\newtheorem{result}{Result}[section]
	\newtheorem{corollary}[result]{Corollary}
	\newtheorem{conjecture}[result]{Conjecture}
	\newtheorem{lemma}[result]{Lemma}
	\newtheorem{proposition}[result]{Proposition}
	\newtheorem{theorem}[result]{Theorem}
\theoremstyle{definition}
	\newtheorem{construction}[result]{Construction}
	\newtheorem{definition}[result]{Definition}
\theoremstyle{remark}
	\newtheorem{remark}[result]{Remark}
\DeclareMathOperator\Hom{Hom}
\DeclareMathOperator\ind{ind}
\DeclareMathOperator\res{res}
\DeclareMathOperator\Res{Res}
\DeclareMathOperator\SL{SL}
\newcommand{\ov}{\overline}
\newcommand{\co}[1]{\prescript{#1}{}}
\newcommand\acts\curvearrowright
\newcommand\mathdef{\textsf}
\newcommand\tn{\textnormal}
\renewcommand{\descriptionlabel}%
[1]{\hspace{\labelsep}##1:}
\title{Isocrystals and limits of rigid local Langlands correspondences} 
\author{Peter Dillery}
\thanks{The author was supported by the Brin Postdoctoral Fellowship at the University of Maryland, College Park}
  \address{Department of Mathematics,
  University of Maryland, 4176 Campus Drive,
  College Park, MD 20742-4015, USA}
\begin{document}

\begin{abstract}
We show that, over a nonarchimedean local field, the rigid refined local Langlands correspondence and associated endoscopic character identities for connected reductive $G$ follow if one only has them for all such $G$ with connected center. The strategy is to construct a projective system of central extensions and then take limits of the Langlands correspondences (and endoscopic data) of each group in the system. As an application, we prove the equivalence of the rigid refined local Langlands correspondence and its analogue for isocrystals, generalizing the work of \cite{kaletha18} in the $p$-adic case. 
\end{abstract}

\maketitle

\setcounter{tocdepth}{1}
\tableofcontents


\section{Introduction}\label{Intro}

\subsection{Motivation}\label{Intro1}
The goal of this paper is to show that if one has a construction of the rigid refined local Langlands correspondence (for a nonarchimedean local field $F$), for connected reductive groups $G$ with connected center then the correspondence can be canonically extended to arbitrary connected reductive $G$. This procedure has two steps: First, we define the candidate local Langlands correspondence for arbitrary $G$ using its connected center counterpart(s). Second, we show that the construction defined in the first step satisfies the endoscopic character identities (assuming that they hold in the connected center case). This generalizes the work of \cite{kaletha18}, which establishes these results for $p$-adic fields; our method works uniformly for all nonarchimedean local fields and recovers Kaletha's approach in the $p$-adic case (via using the constant projective system of embeddings, explained below).

Before going further, we recall (a rough version of) the local Langlands correspondence. Given $G$ as above, there is conjectured to be a finite-to-one map 
\begin{equation}\label{introLLC}
\Pi(G(F)) \to \Phi(G)
\end{equation}
from isomorphism classes of smooth, irreducible representations of $G(F)$ on $\mathbb{C}$-vector spaces 
to conjugacy classes of $L$-parameters for $G$. There are numerous approaches to parametrizing the fibers of \eqref{introLLC} (see e.g. \cite[\S 6.3]{Taibi22} for a summary); this paper will focus primarily on the \mathdef{rigid refined local Langlands correspondence} (we abbreviate ``local Langlands correspondence" by ``LLC"), initially defined for local fields of characteristic zero in \cite{kaletha16} and extended to all nonarchimedean local fields in \cite{Dillery1}. For $G$ quasi-split, given an $L$-parameter $W_{F} \times \SL_{2}(\mathbb{C}) \xrightarrow{\varphi} \prescript{L}{}G$ and finite central subgroup $Z \subset G$, the rigid refined LLC predicts that the fiber of \eqref{introLLC} over $[\varphi]$ can be understood via a bijection  
\begin{equation*}
  \Pi_{\varphi}^{Z}  \xrightarrow{\iota_{\mathfrak{w}}} \tn{Irr}(\pi_{0}(S_{\varphi}^{+}))
\end{equation*}
depending on a choice of Whittaker datum $\mathfrak{w}$ for $G$, where $\Pi_{\varphi}^{Z}$ is a finite set of isomorphism classes of representations of $Z$-rigid inner twists of $G$ (see Definition \ref{RITdef}---for now it suffices to think of this as a collection of representations of $G'(F)$ where $G'$ varies along all inner forms of $G$ together with an extra piece of data that ensures automorphisms preserve representations up to isomorphism) which all map to $\varphi$ under \eqref{introLLC} and $S_{\varphi}^{+}$ is the preimage of $Z_{\widehat{G}}(\varphi)$ in $\widehat{G/Z}$. 

It is useful to reduce to $G$ with connected center because in this situation the rigid refined LLC can be obtained from the \mathdef{isocrystal refined LLC} in the sense of \cite[Conjecture F]{kaletha16a}, and vice versa. Recall that the isocrystal refined LLC only captures all inner forms of a group $G$ when $Z(G)$ is connected, whereas its rigid analogue always captures all inner forms (in practice, we choose $G$ to be quasi-split and then realize an arbitrary connected reductive group as one of its rigid inner forms). Our work shows that, nevertheless, the isocrystal LLC suffices for capturing all possible connected reductive $G$, since we show that one can obtain the rigid refined LLC for arbitrary quasi-split $G$ from the rigid refined LLC for a projective system of quasi-split groups $\{G_{z}^{(i)}\}_{i \geq 0}$ which all have connected center, and so their rigid LLC can further be obtained from their isocrystal refined LLC. 

Establishing an equivalence between these two refined LLC's, in addition to showing that the isocrystal refined LLC captures all connected reductive groups, also has the advantage of placing the rigid refined LLC in closer proximity to the Fargues-Scholze construction (as in \cite{FS21}) of the local Langlands correspondence using the geometry of the Fargues-Fontaine curve. We believe that relating these two approaches to the LLC will be profitable---for example, on the Galois side, the sheaves that arise from the rigid refined LLC (cf. \cite[\S 3]{DS23}) are constructible, whereas one expects the isocrystal refined LLC to give coherent sheaves (on the stack of $L$-parameters), even in the case of basic isocrystals. Therefore, the ability to interpolate between the two refined LLC's (restricted to basic isocrystals for the moment) gives a very crude way of understanding some of the above coherent sheaves in terms of significantly more tractable constructible ones.  

When $F$ is a $p$-adic local field the above reduction to connected center and subsequent comparison to the isocrystal refined LLC has been carried out in \cite{kaletha18}. In this situation, one constructs a \mathdef{pseudo $z$-embedding} (cf. Definition \ref{pseudodef}) of $G$ an arbitrary connected reductive group into another such group $G_{z}$ with connected center and then deduces the rigid refined LLC (including the endoscopic character identities) for $G$ using its analogue for $G_{z}$; to the author's knowledge, this approach was first suggested by Kottwitz. The above embedding is a central extension which has the key property that the induced map 
\begin{equation}\label{introisom}
    H^{1}(F, Z(G)) \to H^{1}(F, Z(G_{z}))
\end{equation}
is an isomorphism. For example, this implies that any $L$-parameter $\varphi$ for $G$ can be lifted to an $L$-parameter $\varphi_{z}$ for $G_{z}$ and the induced map $\pi_{0}(S_{\varphi_{z}}^{+}) \to \pi_{0}(S_{\varphi}^{+})$ is an isomorphism for any $Z$ as above. 

However, when $F$ is a local function field, it is in general not possible to centrally embed $G$ into $G_{z}$ with connected center and have the isomorphism \eqref{introisom} (where now by $H^{1}(F, Z(G))$ we mean fppf cohomology). An easy way to see this fact is that the left-hand side of \eqref{introisom} can be infinite while the right-hand side is always finite. In order to circumvent this difficulty we replace the embedding $G \to G_{z}$ with an infinite projective system $\{G \to G_{z}^{(i)}\}$ of compatible embeddings, where each $G_{z}^{(i)}$ has connected center. Each embedding in this system will not be a pseudo $z$-embedding, but a less restrictive notion that we call a \mathdef{weak $z$-embedding} (Definition \ref{weakdef}). Although each individual embedding does not satisfy \eqref{introisom}, the entire system plays the role of a single pseudo $z$-embedding. In particular, one can still lift parameters, compare centralizer subgroups, and relate representations on the ``automorphic side" in order to construct the rigid refined LLC for $G$ and prove the endoscopic character identities (assuming both of these things for each $G_{z}^{(i)}$). 

Constructing the rigid refined LLC for $G$ using the projective system $\{G_{z}^{(i)}\}$ proceeds via a ``limit" of Langlands correspondences, which requires a basic functoriality property (Conjecture \ref{Maartenbis}) of the correspondence along central surjections (in order to pass between $G_{z}^{(j)}$ and $G_{z}^{(i)}$ for $j \geq i$) in the connected center case. An analogous version of this functoriality is also assumed (again, for groups with connected center) in \cite{kaletha18} in order for the construction of the rigid refined LLC loc. cit. in the disconnected center case to be well-defined. 

This aforementioned functoriality also appears in our study of endoscopy, where one transfers an endoscopic datum for $G$ to a system of endoscopic data for $\{G_{z}^{(i)}\}$ (for $i \gg 0$, cf. Proposition \ref{liftend}) and works with the associated projective system $\{H_{z}^{(i)}\}$ of endoscopic groups, where now the centers of each $H_{z}^{(i)}$ need not be connected. We prove that the desired functoriality holds for our above construction of the rigid refined LLC under the assumption that it holds in the connected center case, which enables us to compute the LLC for the endsocopic group $H$ using a limit of correspondences for the system $\{H_{z}^{(i)}\}$. The reduction of the LLC for $H$ to $\{H_{z}^{(i)}\}$ is the key ingredient for deducing the endoscopic character identities for $G$ using those for each $G_{z}^{(i)}$ and endsocopic group $H_{z}^{(i)}$.

Another difficulty particular to the function field case arises when one carries out the rigid-to-isocrystal comparison. As mentioned above, for any nonarchimedean local $F$, when $Z(G)$ is connected every inner twist of a quasi-split $G$ can be reached using isocrystal inner twists (also called \mathdef{extended pure inner twists}), which means that the canonical comparison map (described explicitly in \S \ref{IsoComp}) 
\begin{equation}\label{introcomp}
    B(G)_{\tn{bas}} \to H^{1}_{\tn{bas}}(\mathcal{E}, G)
\end{equation}
from basic $G$-isocrystals to basic $G$-torsors on the Kaletha gerbe (cf. \S \ref{PrelimKal}) while not surjective in general, becomes so after quotienting the right-hand side by the action of $\varinjlim_{Z} H^{1}(\mathcal{E}, Z)$. To transfer \eqref{introcomp} to the Galois side in the $p$-adic case \cite{kaletha18} proves a duality isomorphism 
\begin{equation*}
    H^{1}(\mathcal{E}, Z) \xrightarrow{\sim} Z^{1}(\Gamma, \widehat{Z})^{*},
\end{equation*}
(where $\Gamma = \Gamma_{F^{s}/F}$) by embedding $Z(G)$ strategically into a torus, where one has access to analogous duality results. 

The above torus embedding strategy does not work for local function fields (this is again related to the general failure of \eqref{introisom}). However, we prove the same duality isomorphism directly using unbalanced cup products in \v{C}ech cohomology, following the same general strategy for the proof of the analogous isomorphism for $H_{\tn{fppf}}^{1}(F, Z)$. This duality result is closely related to the Tate-Nakayama isomorphism for $H^{1}(\mathcal{E}, Z \to T)$ proved in \cite{Dillery1}, which motivated the definition of the aforementioned \v{C}ech version of unbalanced cup products (cf. \cite[\S 4.2]{Dillery1}). 

\subsection{Overview}\label{Intro2}
This paper has six sections and one appendix and its structure is as follows. In \S \ref{Prelminaries} we review cohomology, gerbes, rigid inner forms, and the rigid refined LLC. We approach gerbes and torsors on them via \v{C}ech cohomology associated to the cover $\mathrm{Spec}(\overline{F}) \to \mathrm{Spec}(F)$ for the sake of concreteness. The heart of the paper is \S \ref{Construction}, where we define and study certain systems of central embeddings and use them to extend the rigid refined LLC from the connected center case to all connected reductive groups. In \S \ref{Endoscopy} we study the endoscopic properties of the construction in \S \ref{Construction}, using systems of endoscopic data to establish the endoscopic character identities (again, assuming that they hold in the connected center case). 

Having studied the reduction of the rigid refined LLC to groups with connected center, we turn to comparison with the isocrystal refined LLC. As mentioned above, this naturally motivates a version of Tate-Nakayama duality for the gerbe cohomology groups $H^{1}(\mathcal{E}, Z)$ for finite multiplicative $Z$, which is the content of \S \ref{Tateduality}. We compare the two LLC's in \S \ref{Iso}, which (other than \S \ref{IsoFunc}) is essentially just a summary of the characteristic zero situation studied in \cite[\S 5.2]{kaletha18} with some \v{C}ech-cohomological adjustments. Appendix \ref{Braided} discusses crossed modules for the \v{C}ech cohomology associated to an fpqc cover of rings $\mathrm{Spec}(S) \to \mathrm{Spec}(R)$ and then specializes to the case of $R = F$, $S = \overline{F}$ to extend certain $p$-adic duality results to the local function setting (building on \cite[Appendix A]{Dillery2}, which studies complexes of tori in this setting) required for lifting $L$-parameters for $G$ to ones for the groups $G_{z}^{(i)}$. 

\subsection{Notation and terminology}\label{Intro3}
We assume that $F$ is a local field of characteristic $p > 0$, although all arguments in this paper hold for arbitrary nonarchimedean local fields. We fix an algebraic closure $\overline{F}$ of $F$ with associated separable closure $F^{s}$, absolute Galois group $\Gamma = \Gamma_{F^{s}/F}$, Weil group $W_{F}$, and Weil-Deligne group $L_{F} := W_{F} \times \SL_{2}(\mathbb{C})$ (although essentially all non-endoscopic results in this paper still hold when $\mathbb{C}$ is replaced by an arbitrary algebraically closed field $C$ of characteristic zero if one strategically removes the word ``tempered"). 

We call an affine, commutative algebraic group over $F$ \mathdef{multiplicative} if its characters span its coordinate ring over $F^{s}$. When $N$ is a discrete $\Gamma$-module, we denote by $\underline{N}$ the associated \'{e}tale group scheme over $F$. For a morphism of $X \xrightarrow{f} Y$ of $R$-schemes ($R$ is any commutative ring) and $S$ an $R$-algebra, we denote by $f^{\sharp}$ the induced map $X(S) \to Y(S)$. For notational convenience we will often denote the fibered product of two $R$-schemes $X_{1}$, $X_{2}$ by $X_{1} \times_{R} X_{2}$ rather than $X_{1} \times_{\mathrm{Spec}(R)} X_{2}$, and also a faithfully flat (and thus fpqc) cover $\mathrm{Spec}(S) \to \mathrm{Spec}(S)$ by the ring homomorphism $R \to S$. All group schemes are assumed to be affine unless explicitly stated otherwise.

For an algebraic group $G$, $G_{\tn{der}}$ denotes the derived subgroup of $G$, $G^{\circ}$ the connected component of the identity in $G$ with quotient $\pi_{0}(G) = G/G^{\circ}$, $Z(G)$ the center of $G$, $Z_{\tn{der}}$ the center of $G_{\tn{der}}$, and for a subscheme $H \subset G$, $Z_{G}(H)$ the scheme-theoretic centralizer of $H$ in $G$. When working with group schemes over $\mathbb{C}$ we will frequently conflate the scheme with its $\mathbb{C}$-points. For a topological group $H$ we denote by $H^{D}$ the group $\Hom_{\tn{cts}}(H, \mathbb{C}^{\times})$, where $\mathbb{C}^{\times}$ has the analytic topology (note that when $H$ is locally profinite this is the same as giving $\mathbb{C}^{\times}$ the discrete topology). When we just consider the group $\Hom_{\mathbb{Z}}(H, \mathbb{Q}/\mathbb{Z})$ of abstract group homomorphisms we denote it by $H^{*}$. 

\subsection{Acknowledgements}\label{Intro4} The author thanks Tasho Kaletha for suggesting this project. They also thank Sean Cotner, David Schwein, and Alex Youcis for their helpful conversations, as well as Jean-Pierre Labesse for useful email exchanges about crossed modules in the setting of local function fields. The author also thanks Alexander Bertoloni Meli for proofreading an older version of this paper, expository improvements, as well as numerous suggestions concerning direct limits of restrictions of representations.

\section{Preliminaries}\label{Prelminaries}

\subsection{Cohomological basics}\label{PrelimCohom}
For a commutative group scheme $A$ over a ring $R$, the notation $H^{i}(R, A)$ will always denote $H_{\tn{fppf}}^{i}(\mathrm{Spec}(R),A)$. For $G$ a general group scheme over $R$ we define $H^{1}(R, G)$ as the set of isomorphism classes of (fpqc) $G$-torsors over $R$ (using descent data, one verifies that this is indeed a set).

Given an fpqc cover $\mathrm{Spec}(S) \to \mathrm{Spec}(R)$ one can consider the \v{C}ech cohomology sets (groups when $G$ is commutative) $\check{H}^{i}(S/R, G)$ with coefficients in $G$, which are defined for all $i \geq 0$ when $G$ is commutative and for $i=0,1$ for general $G$. See e.g. \cite[\S 6.4]{Poonen17} for the commutative case and \cite[III.3.6]{Giraud71} for the general case. \v{C}ech cohomology will be reviewed further as needed in \S \ref{Iso} and Appendix \ref{Braided}. Recall that the sets $\check{H}^{i}(\overline{F}/F, G)$ coincide with $H^{i}(F, G)$ as defined in the previous paragraph for $R = F$ a field when $G$ is of finite type. Moreover, when $F$ is a nonarchimedean local field and $G=A$ is commutative of finite type, the groups $H^{i}(F, A)$ have a natural locally-compact topology (defined in \cite{Shatz}, cf. also \cite[\S III.6]{Milne06}). We recall the following useful topological property for $i=1$: 

\begin{lemma}
For nonarchimedean $F$, the topological group $H^{1}(F, A)$ with the natural topology as above is compact when $A$ is of multiplicative type.
\end{lemma}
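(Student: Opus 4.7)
The plan is to reduce the assertion to two special cases---$A$ a torus and $A$ a finite multiplicative group scheme---via the canonical connected-\'etale short exact sequence
\[
0 \to A^{\circ} \to A \to \pi_{0}(A) \to 0,
\]
in which $A^{\circ}$ is a torus and $\pi_{0}(A)$ is a finite multiplicative group scheme. Taking fppf cohomology and using that the resulting connecting maps are continuous for the Shatz topology, $H^{1}(F, A)$ fits in a topological short exact sequence whose outer terms are a continuous quotient of $H^{1}(F, A^{\circ})$ and a closed subgroup of $H^{1}(F, \pi_{0}(A))$ (closed as the kernel of the continuous coboundary into $H^{2}(F, A^{\circ})$). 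Since an extension of compact groups is compact, it suffices to handle the torus and finite multiplicative cases separately.

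For a torus $T$, classical local Tate--Nakayama duality gives that $H^{1}(F, T)$ is \emph{finite}, hence compact.

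For a finite multiplicative group $\mu$, I would invoke local fppf duality (valid in all characteristics): the Cartier dual $\mu^{\vee} = \underline{\mathrm{Hom}}(\mu, \mathbb{G}_{m})$ is a finite \'etale group scheme, and the cup-product pairing
\[
H^{1}(F, \mu) \times H^{1}(F, \mu^{\vee}) \to H^{2}(F, \mathbb{G}_{m}) = \mathbb{Q}/\mathbb{Z}
\]
is perfect. Since $\mu^{\vee}$ is \'etale of finite type, $H^{1}(F, \mu^{\vee}) = \varinjlim_{E/F} H^{1}(\Gal(E/F), \mu^{\vee}(E))$ is a countable discrete torsion abelian group, so $H^{1}(F, \mu)$, identified with its Pontryagin dual, is profinite and in particular compact.

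The principal technical point is the local duality invocation for non-smooth finite multiplicative groups in positive characteristic (the case of $\mu_{p}$ when $\mathrm{char}\,F = p$ being the prototype), which requires working in the fppf---rather than \'etale---setting. As an alternative that avoids this, one can pick a torus resolution $0 \to \mu \to T_{1} \xrightarrow{\phi} T_{2} \to 0$ with $T_{1}, T_{2}$ tori of equal dimension: writing $T_{i}(F)$ as the extension of a finitely generated discrete group by the maximal compact $T_{i}(F)_{c}$, the cokernel $T_{2}(F)/\phi(T_{1}(F))$ is an extension of a finite quotient of the $\mathbb{Z}$-rank components by a Hausdorff quotient of $T_{2}(F)_{c}$, hence compact; coupled with the finite kernel of $H^{1}(F, T_{1}) \to H^{1}(F, T_{2})$, this yields compactness of $H^{1}(F, \mu)$ directly.
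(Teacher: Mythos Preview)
Your argument is correct. The paper does not actually give a proof: its entire argument is the single sentence ``This is explained in \cite[\S III.6]{Milne06}.'' What you have written is essentially a fleshed-out version of what that citation provides. Milne's treatment of the Shatz topology establishes the continuity of the maps in the fppf long exact sequence, and his local duality theorem for finite flat group schemes over local fields (perfectness of the pairing $H^{1}(F,\mu)\times H^{1}(F,\mu^{\vee})\to\mathbb{Q}/\mathbb{Z}$, with $H^{1}(F,\mu^{\vee})$ discrete since $\mu^{\vee}$ is \'etale) yields exactly the compactness of $H^{1}(F,\mu)$ you deduce. The reduction via the connected--\'etale sequence and the finiteness of $H^{1}$ for tori are standard.

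Your alternative torus-resolution argument for the finite case is also valid and has the minor virtue of sidestepping the full duality statement. The one point worth making explicit there is that $\phi(T_{1}(F))$ is closed in $T_{2}(F)$: this holds because $\phi$ carries the maximal compact subgroup $T_{1}(F)_{c}$ into $T_{2}(F)_{c}$ with compact (hence closed) image, and the induced map on the finitely generated discrete quotients automatically has closed image. Once that is in hand, the quotient $T_{2}(F)/\phi(T_{1}(F))$ is Hausdorff and your extension-of-compacts argument goes through.
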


\begin{proof}
This is explained in \cite[\S III.6]{Milne06}.
\end{proof}

We will need some basic \v{C}ech-cohomological notation: Denote by 
\begin{equation*}
\mathrm{Spec}(S) \times_{R} \mathrm{Spec}(S) \xrightarrow{p_{i}} \mathrm{Spec}(S) 
\end{equation*}
the projection onto the $i$th factor for $i=1,2$ (we can and do generalize this notation in the obvious way for sources and targets with more direct factors). 

\subsection{Gerbes}\label{PrelimGerbes}
We attempt to make the gerbes in this paper as concrete as possible (see, e.g., \cite[\S 2]{Dillery1} for a detailed study of their basic properties in the case of the \v{C}ech cover $\mathrm{Spec}(\overline{F}) \to \mathrm{Spec}(F)$). There is a more conceptual approach to this setup which is discussed at length in \cite[\S 2]{Dillery1}. We continue with our fixed fpqc cover $\mathrm{Spec}(S) \to \mathrm{Spec}(R)$.

For a commutative group scheme $A$ over $R$, recall that the \mathdef{classifying stack} $BA$ is the fibered category (in groupoids)
\begin{equation*}
    BA \to \mathrm{Sch}/\mathrm{Spec}(R)
\end{equation*}
whose fiber category $BA(\mathrm{Spec}(R'))$ for an $R$-algebra $R'$ is given by all $A$-torsors over $R'$, with morphisms given by maps of $A$-torsors. We have the following generalization of this object:

\begin{definition}\label{gerbedef}
Given a \v{C}ech $2$-cocycle $a \in A(S \otimes_{R} S \otimes_{R} S)$, we define the fibered category (in groupoids)
\begin{equation*}
\mathcal{E}_{a} \to \mathrm{Sch}/\mathrm{Spec}(R)
\end{equation*}
to have fiber category $\mathcal{E}_{a}(\mathrm{Spec}(R'))$ given by pairs $(T, \psi)$, where $T$ is an $A$-torsor over $\mathrm{Spec}(R' \otimes_{R} S)$ and $\psi$ is an isomorphism of $A$-torsors over $\mathrm{Spec}(R' \otimes S) \times_{R} \mathrm{Spec}(R' \otimes S)$
\begin{equation*}
\psi \colon p_{2}^{*}T \xrightarrow{\sim} p_{1}^{*}T
\end{equation*}
such that $d\psi$ equals translation by $a$ as an $A$-automorphism of $p_{1,2}^{*}T$.
\end{definition}

The above fibered category is an example of a \mathdef{gerbe banded by $A$} (cf. \cite[\S 2.3]{Dillery1}). All gerbes considered in this paper will be, up to isomorphism, of the form given in Definition \ref{gerbedef}. Any category over $\mathrm{Sch}/\mathrm{Spec}(R)$ fibered in groupoids inherits the fpqc topology from $\mathrm{Sch}/\mathrm{Spec}(R)$, and so it makes sense to define $H^{1}(\mathcal{E}_{a}, G)$ for $G$ an $R$-group scheme as the collection of all isomorphism classes of fpqc $G_{\mathcal{E}_{a}}$-torsors over $\mathcal{E}_{a}$. 

When $R = F$ a field and $S = \overline{F}$, we may interpret $H^{1}_{\tn{bas}}(\mathcal{E}_{a}, G)$ concretely as equivalence classes of \mathdef{$a$-twisted (\v{C}ech) cocycles with coefficients in $G$} which are pairs $(c, f)$ with $A \xrightarrow{f} Z(G)$ a morphism of group schemes over $F$ and $c \in G(\overline{F} \otimes_{F} \overline{F})$ such that $dc = f(a)$ (see \cite[\S 2.5]{Dillery1} for the full details). 

\subsection{The Kaletha gerbe}\label{PrelimKal}
Let $R = F$ be nonarchimedean local field. We now specialize the notions of the previous subsection, setting
\begin{equation*}
A = u := \varprojlim_{k \geq 0} \frac{\mathrm{Res}_{E_{n_{k}}/F}(\mu_{n_{k}})}{\mu_{n_{k}}}
\end{equation*}
where the limit is over a totally-ordered cofinal system of natural numbers $\{n_{k}\}_{k \geq 0}$ and a totally-ordered cofinal system $\{E_{n_{k}}\}_{k \geq 0}$ of finite Galois extensions of $F$, the embedding of each $\mu_{n_{k}}$ is given by the diagonal, and the transition map 
\begin{equation*}
   \frac{\mathrm{Res}_{E_{n_{k}}/F}(\mu_{n_{k}})}{\mu_{n_{k}}} \to \frac{\mathrm{Res}_{E_{n_{\ell}}/F}(\mu_{n_{\ell}})}{\mu_{n_{\ell}}} 
\end{equation*}
is a composition of the $E_{n_{k}}/E_{n_{\ell}}$-norm and the $n_{k}/n_{\ell}$-power map. The definition of $u$ is independent of the choice of systems. Denote the $k$th term in the limit defining $u$ by $u_{k}$.

One then computes (\cite[\S 3.1]{Dillery1}) that $H^{1}(F,u) = 0$ and $H^{2}(F, u)$ is canonically isomorphic to $\widehat{\mathbb{Z}}$; we set $\mathcal{E} = \mathcal{E}_{\xi}$ (as in Definition \ref{gerbedef}) for any \v{C}ech $2$-cocycle $\xi$ representing the class corresponding to $-1$ and call $\mathcal{E}$ the \mathdef{Kaletha gerbe}. Any two choices $\mathcal{E}_{\xi}$, $\mathcal{E}_{\xi'}$ differ via a non-canonical isomorphism of gerbes which (due to the vanishing of $H^{1}(F,u)$) induces a canonical isomorphism on cohomology sets
\begin{equation*}
  H^{1}(\mathcal{E}_{\xi}, G) \xrightarrow{\sim}  H^{1}(\mathcal{E}_{\xi'}, G) 
\end{equation*}
for any group scheme $G$, as shown in \cite[\S 2.5]{Dillery1}. For this reason it is harmless to fix such a choice of $\xi$.

Now assume that $G$ is either a connected reductive group or is of multiplicative type. For $Z \subset Z(G)$ a finite subgroup we define $H^{1}(\mathcal{E}, Z \to G)$ to be the isomorphism classes in $H^{1}(\mathcal{E}, G)$ whose image in $H^{1}(\mathcal{E}, G/Z)$ (via the contracted product map) descends to a $G/Z$-torsor over $F$. We then define the set of \mathdef{basic $G$-torsors} on $\mathcal{E}$ as
\begin{equation*}
    H^{1}_{\tn{bas}}(\mathcal{E}, G) = \varinjlim_{Z \subset_{\tn{finite}} Z(G)} H^{1}(\mathcal{E}, Z \to G),
\end{equation*}
with the obvious transition maps. 

For any normal embedding of connected reductive groups over $F$
\begin{equation*}
    1 \to H \to G \to G/H \to 1
\end{equation*}
one always has (by \cite[Proposition III.3.3.1]{Giraud71}) an exact sequence of pointed sets
\begin{equation*}
    1 \to H^{0}(\mathcal{E}, H) \to H^{0}(\mathcal{E}, G) \to H^{0}(\mathcal{E}, G/H) \to H^{1}(\mathcal{E}, H) \to H^{1}(\mathcal{E}, G) \to H^{1}(\mathcal{E}, G/H).
\end{equation*}

Note that $H^{0}(\mathcal{E}, M) = M(F)$ for any group scheme $M$ over $F$. If we insist further that $H \to G$ is a central extension, then for $Z \subset Z(H)$ finite, we have the following further functoriality (abusively denoting the image of $Z$ in $G$ also by $Z$):
\begin{equation}\label{basicgerbeLES}
    1 \to H(F) \to G(F) \to (G/H)(F) \to H^{1}(\mathcal{E}, Z \to H) \to H^{1}(\mathcal{E}, Z \to G) \to H^{1}(F, G/H);
\end{equation}
the fact that the last term is cohomology over $F$ follows from the fact that $Z \subset H$ and all torsors in $H^{1}(\mathcal{E}, Z \to G)$ descend after passing to $G/Z$.

\subsection{The local Langlands correspondence for rigid inner twists}\label{PrelimLLC}
Fix a connected quasi-split reductive group $G$ with finite central subgroup $Z$. We conclude our preliminary section by summarizing the rigid refined LLC as first given in \cite{kaletha16} (for local fields of characteristic zero) and extended in \cite{Dillery1}. First, we recall:

\begin{definition}\label{RITdef}
For a fixed inner twist $G \xrightarrow{\psi} G'$, a \mathdef{$Z$-rigid inner twist enriching $\psi$} is a pair $(\mathcal{T}, \bar{h})$, where $\mathcal{T} \in H^{1}(\mathcal{E}, Z \to G)$ and $\bar{h}$ is an isomorphism of (the $F$-descent of) $\mathcal{T} \times^{G} G_{\tn{ad}}$ with $T_{\psi}$, the $G_{\tn{ad}}$-torsor canonically associated to $\psi$ by taking the fiber over $\psi \in (\underline{\mathrm{Isom}}(G', G)/G_{\tn{ad}})(F)$ in $\underline{\mathrm{Isom}}(G', G)$. Every inner twist has such a (not necessarily unique) enrichment, which we will call a \textsf{rigidification}, for some sufficiently large $Z$. This last fact means that taking $G$ to be quasi-split still allows us to study all possible connected reductive $G'$ by taking rigidifications of quasi-split inner twists. 
\end{definition}

There is a notion of a morphism of rigid inner twists (see \cite[\S 7.1]{Dillery1}) such that $\mathrm{Aut}(\mathcal{T}, \bar{h})$ is canonically isomorphic to $G'(F)$. A \mathdef{representation} of a rigid inner twist $(\mathcal{T}, \bar{h})$ is a triple $((\mathcal{T}, \bar{h}),\pi)$, where $\pi$ is a smooth admissible representation of $G'(F)$ (on a $\mathbb{C}$-vector space), and a \mathdef{morphism} of two such representations is a map between the two rigid inner twists which maps one representation to the other. 

For a tempered $L$-parameter $L_{F} \xrightarrow{\varphi} \prescript{L}{}G$ we define $S_{\varphi}^{+}$ as the preimage of $Z_{\widehat{G}}(\varphi)$ in $\widehat{G/Z}$ and fix a Whittaker datum $\mathfrak{w}$ for $G$. 

\begin{conjecture}\label{rigidLLC1} There is a bijection (depending on $\mathfrak{w}$)
\begin{equation}\label{rigidLLCv1}
  \Pi_{\varphi}^{Z}  \xrightarrow{\iota_{\mathfrak{w}}} \tn{Irr}(\pi_{0}(S_{\varphi}^{+})),
\end{equation}
where $ \Pi_{\varphi}^{Z}$ is a set of isomorphism classes of tempered representations of $Z$-rigid inner twists $((\mathcal{T}, \bar{h}), \pi)$ of $G$. More precisely, we conjecture a commutative diagram
\[
\begin{tikzcd}
\Pi_{\varphi}^{Z} \arrow["\iota_{\mathfrak{w}}"]{r} \arrow{d} & \tn{Irr}(\pi_{0}(S_{\varphi}^{+})) \arrow{d} \\
H^{1}(\mathcal{E}, Z \to G) \arrow{r} & \pi_{0}(Z(\widehat{G/Z})^{+})^{*},
\end{tikzcd}
\]
where $Z(\widehat{G/Z})^{+}$ denotes the preimage of $Z(\widehat{G})^{\Gamma}$ in $\widehat{G/Z}$, the bottom map is a gerbe-theoretic analogue of the Tate-Nakayama isomorphism (see \cite[Theorem 4.11]{kaletha16} and \cite[Theorem 5.10]{Dillery1}), the left-hand column extracts the underlying torsor, and the right-hand column is induced by taking central characters. 
\end{conjecture}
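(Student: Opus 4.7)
The plan is to reduce Conjecture \ref{rigidLLC1} for arbitrary quasi-split $G$ to the case of connected center (taken as an assumption) via a projective system of weak $z$-embeddings, as outlined in \S\ref{Intro1}. I would first construct such a system $\{G \hookrightarrow G_z^{(i)}\}_{i \geq 0}$ of central extensions with each $G_z^{(i)}$ quasi-split with connected center, writing $Z_i \subset G_z^{(i)}$ for a finite central subgroup chosen compatibly so that $G_z^{(i)}/Z_i$ is identified with a quotient through which the map to $G/Z$ factors. The individual embeddings are not pseudo $z$-embeddings in the function-field setting, so \eqref{introisom} fails, but the system is designed so that it plays the role of a single pseudo $z$-embedding in the limit.

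Next, I would lift $\varphi$ to a compatible system of tempered $L$-parameters $\varphi^{(i)} \colon L_F \to \prescript{L}{}{G_z^{(i)}}$. The obstruction to lifting any individual $\varphi$ sits in a continuous cohomology group which does not vanish for a fixed $i$ in general, but which does vanish after passing far enough into the system, via the crossed-module duality results of Appendix \ref{Braided}. The central surjection $\widehat{G_z^{(i)}/Z_i} \twoheadrightarrow \widehat{G/Z}$ then induces an isomorphism $\pi_0(S_{\varphi^{(i)}}^+) \xrightarrow{\sim} \pi_0(S_{\varphi}^+)$, compatible with the transition maps in $i$, so the right-hand side of \eqref{rigidLLCv1} is canonically identified with $\mathrm{Irr}(\pi_0(S_{\varphi^{(i)}}^+))$ for any sufficiently large $i$.

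On the automorphic side, a $Z$-rigid inner twist $(\mathcal{T},\bar h)$ of $G$ pushes forward along $G \hookrightarrow G_z^{(i)}$ to a $Z_i$-rigid inner twist $(\mathcal{T}^{(i)},\bar h^{(i)})$, and any smooth irreducible representation $\pi$ of the associated inner form $G'(F)$ extends (non-canonically but in a controlled way, since the kernel is central) to a representation of the corresponding $G_z^{(i),\prime}(F)$. I would then define $\Pi_{\varphi}^{Z}$ to consist of those triples whose extensions lie in $\Pi_{\varphi^{(i)}}^{Z_i}$ for every sufficiently large $i$, and define $\iota_{\mathfrak{w}}$ by transporting $\iota_{\mathfrak{w}}^{(i)}$ across the identification $\pi_0(S_{\varphi^{(i)}}^+)\cong \pi_0(S_{\varphi}^+)$. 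To verify the commutative square, one checks that the central character of any such $\pi$, transferred to $Z(\widehat{G/Z})^+$, agrees with the composite of the $G_z^{(i)}$-character with the dual of the surjection, and then invokes the gerbe-theoretic Tate--Nakayama of \S\ref{Tateduality}.

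The main obstacle is proving that this construction is well-defined across the system and that the resulting $\iota_{\mathfrak w}$ is genuinely a bijection. Independence of $i$ and of the choice of lift $\varphi^{(i)}$ requires the functoriality Conjecture \ref{Maartenbis} along central surjections $G_z^{(j)} \twoheadrightarrow G_z^{(i)}$ in the connected-center case, applied to every transition map; without this, different choices of lift could produce incompatible packets. Bijectivity then follows from the bijectivity at each $G_z^{(i)}$ together with a cofinality/Mittag-Leffler argument on both sides using the fact that the groupoid $\varinjlim_Z H^{1}(\mathcal{E}, Z\to G)$ is detected by the system $\{Z_i\}$. Finally, commutativity of the Tate--Nakayama square rests on the duality $H^{1}(\mathcal{E},Z)\cong Z^{1}(\Gamma,\widehat Z)^*$ of \S\ref{Tateduality}, which (unlike in \cite{kaletha18}) must be proved directly via unbalanced Čech cup products rather than by a torus-embedding reduction.
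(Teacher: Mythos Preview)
Your overall architecture matches the paper's: build a good $z$-embedding system $\{G\hookrightarrow G_z^{(i)}\}$ with connected centers, lift $\varphi$ to a compatible family $\{\varphi_z^{(i)}\}$, show $\pi_0(S_{\varphi_z^{(i)}}^{+})\xrightarrow{\sim}\pi_0(S_\varphi^{+})$ for $i\gg 0$, push forward rigid inner twists, and define $\iota_{\mathfrak w}$ as a limit using Conjecture~\ref{Maartenbis} on the transition maps. That part is fine.

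The genuine gap is on the automorphic side. You write that an irreducible $\pi$ of $G'(F)$ ``extends (non-canonically but in a controlled way, since the kernel is central)'' to $(G')_z^{(i)}(F)$. This is not available: $G'\hookrightarrow (G')_z^{(i)}$ has cokernel the induced torus $C^{(i)}$, and since $H^1(F,G')\to H^1(F,(G')_z^{(i)})$ is injective, the map $(G')_z^{(i)}(F)\to C^{(i)}(F)$ is surjective, so $(G')_z^{(i)}(F)/G'(F)\cong C^{(i)}(F)$ is noncompact. There is no reason for $\pi$ to extend across this quotient, and in general it does not. The paper's route is quite different: one compactly induces $\pi$ to $(G')_z^{(i)}(F)$, picks an irreducible constituent $\rho$ whose restriction to $G'(F)$ contains $\pi$, then pulls $\rho$ back along the transition maps $p_{k,i}$ and shows (Lemma~\ref{keycor1}) via a compactness argument in $H^1(F,Z(G))$ that for $k\gg 0$ some constituent $\rho_k$ has $\rho_k|_{G'(F)}$ irreducible, hence equal to $\pi$. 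Uniqueness (Lemma~\ref{uniqueness}) is a second compactness argument of the same flavor. Without these two lemmas your definition of $\iota_{\mathfrak w}(\dot\pi)$ is not well-posed.

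A smaller point: the commutative square in Conjecture~\ref{rigidLLC1} uses the Tate--Nakayama isomorphism $H^1(\mathcal E,Z\to G)\to \pi_0(Z(\widehat{G/Z})^{+})^{*}$ already established in \cite{kaletha16,Dillery1}; the duality $H^1(\mathcal E,Z)\cong Z^1(\Gamma,\widehat Z)^{*}$ of \S\ref{Tateduality} is not what makes that square commute --- it is used later, for the isocrystal comparison in \S\ref{Iso}.
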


The map $\iota_{\mathfrak{w}}$ is expected to satisfy many additional properties, such as the endoscopic character identities (cf. \cite[\S 7.4]{Dillery1}), which will be discussed at length in \S \ref{Endoscopy} and will be reviewed then (as well as in \S \ref{Iso}). There is also a version of Conjecture \ref{rigidLLC1} which uses \mathdef{extended pure inner twists} instead of rigid inner twists, which will be reviewed in \S \ref{Iso} when we compare the two versions. For $\rho \in \tn{Irr}(\pi_{0}(S_{\varphi}^{+}))$, write $\rho \in \tn{Irr}(\pi_{0}(S_{\varphi}^{+}), \mathcal{T})$ if its image in $H^{1}(\mathcal{E}, Z \to G)$ via the above diagram is $[\mathcal{T}]$.

\begin{remark}
It is also possible to give a version of the above conjectures in which the temperedness hypotheses are dropped, see \cite[\S 4]{DS23}. See also \cite[\S 7]{SZ} for a discussion of how the local Langlands correspondence for general $L$-parameters is related its tempered analogue.
\end{remark}


\section{Rigid refined LLC and disconnected centers}\label{Construction}
The goal of this section is to show that if one has maps $\iota_{\mathfrak{w}}$ satsifying Conjecture \ref{rigidLLCv1} for all connected reductive $G$ with connected center then (assuming a functorality conjecture, also in the connected center case, see \S\S \ref{rigidcomp}, \ref{ConstructionFunc}) there is a canonical construction of these maps for all connected, reductive $G$.

\subsection{Weak $z$-embeddings}\label{ConstructionEmbeddings}
Let $G$ be a connected, quasi-split reductive group over $F$ with fixed finite central subgroup $Z$. Recall the following notion from \cite{kaletha18}:

\begin{definition}\label{pseudodef} A \mathdef{pseudo $z$-embedding} is a connected reductive group $G_z$ defined over $F$ and an embedding $G \to G_{z}$ with normal image such that
\begin{enumerate}
\item{$G_{z}/G$ is a torus;}
\item{$H^{1}(F, G_{z}/G) = 0$;}
\item{The map $H^{1}(F, Z(G)) \to H^{1}(F, Z(G_{z}))$ is bijective.}
\end{enumerate}
We call it a \mathdef{$z$-embedding} if $Z(G_{z})$ is connected and $G_{z}/G$ is an induced torus.
\end{definition}

Unlike in the $p$-adic case, it is in general not possible to find a $z$-embedding for arbitrary $G$ in equal characteristic because of the third part of the definition. As mentioned in the introduction, an easy way to see this impossibility is that the group $H^{1}(F, Z(G))$ is in general infinite (which is not the case in mixed characteristic), whereas $H^{1}(F, Z(G_{z}))$ is always finite for $Z(G_{z})$ connected. Instead, in this paper we work with a subtly different notion. To define it we first observe that given a central embedding $G \hookrightarrow G_z$ 
and an inner twist $G \xrightarrow{\psi} G'$, we have a canonical inner twist of $G_z$. Indeed

\begin{lemma}\label{transfertwist} For any inner twist $G \xrightarrow{\psi} G'$ with corresponding $1$-cocycle $u$ of $\Gamma$ in $G_{\tn{ad}}(F^{s})$, there is an inner twist $G_{z} \xrightarrow{\psi_{z}} G'_{z}$ of $G_{z}$ with corresponding cocycle equal to $u$ such that the following diagram commutes
\[
\begin{tikzcd}
1 \arrow{r} & G \arrow["\psi"]{d} \arrow{r} & G_{z} \arrow["\psi_{z}"]{d} \arrow{r} &  G_{z}/G \arrow["\text{id}"]{d} \arrow{r} & 1 \\
1 \arrow{r} & G' \arrow{r} & G'_{z} \arrow{r} & G_{z}'/G' \arrow{r} & 1.
\end{tikzcd}
\]
\end{lemma}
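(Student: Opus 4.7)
The plan is to exploit the canonical identification $G_{\mathrm{ad}} \cong (G_z)_{\mathrm{ad}}$ induced by the embedding, and then twist $G_z$ by $u$ using this identification. Since $G \hookrightarrow G_z$ has torus (hence abelian) quotient $G_z/G$, every commutator in $G_z$ lies in $G$, so $(G_z)_{\mathrm{der}} \subseteq G$. Combined with the standard decomposition $G_z = (G_z)_{\mathrm{der}} \cdot Z(G_z)^{\circ}$ for connected reductive groups, one deduces first that $G_z = G \cdot Z(G_z)^{\circ}$, and then (since elements of $Z(G_z)^{\circ}$ commute with everything) that $G_{\mathrm{der}} = (G_z)_{\mathrm{der}}$ and $G \cap Z(G_z) = Z(G)$. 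The second quotient map then gives a canonical $F$-isomorphism $G_{\mathrm{ad}} = G/Z(G) \xrightarrow{\sim} G_z/Z(G_z) = (G_z)_{\mathrm{ad}}$.

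Using this, I would view $u$ as a $1$-cocycle in $(G_z)_{\mathrm{ad}}(F^s)$ and twist the Galois action on $G_z(F^s)$ by conjugation along $u$, via the natural map $(G_z)_{\mathrm{ad}} \to \mathrm{Aut}(G_z)$. This produces an $F$-form $G_z'$ of $G_z$ equipped with an $F^s$-isomorphism $\psi_z \colon G_z \to G_z'$ satisfying $\psi_z^{-1}\sigma(\psi_z) = \mathrm{Ad}(u_\sigma)$, by construction with the same cocycle $u$. I would define $G' \subseteq G_z'$ as the subgroup with underlying $F^s$-points $\psi_z(G(F^s))$ together with the restricted (twisted) Galois action.

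To confirm the diagram commutes I would check two things. First, the conjugation action of $u_\sigma \in (G_z)_{\mathrm{ad}}$ on $G$, obtained by restricting the map $(G_z)_{\mathrm{ad}} \to \mathrm{Aut}(G_z)$, coincides under the identification above with the action $G_{\mathrm{ad}} \to \mathrm{Aut}(G)$ defining the original twist; thus the twisted Galois action on $G(F^s)$ descends $G'$ to an $F$-subgroup of $G_z'$ and satisfies $\psi_z|_G = \psi$. Second, inner automorphisms act trivially on any abelian quotient, so the twist by $u$ is trivial on $G_z/G$, which forces $G_z'/G' = G_z/G$ canonically and makes the induced map the identity.

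The main (mild) obstacle is establishing the canonical isomorphism $G_{\mathrm{ad}} \xrightarrow{\sim} (G_z)_{\mathrm{ad}}$; once this is in place, the rest is a direct unwinding of the twisting construction, together with the observation that inner automorphisms descend to the identity on abelian quotients. The argument is essentially formal and does not rely on any hypothesis beyond the embedding being a central one with torus quotient.
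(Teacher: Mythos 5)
Your argument is correct, but it takes a genuinely different route from the paper's proof. The paper follows Kaletha (\cite[Fact~5.9]{kaletha18}) and constructs $G_z'$ by a push-out: $G_z' := G' \times^{Z(G)} Z(G_z)$, where $Z(G)$ maps into $G'$ via $\psi$; the key observation there is that $\psi|_{Z(G)}$ is automatically defined over $F$ because the differential of $\psi$ is inner, which makes the push-out an $F$-group, and $\psi_z$ is then just $\psi \times \tn{id}$. You instead construct $G_z'$ by twisting $G_z$ directly by the cocycle $u$, after first establishing the canonical identification $G_{\tn{ad}} \xrightarrow{\sim} (G_z)_{\tn{ad}}$ (using $G_z = G \cdot Z(G_z)^\circ$, $(G_z)_{\tn{der}} = G_{\tn{der}}$, $G \cap Z(G_z) = Z(G)$), and then recover $G'$ as the descended $F$-subgroup $\psi_z(G)$ inside $G_z'$. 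Both proofs are short and valid; the push-out approach has the slight advantage that $G'$ is a given datum and appears directly in the construction, whereas your twisting approach requires an extra (easy) identification at the end to match $\psi_z(G)$ with the given $G'$ --- namely, that both carry the same $u$-twisted Galois action on $G(F^s)$. Conversely, your approach makes the commutativity of the right-hand square of the diagram (the identity on $G_z/G$) immediate, since inner automorphisms act trivially on the abelian quotient, whereas the paper's pushout formulation leaves this slightly more implicit. Either argument works for any central embedding of connected reductive groups with torus quotient.
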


\begin{proof}
As explained in the argument of \cite[Fact 5.9]{kaletha18}, $G'_{z}$ is defined as $G' \times^{Z(G)} Z(G_{z})$, where $Z(G)$ maps into $G'$ via $\psi$ (this descends to a morphism over $F$ because the differential of $\psi$ is inner). The embedding $G' \to G'_{z}$ is induced by the identity to the first component and the trivial map to the second component, and the map $\psi_{z}$ is induced by $\psi \times \tn{id}$.
\end{proof}
When we have a reductive group $G$ with inner twist  $(G', \psi)$, we always denote by $(G'_z, \psi_z)$ the inner twist of $G_z$ constructed in Lemma \ref{transfertwist}. We can now make the following definition.
\begin{definition}\label{weakdef}
     A \mathdef{weak $z$-embedding} is a connected reductive group $G_z$ defined over $F$ and an embedding $G \hookrightarrow G_z$ with normal image such that 
     \begin{enumerate}
         \item $G_z/G$ is a torus;
         \item $H^1(F, G_{z}/G)=1$;
         \item the natural map 
\begin{equation*}
    H^1(F, G') \rightarrow H^1(F, G'_z)
\end{equation*}
is bijective for every inner form $G'$ of $G$, where $G'_{z}$ is as in Lemma \ref{transfertwist}.
     \end{enumerate}
\end{definition}
We warn the reader that it is not enough to require condition (3) just for $G$ alone, since in general there is no identification of $H^{1}(F, G)$ with $H^{1}(F, G')$ (unless $G$ and $G'$ are pure inner forms of each other). We also note that the above definition makes sense for non quasi-split $G'$, and so we will use it in that more general context as well.
\begin{lemma}{\label{weaklem}}
    Condition (3) in Definition \ref{weakdef} is equivalent to
    \begin{enumerate}
        \item[$(3')$]  the natural map 
        \begin{equation*}
            H^{1}(\mathcal{E}, Z \to G') \to H^{1}(\mathcal{E}, Z \to G'_{z}),
        \end{equation*}
        is bijective for any (equivalently every) finite subgroup $Z \subset Z(G)$ and every inner twist $G'$.
    \end{enumerate}
\end{lemma}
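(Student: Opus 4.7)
The plan is to show that both (3) and (3') are equivalent to the auxiliary condition $(\ast)$: for every inner form $G''$ of $G$, the map $G''_z(F) \to T(F)$ is surjective, where $T \defeq G_z/G = G''_z/G''$. As a preliminary observation, since $T$ is connected (condition (1)), the conjugation action $G_z \to \mathrm{Aut}(G)$ induces a trivial map $T \to \mathrm{Out}(G)$, from which one deduces $Z(G) \subset Z(G_z)$; in particular $H^1(\mathcal{E}, Z \to G_z)$ makes sense for any finite $Z \subset Z(G)$, and the same holds for every inner twist produced by Lemma \ref{transfertwist}.

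Fix an inner form $G'$ of $G$ and a finite $Z \subset Z(G)$. From the short exact sequence $1 \to G' \to G'_z \to T \to 1$ one obtains two parallel long exact sequences of pointed sets: the usual nonabelian one for $F$-cohomology,
\begin{equation*}
    G'(F) \to G'_z(F) \to T(F) \to H^1(F, G') \to H^1(F, G'_z) \to H^1(F, T),
\end{equation*}
and (arguing as for \eqref{basicgerbeLES}, with the final term being $H^1(F, T)$ because $Z \subset G'$ maps to the identity in $T$)
\begin{equation*}
    G'(F) \to G'_z(F) \to T(F) \to H^1(\mathcal{E}, Z \to G') \to H^1(\mathcal{E}, Z \to G'_z) \to H^1(F, T).
\end{equation*}
Condition (2) forces the terminal maps to be trivial, so both $H^1(F, G') \to H^1(F, G'_z)$ and $H^1(\mathcal{E}, Z \to G') \to H^1(\mathcal{E}, Z \to G'_z)$ are surjective. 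The standard twisting argument in nonabelian cohomology identifies the fiber of either map over a class $[\beta]$ in the target with $T(F)/\pi(({}_\beta G'_z)(F))$, where ${}_\beta G'_z$ is the inner twist of $G'_z$ by $\beta$; by the functoriality of the construction in Lemma \ref{transfertwist}, this twist is naturally isomorphic to $({}_{\beta} G')_z$. Varying $G'$ over all inner forms (and taking $\beta$ trivial), every inner form of $G_z$ arises as such a $({}_{\beta} G')_z$.

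Consequently, bijectivity of either map for every inner form $G'$ is equivalent to condition $(\ast)$, which shows that (3) is equivalent to (3'). Moreover, since $(\ast)$ makes no reference to $Z$, condition (3') is independent of the choice of $Z$, yielding the ``any (equivalently every)'' clause. The main point requiring care is the twisting argument for gerbe cohomology in the \v{C}ech-cohomological setup, but this proceeds exactly as in its $F$-cohomological analogue, with the $Z$-bandings behaving consistently because $Z$ is central in each relevant twist.
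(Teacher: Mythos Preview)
Your proof is correct and follows essentially the same approach as the paper's: both reduce the question to surjectivity of $G''_z(F) \to T(F)$ for every inner form $G''$, using the long exact sequence and the standard twisting argument. Your version is more explicit about the quantification over all inner forms in the twisting step (and adds the useful preliminary remark that $Z(G) \subset Z(G_z)$), whereas the paper compresses this into a single sentence.
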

\begin{proof}
Condition $(2)$ implies that $H^1(F, G') \to H^1(F, G'_z)$ is surjective, as is $H^{1}(\mathcal{E}, Z \to G') \to H^{1}(\mathcal{E}, Z \to G'_{z})$ for each $Z$. Indeed, the latter statement follows from the exact sequence
\begin{equation}{\label{gerbeLES}}
G'_z(F) \to (G/G_z)(F) \to H^{1}(\mathcal{E}, Z \to G') \to H^{1}(\mathcal{E}, Z \to G'_{z}) \to H^{1}(F, G/G_z).
\end{equation}

Hence, we just need to show that the injectivity of $H^{1}(\mathcal{E}, Z \to G') \to H^{1}(\mathcal{E}, Z \to G'_{z})$ is equivalent to that of $H^1(F,G') \to H^1(F, G'_z)$. But by equation \eqref{gerbeLES} (and a standard twisting argument) these are both equivalent to the surjectivity of $G'_z(F) \to (G/G_z)(F)$. 
\end{proof}
It is shown in \cite[\S 5]{kaletha18} that any $z$-embedding is a weak $z$-embedding. 

\begin{remark}\label{inducedremark} Definition \ref{weakdef} could be strengthened to insist that: \begin{itemize}
\item{$G_{z}/G$ is an induced torus (this will be the case for all of the embeddings in this paper used to construct the rigid refined LLC, but there will be other situations, for example when we need to interpolate between different systems of embeddings in Proposition \ref{canonLLC}, where this property does not hold);}
\item{$Z(G_{z})$ is connected; this will be the case for our embeddings until we deal with weak $z$-embeddings of endoscopic groups (in \S \ref{Endoscopy}).} 
\end{itemize}
\end{remark}

It will be useful to consider projective systems of weak $z$-embeddings (they will play the role of honest $z$-embeddings in the function field setting). Before constructing such a system, we follow \cite[Proposition 5.2]{kaletha18} to define a ``base weak $z$-embedding" that will serve as the blueprint for the system. The idea to use such an embedding is originally due to Kottwitz.

Let $G$ be a fixed connected reductive group. Let $(T_0, K)$ be such that
    \begin{enumerate}
        \item $T_0$ is an $F$-torus equipped with an embedding $Z(G) \to T_{0}$ defined over $F$
        \item $K/F$ is a finite Galois extension containing the splitting field $K_0$ of $T_0$.
    \end{enumerate}
We use the following notations:
\begin{itemize}
\item $C_0 = T_0/Z(G)$;
\item $C= \Res_{K/F}(C_{0,K})$;
\item $T=C \times_{C_0} T_0$.
\end{itemize}
 Note that by construction, we have a short exact sequence
 \begin{equation*}
     0 \rightarrow Z(G) \rightarrow T \to C \rightarrow 0.
 \end{equation*}
 Indeed, $Z(G)$ embeds into $T$ via the $T_0$ term in the product and when we quotient by $Z(G)$, we are left with $C \times_{C_0} C_0=C$. Then $H^{1}(F, T/Z(G)) = H^{1}(F, C) = 0$, since $C$ is an induced torus. Moreover, the connecting homomorphism $C(F) \to H^{1}(F, Z(G))$ factors through the norm map $C(F) \to C_{0}(F)$; to see this, apply the functoriality of the long exact sequence in group cohomology to the commutative diagram
 \[
 \begin{tikzcd}
0 \arrow{r} & Z(G) \arrow{r} \arrow["\tn{id}"]{d} & T \arrow{d} \arrow{r} & C \arrow["N_{K/K_{0}}"]{d} \arrow{r} & 0 \\
0 \arrow{r} & Z(G) \arrow{r} & T_{0} \arrow{r} & C_{0} \arrow{r} & 0,
 \end{tikzcd}
 \]
 where by $N_{K/K_{0}}$ we mean the map induced by (a product of) the field norm map after fixing a $K_{0}$-splitting of $T_{0}$ which determines $K_{0}$-splittings of $C_{0}$ and $C$.

\begin{definition}{\label{weakembeddingpair}}
Let $(T_0, K)$ be a pair as in the above paragraph. We say that it is a \mathdef{weak embedding pair} if the composition
 \begin{equation*}
(K^{\times})^{n} \xrightarrow{\sim} C(F) \xrightarrow{N_{K/F}} C_0(F) \rightarrow  H^{1}(F, Z(G)) \rightarrow H^{1}(F, G'),
 \end{equation*}
 is trivial for every inner twist $G'$ of $G$ (where the left-hand map is given by our choice of splitting and $n = \mathrm{rk}(C)$).
\end{definition}
\begin{remark}
Given any $T_{0}$ as above it is easy to find a $K$ such that $(T_{0},K)$ is a weak embedding pair (and, by the above discussion, so is any $(T_{0}, K')$ for $K \subset K'$), as follows. First, note that it's enough to show this for any fixed inner twist $G'$, since if we have such a pair for each twist we can take the compositum of the fields (there are finitely many, since $H^{1}(F, G_{\text{ad}})$ is finite). Choosing a maximal torus $S$ of $G'$, we note that the connecting homomorphism $C_{0}(F) \to H^{1}(F, G')$ factors as
\begin{equation}
C_{0}(F) \to H^{1}(F, Z(G)) \to H^{1}(F, S) 
\to H^{1}(F, G'),
\end{equation}
where now the connecting homomorphism to $H^{1}(F, S)$ is a group homomorphism. After splitting the tori over $K_{0}$, the relevant map is identified with the composition 
\begin{equation}\label{weakpairexists}
(K^{\times})^{n} \xrightarrow{N_{K/K_{0}}} (K_{0}^{\times})^{n} \to H^{1}(F, S) \to H^{1}(F, G').
\end{equation}

From here, the desired result follows from the fact that $H^{1}(F,S)$ is finite, so that each of the $n$ maps $K_{0}^{\times} \to H^{1}(F, S)$ has open kernel, and hence by local class field theory we may choose sufficiently large finite $K$ such that the image of the first map in \eqref{weakpairexists} lands in the kernel of the second.

\end{remark}
\begin{construction}{\label{weakexist}}
Let $G$ be a connected reductive group and $(T_0, K)$ a weak embedding pair. There exists a natural weak $z$-embedding $G \hookrightarrow G_z$ associated to $(T_0,K)$.
\end{construction}
\begin{proof}
We define $G_{z}$ to be the push-out of the maps $Z(G) \to G$ and $Z(G) \to T$. It is shown in the proof of \cite[Corollary 5.3]{kaletha18} that the induced maps $G \to G_{z}$ and $T \to G_{z}$ are injective and the latter identifies $T$ with $Z(G_{z})$, and that $\mathrm{Coker}(Z(G) \to T) = C$. 

To prove that $G \to G_{z}$ is a weak $z$-embedding,  we need only show that the induced map
\begin{equation*}
    H^{1}(F, G') \to H^{1}(F, G'_{z}),
\end{equation*}
is an isomorphism for each inner twist $G'$ of $G$. It is automatically surjective since by construction $G'_{z}/G' = C$, which is an induced torus. The exact sequence \eqref{gerbeLES} (and twisting) gives injectivity, since the connecting homomorphism factors through the composition 
\begin{equation*}
C(F) \to H^{1}(F, Z(G')) \to H^{1}(F, G'),
\end{equation*}
which by construction has trivial image.
\end{proof}

We can now construct our projective system of weak $z$-embeddings.

\begin{proposition}\label{weaksystemexist}
Fix an $F$-torus $T_0$. Let $\{K_i\}_{i \geq 1}$ be a cofinal system of finite Galois extensions of $F$ such that each $(T_0, K_i)$ is a weak embedding pair (find an initial $K_{1}$ such that $(T_{0}, K_{1})$ is such a pair and then take any such system of extensions which all contain $K_{1}$). Each pair $(T_0, K_i)$ gives a weak $z$-embedding $G \rightarrow G^{(i)}_z$ as in Construction \ref{weakexist} and  each map $K_i \rightarrow K_j$ induces a map $G^{(j)}_z \to G^{(i)}_z$ that is a central extension by a torus. 
\end{proposition}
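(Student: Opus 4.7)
The plan is to build each transition map $G_z^{(j)} \to G_z^{(i)}$ (for $K_i \subset K_j$) out of a norm map between the auxiliary tori $C_j$ and $C_i$, and then read off the centrality and torus structure of the kernel directly from the pushout description of $G_z^{(\bullet)}$. First, using the factorization $\Res_{K_j/F} = \Res_{K_i/F} \circ \Res_{K_j/K_i}$, the norm $\Res_{K_j/K_i}(C_{0,K_j}) \to C_{0,K_i}$ Weil-restricts to a morphism $\nu_{ji}\colon C_j \to C_i$. Transitivity of norms $N_{K_j/F} = N_{K_i/F} \circ \nu_{ji}$ ensures compatibility of $\nu_{ji}$ with the projections to $C_0$ appearing in the defining fiber product of $T_\bullet$, so it lifts to a morphism $T_j \to T_i$, explicitly $(c,t) \mapsto (\nu_{ji}(c), t)$. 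Since $Z(G) \hookrightarrow T_\bullet$ is $z \mapsto (1, z)$ into the $T_0$-component, this map restricts to the identity on $Z(G)$, and hence descends via the pushout description $G_z^{(\bullet)} = G \times^{Z(G)} T_\bullet$ to a well-defined $F$-morphism $G_z^{(j)} \to G_z^{(i)}$.

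Second, I would verify the central-extension structure by a direct pushout computation. The kernel of $G_z^{(j)} \to G_z^{(i)}$ identifies with $\ker(\nu_{ji}) \subset T_j$: using the defining $Z(G)$-relation, any element in the kernel can be normalized to have trivial $G$-component, after which triviality in the pushout is equivalent to the vanishing of its $T_j$-component under the induced map, which in turn reduces to $\nu_{ji}(c)=1$. Centrality is automatic because $Z(G_z^{(j)}) = T_j$ by Construction \ref{weakexist}. For the torus claim, I would observe that $\nu_{ji}$ is the Weil restriction of a norm between induced tori, whose kernel is a classical norm-one torus; its character lattice is the torsion-free cokernel of the natural inclusion of permutation lattices $\mathbb{Z}[\Gamma/\Gamma_{K_i}]^{\oplus n} \hookrightarrow \mathbb{Z}[\Gamma/\Gamma_{K_j}]^{\oplus n}$. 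Surjectivity of $G_z^{(j)} \to G_z^{(i)}$ as algebraic groups reduces to that of $\nu_{ji}$, which is transparent on $\overline{F}$-points, where the map becomes the product-over-fibers homomorphism $C_0(\overline{F})^{[K_j:F]} \to C_0(\overline{F})^{[K_i:F]}$.

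Third, compatibility of the system along a chain $K_i \subset K_j \subset K_l$ reduces to the transitivity $\nu_{li} = \nu_{ji} \circ \nu_{lj}$ of Weil-restricted norms, which passes through the pushout construction unchanged. The claim that each $G \to G_z^{(i)}$ is a weak $z$-embedding is already contained in Construction \ref{weakexist}, so I can invoke it directly.

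The main obstacle is confirming that the kernel of $\nu_{ji}$ is an honest $F$-torus rather than merely a group of multiplicative type with nontrivial component group; this rests on the torsion-freeness of the cokernel of the norm map between the relevant permutation lattices, which is standard for induced $\Gamma$-modules. Once that is in hand the rest of the argument is structural, amounting to careful bookkeeping of norms and the universal property of pushouts.
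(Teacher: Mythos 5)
Your proposal is correct and follows essentially the same route as the paper: the transition map is defined via the Weil-restricted norm $N_{K_j/K_i}\colon C^{(j)} \to C^{(i)}$, descended through the fiber product defining $T^{(\bullet)}$ and the pushout defining $G_z^{(\bullet)}$, and the kernel is identified with the norm-one subtorus of $T^{(j)}$. You simply spell out the verifications (transitivity of norms guaranteeing compatibility with the $C_0$-projection, torsion-freeness of the cokernel of permutation lattices, transitivity of the system) that the paper compresses to ``it's easy to see.''
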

The above Proposition gives a projective system  $\{G \to G^{(i)}_z\}$ of weak $z$-embeddings of $G$ (sometimes we just write $\{G^{(i)}_z\}$ if the embedding of $G$ is understood).
\begin{proof}
    For the pair $(T_0, K_i)$, we adopt the notation $C^{(i)}$, $T^{(i)}$, and $G^{(i)}_z$ for the objects $C$, $T$, and $G_z$ associated to this pair. 

    We now construct, for $K_i \rightarrow K_j$ a morphism 
    \begin{equation*}
        G^{(j)}_z \rightarrow G^{(i)}_z.
    \end{equation*}
   It suffices to construct a map $C^{(j)} \rightarrow C^{(i)}$, which we take to be $N_{K_j/K_i}$. Note that this is compatible with the embeddings of $G$, since by construction $G$ maps into $G_{z}^{(i)} = G \times^{Z(G)} T^{(i)}$ by $\tn{id} \times \tn{1}$ and our transition map is defined on the second factor. 

   The kernel of $G^{(j)}_z \rightarrow G^{(i)}_z$ is canonically identified with $\tn{Ker}[T^{(i+1)} \to T^{(i)}]$ and hence is a central extension by a torus (it's easy to see that this aforementioned kernel is the subtorus of elements killed by the $K_{i+1}/K_{i}$-norm). 
\end{proof}

Consider the short exact sequence
\begin{equation}\label{modifiedSES1}
    0 \to Z(G) \to G' \times Z((G')_{z}^{(i)}) \to (G')_{z}^{(i)} \to 0.
\end{equation}
A crucial property of the system $\{(G')_{z}^{(i)}\}$ is:

\begin{lemma}\label{shrinkingimage} The intersection across all $j$ of the image of the connecting homomorphisms $(G')_{z}^{(j)}(F) \to H^{1}(F, Z(G))$ induced by the short exact sequence \eqref{modifiedSES1} is zero. 
\end{lemma}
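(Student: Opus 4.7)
The plan is to reduce, via the weak embedding pair condition, to showing that $H^1(F, Z(G)) \to \varprojlim_j H^1(F, T^{(j)})$ is injective, and then to establish this by combining local Tate duality with Shapiro's lemma.

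For the reduction, I would first identify $\mathrm{Im}(\delta_j)$ with $\ker\!\bigl(H^1(F, Z(G)) \to H^1(F, T^{(j)})\bigr)$, where $T^{(j)} = Z((G')_z^{(j)})$ and $\delta_j$ denotes the connecting map of \eqref{modifiedSES1}. The canonical projection $(G')_z^{(j)}(F) \to C^{(j)}(F)$ is surjective: by functoriality of long exact sequences, the connecting map $C^{(j)}(F) \to H^1(F, G')$ arising from $0 \to G' \to (G')_z^{(j)} \to C^{(j)} \to 0$ equals the composite $C^{(j)}(F) \to H^1(F, Z(G)) \to H^1(F, G')$, which, using the norm factorization of \S \ref{ConstructionEmbeddings}, reads as $C^{(j)}(F) \to C_0(F) \to H^1(F, Z(G)) \to H^1(F, G')$ and thus vanishes by Definition \ref{weakembeddingpair}. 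Comparing \eqref{modifiedSES1} with the simpler SES $0 \to Z(G) \to T^{(j)} \to C^{(j)} \to 0$ via the sign-twisted map $(g, t) \mapsto t^{-1}$ on middle terms (compatible with the antidiagonal embedding $Z(G) \hookrightarrow G' \times T^{(j)}$) then identifies $\mathrm{Im}(\delta_j)$ with the image of the connecting map $C^{(j)}(F) \to H^1(F, Z(G))$ for the latter sequence, which by exactness equals the claimed kernel.

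The remaining injectivity I would prove by local Tate duality for multiplicative groups of finite type over $F$ (classical in characteristic zero; available in the needed fppf generality through \S \ref{Tateduality} and Appendix \ref{Braided} of the paper, cf.\ also \cite[\S III.6]{Milne06}), which perfectly pairs the compact group $H^1(F, Z(G))$ with the discrete group $H^1(F, \widehat{Z(G)})$. Under this pairing, $\ker\!\bigl(H^1(F, Z(G)) \to H^1(F, T^{(j)})\bigr)$ is the annihilator of $\mathrm{Im}\!\bigl(H^1(F, \widehat{T^{(j)}}) \to H^1(F, \widehat{Z(G)})\bigr)$, so the injectivity reduces to the assertion
\[\bigcup_{j} \mathrm{Im}\bigl(H^1(F, \widehat{T^{(j)}}) \to H^1(F, \widehat{Z(G)})\bigr) = H^1(F, \widehat{Z(G)}).\]

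For any class $\eta \in H^1(F, \widehat{Z(G)})$, the obstruction to lifting to $H^1(F, \widehat{T^{(j)}})$ lies in $H^2(F, \widehat{C^{(j)}})$; by Shapiro's lemma (applied to $C^{(j)} = \mathrm{Res}_{K_j/F}(C_{0, K_j})$) this obstruction group is $H^2(K_j, \widehat{C_0})$, and the obstruction itself equals $\delta_{K_j}(\eta|_{K_j})$ for the $K_j$-connecting map of $0 \to \widehat{C_0} \to \widehat{T_0} \to \widehat{Z(G)} \to 0$. Since $\eta$ is a continuous cocycle valued in the finitely generated Galois module $\widehat{Z(G)}$, it factors through a finite quotient of $\Gamma_F$, so taking $K_j$ large enough forces $\eta|_{K_j} = 0$ and kills the obstruction. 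The main obstacle is securing local Tate duality in the precise fppf generality demanded by $Z(G)$ (which may have infinitesimal components in characteristic $p$), but this falls within the cohomological infrastructure developed in the paper.
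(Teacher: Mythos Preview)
Your argument is correct and, after the first step, takes a genuinely different route from the paper's.

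Both you and the paper begin by factoring the connecting map of \eqref{modifiedSES1} through the connecting map $C^{(j)}(F)\to H^1(F,Z(G))$ attached to $0\to Z(G)\to T^{(j)}\to C^{(j)}\to 0$: the paper does this via the diagram with middle row $0\to Z(G)\to Z(G_z^{(j)})\to C^{(j)}\to 0$, while your sign-twisted comparison $(g,t)\mapsto t^{-1}$ accomplishes the same thing. At that point the paper simply asserts that the intersection of the images of these connecting maps is zero ``by construction of our system'', pointing back to the norm factorization through $C_0(F)$ and the cofinality of $\{K_j\}$, without spelling out why this forces the intersection to vanish.

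You instead supply a complete argument for that step: rewriting each image as $\ker\bigl(H^1(F,Z(G))\to H^1(F,T^{(j)})\bigr)$, dualising via local Tate duality (so that the claim becomes $\bigcup_j\mathrm{Im}\bigl(H^1(F,X^*(T^{(j)}))\to H^1(F,X^*(Z(G)))\bigr)=H^1(F,X^*(Z(G)))$), and then using Shapiro's lemma on $C^{(j)}=\mathrm{Res}_{K_j/F}(C_{0,K_j})$ to identify the obstruction to lifting a class $\eta$ with the image of $\eta|_{K_j}$, which vanishes once $K_j$ is large enough for $\eta$ to be inflated from $\Gamma_{K_j/F}$. This is clean and self-contained; it makes explicit what the paper leaves implicit and avoids any delicate analysis of how norm groups in $C_0(F)$ interact with the (generally infinite) group $H^1(F,Z(G))$. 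One small correction: the sections you cite (\S\ref{Tateduality} and Appendix~\ref{Braided}) do not establish the Tate duality you invoke---they treat $H^1(\mathcal{E},Z)$ and crossed modules respectively---but the pairing you need for $H^1(F,Z(G))$ with $Z(G)$ of multiplicative type is standard fppf local duality, and your reference to \cite[\S III.6]{Milne06} is the apt one.
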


\begin{proof} To simplify notation we work with $G$ instead of $G'$, since the argument is the same. 
The connecting homomorphism $G_{z}^{(j)}(F) \to H^{1}(F, Z(G))$ factors as the composition $G_{z}^{(j)}(F) \to C^{(j)}(F) \xrightarrow{\delta} H^{1}(F, Z(G))$ due to the commutative diagram
\[
\begin{tikzcd}
0 \arrow{r} & Z(G) \arrow["\Delta"]{r} \arrow["\tn{id}"]{d} & G \times  Z(G_{z}^{(j)}) \arrow{d} \arrow{r} & G_{z}^{(j)} \arrow{d} \arrow{r} & 1 \\
0 \arrow{r} & Z(G) \arrow{r} & Z(G_{z}^{(j)}) \arrow{r} & C^{(j)} \arrow{r} & 0
\end{tikzcd}
\]
and so we deduce the result from the fact that the intersection of the images of the connecting homomorphisms $C^{(j)}(F) \to H^{1}(F, Z(G))$ is zero, by construction of our system $\{G_{z}^{(j)}\}$ (cf. the proof of Construction \ref{weakexist}).
\end{proof}

The above result motivates the following definition:

\begin{definition}\label{goodsysdef} Let $H$ be a connected reductive group, and suppose we have a projective system of compatible weak $z$-embeddings $\{H \to H_{z}^{(i)}\}_{i \geq 0}$. We say that it is a \mathdef{good $z$-embedding system} if:
\begin{enumerate}
\item{Each transition map $H_{z}^{(i+1)} \to H_{z}^{(i)}$ is surjective (note that the compatibility condition on the system already implies that the kernel of each transition map is central);}
\item{The limit connecting homomorphism
\begin{equation*}
   \varprojlim_{i} C^{(j)}(F) \to H^{1}(F, Z(H)) 
\end{equation*}
 is trivial.}
\end{enumerate}
\end{definition}

In particular, any system of weak $z$-embeddings constructed using Proposition \ref{weaksystemexist} is a good $z$-embedding system, and any good $z$-embedding system satisfies Lemma \ref{shrinkingimage}. 

\begin{remark}\label{shrinkingimagebis} As in the discussion preceding Definition \ref{weakembeddingpair}, each connecting homomorphism $C^{(j)}(F) \to H^{1}(F, Z(H))$ factors through the composition $C^{(j)}(F) \to C^{(i)}(F) \to H^{1}(F, Z(H))$ for any $i \leq j$ due to the commutative diagram
 \begin{equation}\label{connhomfacdiag}
 \begin{tikzcd}
0 \arrow{r} & Z(H) \arrow{r} \arrow["\tn{id}"]{d} & Z(H_{z}^{(j)}) \arrow{d} \arrow{r} & C^{(j)} \arrow{d} \arrow{r} & 0 \\
0 \arrow{r} & Z(H) \arrow{r} & Z(H_{z}^{(i)}) \arrow{r} & C^{(i)} \arrow{r} & 0,
 \end{tikzcd}
 \end{equation}
 and functoriality of the long exact sequence in cohomology. This implies that, for any fixed $i$, the intersection over all $j \geq i$ of the image of $C^{(j)}(F)$ in $C^{(i)}(F)$ is contained in the image of $Z(H_{z}^{(i)})(F)$.
\end{remark}





\subsection{Dual constructions}\label{ConstructionDual}
Recall that $W_{F}$ denotes the Weil group of $F$ and $L_{F} = W_{F} \times \SL_{2}(\mathbb{C})$ the Weil-Deligne group. For an arbitrary weak $z$-embedding $G \to G_{z}$ we have the dual exact sequence of groups
\begin{equation}\label{dualSES}
1 \to \widehat{C} \to \widehat{G_{z}} \to \widehat{G} \to 1.
\end{equation}

Denote by $\Psi(G)$ the set of $\widehat{G}$-conjugacy classes of $L$-parameters $L_{F} \to \co{L}{G}$, which we can also view as a subset of $H^{1}(L_{F}, \widehat{G})$ via the projection $\co{L}{G}\to \widehat{G}$. If $\varphi_{z}$ is an $L$-parameter for $\prescript{L}{}G_{z}$ then the map from \eqref{dualSES} gives an $L$-parameter $\varphi$ for $^{L}G$; we'll say in this case that $\varphi_{z}$ \textsf{lifts} $\varphi$ and write $\varphi_{z} \mapsto \varphi$.

The following result says that we can always lift $L$-parameters to weak $z$-embeddings:

\begin{proposition}\label{liftparam} For any $\varphi \in \Psi(G)$ there is a $\varphi_{z} \in \Psi(G_{z})$ with $\varphi_{z} \mapsto \varphi$. Moreover, if $\varphi(W_{F})$ is bounded then we can also choose $\varphi_{z}$ such that $\varphi_{z}(W_{F})$ is bounded.
\end{proposition}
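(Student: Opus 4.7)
The dual sequence \eqref{dualSES} promotes to a central extension of topological groups
\[
1 \to \widehat{C} \to \co{L}{G_z} \to \co{L}{G} \to 1,
\]
and constructing $\varphi_z$ amounts to lifting the continuous homomorphism $\varphi \colon L_F \to \co{L}{G}$ through this surjection. The plan is to treat the two factors of $L_F = W_F \times \SL_2(\bbC)$ separately. Because $\SL_2(\bbC)$ is simply connected and perfect while $\widehat{C}$ is a connected complex torus, the algebraic homomorphism $\varphi|_{\SL_2(\bbC)} \colon \SL_2(\bbC) \to \widehat{G}$ lifts uniquely to an algebraic homomorphism $\SL_2(\bbC) \to \widehat{G_z}$, and this lift will commute with any continuous lift of $\varphi|_{W_F}$ since $\widehat{C}$ is central in $\widehat{G_z}$. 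Thus the essential content is the lifting of $\varphi|_{W_F}$.

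The obstruction to lifting the continuous $1$-cocycle $\varphi|_{W_F} \colon W_F \to \co{L}{G}$ through the central extension above is a class in $H^2_{\tn{cts}}(W_F, \widehat{C})$, where $W_F$ acts on $\widehat{C}$ through its natural finite quotient. The crux of the argument is therefore the vanishing
\[
H^2_{\tn{cts}}(W_F, \widehat{C}) = 0
\]
for $\widehat{C}$ Langlands-dual to an $F$-torus. Over $p$-adic $F$ this is classical, but in equal characteristic it must be extracted from the \v{C}ech-cohomological duality developed in Appendix \ref{Braided}; I expect this to be the main obstacle.

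For the boundedness refinement, start with any lift $\varphi_z$ produced above and use the canonical $W_F$-equivariant decomposition $\widehat{C} \cong K_C \times A_C$ of the complex torus into its maximal compact subgroup $K_C$ and a real vector group $A_C$ (this splitting is preserved by $W_F$ because the action is through a finite group of algebraic automorphisms). The image $\varphi_z(W_F)$ projects to the bounded set $\varphi(W_F) \subset \co{L}{G}$, so the failure of $\varphi_z$ to be bounded is recorded by a class $[a_0] \in H^1_{\tn{cts}}(W_F, A_C)$ extracted from the $A_C$-component of the defect. The retraction $\widehat{C} \twoheadrightarrow A_C$ induces a surjection $H^1_{\tn{cts}}(W_F, \widehat{C}) \twoheadrightarrow H^1_{\tn{cts}}(W_F, A_C)$, so one can twist $\varphi_z$ by a cocycle in $\widehat{C}$ whose $A_C$-component represents $-[a_0]$. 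Since $W_F$ acts on $A_C$ through a finite quotient, the resulting coboundary in $A_C$ has finite image, and the twisted lift is then bounded in $\co{L}{G_z}$.
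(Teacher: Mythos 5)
Your decomposition into a $W_F$-step and an $\SL_2$-step is the same shape as the paper's, but the content of both steps is genuinely different, and the $W_F$-step has a real gap.

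The $\SL_2$-step in your proposal is in fact correct, and arguably cleaner than the paper's. The pullback $Q := \widehat{G_z}\times_{\widehat{G}}\SL_2(\bbC)$ along $\varphi|_{\SL_2}$ is a central extension of $\SL_2(\bbC)$ by the torus $\widehat{C}$, which splits uniquely because $\SL_2$ is simply connected and perfect; and the defect of the resulting lift $s$ failing to centralize $\varphi_{0,z}(W_F)$ (for any lift $\varphi_{0,z}$) is, because $\widehat{C}$ is central in $\widehat{G_z}$, a family of algebraic group homomorphisms $\SL_2(\bbC)\to\widehat{C}$, which all vanish by perfection. This neatly sidesteps the paper's more involved argument that the connecting map $S_{\varphi_0}\to H^1(L_F,\widehat{C})$ kills $S_{\varphi_0}^{\circ}$ (via a discreteness argument using the profiniteness of $\overline{C(F)}$) and that $(S_{\varphi_{0,z}}^{\circ})_{\tn{der}}\to(S_{\varphi_0}^{\circ})_{\tn{der}}$ is therefore an isogeny.

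The genuine gap is precisely where you flag it: the $W_F$-lifting step. You reduce everything to the vanishing $H^2_{\tn{cts}}(W_F,\widehat{C})=0$, but do not prove it, and it is not one of the outputs of Appendix \ref{Braided}. The paper's proof is arranged specifically to \emph{avoid} needing that vanishing. It works instead with the crossed-module cohomology $H^1(W_F,\widehat{G}_{\tn{sc}}\to\widehat{G})$ and the duality isomorphism $H^1(W_F,\widehat{G}_{\tn{sc}}\to\widehat{G})\cong Z(G)(F)^D$ of Proposition \ref{appdual1}; the lift of $\varphi_0$ is then produced from the surjectivity of restriction $Z(G_z)(F)^D\twoheadrightarrow Z(G)(F)^D$ (a soft fact, since $Z(G)\hookrightarrow Z(G_z)$ is a closed embedding of locally compact abelian groups), followed by a diagram chase as in Kaletha's Corollary 5.13. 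This route never touches $H^2$ of $W_F$ with torus coefficients. Your obstruction-theoretic route would also work if one had the Labesse-type vanishing theorem in hand for equal-characteristic local fields, but you would need to supply that input; it is exactly the kind of statement the paper's Appendix was written to replace. Relatedly, your boundedness argument via the $W_F$-equivariant splitting $\widehat{C}\cong K_C\times A_C$ is plausible in outline but is also different in kind from the paper's, which deduces boundedness from the characterization of unitary characters inside $Z(G_z)(F)^D$ under the same duality; as written, your twisting step needs more care about what ``$A_C$-component of the defect'' means intrinsically for an $L$-parameter rather than for a $1$-cocycle measuring the difference of two lifts.
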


\begin{proof} The proof will adapt the argument used to prove \cite[Corollary 5.13]{kaletha18}. For $\varphi \in \Psi(G)$ denote by $\varphi_{0}$ its restriction to $W_{F}$. We first lift $\varphi_{0}$ to semisimple $\varphi_{0,z}$ such that $\varphi_{0,z}(W_{F})$ is bounded when $\varphi_{0}(W_{F})$ is. This part of the argument is unchanged from the pseudo $z$-embedding case (which is the setting of \cite[Corollary 5.13]{kaletha18}), using the duality results for crossed modules from Appendix \ref{Braided}. 

The long exact sequence of $W_{F}$-modules for the short exact sequence \eqref{dualSES} gives the commutative diagram with exact rows
\[
\begin{tikzcd}
H^{1}(W_{F}, \widehat{C}) \arrow{r} \arrow["\text{id}"]{d} & H^{1}(W_{F}, \widehat{G_{z}}) \arrow{r} \arrow{d} & H^{1}(W_{F}, \widehat{G}) \arrow{r} \arrow{d} & H^{2}(W_{F}, \widehat{C}) \arrow["\text{id}"]{d} \\
H^{1}(W_{F}, \widehat{C}) \arrow{r} & H^{1}(W_{F}, \widehat{G}_{\text{sc}} \to \widehat{G_{z}}) \arrow{r} &  H^{1}(W_{F}, \widehat{G}_{\text{sc}} \to \widehat{G}) \arrow{r} & H^{2}(W_{F}, \widehat{C}),
\end{tikzcd}
\] 
where $H^{1}(\Gamma', H' \to H)$ denotes the cohomology of the $\Gamma'$-braided crossed module $H' \to H$ (see \S \ref{BraidedCrossed} for an exposition of such objects) for a (topological) group $\Gamma'$. To extend $\varphi_{0}$ to $\varphi_{0,z}$, it's enough to find an element $\varphi' \in H^{1}(W_{F}, \widehat{G}_{\text{sc}} \to \widehat{G_{z}})$ whose image in $H^{1}(W_{F}, \widehat{G}_{\text{sc}} \to \widehat{G})$ coincides with the image of $\varphi_{0}$.

It follows from Proposition \ref{appdual1} that $H^{1}(W_{F}, \widehat{G}_{\text{sc}} \to \widehat{G})$ is canonically isomorphic to $Z(G)(F)^{D}$. As explained in the proof of \cite[Corollary 5.13]{kaletha18}, the unitary characters correspond to the elements of $H^{1}(W_{F}, \widehat{G}_{\text{sc}} \to \widehat{G})$ whose image in $H^{1}(W_{F}, \text{Coker}(\widehat{G}_{\text{sc}} \to \widehat{G}))$ is bounded. From here, the identical argument loc. cit. produces such an element $\varphi'$ and shows that the corresponding parameter $\varphi_{0,z}$ has bounded image when $\varphi_{0}$ does. 

We now want to extend $\varphi_{0,z}$ to an admissible cocycle of $L_{F}$. The restriction of $\varphi$ to $\SL_{2}$ is a homomorphism of algebraic groups 
\begin{equation*}
    \SL_{2} \to  (S_{\varphi}^{\circ})_{\text{der}},
\end{equation*}
and we want to lift this to an algebraic homomorphism
\begin{equation*}
    \SL_{2} \to (S_{\varphi_{0,z}}^{\circ})_{\text{der}}.
\end{equation*}
To find such a lift, it is enough to show that the natural map $(S_{\varphi_{0,z}}^{\circ})_{\tn{der}} \to (S_{\varphi_{0}}^{\circ})_{\tn{der}} $ is surjective; indeed, this means that it's an isogeny, and so the morphism lifts because $\SL_{2}$ is simply connected (see, e.g., \cite[Proposition 9.3.2]{Conrad1}).

Via taking the long exact sequence for \eqref{dualSES} associated to the $\varphi_{0,z}$-twisted $L_{F}$-action we obtain 
\begin{equation}\label{twistLES}
    0 \to \widehat{C}^{\Gamma} \to S_{\varphi_{0,z}} \to S_{\varphi_{0}} \to H^{1}(L_{F}, \widehat{C}) \to H^{1}(L_{F}, \varphi_{0,z}, \widehat{G_{z}}).
\end{equation}
Note that the last degree-zero map need not be surjective, so this is where the argument in the proof of the analogous result in \cite{kaletha18} does not work in our situation. However, we have the chain of canonical identifications 
\begin{equation*}
    H^{1}(L_{F}, \widehat{C}) = H^{1}(W_{F}, \widehat{C}) \xrightarrow{\sim} H^{1}(W_{F}, 1 \to \widehat{C}) \xrightarrow{\sim} C(F)^{D}. 
\end{equation*}
Moreover, the group $C(F)^{D}$ has a natural topology and the map induced by the above identification and the connecting homomorphism 
\begin{equation}\label{SL2conn}
    S_{\varphi_{0}} \to H^{1}(L_{F}, \widehat{C}) \xrightarrow{\sim} C(F)^{D}
    \end{equation}
is continuous (the group $H^{1}(W_{F}, \widehat{C})$ has a natural topology making the connecting homomorphism continuous, cf. \cite[\S A.2]{Dillery2}, and the identification of this group with $C(F)^{D}$ is a homeomorphism). 

We note that anything in the image of \eqref{SL2conn} is trivial on the image of $Z(G_{z})(F)$ in $C(F)$, using the commutative diagram
\begin{equation}\label{dualdiagramS3}
\begin{tikzcd}
H^{1}(L_{F}, \varphi_{0,z}, \widehat{G_{z}}) \to \Psi(G_{z}) \arrow{r} & H^{1}(W_{F}, \widehat{G}_{\text{sc}} \to \widehat{G_{z}}) \arrow["\sim"]{r} & Z(G_{z})(F)^{D} \\
H^{1}(W_{F}, \widehat{C}) \arrow{u} \arrow["\sim"]{r} & H^{1}(W_{F}, 1 \to \widehat{C}) \arrow{u} \arrow["\sim"]{r} & C(F)^{D} \arrow{u} \\
S_{\varphi_{0}}; \arrow{u} & & 
\end{tikzcd}
\end{equation}
the top-right isomorphism is again from Proposition \ref{appdual1}, the first column is from the long exact sequence \eqref{twistLES}, and the top-left map is the composition 
\begin{equation*}
    H^{1}(L_{F}, \varphi_{0,z}, \widehat{G_{z}}) \xrightarrow{\sim}  H^{1}(L_{F},\widehat{G_{z}}) \to \Psi(G_{z}), 
\end{equation*}
where the first map is the standard twisting identification from non-abelian cohomology and the second map sends a cocycle to the associated parameter. 

It follows that any character in the image \eqref{SL2conn} factors through the following quotient of $C(F)$:
\begin{equation}\label{characterquot}
  \overline{C(F)} := \frac{ C(F)}{\text{im}[Z(G_{z})(F) \to C(F)]} \hookrightarrow H^{1}(F, Z(G)),
\end{equation}
which, in view of \eqref{characterquot}, is profinite (since  $H^{1}(F, Z(G))$ is and the inclusion in \eqref{characterquot} is a closed immersion). 

We deduce from the above discussion that the map \eqref{SL2conn} factors through the subgroup
\begin{equation*}
    \overline{C(F)}^{D} \hookrightarrow C(F)^{D},
\end{equation*}
and that $\overline{C(F)}^{D}$ is the Pontryagin dual of a profinite group, and is therefore totally disconnected. 

It follows that the image of $S_{\varphi_{0}}^{\circ}$ in $H^{1}(W_{F}, \widehat{C})$ is trivial, and so it lies in the image of $S_{\varphi_{0,z}}$. Since $\widehat{C}^{\Gamma}$ is connected (because $H^{1}(F, C) = 1$, so that $\pi_{0}(\widehat{C}^{\Gamma})$ is trivial by Kottwitz's duality result \cite{Kottwitz86}), we see that the induced map $\pi_{0}(S_{\varphi_{0,z}}) \to \pi_{0}(S_{\varphi_{0}})$ is injective, and therefore $S_{\varphi_{0,z}}^{\circ}$ is the preimage of $S_{\varphi}^{\circ}$ and the natural map $S_{\varphi_{0,z}}^{\circ} \to S_{\varphi}^{\circ}$ is surjective as well. Because this map restricts to a morphism $Z(S_{\varphi_{0,z}}^{\circ}) \to Z(S_{\varphi}^{\circ})$ it remains surjective after passing to derived groups (the map $(S_{\varphi_{0,z}}^{\circ})_{\tn{der}} \times Z(S_{\varphi_{0,z}}^{\circ}) \to S_{\varphi_{0}}^{\circ}$ is surjective, and one only needs the left-hand factor to surject onto commutators), 
giving the desired result. 
\end{proof}

We now fix a good $z$-embedding system $\{G \to G_{z}^{(i)}\}$ (not necessarily of the type constructed in Proposition \ref{weaksystemexist}). In the above proof we mentioned that if $\varphi_{z}$ lifts $\varphi$ then the induced map $S_{\varphi_{z} }\to S_{\varphi}$ need not be surjective (and this is also the case after applying $``+"$ and/or taking $\pi_{0}$). However, the following result (and its corollary) shows that this will be true for our fixed system $\{G_{z}^{(i)}\}$ as long as $i$ is sufficiently large. Following the notation of \S \ref{ConstructionEmbeddings}, we denote $G_{z}^{(i)}/G$ by $C^{(i)}$.

\begin{proposition}\label{limsurj} Given a compatible system of parameters $\varphi_{z}^{(i)}$ for $^{L}G_{z}^{(i)}$ which induce some fixed parameter $\varphi$ for $^{L}G$, the natural map
$$\varinjlim_{i} S_{\varphi_{z}^{(i)}} \to S_{\varphi}$$ is surjective (in the category of abstract abelian groups). 
\end{proposition}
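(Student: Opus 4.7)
The plan is to reformulate the surjectivity as a vanishing condition for certain characters. For each $i$, the $\varphi_z^{(i)}$-twisted long exact sequence attached to $1 \to \widehat{C^{(i)}} \to \widehat{G_z^{(i)}} \to \widehat{G} \to 1$ gives a connecting homomorphism $\partial_i \colon S_\varphi \to H^{1}(L_F, \widehat{C^{(i)}})$, whose kernel is the image of $S_{\varphi_z^{(i)}}$. Identifying $H^{1}(L_F, \widehat{C^{(i)}}) = H^{1}(W_F, \widehat{C^{(i)}}) \xrightarrow{\sim} C^{(i)}(F)^{D}$ as in the proof of Proposition \ref{liftparam}, I would view $\partial_i(s)$ as a character $\chi^{(i)}$ on $C^{(i)}(F)$; the proposition then reduces to producing, for each $s \in S_\varphi$, some $j$ with $\chi^{(j)} \equiv 0$.

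Next I would record two structural facts. First, functoriality of the long exact sequence applied to the ladder of $\varphi_z^{(\bullet)}$-twisted short exact sequences induced by the transition $G_z^{(j)} \to G_z^{(i)}$ (which, via our compatibility hypothesis on the $\varphi_z^{(i)}$, dualizes to a morphism of the relevant SES's) gives $\chi^{(j)} = \chi^{(i)} \circ N_{j,i}$, where $N_{j,i} \colon C^{(j)}(F) \to C^{(i)}(F)$ is the induced transition map. Second, the diagram-chasing argument surrounding \eqref{dualdiagramS3} applies verbatim to each $G_z^{(i)}$ and shows that $\chi^{(i)}$ factors through the profinite quotient
\[
\overline{C^{(i)}(F)} \;=\; C^{(i)}(F)\big/\tn{im}[Z(G_z^{(i)})(F) \to C^{(i)}(F)] \;\hookrightarrow\; H^{1}(F, Z(G)).
\]

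Finally, a compactness argument closes the proof. Since $\overline{C^{(i)}(F)}$ is profinite and $\chi^{(i)}$ is a continuous character into $\mathbb{C}^{\times}$ (which has no small subgroups), its kernel $U_i := \ker(\chi^{(i)})$ is a clopen subgroup of finite index in $\overline{C^{(i)}(F)}$. Let $N_j \subseteq \overline{C^{(i)}(F)}$ denote the image of $C^{(j)}(F)$; these form a descending family of closed subgroups whose intersection is trivial, by Remark \ref{shrinkingimagebis} combined with the good $z$-embedding hypothesis. The complement $\overline{C^{(i)}(F)} \setminus U_i$ is closed in the compact profinite group $\overline{C^{(i)}(F)}$, hence compact, and the nested closed subsets $\{N_j \setminus U_i\}_{j \geq i}$ of this compact space have empty total intersection, forcing $N_j \subseteq U_i$ for some $j$ by the finite intersection property. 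For such $j$, $\chi^{(j)} = \chi^{(i)} \circ N_{j,i}$ vanishes identically on $C^{(j)}(F)$, so $s$ lifts to $S_{\varphi_z^{(j)}}$. The main point I would need to verify carefully is that $\overline{C^{(i)}(F)}$ is genuinely profinite (so that continuity of $\chi^{(i)}$ forces openness of $U_i$), which ultimately rests on Lemma \ref{PrelimCohom} giving compactness of $H^{1}(F, Z(G))$ together with closedness of the embedding $\overline{C^{(i)}(F)} \hookrightarrow H^{1}(F, Z(G))$; all other ingredients are essentially formal once the correct connecting-map/duality setup from the proof of Proposition \ref{liftparam} is in place.
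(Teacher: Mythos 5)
Your proof is correct and follows essentially the same route as the paper: identify the obstruction $\partial_i(s)$ as a character $\chi^{(i)}$ of $C^{(i)}(F)$ via the crossed-module duality from the proof of Proposition~\ref{liftparam}, show it factors through the profinite quotient $\overline{C^{(i)}(F)} \hookrightarrow H^1(F,Z(G))$, and then use Remark~\ref{shrinkingimagebis} plus the good $z$-embedding hypothesis to kill $\chi^{(j)}$ for $j\gg 0$. The one place where you differ is that you spell out the compactness/finite-intersection argument (the $N_j$ are compact subgroups with trivial intersection, $U_i$ is open, hence $N_j\subseteq U_i$ eventually), whereas the paper compresses this to ``it follows by Remark~\ref{shrinkingimagebis} that the image $C^{(j)}(F)\to C^{(1)}(F)$ is eventually contained in $\ker\chi_x$''; making this step explicit, as you do, is worthwhile since ``intersection contained in the kernel'' does not formally imply ``eventually contained in the kernel'' without compactness, and that is exactly the point you flag at the end. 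Your reduction of the profiniteness of $\overline{C^{(i)}(F)}$ to compactness of $H^1(F,Z(G))$ and closedness of the embedding matches the paper's argument around \eqref{characterquot}.
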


\begin{proof} For any $j$ we have the commutative diagram
\[
\begin{tikzcd}
S_{\varphi_{z}^{(1)}} \arrow{d} \arrow{r} & S_{\varphi} \arrow["\text{id}"]{d} \arrow{r} & H^{1}(L_{F}, \widehat{C^{(1)}})\arrow["\sim"]{r} \arrow{d} & H^{1}(W_{F}, 1 \to \widehat{C^{(1)}}) \arrow{d} \arrow["\sim"]{r} & C^{(1)}(F)^{D} \arrow{d} \\
S_{\varphi_{z}^{(j)}} \arrow{r} & S_{\varphi} \arrow{r} & H^{1}(L_{F}, \widehat{C^{(j)}})\arrow["\sim"]{r} & H^{1}(W_{F}, 1 \to \widehat{C^{(j)}}) \arrow["\sim"]{r} & C^{(j)}(F)^{D}, 
\end{tikzcd}
\]
where the right-most horizontal map is via the duality pairing for $W_{F}$-crossed modules from Proposition \ref{appdual1}.

Fix $x \in S_{\varphi}$ and denote by $\chi_{x}$ the character of $C^{(1)}(F)$ corresponding to $x$ as above. The same arguments as in the proof of Proposition \ref{liftparam} show that the map 
\begin{equation}\label{Ssurjmap}
    S_{\varphi} \to C^{(1)}(F)^{D}
\end{equation}
from the top line of the above diagram factors through the subgroup $\overline{C^{1}(F)}^{D}$, where as above
\begin{equation*}
  \overline{C^{(1)}(F)} := \frac{ C^{(1)}(F)}{\text{im}[Z(G_{z}^{(1)})(F) \to C^{(1)}(F)]} \hookrightarrow H^{1}(F, Z(G)),
\end{equation*}
which is profinite. Since $\chi_{x}$ has open kernel we deduce that $\chi_{x}$ has finite image.

It follows by Remark \ref{shrinkingimagebis} that the image $C^{(j)}(F) \to C^{(1)}(F)$ is eventually contained in the kernel of $\chi_{x}$, and thus that the image of $\chi_{x}$ is zero in $C^{(k)}(F)^{D}$ for all $k \gg 0$. The same is thus true for the image of $x$ in $H^{1}(W_{F}, \widehat{C^{(k)}})$ for $k \gg 0$, giving the result.
\end{proof}

We can now deduce the eventual surjectivity of $S_{\varphi_{z}^{(i)}} \to S_{\varphi}$ as well as some related results that will be useful for our comparison of the LLC:

\begin{corollary}\label{keycor} In the setting of the previous Proposition, there is some $j \gg 0$ such that $S_{\varphi_{z}^{(j)}} \to S_{\varphi}$ is surjective. In such a case, the natural map $S_{\varphi_{z}^{(j)}}^{+} \to S_{\varphi}^{+}$ is also surjective, and after applying $\pi_{0}$ it's an isomorphism.
\end{corollary}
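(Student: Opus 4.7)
The plan is to combine the pointwise analysis from the proof of Proposition \ref{limsurj} with the observation that the relevant connecting map has finite image, then transfer the resulting surjectivity through the central extensions by $\widehat{Z}$.

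First I would promote the pointwise argument in Proposition \ref{limsurj} to a uniform one, by showing that the connecting map
\[
\phi_j \colon S_\varphi \longrightarrow H^1(L_F, \widehat{C^{(j)}}) \xrightarrow{\sim} C^{(j)}(F)^D
\]
has finite image. As observed in the proof of Proposition \ref{limsurj}, this map lands in the subgroup $\overline{C^{(j)}(F)}^D \subseteq C^{(j)}(F)^D$, which is discrete (being the Pontryagin dual of the profinite group $\overline{C^{(j)}(F)}$); continuity then forces $\phi_j$ to vanish on the connected component $S_\varphi^\circ$, so $\phi_j$ factors through the finite group $\pi_0(S_\varphi)$. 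For lifts $x_1, \ldots, x_n \in S_\varphi$ of a set of generators of $\pi_0(S_\varphi)$, the argument in the proof of Proposition \ref{limsurj} (using the open kernel of $\chi_{x_\alpha}$ on $\overline{C^{(1)}(F)}$ together with Remark \ref{shrinkingimagebis}) supplies an index $j_\alpha$ beyond which $\phi_j(x_\alpha) = 0$; choosing $j$ larger than all $j_\alpha$ simultaneously kills $\phi_j$, and the long exact sequence \eqref{twistLES} gives surjectivity of $S_{\varphi_z^{(j)}} \to S_\varphi$.

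For the ``$+$'' statements I would apply the snake lemma to the commutative diagram of central extensions
\[
\begin{tikzcd}
1 \arrow{r} & \widehat{Z} \arrow["\tn{id}"]{d} \arrow{r} & S_{\varphi_z^{(j)}}^+ \arrow{d} \arrow{r} & S_{\varphi_z^{(j)}} \arrow{d} \arrow{r} & 1 \\
1 \arrow{r} & \widehat{Z} \arrow{r} & S_\varphi^+ \arrow{r} & S_\varphi \arrow{r} & 1,
\end{tikzcd}
\]
which simultaneously yields the surjectivity $S_{\varphi_z^{(j)}}^+ \twoheadrightarrow S_\varphi^+$ and identifies the kernels of the middle and right vertical arrows via the restriction of the algebraic-group projection $S_{\varphi_z^{(j)}}^+ \to S_{\varphi_z^{(j)}}$. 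The kernel on the right is the image in $S_{\varphi_z^{(j)}}$ of $\widehat{C^{(j)}}^\Gamma$ coming from \eqref{twistLES}, and this is connected because $H^1(F, C^{(j)}) = 1$ makes $\widehat{C^{(j)}}^\Gamma$ connected by Kottwitz duality (as already invoked in the proof of Proposition \ref{liftparam}); the middle kernel, being algebraically isomorphic to the right kernel, is therefore also connected and so sits in $(S_{\varphi_z^{(j)}}^+)^\circ$. Injectivity of $\pi_0(S_{\varphi_z^{(j)}}^+) \to \pi_0(S_\varphi^+)$ follows, and combined with surjectivity this gives the claimed isomorphism on $\pi_0$. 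The main obstacle is the first paragraph: one must rule out infinite image before combining the finitely many pointwise vanishing statements, which is exactly what discreteness of $\overline{C^{(j)}(F)}^D$ accomplishes.
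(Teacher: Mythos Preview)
Your proof is correct, and the route you take for the first claim is genuinely different from the paper's.

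For surjectivity of $S_{\varphi_z^{(j)}}\to S_\varphi$, the paper argues abstractly: the images of the $S_{\varphi_z^{(j)}}$ in $S_\varphi$ are closed algebraic subgroups whose union (by Proposition~\ref{limsurj}) is all of $S_\varphi$; passing to the constructible topology on the spectral space $S_\varphi$ makes each image clopen, and quasi-compactness then yields a finite subcover, hence a single $j$ that works. Your argument instead exploits the specific structure of the connecting map: since $\phi_j$ is a continuous homomorphism into the totally disconnected group $\overline{C^{(j)}(F)}^D$, it kills $S_\varphi^\circ$ and so factors through the finite group $\pi_0(S_\varphi)$, after which finitely many applications of the pointwise bound from Proposition~\ref{limsurj} suffice. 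Your approach is more elementary in that it avoids the constructible topology and reuses only machinery already set up in the proofs of Propositions~\ref{liftparam} and~\ref{limsurj}; the paper's approach has the virtue of being a one-line general-nonsense argument that would apply to any directed system of closed subgroups exhausting an algebraic group.

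For the $``+"$ statements the two arguments are essentially repackagings of one another: the paper chases an element through $\widehat{G_z^{(j)}/Z}\to\widehat{G/Z}$ to get surjectivity and then applies $\pi_0$ to the short exact sequence $0\to\widehat{C^{(j)}}^\Gamma\to S_{\varphi_z^{(j)}}^+\to S_\varphi^+\to 0$, while your snake-lemma formulation extracts the same short exact sequence (and the identification of the kernel with $\widehat{C^{(j)}}^\Gamma$) in one step.
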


\begin{proof} Each morphism $S_{\varphi_{0,z}^{(j)}} \to S_{\varphi_{0}}$ is a morphism of algebraic groups, and hence has closed (in particular, constructible) image. One can give $S_{\varphi_{0}}$, a spectral space, the constructible topology, which is quasi-compact and has the property that every constructible subset is open and closed (the constructible topology is, by definition, the coarsest topology with this property). The images of each of the above homomorphisms thus gives an open cover, which has a finite subcover, giving the first claim. 

For $j$ giving surjectivity, the fact that $S_{\varphi_{z}^{(j)}}^{+} \to S_{\varphi}^{+}$ is surjective follows easily from the surjectivity of $\widehat{G_{z}/Z} \to \widehat{G/Z}$, but we give the argument for completeness. Fix $y \in S_{\varphi}^{+}$ with image $\bar{y} \in S_{\varphi}$ and $\bar{y}_{z} \in S_{\varphi_{z}^{(j)}}$ a preimage of $\bar{y}$; we may take a preimage $y_{z}'$ in $\widehat{G_{z}^{(j)}/Z}$ of $\bar{y}_{z}$, and by construction its image in $\widehat{G/Z}$ differs from $y$ by an element $c \in \widehat{Z} \hookrightarrow \widehat{G/Z}$. Since the embedding of $\widehat{Z}$ into $\widehat{G/Z}$ is compatible with its analogous embedding in $\widehat{G_{z}^{(j)}/Z}$ (in particular, it has trivial intersection with $\widehat{C^{(j)}}$), we may lift $c$ to $\widehat{G_{z}^{(j)}/Z}$ (denoting the lift also by $c$) and replace $y'_{z}$ by $y_{z} := y'_{z} \cdot c$, which lies in $S_{\varphi_{z}^{(i)}}^{+}$ and is the desired preimage. 

The final claim follows from applying the right-exact functor $\pi_{0}$ to the short exact sequence
$$0 \to \widehat{C^{(j)}}^{\Gamma} \to S_{\varphi_{z}^{(j)}}^{+} \to S_{\varphi}^{+} \to 0$$ (the only subtle part of this short exact sequence is surjectivity, which comes from the second statement), noting that $\widehat{C^{(j)}}^{\Gamma}$ is connected.
\end{proof}

\subsection{Construction of rigid refined LLC}\label{rigidcomp}
Continuing with $G$ and a good $z$-embedding system $\{G \to G_{z}^{(i)}\}$ as above, we show that one can construct the rigid refined LLC for $G$ using its analogue for each $G_{z}^{(i)}$ (which, in view of Proposition \ref{weaksystemexist} can be taken to have connected center).

\begin{remark}
It may seem more natural and easier at this stage to take a good $z$-embedding system $\{G \to G_{z}^{(i)}\}$ from Proposition \ref{weaksystemexist}. However, later in the paper we will be working with projective systems of endoscopic groups which will be good $z$-embedding systems that do not arise as in the aforementioned proposition.
\end{remark}

Recall the rigid refined LLC (Conjecture \ref{rigidLLCv1}) which predicts, for a fixed Whittaker datum $\mathfrak{w}$ of $G$, finite central subgroup $Z$, and tempered $L$-parameter $L_{F} \to \prescript{L}{}G$, that there is a bijection 
\begin{equation}\label{rigidLLC}
  \Pi_{\varphi}^{Z}  \xrightarrow{\iota_{\mathfrak{w}}} \tn{Irr}(\pi_{0}(S_{\varphi}^{+})),
\end{equation}
where $ \Pi_{\varphi}^{Z} \subset \Pi^{\tn{temp},Z}$ is the set of isomorphism classes of representations of $Z$-rigid inner twists $((\mathcal{T}, \bar{h}), \pi)$ of $G$ such that $\pi \in \Pi_{\varphi}(G^{\mathcal{T}}),$ where $G^{\mathcal{T}}$ denotes the inner form of $G$ corresponding to $\mathcal{T}$. When we want to emphasize the group $G$, we write the above map as $\iota_{\mathfrak{w},G}$. 

To ensure that our construction of rigid refined LLC is well-defined, we need some standard functoriality assumptions about the rigid refined LLC. The relevant setting is a morphism $G_{1} \xrightarrow{\eta} G_{2}$ of quasi-split connected reductive groups with central kernel and abelian cokernel (in particular, $\eta$ induces an isomorphism on adjoint groups). Note that any Whittaker datum $\mathfrak{w}$ for $G_{2}$ induces, via $\eta$, a canonical Whittaker datum for $G_{1}$ (because the derived subgroups are isogenous), which we will also denote by $\mathfrak{w}$. 

A $Z$-rigid inner twist $(\mathcal{T}, \bar{h})$ for $G_{1}$ gives rise to a $Z' := \eta(Z)$-rigid inner twist $(\mathcal{T}' := \mathcal{T} ^{\times G_{1}} G_{2}, \bar{h}')$ of $G_{2}$. Fix such a rigid inner twist $(\mathcal{T}, \bar{h})$ (thus determining $(\mathcal{T}', \bar{h}')$) and denote the corresponding inner forms by $G_{1}'$, $G_{2}'$. Moreover, letting $\psi$ and $\psi'$ be the twisting isomorphisms associated to $(\mathcal{T}, \bar{h})$ and $(\mathcal{T}', \bar{h}')$, we see that $\psi' \circ \eta \circ \psi^{-1}$ descends to a morphism $G_{1}' \xrightarrow{\eta'} G_{2}'$ satisfying the same properties as the map $\eta$. For simplicity of exposition, we assume that the induced map $H^{1}(\mathcal{E}, Z \to G_{1}) \xrightarrow{\eta} H^{1}(\mathcal{E}, Z' \to G_{2})$ is an isomorphism for any $Z$ (this will be the case for all of our applications in this paper). For a representation $\dot{\pi}$ of a rigid inner twist enriching $G_{2}'$, we will abusively use $\dot{\pi} \circ \eta'$ to denote the pullback of the underlying representation of $G_{2}'(F)$ to $G_{1}'(F)$. 

\begin{conjecture}\label{Maartenbis}
If $G_{1} \xrightarrow{\eta} G_{2}$, $(\mathcal{T}, \bar{h})$, $\mathfrak{w}$ are as above and have connected centers, then for a fixed tempered parameter $\varphi$ for $G_{2}$ and $\rho \in \tn{Irr}(\pi_{0}(S_{\varphi}^{+}), \mathcal{T}')$ such that $\iota_{\mathfrak{w},G_{2}}^{-1}(\varphi, \rho)\circ \eta'$ is irreducible, 
 the representation $((\mathcal{T}, \bar{h}), \iota_{\mathfrak{w},G_{2}}^{-1}(\varphi, \rho)\circ \eta')$ lies in the compound $L$-packet $\Pi^{Z'}_{\prescript{L}{}\eta \circ \varphi}$. 
 Moreover, when the induced map 
\begin{equation}\label{Maartenmap} \pi_{0}(S_{\varphi}^{+}) \xrightarrow{\prescript{L}{}\eta} \pi_{0}(S^{+}_{\prescript{L}{}\eta \circ \varphi})
\end{equation}
is bijective, we have (as representations of rigid inner twists)
\begin{equation*}
   ((\mathcal{T}, \bar{h}), \iota_{\mathfrak{w},G_{2}}^{-1}(\varphi, \rho)\circ \eta') = \iota_{\mathfrak{w},G_{1}}^{-1}(\prescript{L}{}\eta \circ \varphi, \rho \circ \prescript{L}{}\eta). 
\end{equation*}
\end{conjecture}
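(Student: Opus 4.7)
The plan is to establish the conjecture by reducing to two compatibility properties of the rigid refined LLC in the connected-center case: compatibility with central characters under the dual map $\widehat{\eta}\colon \widehat{G_2} \to \widehat{G_1}$, and compatibility with restriction to the common simply-connected cover (which makes sense since $\eta$ induces an isomorphism on adjoint groups, hence on simply-connected covers). The idea is that because $\eta$ has central kernel and abelian cokernel, a representation of $G_1'(F)$ with prescribed restriction to the derived subgroup and prescribed central character is essentially pinned down, so these two compatibilities force membership in the correct packet.

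First I would verify that $\pi_1 := \iota_{\mathfrak{w},G_2}^{-1}(\varphi,\rho)\circ \eta'$ has the central character on $Z(G_1')(F)$ predicted by $\prescript{L}{}\eta \circ \varphi$. This follows from LLC for tori applied to $\eta|_{Z(G_1)}$: the central character of $\iota_{\mathfrak{w},G_2}^{-1}(\varphi,\rho)$ corresponds under LLC to the composition of $\varphi$ with $\prescript{L}{}G_2 \to \prescript{L}{}Z(G_2)$ (this is the right-hand column of the diagram in Conjecture \ref{rigidLLC1}), and its restriction along $\eta$ corresponds dually to $\prescript{L}{}\eta \circ \varphi$ composed with $\prescript{L}{}G_1 \to \prescript{L}{}Z(G_1)$. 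Next, I would show that the restriction of $\pi_1$ to $(G_1')_{\tn{sc}}(F)$ agrees with the restriction of any representation in $\Pi_{\prescript{L}{}\eta \circ \varphi}^{Z'}$, using that $\widehat{\eta}$ is an isomorphism on the simply-connected data and the restricted parameter $\varphi|_{\widehat{G}_{\tn{sc}}}$ is common to both sides. If $\pi_1$ is irreducible, these two matchings together force it to lie in $\Pi_{\prescript{L}{}\eta\circ\varphi}^{Z'}$.

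For the strengthened statement when \eqref{Maartenmap} is bijective, both $((\mathcal{T},\bar{h}),\pi_1)$ and $\iota_{\mathfrak{w},G_1}^{-1}(\prescript{L}{}\eta \circ \varphi, \rho \circ \prescript{L}{}\eta)$ enrich the same rigid inner twist, since the underlying torsor $\mathcal{T}$ and its push-forward $\mathcal{T}'$ determine each other via the contracted product. The character of $\pi_0(S_{\prescript{L}{}\eta \circ \varphi}^+)$ attached to $\pi_1$ must be $\rho \circ \prescript{L}{}\eta$: the right-hand square of the diagram in Conjecture \ref{rigidLLC1} is natural under $\eta$ (since both the gerbe-theoretic Tate-Nakayama pairing on $H^{1}(\mathcal{E}, Z \to -)$ and the map to $\pi_{0}(Z(\widehat{G_i/Z})^{+})^{*}$ are compatible with the dual $\widehat{\eta}$). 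The bijectivity of \eqref{Maartenmap} then promotes this matching of data into equality of representations, since $\iota_{\mathfrak{w},G_1}$ is itself a bijection on $\Pi^{Z}_{\prescript{L}{}\eta\circ\varphi}$.

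The hard part will be justifying the compatibility of the LLC with restriction to the simply-connected cover and with the pullback of Whittaker data: this is not an axiom of the rigid refined LLC but rather a property that must be deduced from a specific construction (e.g.\ the supercuspidal LLC of \cite{kaletha16} for regular parameters, or Fargues-Scholze), or else proved via the endoscopic character identities by lifting endoscopic data from $G_1$ through $\eta$ and descending the identities for $G_2$ along $\eta'$. Carefully tracking how the rigidification $\bar{h}$ and the Whittaker datum $\mathfrak{w}$ transform under $\eta$, particularly ensuring that the induced Whittaker normalization on $G_1$ matches the pulled-back parametrization of the $L$-packet, is the main source of bookkeeping delicacy.
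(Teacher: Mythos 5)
The statement you were asked to prove is labeled a \emph{conjecture} in the paper, and the paper does not prove it. The author explicitly treats Conjecture~\ref{Maartenbis} as a working hypothesis, remarking only that it ``is weaker than \cite[Conjecture 2.15]{KalethaICM}, which is already known in many cases,'' and then assumes it (in the connected-center case) throughout \S\ref{rigidcomp}. What the paper \emph{does} prove about it is Proposition~\ref{funcprop1}: that if it holds for groups with connected center, then it holds in general for the LLC constructed in \S\ref{rigidcomp}. That is a bootstrapping argument, not a proof of the conjecture itself, and your proposal is not attempting to reproduce that argument.

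Your sketch is a reasonable outline of how one \emph{would} attempt to establish Conjecture~\ref{Maartenbis} given a concrete realization of the rigid refined LLC: match central characters via LLC for tori, match restrictions to the simply-connected cover, and use that these two data pin down an irreducible representation of $G_1'(F)$ given one of $G_2'(F)$. You also correctly flag that the key step --- compatibility of the LLC with restriction to the simply-connected cover and with pullback of Whittaker data --- ``is not an axiom of the rigid refined LLC but rather a property that must be deduced from a specific construction.'' That acknowledgment is exactly why the paper leaves this as a conjecture rather than a theorem: absent either a specific construction (Kaletha's regular-supercuspidal correspondence, Fargues--Scholze) or a proved list of endoscopic character identities, there is no way to close the gap you identify in your final paragraph. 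So your proposal is a sensible research plan but not a proof, and it should not be graded as if the paper contained a proof to compare it against.

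One technical point worth flagging in your sketch: the middle step, ``show that the restriction of $\pi_1$ to $(G_1')_{\tn{sc}}(F)$ agrees with the restriction of any representation in $\Pi^{Z'}_{\prescript{L}{}\eta\circ\varphi}$,'' is not strong enough by itself. Restriction to $(G_1')_{\tn{sc}}(F)$ is generally not semisimple of multiplicity one, and two non-isomorphic irreducibles of $G_1'(F)$ can have the same restriction to $(G_1')_{\tn{sc}}(F)$ and the same central character but differ by a twist by a character of $G_1'(F)/Z(G_1')(F)(G_1')_{\tn{sc}}(F)$. You would instead need to control restriction to the full open subgroup $Z(G_1')(F)\cdot (G_1')_{\tn{sc}}(F)$ (equivalently, to the image of $G_2'(F)$ if $\eta$ is injective), or otherwise rule out such twists, which is where the parameter comparison via $\prescript{L}{}\eta$ has to do real work.
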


The above functoriality Conjecture is weaker than \cite[Conjecture 2.15]{KalethaICM}, which is already known in many cases, as explained loc. cit. 
In fact, for our purposes, Conjecture \ref{Maartenbis} (for groups with connected center) can, assuming the compatibility of the rigid refined LLC and the Fargues-Scholze LLC (cf. \S \ref{IsoFunc}), be weakened to the following statement, which is an analogous functoriality assumption as made in \cite[\S 5.2]{kaletha18}:

\begin{conjecture}\label{Maartenbisbis} If $G_{1} \xrightarrow{\eta} G_{2}$, $(\mathcal{T}, \bar{h})$, and $\mathfrak{w}$ are as in Conjecture \ref{Maartenbis} (in particular, they have connected centers), then for a fixed tempered parameter $\varphi$ for $G_{2}$, $\rho \in \tn{Irr}(\pi_{0}(S_{\varphi}^{+}), \mathcal{T}')$ such that $\iota_{\mathfrak{w},G_{2}}^{-1}(\varphi, \rho)\circ \eta'$ is irreducible with $L$-parameter $\tilde{\varphi}$ which lifts to an $L$-parameter for $^{L}G_{2}$  
and the induced map
\begin{equation*} \pi_{0}(S_{\varphi}^{+}) \xrightarrow{\prescript{L}{}\eta} \pi_{0}(S^{+}_{\prescript{L}{}\eta \circ \varphi})
\end{equation*}
is bijective, we have (as representations of rigid inner twists)
\begin{equation*}
   ((\mathcal{T}, \bar{h}), \iota_{\mathfrak{w},G_{2}}^{-1}(\varphi, \rho)\circ \eta') = \iota_{\mathfrak{w},G_{1}}^{-1}(\prescript{L}{}\eta \circ \varphi, \rho \circ \prescript{L}{}\eta). 
\end{equation*}
\end{conjecture}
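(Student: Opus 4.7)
The plan is to deduce Conjecture \ref{Maartenbisbis} from the hypothesized compatibility between the rigid refined LLC and the Fargues--Scholze LLC referenced via \S \ref{IsoFunc}, exploiting the fact that the Fargues--Scholze construction is manifestly functorial along morphisms of reductive groups. Since $\eta \colon G_1 \to G_2$ has central kernel and abelian cokernel (so induces an isomorphism on adjoint groups and a canonical identification of Whittaker data), the geometric input on $\tn{Bun}_{G_i}$ transports cleanly along $\eta$.

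First I would verify the parameter identity $\tilde{\varphi} = {^L\eta} \circ \varphi$. Granting compatibility with Fargues--Scholze, the $L$-parameter attached to $\iota_{\mathfrak{w},G_2}^{-1}(\varphi,\rho) \circ \eta'$ is the one extracted from the spectral action on the $\eta'$-pullback of the sheaf realizing $\iota_{\mathfrak{w},G_2}^{-1}(\varphi,\rho)$ on $\tn{Bun}_{G_2'}$ to $\tn{Bun}_{G_1'}$. Because Fargues--Scholze functoriality commutes this pullback with the spectral action via ${^L\eta}$, the irreducibility hypothesis on $\iota_{\mathfrak{w},G_2}^{-1}(\varphi,\rho) \circ \eta'$ forces its parameter to be ${^L\eta} \circ \varphi$; this is where the assumption that $\tilde{\varphi}$ lifts to ${^L}G_2$ is used to rule out pathological descent phenomena in the comparison.

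Next I would match enhancements. By Corollary \ref{keycor} and the bijectivity hypothesis on $\pi_0(S_\varphi^+) \xrightarrow{{^L\eta}} \pi_0(S^+_{{^L\eta} \circ \varphi})$, pullback of irreducible representations along ${^L\eta}$ is a bijection. The geometric functoriality of Fargues--Scholze, together with the compatibility of the Whittaker normalization under $\eta$ (which reduces to the fact that $\mathfrak{w}$ pulls back canonically since $\eta$ is an isomorphism on adjoint groups), then forces
\[
((\mathcal{T},\bar{h}), \iota_{\mathfrak{w},G_2}^{-1}(\varphi,\rho) \circ \eta') = \iota_{\mathfrak{w},G_1}^{-1}({^L\eta} \circ \varphi,\ \rho \circ {^L\eta}).
\]
The rigid enrichment is tracked correctly because $\mathcal{T}' = \mathcal{T} \times^{G_1} G_2$ by construction, and the assumption in \S \ref{rigidcomp} that the induced map on $H^1(\mathcal{E},Z \to -)$ is bijective pins down the torsor data unambiguously.

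The main obstacle is the precise articulation of Fargues--Scholze functoriality along $\eta$ at the level of rigid inner twists: Fargues--Scholze is most naturally formulated using extended pure inner twists (basic isocrystals), so one must pass through the comparison between these and rigid inner twists developed in \S \ref{Iso} and then check that $\eta$ behaves compatibly with this comparison. A secondary difficulty is ensuring that Whittaker normalizations match on the nose under $\eta$ rather than only up to a twist, which is plausible given the isomorphism on adjoint groups but requires care with the identification of generic characters.
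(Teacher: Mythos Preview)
The statement is a \emph{conjecture}, not a theorem: the paper does not prove it and offers no argument for it. Conjecture \ref{Maartenbisbis} (for groups with connected center) is taken as a standing hypothesis, alongside the existence of the rigid refined LLC itself. What \S \ref{IsoFunc} actually establishes (Corollary \ref{conjsuffices}) is that, assuming Fargues--Scholze compatibility, Conjecture \ref{Maartenbisbis} \emph{suffices} in place of the stronger Conjecture \ref{Maartenbis} for the constructions of \S \ref{rigidcomp}; it does not derive \ref{Maartenbisbis} from anything. So there is no paper proof to compare your proposal against.

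Your attempt to prove the conjecture outright from Fargues--Scholze compatibility has a genuine gap. The FS construction, and hence the assumed compatibility, only concerns \emph{semisimple} parameters (Theorem \ref{FSthm}); functoriality along $\eta$ therefore yields $\tilde{\varphi}^{\tn{ss}} = ({}^{L}\eta \circ \varphi)^{\tn{ss}}$, not $\tilde{\varphi} = {}^{L}\eta \circ \varphi$. This is precisely why Conjecture \ref{Maartenbisbis} carries the extra hypothesis that $\tilde{\varphi}$ lifts to ${}^{L}G_{2}$: it is input, not output, and the paper uses it (together with Jacobson--Morozov, Lemma \ref{sslemma}) to control the full parameter from the semisimplification. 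Your second step, matching enhancements via ``geometric functoriality of Fargues--Scholze,'' assumes vastly more than the paper does: FS as currently formulated does not produce the representation of $\pi_{0}(S_{\varphi}^{+})$, so there is no mechanism in the assumed compatibility to transport $\rho$ along $\eta$. That transport is exactly the content of the conjecture you are trying to prove.
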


\begin{remark}\label{weakremark}
An analogous parameter-lifting assumption can be placed in Conjecture \ref{Maartenbis} to weaken it as follows: Assume in addition to the previous hypotheses that the parameter $\tilde{\varphi}$ for $^{L}G_{1}$ corresponding to $\iota_{\mathfrak{w},G_{2}}^{-1}(\varphi, \rho)\circ \eta'$  lifts to an $L$-parameter for $^{L}G_{2}$ up to modifying by an element of $H^{1}(W_{F}, Z(\widehat{G}_{1}))$. Then $((\mathcal{T}, \bar{h}), \iota_{\mathfrak{w},G_{2}}^{-1}(\varphi, \rho)\circ \eta')$ lies in the compound $L$-packet $\Pi^{Z'}_{(\prescript{L}{}\eta \circ \varphi) \cdot \hat{z}}$ for some $\hat{z} \in H^{1}(W_{F}, Z(\widehat{G}_{1}))$ (note that $(\prescript{L}{}\eta \circ \varphi) \cdot \hat{z}$ is necessarily tempered). 
 Moreover, when the induced map 
\begin{equation} \pi_{0}(S_{\varphi}^{+}) \xrightarrow{\prescript{L}{}\eta}  \pi_{0}(S^{+}_{\prescript{L}{}\eta \circ \varphi}) = \pi_{0}(S^{+}_{(\prescript{L}{}\eta \circ \varphi) \cdot \hat{z}})
\end{equation}
is bijective, we have (as representations of rigid inner twists)
\begin{equation*}
   ((\mathcal{T}, \bar{h}), \iota_{\mathfrak{w},G_{2}}^{-1}(\varphi, \rho) \circ \eta') = \iota_{\mathfrak{w},G_{1}}^{-1}((\prescript{L}{}\eta \circ \varphi) \cdot \hat{z}, \rho \circ \prescript{L}{}\eta).
\end{equation*}

What's more, if there are compatible weak $z$-embeddings $G \to G_{i}$ and the twisted lift of $\tilde{\varphi}$ to $^{L}G_{2}$ induces the same parameter for $^{L}G$ as $\tilde{\varphi}$, then the parameter for $^{L}G$ induced by $\tilde{\varphi}$ is the same as the one induced by $\varphi$. We choose not to use this weakening for clarity of exposition, but will indicate where and how certain arguments can be modified for this weaker statement (note also that, if one assumes compatibility of Fargues-Scholze and the rigid refined LLC, then this remark is unnecessary, since we can just use Conjecture \ref{Maartenbisbis}).
\end{remark}

We postpone the proof that Conjecture \ref{Maartenbisbis} suffices for our purposes to Corollary \ref{conjsuffices} (the argument does not rely on any constructions using Conjecture \ref{Maartenbis}, so there is no danger of circularity). The reader that does not want to assume compatibility of rigid refined LLC with Fargues-Scholze should assume Conjecture \ref{Maartenbis} for this paper.

We will now construct the rigid refined LLC for (quasi-split) $G$ assuming: \begin{enumerate}
\item{The rigid refined LLC for all groups in the system $\{G_{z}^{(i)}\};$}
\item{Conjecture \ref{Maartenbis} (or \ref{Maartenbisbis})}
\item{Conjecture \ref{Maartenbis} (or \ref{Maartenbisbis}) without the connected center assumption for the transition morphisms in the system $\{G_{z}^{(i)}\}$.}
\end{enumerate}

\begin{remark}\label{assumprem}
We will show (cf. Proposition \ref{funcprop1}) that in fact condition (3) is a consequence of condition (2), since if Conjecture \ref{Maartenbis} (or \ref{Maartenbisbis}) holds in the connected center case, it holds in general (using the LLC we construct below).
\end{remark}

We clarify the logical structure of the following construction since it is somewhat delicate. The below procedure gives a construction of the rigid refined LLC for $G$ using its analogue for each group in the system $\{G_{z}^{(i)}\}$. For the rest of this paper, this procedure will be the ``explicit" construction of the rigid refined LLC for any $G$ with disconnected center, and when we list assumption (3) above, we mean using this construction. We will show in Proposition \ref{canonLLC} and Corollary \ref{fullunique} that this construction is independent of the system $\{G_{z}^{(i)}\}$; in particular, if $G$ already has connected center, then our construction using any system as above is just the connected rigid refined LLC for $G$ given in assumption (1). This uniqueness fact will also be useful in \S \ref{Endoscopy}, when the system consists of groups with potentially disconnected centers, where we will use the LLC coming from a good $z$-embedding system.

In \S \ref{Endoscopy} we will relate the aforementioned construction to the endoscopic character identities, verifying that it is indeed a construction of the rigid refined LLC for $G$. Denote the transition map $G_{z}^{(j)} \to G_{z}^{(i)}$ in our fixed system by $p_{j,i}$. We start with the following elementary observation:

\begin{lemma}\label{Spluslem} For $\varphi_{z}^{(i)}$ an $L$-parameter for $G_{z}^{(i)}$, the natural map
$$\pi_{0}(S_{\varphi_{z}^{(j)}}^{+}) \xrightarrow{\prescript{L}{}p_{j+1,j}} \pi_{0}(S_{\prescript{L}{}p_{j+1,j} \circ \varphi_{z}^{(j)}}^{+})$$ is an isomorphism for all $j \gg 0$, where $\varphi_{z}^{(j)} := \prescript{L}{}p_{j,i} \circ \varphi_{z}^{(i)}$.
\end{lemma}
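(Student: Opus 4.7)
My plan is to deduce the lemma from Corollary \ref{keycor} by a triangle chase: I will show that for $j$ sufficiently large, \emph{both} $\pi_{0}(S_{\varphi_{z}^{(j)}}^{+})$ and $\pi_{0}(S_{\varphi_{z}^{(j+1)}}^{+})$ identify canonically with $\pi_{0}(S_{\varphi}^{+})$, where $\varphi$ is the parameter for $G$ induced by $\varphi_{z}^{(i)}$ through the dual of the embedding $G \hookrightarrow G_{z}^{(i)}$.

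The preparatory step is to observe that $\{\varphi_{z}^{(k)}\}_{k \geq i}$ is exactly a compatible system of lifts of $\varphi$ in the sense of Proposition \ref{limsurj}, and that the surjectivity of $S_{\varphi_{z}^{(k)}} \twoheadrightarrow S_{\varphi}$ guaranteed by Corollary \ref{keycor} propagates upward along the tower. Explicitly, each transition $\prescript{L}{}p_{k+1,k}$ is an injection $\widehat{G_{z}^{(k)}} \hookrightarrow \widehat{G_{z}^{(k+1)}}$, and compatibility of the parameters gives an inclusion $S_{\varphi_{z}^{(k)}} \hookrightarrow S_{\varphi_{z}^{(k+1)}}$ whose composition with the natural map $S_{\varphi_{z}^{(k+1)}} \to S_{\varphi}$ agrees with the natural map $S_{\varphi_{z}^{(k)}} \to S_{\varphi}$. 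Once Corollary \ref{keycor} supplies one stage $j_{0}$ with $S_{\varphi_{z}^{(j_{0})}} \twoheadrightarrow S_{\varphi}$, this factorization forces the same surjectivity at every $k \geq j_{0}$, and the second half of the corollary then yields $\pi_{0}(S_{\varphi_{z}^{(k)}}^{+}) \xrightarrow{\sim} \pi_{0}(S_{\varphi}^{+})$ for every such $k$.

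To finish, I would run the triangle chase: for $j \geq j_{0}$, the compatibility $\bigl(G \hookrightarrow G_{z}^{(j+1)} \twoheadrightarrow G_{z}^{(j)}\bigr) = \bigl(G \hookrightarrow G_{z}^{(j)}\bigr)$ (and its avatar after modding out by $Z$ on the dual side) produces a commutative triangle
\[
\begin{tikzcd}
\pi_{0}(S_{\varphi_{z}^{(j)}}^{+}) \arrow[rr, "\prescript{L}{}p_{j+1,j}"] \arrow[dr, "\sim"'] & & \pi_{0}(S_{\varphi_{z}^{(j+1)}}^{+}) \arrow[dl, "\sim"] \\
 & \pi_{0}(S_{\varphi}^{+}) &
\end{tikzcd}
\]
whose two slanted edges are the isomorphisms from the previous step, forcing the top edge to be an isomorphism. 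The only real subtlety is the propagation of surjectivity, since Corollary \ref{keycor} is phrased as an existence statement at a single unspecified stage; everything else is a bookkeeping diagram chase on the dual side of the tower.
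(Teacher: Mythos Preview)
Your proposal is correct and takes essentially the same approach as the paper: both arguments reduce to the commutative triangle with $\pi_{0}(S_{\varphi}^{+})$ at the bottom and invoke Corollary \ref{keycor} to make the two slanted edges isomorphisms. The paper simply draws the triangle and cites the corollary, leaving implicit the propagation of surjectivity from a single stage $j_{0}$ to all $k \geq j_{0}$ that you spell out explicitly; your extra sentence justifying this via the factorization $S_{\varphi_{z}^{(k)}} \hookrightarrow S_{\varphi_{z}^{(k+1)}} \to S_{\varphi}$ is a welcome clarification.
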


\begin{proof} This follows immediately from Corollary \ref{keycor} and the commutativity of the diagram
\[
\begin{tikzcd}
\pi_{0}(S_{\varphi_{z}^{(j)}}^{+}) \arrow["\sim"]{rd} \arrow["\prescript{L}{}p_{j+1,j}"]{rr} &  & \pi_{0}(S_{\prescript{L}{}p_{j+1,j} \circ \varphi_{z}^{(j)}}^{+}) \arrow["\sim"]{ld} \\
& \pi_{0}(S_{\varphi}^{+}), & 
\end{tikzcd}
\]
where $\varphi$ denotes the induced parameter of $G$ and the two diagonal maps are induced by the dual of the embedding of $G$, which are isomorphisms by Corollary \ref{keycor}.
\end{proof}



Fix a $Z$-rigid inner twist $(\mathcal{T}, \bar{h})$ of $G$ with corresponding inner form $G'$. By construction of the system $\{G_{z}^{(i)}\}$ we automatically get a compatible family of $Z$-rigid inner twist $(\mathcal{T}^{(i)}, \bar{h}^{(i)})$ with underlying inner forms $(G')_{z}^{(i)}$, which form a good $z$-embedding system $\{G' \to (G')_{z}^{(i)}\}$ (cf. Lemmas \ref{transfertwist} and \ref{weaklem}). 

The following result (which was brought to our attention by Alexander Bertoloni Meli) will be used repeatedly:
\begin{theorem}(\cite{Silberger79})\label{Silberger} If $G_{1} \xrightarrow{\phi} G_{2}$ is an isogeny of connected reductive groups over a nonarchimedean local field then the pullback of any smooth, irreducible representation of $G_{2}(F)$ by $\phi$ is a finite direct sum of smooth irreducible representations of $G_{1}(F)$.
\end{theorem}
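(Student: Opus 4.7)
The plan is to reduce the question to a restriction problem and then apply Clifford theory. Since $\phi$ is an isogeny of connected reductive groups its kernel $Z$ is a finite multiplicative group contained in $Z(G_1)$, and the long exact sequence in fppf cohomology yields a short exact sequence
\begin{equation*}
1 \to Z(F) \to G_1(F) \to H \to 1,
\end{equation*}
where $H := \phi(G_1(F))$, together with an injection $G_2(F)/H \hookrightarrow H^1(F,Z)$. By the lemma in \S\ref{PrelimCohom}, $H^1(F,Z)$ is compact, so $H$ is an open normal subgroup of $G_2(F)$ with compact abelian quotient. In particular, $\pi \circ \phi$ is nothing but the inflation of the restriction $\pi|_H$ along the central surjection $G_1(F) \twoheadrightarrow H$, which preserves being a finite direct sum of irreducibles, so it suffices to prove the statement for $\pi|_H$ as a smooth representation of $H$.

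Next I would apply Clifford theory to the pair $H \trianglelefteq G_2(F)$. Pick any irreducible subrepresentation $\pi_0 \subseteq \pi|_H$ (which exists because $\pi|_H$ is admissible as a representation of $H$, since any compact open subgroup of $H$ is also one of $G_2(F)$). Conjugation by $G_2(F)$ acts on isomorphism classes of smooth irreducible representations of $H$ through $G_2(F)/H$, and since $\pi$ is irreducible a standard orbit decomposition gives
\begin{equation*}
\pi|_H \;\cong\; m \cdot \bigoplus_{g \in G_2(F)/\mathrm{Stab}_{G_2(F)}([\pi_0])} \pi_0^{g}
\end{equation*}
for some multiplicity $m \geq 1$. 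The task therefore becomes showing that the orbit of $[\pi_0]$ under $G_2(F)/H$ is \emph{finite}.

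For the finiteness of the orbit I would combine two facts. First, the central character $\omega_{\pi}$ of $\pi$ (which exists by Schur's lemma) ensures that $Z(G_2)(F) \cdot H$ is contained in the stabilizer of $[\pi_0]$, since conjugation by $Z(G_2)(F)$ is trivial on $H$ and preserves $\pi$ up to the scalar $\omega_\pi$. Second, since $\phi$ induces an isomorphism $G_1^{\mathrm{ad}} \xrightarrow{\sim} G_2^{\mathrm{ad}}$ of adjoint groups, the image of $H \cdot Z(G_2)(F)$ in $G_2(F)$ agrees, modulo the center, with the image of $G_1^{\mathrm{ad}}(F) \to G_2^{\mathrm{ad}}(F)$; reducing modulo the center gives control by the adjoint quotient. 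The main obstacle is to promote these compactness statements to genuine finiteness of the orbit: the naive argument only bounds $G_2(F)/(H\cdot Z(G_2)(F))$ by a compact (not necessarily finite) subquotient of $H^1(F,Z(G_1))$.

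The way I would overcome this is by invoking admissibility of $\pi$ directly. Fix a compact open subgroup $K \subseteq G_2(F)$ such that $\pi^{K}\neq 0$, and intersect with $H$ to get $K' := K \cap H$. Then $\pi^{K'}$ is finite-dimensional, and each irreducible summand $\pi_0^{g} \subseteq \pi|_H$ with $\pi_0^{K'} \neq 0$ contributes a nonzero subspace to this $K'$-invariant space. A sufficiently small $K$ (one normalized by a set of representatives of $G_2(F)/(H\cdot Z(G_2)(F))$) ensures that every orbit representative has nonzero $K'$-fixed vectors, and the finite-dimensionality of $\pi^{K'}$ then forces finiteness of the orbit. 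This completes the argument, the hard part really being this last step, i.e., bridging the gap between compactness of $G_2(F)/H$ and the discrete finiteness needed for Clifford decomposition. In practice, one cites Silberger's two-page note \cite{Silberger79}, which carries out precisely this admissibility-based orbit-finiteness argument.
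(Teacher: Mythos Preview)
The paper does not give its own proof of this statement; it is stated as a theorem and attributed to \cite{Silberger79} with no further argument. The only additional content in the paper is the remark immediately following the statement that ``the proof of the above result still holds if one weakens `isogeny' to the condition that $G_{2}/\phi(G_{1})$ is a torus and $G_{2}(F)/\phi(G_{1}(F))$ is compact,'' which is asserted without justification. So there is no in-paper proof to compare your proposal against; your final sentence (that in practice one simply cites Silberger) is exactly what the paper does.

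Your sketch is a reasonable outline of the Clifford-theoretic strategy behind Silberger's note. One caution worth flagging, since the paper works over local fields of positive characteristic: your first step asserts that $H=\phi(G_1(F))$ is open in $G_2(F)$, but this can fail when the isogeny is not \'etale (e.g.\ $\mathrm{SL}_p \to \mathrm{PGL}_p$ in characteristic $p$, where the image is the kernel of $\mathrm{PGL}_p(F)\to F^\times/(F^\times)^p$ and $(F^\times)^p$ is not open). In the paper's actual applications the maps used are the weak $z$-embeddings $G'\hookrightarrow (G')_z^{(i)}$ and the transition maps between them, where the cokernel is a torus and the compactness of $G_2(F)/\phi(G_1(F))$ is arranged by hand, so the extended version of Silberger quoted in the paper's remark is what is really being invoked.
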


The proof of the above result still holds if one weakens ``isogeny" to the condition that $G_{2}/\phi(G_{1})$ is a torus and $G_{2}(F)/\phi(G_{1}(F))$ is compact, and so we will cite it for this generalization as well. 

We can now begin the construction of the rigid refined LLC for $G$ using its analogue for the $G_{z}^{(i)}$. Start with $\dot{\pi}$ an irreducible smooth representation of $(\mathcal{T}, \bar{h})$ with underlying representation $\pi$ of $G'(F)$. We can suppress the $(\mathcal{T}, \bar{h})$ notation for now because we already have our canonical family of rigid inner twists and corresponding weak $z$-embeddings for all $i$.
 
Fix some $i \geq 1$; by Frobenius reciprocity, we know that $\pi$ is a subrepresentation of $$\res^{(G')_{z}^{(i)}(F)}_{G'(F)}(\ind^{(G')_{z}^{(i)}(F)}_{G'(F)}(\pi)).$$   Note that, since $\pi$ is an irreducible smooth representation of $G'(F)$ the representation $\ind^{(G')_{z}^{(i)}(F)}_{G'(F)}(\pi)$ is admissible and semisimple (by \cite{Bushnell90}), so we have a decomposition
$$\ind^{(G')_{z}^{(i)}(F)}_{G'(F)}(\pi) = \bigoplus_{S_{\pi, i}} \rho^{m_{\rho}}$$ into irreducible smooth representations $\rho$ of $(G')_{z}^{(i)}(F)$. Moreover, the restriction to $G'(F)$ of each $\rho$ decomposes as a finite direct sum of irreducible smooth representations, by Theorem \ref{Silberger}.  

Choose any $\rho \in S_{\pi, i}$ such that $\pi$ is a subrepresentation of $\res^{(G')_{z}^{(i)}(F)}_{G'(F)}(\rho)$ (our construction will not depend on this choice, cf. Lemma \ref{uniqueness}). By Theorem \ref{Silberger}, the restriction of $\rho$ to any $G^{(j)}_{z}(F)$ for $j \geq i$ decomposes as direct sum of irreducible smooth representations. Moreover, Theorem \ref{Silberger} also tells us that there is some $j \gg i$ and irreducible smooth representation $\rho_{j}$ of $(G')_{z}^{(j)}(F)$ (which is a summand of $\rho \circ p_{j,i}$) such that $\rho_{j}|_{G'(F)}$ contains $\pi$ as a summand and, more importantly, such that $\rho_{j} \circ p_{k,j}$ is an irreducible representation of $(G')^{(k)}_{z}(F)$ for all $k \geq j$. In this setting, we have the following result:

\begin{lemma}\label{keycor1} Let $\rho_{j} = (\rho_{j}, V)$ be a smooth representation of $(G')_{z}^{(j)}(F)$ such that $\rho_{j} \circ p_{k,j}$ is irreducible for all $k \geq j$. Then $\rho_{j}|_{G'(F)}$ is irreducible.
\end{lemma}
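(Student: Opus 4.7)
The plan is to apply Clifford theory for smooth representations of locally profinite groups to the normal subgroup $G'(F) \triangleleft (G')_z^{(j)}(F)$, and then exploit the shrinking-image property of good $z$-embedding systems (Remark \ref{shrinkingimagebis}) to force the multiplicity to be one.

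First, I would set up the Clifford-theoretic framework. By the weak $z$-embedding hypothesis (property $(3')$ of Lemma \ref{weaklem}) together with $H^{1}(F, C^{(j)}) = 0$, the sequence $1 \to G'(F) \to (G')_z^{(j)}(F) \to C^{(j)}(F) \to 1$ is exact on $F$-points. Since $(G')_z^{(j)} = G' \times^{Z(G)} T^{(j)}$ is a central extension of $G'$ by a torus, conjugation of $G'(F)$ by elements of $(G')_z^{(j)}(F)$ is inner, and hence preserves isomorphism classes of smooth irreducible representations of $G'(F)$. Combined with admissibility of $\rho_{j}|_{G'(F)}$ (open compact subgroups of $G'(F)$ extend to open compact subgroups of $(G')_z^{(j)}(F)$), Clifford theory yields a decomposition $\rho_{j}|_{G'(F)} \cong \pi \otimes W$ for some irreducible smooth admissible $\pi$ of $G'(F)$ and a finite-dimensional multiplicity space $W$. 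The goal then becomes $\dim W = 1$.

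Next, I would encode the remaining $(G')_z^{(j)}(F)$-structure as a projective representation $\omega \colon C^{(j)}(F) \to \mathrm{PGL}(W)$ (well-defined after normalizing by inner conjugation, using the central nature of the extension $G' \hookrightarrow (G')_z^{(j)}$), so that irreducibility of $\rho_{j}$ corresponds to $\omega(C^{(j)}(F))$ acting irreducibly on $W$. Since elements of $Z((G')_z^{(j)})(F)$ act on $V$ by a scalar (Schur's lemma applied to irreducible $\rho_j$), $\omega$ is trivial on the image $\bar{Z}_{j}$ of $Z((G')_z^{(j)})(F)$ in $C^{(j)}(F)$. Setting $A_{k} := \mathrm{image}(C^{(k)}(F) \to C^{(j)}(F))$, irreducibility of $\rho_{j} \circ p_{k,j}$ is equivalent to $\omega(A_{k})$ acting irreducibly on $W$ for every $k \geq j$. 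The subgroups $A_{k}$ are decreasing in $k$ (since the transition maps compose), and by Remark \ref{shrinkingimagebis} they satisfy $\bigcap_{k} A_{k} \subseteq \bar{Z}_{j} \subseteq \ker \omega$, so $\bigcap_{k} \omega(A_{k}) = \{1\}$ in $\mathrm{PGL}(W)$.

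The main obstacle is the final step: concluding $\dim W = 1$ from a decreasing sequence of irreducibly-acting subgroups of $\mathrm{PGL}(W)$ with trivial intersection. A naive ``eventually trivial'' argument fails in general (e.g.\ $p^{n}\mathbb{Z}_{p} \subset \mathbb{Z}_{p}$), so one must exploit smoothness of $\omega$ (open kernel in the $\ell$-adic topology on $C^{(j)}(F)$) together with finite-dimensionality of $W$. I expect the cleanest route is to show that the cocycle class of $\omega$ restricted to $A_{k}$ trivializes for $k$ sufficiently large---possibly by combining the open-kernel property of $\omega$ with the fact that the image of each norm map is an open finite-index subgroup of $C^{(j)}(F)$---so that $A_{k}$ acts on $W$ by genuine characters; then irreducibility of the action of the abelian group $\omega(A_{k})$ on $W$ directly forces $\dim W = 1$. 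Alternatively, one could argue via the commutant, using that any nontrivial endomorphism of $W$ is centralized by a proper Zariski-closed subgroup of $\mathrm{PGL}(W)$, which must contain $\omega(A_{k})$ for large $k$ by the shrinking property combined with the continuity of $\omega$.
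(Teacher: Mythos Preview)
Your Clifford-theoretic setup is a legitimate alternative to the paper's direct argument, but it has a real gap at the step you yourself flag, and neither of your two proposed completions works as stated.

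First, a minor issue: your justification for admissibility of $\rho_j|_{G'(F)}$ (``open compact subgroups of $G'(F)$ extend to open compact subgroups of $(G')_z^{(j)}(F)$'') goes the wrong way --- it gives $V^{K'} \subseteq V^{K}$, not the reverse. The correct argument is to apply Silberger's theorem to the surjection $G' \times Z((G')_z^{(j)}) \twoheadrightarrow (G')_z^{(j)}$, whose cokernel on $F$-points embeds in $H^1(F,Z(G))$ and is therefore compact; this gives that $\rho_j|_{G'(F)}$ is a finite direct sum of irreducibles, hence $\pi\otimes W$ with $\dim W < \infty$.

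The main gap is in concluding $\dim W = 1$. Your first route (``cocycle trivializes on $A_k$ for large $k$'') does not follow from open kernel plus finite index of norm images: neither property forces $A_k$ into $\ker\omega$. Your second route (commutant argument) fails because $\bigcap_k \omega(A_k) = \{1\}$ says nothing about any individual $\omega(A_k)$ lying in a proper algebraic subgroup. What you are missing is precisely the compactness ingredient the paper uses: the conjugation action of $(G')_z^{(j)}(F)$ on $\mathrm{End}_{G'(F)}(V)\cong\mathrm{End}(W)$ is smooth (check on the finite generating set $v_0\otimes w_i$), so $\ker\omega$ is open in $C^{(j)}(F)$; since $\omega$ factors through $\overline{C^{(j)}(F)}$, which embeds as a \emph{closed subgroup of the compact group} $H^1(F,Z(G))$, the image of $\omega$ is finite. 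Then the decreasing subgroups $\omega(A_k)$ with trivial intersection are eventually trivial, contradicting irreducible action unless $\dim W = 1$.

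The paper's proof avoids Clifford theory altogether: it takes a proper $G'(F)$-submodule $W\subsetneq V$, forms the closed sets $H^{(k)} = \{h : h\cdot v \notin W\}$ (after enlarging to $G_z^{(k)}\times Z(G_z^{(j)})$ so that central characters are handled), pushes their images down to the compact space $H^1(F,Z(G))$, and applies the finite intersection property directly to obtain $H^{(k')}=\emptyset$ for some $k'$, contradicting irreducibility of $\rho_j\circ p_{k',j}$. Once your argument is completed as above, the two proofs are essentially the same --- both hinge on compactness of $H^1(F,Z(G))$ and the shrinking-image property --- but the paper's version is more elementary in that it needs neither the decomposition $V\cong\pi\otimes W$ nor the smoothness of the conjugation action on $\mathrm{End}(W)$.
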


\begin{proof}
Work with $G' = G$ to simplify notation. We will replace the projective system $\{G_{z}^{(k)}\}_{k \geq j}$ with $$\{\mathcal{G}_{z}^{(k)}\} := \{G_{z}^{(k)} \times Z(G_{z}^{(j)})\},$$ where $G$ is embedded in the first factor of each of these groups. Note that there is a canonical way to extend $\rho_{j} \circ p_{k,j}$ to an irreducible representation of $G_{z}^{(k)}(F) \times Z(G_{z}^{(j)})(F)$.

Assume by contradiction that $\rho_{j}|_{G(F)}$ is not irreducible; in particular, we may find a nonzero vector $v$ such that the $G(F)$-subrepresentation $W$ generated by $v$ is a proper subspace of $V$. Define, for $k \geq j$,
\begin{equation*} H^{(k)} = \{h \in \mathcal{G}_{z}^{(k)}(F) | h\cdot W \not\subset W\} = \{h \in \mathcal{G}_{z}^{(k)}(F) | h\cdot v \notin W\};
\end{equation*}
note that $H^{(k)}$ is a closed subset of $\mathcal{G}_{z}^{(k)}(F)$, since it's the preimage of $V \setminus W$ under the ``action-on-$v$" map. It's also closed under right translation by the subgroup $G(F) \times Z(G_{z}^{(j)})(F)$.

The image of $H^{(k)}$ in $G_{z}^{(j)}(F)$, denoted by $M^{(k)}$, is also closed, since the projection map $\mathcal{G}_{z}^{(k)}(F) \to G_{z}^{(j)}(F)$ is open and $H^{(k)}$ contains the full preimage of $M^{(k)}$. Note that we really mean the image in $G_{z}^{(j)}(F)$, not $\mathcal{G}_{z}^{(j)}(F)$---we have used the $\mathcal{G}^{(k)}_{z}$ to ensure that $Z(G_{z}^{(j)})(F)$ is contained in the image of the map $\mathcal{G}^{(k)}_{z}(F) \to G^{(j)}_{z}(F)$. We also have that $M^{(k)}$ is preserved by right-translation by the subgroup $G(F) \cdot Z(G_{z}^{(j)})(F)$.

 We claim now that $\bigcap_{k > j} M^{(k)} = \emptyset$; to see this, it suffices to show that \begin{equation}\label{int1}
 \bigcap_{k > j} \text{im}[\mathcal{G}^{(k)}_{z}(F) \to G^{(j)}(F)] = G(F) \cdot Z(G_{z}^{(j)})(F),
 \end{equation}
 since if we knew the equality \eqref{int1} then an element in  $\bigcap_{k > j} M^{(k)}$ would yield an element of $G(F)$ that sends $v$ into $V \setminus W$ (since $Z(G_{z}^{(j)})(F)$ acts by a character). The characterization \eqref{int1} of the intersection follows immediately from the fact that the intersection of the images of
 \begin{equation*}
 \mathcal{G}^{(k)}_{z}(F) \to G^{(j)}_{z}(F) \to H^{1}(F, Z(G))
 \end{equation*}
 are trivial across all $k$, since one checks that these are the same as the images of the connecting homomorphisms $G^{(k)}_{z}(F) \to H^{1}(F, Z(G))$ from the short exact sequences 
 $$1 \to Z(G) \to G \times Z(G^{(k)}_{z}) \to G^{(k)}_{z} \to 1,$$ which tend to zero by definition of a good $z$-embedding system (cf. Lemma \ref{shrinkingimage} and the discussion following Definition \ref{goodsysdef}).

Having established $\bigcap_{k > j} M^{(k)} = \emptyset$, if we denote by $\overline{M}^{(k)}$ the closed image of $M^{(k)}$ in $H^{1}(F, Z(G))$, then also $\bigcap_{k> j} \overline{M}^{(k)}_{\rho} = \emptyset$, because $M^{(k)}$ is the full preimage in $G_{z}^{(j)}(F)$ of $\overline{M}^{(k)}$, by right $G(F) \cdot Z(G_{z}^{(j)})(F)$-invariance. 

Since $H^{1}(F, Z(G))$ is compact and $\bigcap_{k > j} \overline{M}^{(k)} = \emptyset$, we must have 
$$\bigcap_{k' \geq k > j}  \overline{M}^{(k)} = \overline{M}^{(k')} = \emptyset$$ for some $k' \geq j$, by the finite intersection property, which obviously implies that $M^{(k')} = \emptyset$, implying that $\mathcal{G}_{z}^{(k)}(F) \cdot v$ is contained in $W$ and therefore that $W$ is $\mathcal{G}_{z}^{(k)}(F)$-stable, contradicting the irreducibility of $\rho_{j} \circ p_{k,j}$.
\end{proof}

For applications to the rigid LLC, it is desirable to show that any irreducible smooth representation $\pi$ of $G'(F)$ arises uniquely in the above manner. To guarantee uniqueness, we need the following result, which has a similar flavor to Lemma \ref{keycor1}:
 
 \begin{lemma}\label{uniqueness} Let $(\rho, V), (\rho', V')$ be irreducible smooth representations of some $(G')^{(i)}_{z}(F)$ with the same central characters whose restrictions to $G'(F)$ are isomorphic irreducible representations (this immediately implies their pullbacks by $p_{j,i}$ are also irreducible for any $j \geq i$). Then for all $j \gg i$ the pullbacks of $\rho$ and $\rho'$ to $(G')^{(j)}_{z}(F)$ are isomorphic.
 \end{lemma}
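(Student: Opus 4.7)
The plan is to apply Clifford theory to write $\rho' \cong \rho \otimes \chi$ for a character $\chi$ of $(G')_z^{(i)}(F)$ trivial on $G'(F) \cdot Z((G')_z^{(i)})(F)$, and then to show that $\chi$ becomes trivial after pullback to $(G')_z^{(j)}(F)$ for $j \gg i$, using the same topological ingredients as in Proposition \ref{limsurj}.

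First, since $G'(F)$ is normal in $(G')_z^{(i)}(F)$ with abelian quotient, and $\rho|_{G'(F)} \cong \rho'|_{G'(F)}$ is irreducible, Schur's lemma applied to the one-dimensional space $\Hom_{G'(F)}(\rho', \rho)$ yields such a character $\chi\colon (G')_z^{(i)}(F) \to \mathbb{C}^\times$ with $\rho' \cong \rho \otimes \chi$. Smoothness of $\rho, \rho'$ forces $\chi$ to have open kernel (since the stabilizer of any vector under the twisted action must remain open), and the equal central character hypothesis forces $\chi|_{Z((G')_z^{(i)})(F)} = 1$. Hence $\chi$ descends to a character of a subgroup of the quotient $\overline{C^{(i)}(F)} = C^{(i)}(F)/\text{im}[Z((G')_z^{(i)})(F) \to C^{(i)}(F)]$, which is profinite via its closed embedding into $H^1(F, Z(G))$, as in the proof of Proposition \ref{liftparam}; a character of a profinite group with open kernel has finite image, so $\ker(\chi)$ is an open-closed subgroup of finite index.

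It remains to show that the image of $(G')_z^{(j)}(F) \to (G')_z^{(i)}(F) \to \overline{C^{(i)}(F)}$ lies in $\ker(\chi)$ for $j \gg i$. This image is contained in $\bar{N}_j$, the image in $\overline{C^{(i)}(F)}$ of the norm map $N_{K_j/K_i}\colon C^{(j)}(F) \to C^{(i)}(F)$. After splitting $C_0$ over $K_i$, this transition becomes a product of field norms $K_j^\times \to K_i^\times$, whose images are open of finite index in $K_i^\times$ by local class field theory; thus $\bar{N}_j$ is open-closed in $\overline{C^{(i)}(F)}$. Remark \ref{shrinkingimagebis} gives $\bigcap_{j \geq i} \bar{N}_j = \{e\}$, so the decreasing family of closed subsets $\bar{N}_j \setminus \ker(\chi)$ in the compact space $\overline{C^{(i)}(F)}$ has empty intersection; the finite intersection property forces $\bar{N}_j \subseteq \ker(\chi)$ for $j$ large, whence $\chi \circ p_{j,i} = 1$ and $\rho' \circ p_{j,i} \cong (\rho \otimes \chi) \circ p_{j,i} \cong \rho \circ p_{j,i}$. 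The main technical points, namely the closedness of $\bar{N}_j$ and the profiniteness of $\overline{C^{(i)}(F)}$, are handled exactly as in the proof of Proposition \ref{limsurj}.
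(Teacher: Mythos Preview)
Your proof is correct and follows essentially the same approach as the paper's. Both arguments produce, via Schur's lemma, a smooth character $\chi$ of $(G')_z^{(i)}(F)$ trivial on $G'(F)\cdot Z((G')_z^{(i)})(F)$ (the paper calls it $\chi_f$ and works with its non-trivial locus $H_f$ rather than with $\ker\chi$ directly), and both conclude by a finite-intersection-property argument in a compact quotient, using that the images of the $(G')_z^{(j)}(F)$ shrink to $G'(F)\cdot Z((G')_z^{(i)})(F)$. The only cosmetic difference is that you run the compactness argument inside $\overline{C^{(i)}(F)}$, whereas the paper runs it inside $[G'(F)\cdot Z((G')_z^{(i)})(F)]\backslash (G')_z^{(i)}(F)$.

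One small remark: your justification that $\bar N_j$ is open-closed invokes the explicit field-norm description of the transition maps, which is specific to the system built in Proposition~\ref{weaksystemexist}, while the lemma is stated for an arbitrary good $z$-embedding system. The paper handles this by noting that $p_{j,i}((G')_z^{(j)}(F))\cdot Z((G')_z^{(i)})(F)$ is closed directly (as in the proof of Lemma~\ref{keycor1}), which works in the general setting; your argument goes through verbatim once you replace $\bar N_j$ by the image of $(G')_z^{(j)}(F)$ in $\overline{C^{(i)}(F)}$ and argue closedness the same way.
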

 
 \begin{proof} Replace $G'$ by $G$ to simplify notation. Let $f \colon V \xrightarrow{\sim} V'$ be an isomorphism of vector spaces realizing the isomorphism at the level of restrictions to $G(F)$. Note that, for any $g \in G^{(i)}_{z}(F)$, the isomorphism $^{g}f$ is also an isomorphism of $G(F)$-representations, so by Schur's Lemma the action of $G^{(i)}_{z}(F)$ on $f$ is determined by a (continuous) character $\chi_{f} \colon G^{(i)}_{z}(F) \to \mathbb{C}^{\times}$, where the target has the discrete topology. 
 
Define the subset $$H_{f} := \{g \in G_{z}^{(i)}(F) | \exists v \in V \text{ such that } f(g^{-1} \cdot v) \neq g^{-1} \cdot f(v) \},$$ which is a closed subset of $G_{z}^{(i)}(F)$, since it's the preimage under $\chi_{f}$ of the closed subset $\mathbb{C}^{\times} \setminus \{1\}$
 
By construction, $H_{f}$ is stable under left-translation by $G(F) \cdot Z(G_{z}^{(i)})(F)$ (the central part uses our assumption that the representations have the same central characters). Now consider the image of $\bar{H}_{f}$ of $H_{f}$ under the (closed) quotient map with compact target 
\begin{equation*}
G^{(i)}_{z}(F) \to [G(F) \cdot Z(G_{z}^{(i)})(F)] \backslash G_{z}^{(i)}(F).
\end{equation*}
 
Using that 
\begin{equation*} \bigcap_{j \geq i} [p_{j,i}(G_{z}^{(j)}(F)) \cdot Z(G_{z}^{(i)}(F))] = G(F) \cdot Z(G_{z}^{(i)})(F)
\end{equation*} (this equality is from the proof of Corollary \ref{keycor1}), we deduce that $\bigcap_{j \geq i} ([p_{j,i}(G_{z}^{(j)}(F)) \cdot Z(G_{z}^{(i)}(F))] \cap H_{f}) = \emptyset,$ or else $f$ would not be $G(F)$-equivariant. Set $H_{f,j} = [p_{j,i}(G_{z}^{(j)}(F)) \cdot Z(G_{z}^{(i)}(F))] \cap H_{f}$, which is also a closed subset of $G_{z}^{(i)}(F)$ (cf. the proof of Corollary \ref{keycor1}) left-invariant under translation by $G(F) \cdot Z(G_{z}^{(i)})(F)$, with image in $[G(F) \cdot Z(G_{z}^{(i)})(F)] \backslash G_{z}^{(i)}(F)$ denoted by $\bar{H}_{f,j}$, which is also closed.
 
The fact that $\bigcap_{j \geq i} H_{f,j} = \emptyset$ together with the above left-invariance implies that $\bigcap_{j \geq i} \bar{H}_{f,j} = \emptyset$, and hence, by the finite intersection property, that there is some $k \geq i$ with $\bigcap_{k \geq j \geq i} \bar{H}_{f,j} = \emptyset$. For this $k$ we immediately also have $\bigcap_{k \geq j \geq i} H_{f,j} = H_{f,k} = \emptyset$, and thus $f$ is $G^{(\ell)}_{z}(F)$-equivariant for any $\ell \geq k$, as desired.
 \end{proof}

We can now finish giving the construction of the rigid refined LLC for $G$. Continuing with the above notation, Lemma \ref{keycor1} implies that the restriction of $\rho_{j}$ (as defined above) to $G'(F)$ is $\pi$. Fix a Whittaker datum $\mathfrak{w}$ for $G$, which determines Whittaker data for each $G^{(j)}_{z}$, denoted by $\mathfrak{w}_{z}^{(j)}$. 

Set $\dot{\pi}^{(k)} := ((\mathcal{T}^{(k)}, \bar{h}^{(k)}), \rho_{j} \circ p_{k,j})$ and $\dot{\pi} = ((\mathcal{T}, \bar{h}), \pi)$. We then define
\begin{equation}\label{limLLC} \iota_{\mathfrak{w}}(\dot{\pi}) =  \varinjlim_{k \geq j} \iota_{\mathfrak{w}_{z}^{(j)}}(\dot{\pi}^{(k)});
\end{equation}
note that this direct limit is well-defined (and, in particular, eventually constant) by combining Conjecture \ref{Maartenbis} and Lemma \ref{Spluslem}.

\begin{remark}\label{weakremark2} If one wants to weaken Conjecture \ref{Maartenbis} as in Remark \ref{weakremark}, then each limit term in the equation \eqref{limLLC} needs to be modified via twisting by a character of $(G')_{z}^{(k)}(F)$ arising from the element of $H^{1}(W_{F}, Z(G_{z}^{(k)}))$ required to lift parameters up to twisting as in Remark \ref{weakremark} (which exists by Proposition \ref{liftparam}) via the map $H^{1}(W_{F}, Z(G_{z}^{(k)})) \to (G')_{z}^{(k)}(F)^{D}$ constructed by Langlands (as explained in \cite{Borel77}), assuming the compatibility of the rigid refined LLC with the Langlands correspondence for tori (cf. \cite[\S 6.1.1]{Taibi22}).
\end{remark}

Observe that the above map is well-defined (that is, it does not depend on the choice of $\rho$, and $\rho_{j}$ is uniquely determined up to isomorphism) by Lemma \ref{uniqueness}, finishing the construction. We have the following stronger version of uniqueness:

\begin{proposition}\label{canonLLC} The construction of the refined LLC given above does not depend on the choice of good $z$-embedding system $\{G_{z}^{(i)}\}$ where each $G_{z}^{(i)}$ has connected center. 
\end{proposition}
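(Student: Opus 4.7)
The plan is to compare the two constructions by passing through a common refinement. Given two good $z$-embedding systems $\{A_i\}$ and $\{B_j\}$ of $G$ (both with connected centers), I construct a third good $z$-embedding system $\{C_k\}$ with connected centers admitting compatible weak $z$-embeddings $A_k \hookrightarrow C_k$ and $B_k \hookrightarrow C_k$. Following Construction \ref{weakexist}, take $T_0^C \defeq (T_0^A \times T_0^B)/\{(z, z^{-1}) : z \in Z(G)\}$ (the pushout over the antidiagonal copy of $Z(G)$) and a cofinal system of extensions $K_k^C$ containing $K_k^A \cdot K_k^B$ chosen so that $(T_0^C, K_k^C)$ is a weak embedding pair. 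A direct computation gives $T_k^C/Z(G) = C_k^A \times C_k^B$, so $C_k/G$ is an induced torus (in particular $H^1$-trivial), and $T_k^C$ is a connected torus containing $T_k^A$ and $T_k^B$ compatibly with their embeddings of $Z(G)$, producing the desired central extensions $A_k \hookrightarrow C_k$ and $B_k \hookrightarrow C_k$. The surjectivity of the transition maps and triviality of the limit connecting homomorphism $\varprojlim_k (C_k^A \times C_k^B)(F) \to H^{1}(F, Z(G))$ follow from the corresponding properties of $\{A_i\}$ and $\{B_j\}$.

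Given this joint system and a representation $\dot\pi$ of a rigid inner twist enriching $G'$, I would choose, for large $k$, an irreducible smooth representation $\rho_C$ of $(C_k)'(F)$ with all pullbacks $\rho_C \circ p^C_{\ell, k}$ irreducible and $\rho_C|_{G'(F)} = \pi$, as guaranteed by the argument preceding Lemma \ref{keycor1}. Applying Theorem \ref{Silberger}, pick an irreducible summand $\rho_A$ of $\rho_C|_{(A_k)'(F)}$ whose restriction to $G'(F)$ contains $\pi$; Lemma \ref{keycor1} then forces $\rho_A|_{G'(F)} = \pi$, so $\rho_A$ is a valid input for computing $\iota^A_{\mathfrak w}(\dot\pi)$ via $\{A_i\}$, and likewise one produces $\rho_B$. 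It then suffices to identify $\iota^A_{\mathfrak w}(\dot\pi) = \iota^C_{\mathfrak w}(\dot\pi) = \iota^B_{\mathfrak w}(\dot\pi)$ as elements of $\tn{Irr}(\pi_0(S_\varphi^+))$ by invoking Conjecture \ref{Maartenbis} for the embeddings $A_k \hookrightarrow C_k$ and $B_k \hookrightarrow C_k$ (both sides having connected centers) and using Corollary \ref{keycor} to see that the pullback maps $\pi_0(S_{\varphi_C}^+) \to \pi_0(S_{\varphi_A}^+) \to \pi_0(S_\varphi^+)$ are isomorphisms for large $k$.

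The hard part is that Conjecture \ref{Maartenbis} as stated requires $\rho_C|_{(A_k)'(F)}$ to be irreducible, while generically this restriction decomposes into several irreducible summands with $\rho_A$ only one of them. To circumvent this, I would appeal to the weaker compound $L$-packet clause of Conjecture \ref{Maartenbis}, which places every irreducible summand of the restriction inside $\Pi^{Z'}_{\prescript{L}{}\eta \circ \varphi_C}$, combined with Lemma \ref{uniqueness} (which shows that different choices of irreducible summand with the correct restriction to $G'(F)$ become isomorphic after further pullback within $\{A_k\}$) to identify the pullback of $\iota_{\mathfrak{w}_C, C_k}(\rho_C)$ to $\pi_0(S_{\varphi_A}^+)$ with $\iota_{\mathfrak{w}_A, A_k}(\rho_A)$ in the direct limit. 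By symmetry the same argument equates $\iota^B_{\mathfrak w}(\dot\pi)$ with $\iota^C_{\mathfrak w}(\dot\pi)$, giving the independence.
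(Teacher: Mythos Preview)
Your overall strategy—build a common refinement of the two systems and invoke Conjecture \ref{Maartenbis} with the $\pi_0$ isomorphisms from Corollary \ref{keycor}—is the same as the paper's. But your ``hard part'' is a phantom: once you have chosen $\rho_C$ on $(C_k)'(F)$ with $\rho_C|_{G'(F)} = \pi$ irreducible, the intermediate restriction $\rho_C|_{(A_k)'(F)}$ is \emph{automatically} irreducible, because $G'(F) \subset (A_k)'(F)$ and any $(A_k)'(F)$-subrepresentation of $\rho_C$ is in particular a $G'(F)$-subrepresentation. So there is no decomposition to worry about, $\rho_A = \rho_C|_{(A_k)'(F)}$, and Conjecture \ref{Maartenbis} applies directly. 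The paper makes exactly this observation (``$\rho^{(j)}|_{H_z^{(j)}(F)}$ is automatically irreducible, since this is the case for $\rho^{(j)}|_{G(F)} = \pi$''). Your proposed workaround via the compound-packet clause is both unnecessary and, as written, not actually licensed by Conjecture \ref{Maartenbis}, which takes irreducibility of the restriction as a hypothesis rather than asserting anything about summands in the reducible case.

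The paper's construction of the common refinement is different from yours and worth noting. Rather than a symmetric pushout $T_0^C$ and a third good $z$-embedding system, the paper sets $J_{i,j} := G_z^{(i)} \times^{Z(G)} Z(H_z^{(j)})$ and exploits the fact that, because $Z(G_z^{(i)})$ is connected, the embedding $G_z^{(i)} \to J_{i,j}$ becomes a \emph{pseudo} $z$-embedding for $j \gg 0$. This lets one \emph{extend} $\pi^{(i)}$ to an irreducible $\rho^{(j)}$ on $J_{i,j}(F)$ (unique up to central character) rather than restricting down from above, and then restrict to $H_z^{(j)}(F)$, where irreducibility is automatic as above. The upshot is that the paper never needs to verify that $\{J_{i,j}\}$ is itself a good $z$-embedding system or to run the limit construction there; it only uses Conjecture \ref{Maartenbis} twice, for $G_z^{(i)} \to J_{i,j}$ and $H_z^{(j)} \to J_{i,j}$. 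Your symmetric construction works too once the phantom difficulty is removed, but requires checking more properties of $\{C_k\}$.
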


\begin{proof} 
Consider two good $z$-embedding systems $\{G_{z}^{(i)}\}$ and $\{H_{z}^{(j)}\}$ Setting $J_{i,j} := G_{z}^{(i)} \times^{Z(G)} Z(H_{z}^{(j)})$, it is straightforward to check that, for any fixed $i$, the system $\{G_{z}^{(i)} \to J_{i,j}\}_{j}$ consists of compatible weak $z$-embeddings with surjective transition maps, and for $j \gg 0$ (depending on $i$) the map $G^{(i)}_{z} \to J_{i,j}$ is in fact a \textbf{pseudo} $z$-embedding (the convergence at a finite level is due to the connectedness of $Z(G^{(i)}_{z})$). We note that the cokernels $J_{i,j}/G_{z}^{(i)}$ may not be induced (cf. Remark \ref{inducedremark}). 

For any fixed $j$, the composition $$H_{z}^{(j)} \xrightarrow{\sim} \frac{G \times Z(H_{z}^{(j)})}{Z(G)} \to \frac{G_{z}^{(i)} \times Z(H_{z}^{(j)})}{Z(G)} = J_{i,j}$$ yields a compatible system of weak $z$-embeddings $\{H_{z}^{(j)} \to J_{i,j}\}_{i}$ with surjective transition maps.

Recall that for any fixed irreducible smooth representation $\pi$ of $G(F)$ we can find $i \gg 0$ such that $\pi$ is the restriction of $\pi^{(i)}$ an irreducible representation of $G_{z}^{(i)}(F)$ and, if $\varphi_{z}^{(i)}$ is the corresponding parameter for $G_{z}^{(i)}$ and $\varphi$ the induced parameter for $G$, such that the natural map
\begin{equation*}
  \pi_{0}(S_{\varphi_{z}^{(i)}}^{+}) \to \pi_{0}(S_{\varphi}^{+})  
\end{equation*}
is an isomorphism. 

We can also find $j \gg 0$ such that $G_{z}^{(i)} \to J_{i,j}$ is a pseudo $z$-embedding, and hence $\pi^{(i)}$ extends (uniquely up to a choice of extension of central characters) to an irreducible representation $\rho^{(j)}$ of $J_{i,j}(F)$. Note that $\rho^{(j)}|_{H_{z}^{(j)}(F)}$ is automatically irreducible, since this is the case for $\rho^{(j)} |_{G(F)} = \pi$ and $G(F)$ is a subgroup of $H_{z}^{(j)}(F)$. After possibly enlarging $j$, we may assume that the induced map $\pi_{0}(S_{\varphi_{z,H}^{(j)}}^{+}) \to \pi_{0}(S_{\varphi}^{+})$ is an isomorphism for any $\varphi_{z,H}^{(j)}$ a parameter for $H_{z}^{(j)}$ lifting $\varphi$.  

After choosing appropriate rigidifcations of $\pi$, the construction of refined LLC using the system $\{G_{z}^{(i)}\}$ yields 
\begin{equation*}
\iota_{\mathfrak{w}}(\dot{\pi}) = \iota_{\mathfrak{w}_{G,z}^{(i)}}(\dot{\pi}^{(i)}) = \iota_{\mathfrak{w}^{i,j}}(\dot{\rho}^{(j)}),
\end{equation*}
($\mathfrak{w}^{i,j}$ denotes the induced Whittaker datum for $J_{i,j}$) where the second equality is due to Conjecture \ref{Maartenbis} for groups with connected center, using that we may lift the parameter $\varphi_{z}^{(i)}$ to a parameter $\varphi_{i,j}$ for $J_{i,j}$ and the induced map between centralizers
\begin{equation*}
   \pi_{0}(S_{\varphi_{i,j}}^{+}) \to \pi_{0}(S_{\varphi_{z}^{(i)}}^{+}) 
\end{equation*}
is an isomorphism because $G_{z}^{(i)} \to J_{i,j}$ is a pseudo $z$-embedding (see \cite[\S \S 5.1, 5.2]{kaletha18} for an explanation of these facts). 

 The construction of refined LLC using $\{H_{z}^{(j)}\}$ yields $\iota_{\mathfrak{w}_{H,z}^{(j)}}(\dot{\rho^{(j)}|_{H_{z}^{(j)}(F)}})$ by Lemma \ref{uniqueness}. We now obtain the desired compatibility from applying Conjecture \ref{Maartenbis} to the embedding $H_{z}^{(j)} \to J_{i,j}$, using that 
\begin{equation*}
 \pi_{0}(S_{\varphi_{i,j}}^{+}) \to \pi_{0}(S_{\varphi_{z,H}^{(j)}}^{+})
 \end{equation*}
(where $\varphi_{z,H}^{(j)}$ is the parameter of $^{L}H_{z}^{(j)}$ induced by $\varphi_{i,j}$)  is an isomorphism, since they're both (compatibly) isomorphic to $\pi_{0}(S_{\varphi}^{+})$, by construction.
\end{proof}

\begin{remark}
The connected center assumption is not necessary; Corollary \ref{fullunique} gives the strongest version of this result (cf. Remark \ref{assumprem}). The assumption is included here only for clarity of exposition. The above Proposition also holds if one uses the twisted version of the formula \eqref{limLLC} as explained in Remark \ref{weakremark2} adapted to the weaker hypotheses of Remark \ref{weakremark}.
\end{remark}

\subsection{General functoriality}\label{ConstructionFunc}
This subsection focuses on the proof of the result:

\begin{proposition}\label{funcprop1}
If Conjecture \ref{Maartenbis} (resp. \ref{Maartenbisbis}) holds for all $G_{1} \xrightarrow{\eta} G_{2}$ with connected center then, using our construction of the refined LLC for general $G$ given above, Conjecture \ref{Maartenbis} (resp. \ref{Maartenbisbis}) holds for any $G_{1} \xrightarrow{\eta} G_{2}$. 
\end{proposition}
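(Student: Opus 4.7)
The plan is to reduce to the connected-center version of the conjecture by constructing compatible good $z$-embedding systems joined by an extension of $\eta$ at each level, and then to pass to the limit \eqref{limLLC}.

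Starting from a good $z$-embedding system $\{G_2 \hookrightarrow G_{2,z}^{(i)}\}$ with each $G_{2,z}^{(i)}$ of connected center (available by Proposition \ref{weaksystemexist}), I would set
\[ G_{1,z}^{(i)} := G_1 \times^{Z(G_1)} Z(G_{2,z}^{(i)}), \]
with $Z(G_1) \to Z(G_{2,z}^{(i)})$ factoring through $\eta$ and the canonical inclusion $Z(G_2) \hookrightarrow Z(G_{2,z}^{(i)})$. The natural map $\eta^{(i)} : G_{1,z}^{(i)} \to G_{2,z}^{(i)}$, $[g,z] \mapsto \eta(g)\cdot z$, has cokernel $G_2/(\eta(G_1)\cdot Z(G_2))$ (a quotient of the abelian group $G_2/\eta(G_1)$), identifies $Z(G_{1,z}^{(i)})$ with the connected group $Z(G_{2,z}^{(i)})$, and induces an isomorphism on adjoint groups; moreover $\eta^{(i)}$ is itself injective because $\ker(\eta) \subseteq Z(G_1)$ is collapsed in the pushout defining $G_{1,z}^{(i)}$. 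A direct check using Lemma \ref{weaklem} and the analogue of Lemma \ref{shrinkingimage} for the central extension $G_{1,z}^{(i)} \to G_1$ shows that $\{G_{1,z}^{(i)}\}$ is itself a good $z$-embedding system, so each $\eta^{(i)}$ satisfies the hypotheses of the connected-center Conjecture \ref{Maartenbis} (resp.\ \ref{Maartenbisbis}).

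Next I would lift $\varphi$ via Proposition \ref{liftparam} to a compatible family $\varphi_2^{(i)}$ of parameters for $\prescript{L}{}G_{2,z}^{(i)}$ and set $\varphi_1^{(i)} := \prescript{L}{}\eta^{(i)} \circ \varphi_2^{(i)}$. Corollary \ref{keycor} applied separately to the two systems gives, for $i \gg 0$, natural isomorphisms
\[ \pi_0(S_{\varphi_2^{(i)}}^+) \xrightarrow{\sim} \pi_0(S_\varphi^+), \qquad \pi_0(S_{\varphi_1^{(i)}}^+) \xrightarrow{\sim} \pi_0(S_{\prescript{L}{}\eta \circ \varphi}^+), \]
so the hypothesized bijectivity at the $G_1$, $G_2$ level forces $\pi_0(S_{\varphi_2^{(i)}}^+) \to \pi_0(S_{\varphi_1^{(i)}}^+)$ to be a bijection for $i$ large. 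Following the construction of $\iota_{\mathfrak{w}, G_2}$, one chooses an irreducible rigid representation $\dot\rho_2^{(i)}$ of $G_{2,z}^{(i)}$ lifting the target representation $\iota_{\mathfrak{w}, G_2}^{-1}(\varphi,\rho)$ and satisfying $\iota_{\mathfrak{w}_2^{(i)}, G_{2,z}^{(i)}}(\dot\rho_2^{(i)}) = \iota_{\mathfrak{w}, G_2}(\dot\pi_2)$ on the nose. Its pullback $\dot\rho_2^{(i)} \circ (\eta^{(i)})'$ restricts along $G_{1,z}^{(i)}(F) \to G_1'(F)$ to $\iota_{\mathfrak{w}, G_2}^{-1}(\varphi, \rho) \circ \eta'$, which is irreducible by hypothesis, whence the pullback itself is irreducible.

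Applying the connected-center Conjecture \ref{Maartenbis} (resp.\ \ref{Maartenbisbis}) to each $\eta^{(i)}$ then delivers the stated identity at level $i$, and taking the direct limit via \eqref{limLLC} — well-defined by Lemma \ref{Spluslem} and independent of choices by Proposition \ref{canonLLC} — yields the identity at the level of $G_1$ and $G_2$. The principal obstacle is the first step: verifying that the pushout construction yields a good $z$-embedding system for $G_1$ (in particular, weak $z$-embedding condition (3) for \emph{all} inner forms of $G_1$ and the vanishing of the limit connecting homomorphism from Definition \ref{goodsysdef}), and confirming that $\eta^{(i)}$ satisfies precisely the centralizer-compatibility hypothesis required to apply the connected-center conjecture at each level. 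Once these structural points are settled the remainder is a formal direct-limit manipulation, with irreducibility supplied by Theorem \ref{Silberger} and centralizer compatibilities by Corollary \ref{keycor} and Lemma \ref{Spluslem}.
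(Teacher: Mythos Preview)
Your overall strategy—build compatible connected-center systems over $G_1$ and $G_2$, apply the connected-center conjecture levelwise, then take the limit \eqref{limLLC}—is exactly the paper's approach. The gap is in your specific construction of the system for $G_1$.

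You start from a good $z$-embedding system $\{G_{2,z}^{(i)}\}$ for $G_2$ and set $G_{1,z}^{(i)} := G_1 \times^{Z(G_1)} Z(G_{2,z}^{(i)})$. The cokernel of the map $G_1 \to G_{1,z}^{(i)}$ (or rather of its image $\eta(G_1)$, since $\ker\eta$ is collapsed) is $Z(G_{2,z}^{(i)})/\eta(Z(G_1))$, which sits in
\[
1 \to Z(G_2)/\eta(Z(G_1)) \to Z(G_{2,z}^{(i)})/\eta(Z(G_1)) \to C^{(i)} \to 1.
\]
The left-hand torus $Z(G_2)/\eta(Z(G_1))$ can have nontrivial $H^1(F,-)$, so condition (2) of Definition \ref{weakdef} can fail for $G_1 \to G_{1,z}^{(i)}$. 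Concretely: take $G_1 = T$ a torus, $G_2 = T \times S$ with $\eta$ the inclusion, and $G_{2,z}^{(i)} = G_2$ (constant, since $Z(G_2)$ is already connected). Then $G_{1,z}^{(i)} = T \times S$ and the cokernel is $S$, which need not satisfy $H^1(F,S)=0$. So your ``direct check'' does not go through, and $\{G_{1,z}^{(i)}\}$ is not in general a good $z$-embedding system for $G_1$; you then cannot invoke Proposition \ref{canonLLC} to identify the limit with $\iota_{\mathfrak{w},G_1}$.

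The paper avoids this by reversing the direction: it takes a good $z$-embedding system $\{G_1 \to H_z^{(i)}\}$ from Proposition \ref{weaksystemexist} and \emph{pushes forward} along $\eta$ to $\bar H_z^{(i)} := H_z^{(i)} \times^{G_1} G_2$. The point is that the cokernel $\bar H_z^{(i)}/G_2$ is then exactly $H_z^{(i)}/G_1 = C^{(i)}$, the same induced torus with vanishing $H^1$, so condition (2) is automatic and the remaining conditions follow from the factorization of the connecting homomorphism through the one for $G_1$. The paper then checks $Z(\bar H_z^{(i)})$ is connected by computing $Z(G_2)/\eta(Z(G_1))$ to be connected, and proceeds exactly as you outline for the remainder. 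So the fix is simply to swap which side you start from.
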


\begin{proof}
We prove this for Conjecture \ref{Maartenbis}---the version for Conjecture \ref{Maartenbisbis} follows from an identical argument. Fix an $L$-parameter $\varphi$ for $^{L}G_{2}$, as well as a Whittaker datum $\mathfrak{w}$ for $G_{2}$ which induces a Whittaker datum, also denoted by $\mathfrak{w}$, for $G_{1}$. We also fix a rigid inner twist $(G_{1}',\mathcal{T}, \bar{h})$ of $G_{1}$ with underlying inner form $G_{1}'$ and $\rho \in \mathrm{Irr}(\pi_{0}(S_{\varphi}^{+}), \mathcal{T} \times^{G_{1}} G_{2})$. 

Construct a good $z$-embedding system (with connected centers) from Proposition \ref{weaksystemexist} for $G_{1}$, denoted by $\{G_{1} \to H_{z}^{(i)}\}$, along with the usual associated notation. Form the system of embeddings $\{G_{2} \to \bar{H}_{z}^{(i)}\}$, where $\bar{H}_{z}^{(i)} := H_{z}^{(i)} \times^{G_{1}} G_{2} = H_{z}^{(i)} \times^{Z(G_{1})} Z(G_{2})$. 

We claim that $Z(\bar{H}_{z}^{(i)})$ is connected; since $Z(H_{z}^{(i)})$ is connected, it suffices to show that $Z(G_{2})/\eta(Z(G_{1}))$ is connected. Replacing $G_{1}$ by $\eta(G_{1})$ immediately reduces to the case where $\eta$ is an inclusion, where it induces an equality on derived groups, and hence we have (using the central isogeny decomposition)
\begin{equation*}
\frac{Z(G_{2})}{Z(G_{1})} = \frac{Z(G_{2})^{\circ} \times^{Z(G_{2})^{\circ} \cap G_{2,\mathrm{der}}} G_{2,\mathrm{der}}}{Z(G_{1})^{\circ} \times^{Z(G_{1})^{\circ} \cap G_{2,\mathrm{der}}} G_{2,\mathrm{der}}} \cong \frac{Z(G_{2})^{\circ}/(Z(G_{2})^{\circ} \cap G_{2,\mathrm{der}})}{Z(G_{1})^{\circ}/(Z(G_{1})^{\circ} \cap G_{2,\mathrm{der}})},
\end{equation*}
and the right-most term is connected, giving the claim.

By construction, we have 
\begin{equation*}
\frac{\bar{H}_{z}^{(i)}}{G_{2}} = \frac{H_{z}^{(i)}}{G_{1}} = C^{(i)},
\end{equation*}
which we explicitly described (along with the induced transition maps) in \S \ref{ConstructionEmbeddings}. In particular, we have for any $j \geq 1$ the usual factorization of the connecting homomorphism (using an analogue of the diagram \eqref{connhomfacdiag})
\begin{equation}\label{funcfac}
    C^{(j)}(F) \to C^{(1)}(F) \to H^{1}(F, Z(G_{2}));
\end{equation}
as we range over our cofinal system of finite Galois extensions of $F$, we also get a cofinal system of finite separable extensions of $K_{1}$ (where $K_{1}$ is from the first weak embedding pair as in Proposition \ref{weaksystemexist} for our choice of good $z$-embedding system) and so, from the explicit description of $C^{(j)}(F) \to C^{(1)}(F)$ in terms of field norms, we see that the system $\{G_{2} \to \bar{H}_{z}^{(i)}\}$ satisfies conditions (1) and (2) of Definition \ref{goodsysdef}. Thus, to verify that $\{G_{2} \to \bar{H}_{z}^{(i)}\}$ is a good $z$-embedding system we only need to check that (for $j \gg 0$) the map $G_{2} \to \bar{H}_{z}^{(j)}$ is actually a weak $z$-embedding, which also follows easily from the factorization \eqref{funcfac}.

For each $i$ the map $H_{z}^{(i)} \xrightarrow{\eta_{i}} \bar{H}_{z}^{(i)}$ satisfies the hypotheses of Conjecture \ref{Maartenbis} and both groups have connected center. Denote by $\{\bar{\varphi}_{z}^{(i)}\}$ a compatible system of lifts of $\varphi$ to each $^{L}\bar{H}_{z}^{(i)}$ and choose $j \gg 0$ so that the natural maps (for $k \geq j$)
\begin{equation*}
\pi_{0}(S_{\bar{\varphi}_{z}^{(k)}}^{+}) \to \pi_{0}(S_{\varphi}^{+})
\end{equation*}
are isomorphisms, and such that the same holds for $\bar{\varphi}_{z}^{(k)}$, $\varphi$ replaced by $\prescript{L}{}\eta_{k} \circ \bar{\varphi}_{z}^{(k)}$, and $^{L}\eta \circ \varphi$. We have for any $\rho_{k} \in \mathrm{Irr}(\pi_{0}(S_{\bar{\varphi}_{z}^{(k)}}^{+}), \mathcal{T}^{(k)} \times^{H_{z}^{(k)}} \bar{H}_{z}^{(k)})$ that $\iota_{\mathfrak{w},\bar{H}_{z}^{(k)}}^{-1}(\bar{\varphi}_{z}^{(k)}, \rho_{k}) \circ \eta_{k}'$ is irreducible for $k \gg 0$, since $(\varinjlim_{k} \iota_{\mathfrak{w},\bar{H}_{z}^{(k)}}^{-1}(\bar{\varphi}_{z}^{(k)}, \rho_{k}) \circ \eta_{k}')|_{G_{1}(F)} = \iota_{\mathfrak{w},G_{2}}^{-1}(\rho) \circ \eta'$ is. We thus have by assumption that the representation $((\mathcal{T}^{(k)}, \bar{h}_{z}^{(k)}), \iota_{\mathfrak{w},\bar{H}_{z}^{(k)}}^{-1}(\bar{\varphi}_{z}^{(k)}, \rho_{k}) \circ \eta_{k}')$ lies in the compound $L$-packet $\Pi_{\prescript{L}{}\eta_{k} \circ \bar{\varphi}_{z}^{(k)}}^{Z'}$. Moreover, when $\pi_{0}(S_{\varphi}^{+}) \to \pi_{0}(S_{\prescript{L}{}\eta \circ \varphi}^{+})$ is an isomorphism, so is each $\pi_{0}(S_{\bar{\varphi}_{z}^{(k)}}^{+}) \to \pi_{0}(S_{\prescript{L}{}\eta \circ \bar{\varphi}_{z}^{(k)}}^{+})$, and hence by assumption
\begin{equation*}
((\mathcal{T}^{(k)}, \bar{h}_{z}^{(k)}), \iota_{\mathfrak{w},\bar{H}_{z}^{(k)}}^{-1}(\bar{\varphi}_{z}^{(k)}, \rho_{k}) \circ \eta_{k}') = \iota_{\mathfrak{w}, H_{z}^{(k)}}^{-1}(\prescript{L}{}\eta_{k} \circ \bar{\varphi}_{z}^{(k)}, \rho_{k} \circ \prescript{L}{}\eta).
\end{equation*}

We obtain immediately obtain the desired result by applying the construction of LLC for $G_{1}$ and $G_{2}$ given in \S \ref{rigidcomp} (using the good $z$-embedding systems $\{H_{z}^{(i)}\}$ and $\{\bar{H}_{z}^{(i)}\}$, respectively, via Proposition \ref{canonLLC}).
\end{proof}

The analogous result holds for the LLC from Remark \ref{weakremark2} using the same argument.

\begin{corollary}\label{fullunique}
Proposition \ref{canonLLC} holds for any good $z$-embedding system $\{G_{z}^{(i)}\}$ (not necessarily with connected centers).
\end{corollary}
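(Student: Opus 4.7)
The plan is to deduce the corollary directly from the generalized functoriality provided by Proposition \ref{funcprop1}. Let $\{G \to G_z^{(i)}\}$ be a good $z$-embedding system in which the centers $Z(G_z^{(i)})$ are allowed to be disconnected, and let $\iota_{\mathfrak{w}}^{\{G_z^{(i)}\}}$ denote the map produced by applying the direct-limit construction \eqref{limLLC} to this system. My first step is to check that this construction makes sense verbatim: the LLC for each $G_z^{(i)}$ is itself unambiguously defined by Proposition \ref{canonLLC} applied internally to $G_z^{(i)}$ (using any good $z$-embedding system of $G_z^{(i)}$ with connected centers, which exists by Proposition \ref{weaksystemexist}), and the supporting results --- Lemma \ref{Spluslem}, Corollary \ref{keycor}, Lemma \ref{keycor1}, and Lemma \ref{uniqueness} --- rely only on the good $z$-embedding structure and not on any connectedness assumption, so they apply without modification.

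Next I would compare $\iota_{\mathfrak{w}}^{\{G_z^{(i)}\}}$ to the canonical LLC $\iota_{\mathfrak{w}}^{\mathrm{can}}$ for $G$ defined via a good $z$-embedding system with connected centers (whose independence of that choice is Proposition \ref{canonLLC} in its original form). Fix $\dot{\pi} = ((\mathcal{T}, \bar{h}), \pi)$ and lift $\pi$ to an irreducible representation $\rho_j$ of $(G')_z^{(j)}(F)$ for $j \gg 0$ as in \S\ref{rigidcomp}. By definition, $\iota_{\mathfrak{w}}^{\{G_z^{(i)}\}}(\dot\pi)$ is obtained by applying the LLC for $G_z^{(j)}$ to the rigidified representation $\dot\rho_j$ and transporting the resulting character of $\pi_0(S^+_{\varphi_z^{(j)}})$ through the isomorphism $\pi_0(S^+_{\varphi_z^{(j)}}) \xrightarrow{\sim} \pi_0(S^+_{\varphi})$ guaranteed by Corollary \ref{keycor}.

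Now I would invoke Proposition \ref{funcprop1} --- which provides Conjecture \ref{Maartenbis} for arbitrary central embeddings with abelian cokernel, with no connectedness hypothesis on source or target --- applied to the weak $z$-embedding $G \hookrightarrow G_z^{(j)}$. Its hypotheses are met for $j \gg 0$: the cokernel $C^{(j)}$ is a torus, the induced torsor map is bijective by Lemma \ref{weaklem}, and the centralizer map $\pi_0(S^+_{\varphi_z^{(j)}}) \to \pi_0(S^+_{\varphi})$ is an isomorphism. The conclusion reads precisely as the equality $\iota_{\mathfrak{w}}^{\mathrm{can}}(\dot\pi) = \iota_{\mathfrak{w}}^{\{G_z^{(i)}\}}(\dot\pi)$ via the transported character. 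Taking the direct limit in $j$ (which is eventually constant by Lemma \ref{Spluslem}) concludes the argument.

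The main obstacle I anticipate is purely bookkeeping: one must carefully track the rigidifications $(\mathcal{T}^{(j)}, \bar{h}^{(j)})$ produced by Lemma \ref{transfertwist}, the induced Whittaker data, and the chain of $\pi_0(S^+)$-identifications along the various central embeddings, and verify that these align across the two constructions. But this alignment is precisely what the construction of \S\ref{rigidcomp} is designed to guarantee, so no new input is required beyond Proposition \ref{funcprop1} itself; the original connected-center hypothesis in Proposition \ref{canonLLC} was exactly the obstacle that the stronger functoriality lifts.
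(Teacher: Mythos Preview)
Your proposal is correct and matches the paper's approach: the paper states the corollary immediately after Proposition \ref{funcprop1} with no proof, treating it as a direct consequence of the general functoriality just established (cf.\ also Remark \ref{assumprem}). You have simply spelled out the intended deduction --- applying Proposition \ref{funcprop1} to the embedding $G \hookrightarrow G_{z}^{(j)}$ for $j \gg 0$ --- and correctly verified the hypotheses (irreducibility of the restriction, bijectivity of the torsor map via Lemma \ref{weaklem}, and the $\pi_{0}(S^{+})$-isomorphism via Corollary \ref{keycor}).
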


\section{Endoscopy}\label{Endoscopy}
For $G$ a quasi-split connected reductive group we fix for once and for all $\{G \to G_{z}^{(i)}\}$ a system of weak $z$-embeddings as constructed in Proposition \ref{weaksystemexist} (in particular,  it is a good $z$-embedding system and each $Z(G_{z}^{(i)})$ is connected). To facilitate comparison with the refined isocrystal LLC it is necessary to define a system of finite central subgroups as follows: We set $Z_{n}$ to be the preimage of $(Z(G)/Z_{\text{der}})[n]$ in $Z(G)$, take $G_{n} := G/Z_{n}$ and $G^{(i)}_{z,n} := G_{z}^{(i)}/Z_{n}$. Note that each $G_{z,n} \to G_{z,n}^{(i)}$ is still a weak $z$-embedding, since $G_{z,n}^{(i)}/G_{n} = C^{(i)}$ and the connecting homomorphism $C^{(i)}(F) \to H^{1}(F, G_{n})$ factors through $C^{(i)}(F)\xrightarrow{0} H^{1}(F, G)$ (and the situation is completely analogous for each inner twist $G'$). Moreover, the system $\{G_{n} \to G_{z,n}^{(i)}\}$ is of the type constructed in Proposition \ref{weaksystemexist}; more precisely, each $(T_{0}/Z_{n}, K_{i})$ is a weak embedding pair for $G_{n}$, and the weak $z$-embedding $G_{n} \to G_{z,n}^{(i)}$ can be canonically identified with the one obtained from $G_{n}$ and $(T_{0}/Z_{n}, K_{i})$ as explained in \S \ref{ConstructionEmbeddings}. Set $\bar{G} = \varinjlim_{n} G_{n}$, $\widehat{\bar{G}} = \varprojlim_{n} \widehat{G_{n}}$, $\bar{G}^{(i)}_{z} = \varinjlim G^{(i)}_{z,n}$, and $\widehat{\bar{G}^{(i)}_{z}} = \varprojlim \widehat{G_{z,n}^{(i)}}$. 

\subsection{Systems of endoscopic data}\label{EndoscopySystems}

Let $\dot{\mathfrak{e}} = (H, \mathcal{H}, \dot{s}, \xi)$ be a refined endoscopic datum for $G$; as in \cite[\S 5.1.3]{kaletha18}, our goal is to construct a compatible system of refined endoscopic data $\{ \dot{\mathfrak{e}}_{z}^{(j)}\}$ for all $j \gg 0$, where, as the notation suggests, $\dot{\mathfrak{e}}_{z}^{(j)}$ is a refined endoscopic datum for $G_{z}^{(j)}$. This construction will allow us to study the endoscopy of $G$ using the endoscopy for the system $\{G_{z}^{(j)}\}$. 

For any $i$ and $n$ fixed note that there is a short exact sequence 
\begin{equation*}
0 \to \widehat{C^{(i)}}^{\Gamma} \to Z(\widehat{G_{z,n}^{(i)}})^{+} \to Z(\widehat{G_{n}})^{+} \to 0
\end{equation*}
(cf. the proof of Corollary \ref{keycor} for the ``$+$"-aspect of this sequence). The surjectivity in the above short exact sequence still holds in the weak $z$-embedding setting, since it only relies on the surjectivity of $G_{z}^{(i)}(F) \to C^{(i)}(F)$, cf. the proof of \cite[Lemma 5.10]{kaletha18}, which is one of the properties that carries over in the weak case. This surjectivity extends to the inverse limits over $n$, since the kernel of the above map is constant (so the relevant derived inverse limit vanishes), giving a lift $\dot{s}_{z}^{(i)}$ of $\dot{s}$. The image of $\dot{s}_{z}^{(i)}$ in $\varinjlim_{j \geq i} \widehat{\bar{G}_{z}^{(j)}}$ automatically gives such a lift for any $j$, so we denote the resulting direct limit of refined data simply by $\dot{s}_{z}$. Note that one could take $i=1$ to find such a lift, but allowing for general $i$ gives more flexibility that will be used later.

Recall that by definition $\mathcal{H}$ is a split extension of $W_{F}$ by $\widehat{H} \hookrightarrow \mathcal{H}$, and we have $\xi(\mathcal{H}) = Z_{\widehat{G}}(s)^{\circ}$, where $s$ is the image of $\dot{s}$ in $\widehat{G}$. Moreover, if we define $\mathcal{H}_{z}^{(i)}$ as the pullback of $\mathcal{H} \to \prescript{L}{}G$ and $^{L}G_{z}^{(i)} \to \prescript{L}{}G$, then we have a natural injection $\widehat{C^{(i)}} \to \mathcal{H}_{z}^{(i)}$ realizing the left-hand group as the kernel of the surjection $\mathcal{H}_{z}^{(i)} \to \mathcal{H}$. Denote the other projection by $$\mathcal{H}_{z}^{(i)} \xrightarrow{\xi_{z}^{(i)}} \prescript{L}{}G_{z}^{(i)}.$$
Also define $$H_{z}^{(i)} := H \times^{Z(G)} Z(G_{z}^{(i)}),$$ which is well-defined since $Z(G)$ is canonically embedded in any endoscopic group $H$, and set $$\dot{\mathfrak{e}}_{z}^{(i)} := (H_{z}^{(i)}, \mathcal{H}_{z}^{(i)}, \dot{s}_{z}, \xi_{z}^{(i)}).$$

As the notation suggests, we now show:

\begin{proposition}\label{liftend} For all $j \gg i$ (recall that $i$ is fixed), the following facts hold: The tuple $\dot{\mathfrak{e}}_{z}^{(j)}$ is a refined endoscopic datum for $G_{z}^{(j)}$, and $\{H \to H_{z}^{(k)}\}_{k \geq j}$ is a good $z$-embedding system.
\end{proposition}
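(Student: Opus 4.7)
My plan is to verify the axioms of a refined endoscopic datum for $\dot{\mathfrak{e}}_{z}^{(j)}$ and then the axioms of a good $z$-embedding system for $\{H \to H_{z}^{(k)}\}$, in both cases reusing the eventual-stabilization technology developed in \S \ref{ConstructionDual} and the structural properties of the system $\{G \to G_{z}^{(i)}\}$ from Proposition \ref{weaksystemexist}.

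For the first assertion, the group $H_{z}^{(j)} = H \times^{Z(G)} Z(G_{z}^{(j)})$ is a connected reductive $F$-group because it is a pushout of connected reductives along a central torus embedding, and one has $H_{z}^{(j)}/H = Z(G_{z}^{(j)})/Z(G) = C^{(j)}$, the induced torus appearing in $G_{z}^{(j)}/G$. Since $H^{1}(F, C^{(j)})=0$ and $H$ is quasi-split, so is $H_{z}^{(j)}$. The pullback definition of $\mathcal{H}_{z}^{(j)}$ identifies its ``dual group part" $\widehat{H} \times_{\widehat{G}} \widehat{G_{z}^{(j)}}$ with $\widehat{H_{z}^{(j)}}$ via duality of the central extension $H \hookrightarrow H_{z}^{(j)}$; splitness of the extension $\mathcal{H}_{z}^{(j)} \to W_{F}$ follows from combining a splitting of $\mathcal{H} \to W_{F}$ with the canonical splitting of $^{L}G_{z}^{(j)}$ and adjusting (if necessary) by a $\widehat{C^{(j)}}$-valued cochain of $W_{F}$, as in the argument in \cite[\S 5.1.3]{kaletha18}.

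The delicate point is the centralizer identification $\xi_{z}^{(j)}(\widehat{H_{z}^{(j)}}) = Z_{\widehat{G_{z}^{(j)}}}(\dot{s}_{z})^{\circ}$; this is where the condition $j \gg i$ is used. Let $P^{(j)} \subset \widehat{G_{z}^{(j)}}$ denote the preimage of $Z_{\widehat{G}}(s)^{\circ} = \xi(\widehat{H})$ under $\widehat{G_{z}^{(j)}} \twoheadrightarrow \widehat{G}$. Since $\widehat{C^{(j)}}$ is a connected torus, $P^{(j)}$ is connected, and by construction $P^{(j)} = \xi_{z}^{(j)}(\widehat{H_{z}^{(j)}})$. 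The inclusion $Z_{\widehat{G_{z}^{(j)}}}(\dot{s}_{z})^{\circ} \subseteq P^{(j)}$ is automatic. For the reverse inclusion, note that centrality of $\widehat{C^{(j)}}$ in $\widehat{G_{z}^{(j)}}$ makes $g \mapsto [g, \dot{s}_{z}]$ an algebraic group homomorphism $P^{(j)} \to \widehat{C^{(j)}}$ with kernel $Z_{\widehat{G_{z}^{(j)}}}(\dot{s}_{z})$, so it suffices to show this homomorphism is trivial for $j$ sufficiently large. Running parallel to Proposition \ref{limsurj}, the corresponding character of $C^{(j)}(F)$ factors through the profinite quotient $\overline{C^{(j)}(F)}$ (using the duality for $W_{F}$-crossed modules from Appendix \ref{Braided}), and then Remark \ref{shrinkingimagebis} ensures that the norm transition maps $C^{(k)}(F) \to C^{(j)}(F)$ eventually kill such a character for $k \geq j \gg 0$; passing to the inverse limit gives the desired triviality.

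For the second assertion, I verify the three conditions in Definitions \ref{weakdef} and \ref{goodsysdef}. The quotient $H_{z}^{(k)}/H = C^{(k)}$ is induced so $H^{1}(F, C^{(k)})=0$, giving conditions (1) and (2) of Definition \ref{weakdef}. For condition (3), the crucial observation is that $Z(G) \subseteq Z(H)$ (by definition of an endoscopic datum) and this inclusion is preserved under inner twisting, so for any inner twist $H'$ of $H$, the connecting homomorphism $C^{(k)}(F) \to H^{1}(F, Z(H'))$ factors through $C^{(k)}(F) \to H^{1}(F, Z(G))$; composing with $H^{1}(F, Z(H')) \to H^{1}(F, H')$, injectivity of $H^{1}(F, H') \to H^{1}(F, H'_{z})$ follows from triviality of this composition for $k$ large, which is guaranteed by the analogous property of the system $\{G_{z}^{(k)}\}$. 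Condition (1) of Definition \ref{goodsysdef} is clear since $Z(G_{z}^{(k+1)}) \to Z(G_{z}^{(k)})$ is surjective. Condition (2) follows from the same factoring argument: $\varprojlim_{k} C^{(k)}(F) \to H^{1}(F, Z(H))$ factors through the triviality of $\varprojlim_{k} C^{(k)}(F) \to H^{1}(F, Z(G))$.

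The main obstacle is the centralizer identification, as it requires translating the stabilization machinery of Proposition \ref{limsurj} (phrased for $L$-parameter centralizers) into a statement about the commutator homomorphism attached to $\dot{s}_{z}$; everything else reduces to bookkeeping with the structural properties already established for the system $\{G_{z}^{(i)}\}$.
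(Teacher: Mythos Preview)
You have misidentified the step that forces $j \gg 0$ in the verification that $\dot{\mathfrak{e}}_{z}^{(j)}$ is a refined endoscopic datum, and in doing so you have left out the step that actually requires it.

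The centralizer identification $\xi_{z}^{(j)}(\widehat{H_{z}^{(j)}}) = Z_{\widehat{G_{z}^{(j)}}}(s_{z}^{(j)})^{\circ}$ holds for \emph{every} $j$, not just large ones. Your own setup shows why: the commutator map $g \mapsto [g, s_{z}^{(j)}]$ from $P^{(j)}$ lands in $\widehat{C^{(j)}} \cap (\widehat{G_{z}^{(j)}})_{\tn{der}}$, which is finite (a central torus meets the derived group in a finite group), and $P^{(j)}$ is connected, so the map is trivial. There is no need to invoke characters of $C^{(j)}(F)$ here, and in fact your claim that this algebraic homomorphism ``corresponds'' to such a character via the machinery of Proposition \ref{limsurj} does not make sense---the commutator is a purely algebraic object with no Weil group in sight.

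What actually requires $j \gg 0$ is the condition $s_{z}^{(j)} \in Z(\widehat{H_{z}^{(j)}})^{\Gamma}$, i.e.\ $\Gamma$-invariance of the lifted semisimple element as an element of the center of the endoscopic dual group. This is a genuine obstruction: one has the exact sequence
\[
0 \to (\widehat{C^{(j)}})^{\Gamma} \to Z(\widehat{H_{z}^{(j)}})^{\Gamma} \to Z(\widehat{H})^{\Gamma} \to H^{1}(W_{F}, \widehat{C^{(j)}}),
\]
and the image of $s$ in $H^{1}(W_{F}, \widehat{C^{(j)}})$ need not vanish for small $j$. The paper shows it does vanish for $j \gg 0$ by the same stabilization argument as in Proposition \ref{limsurj} (now applied with $Z(\widehat{H})$ in place of $S_{\varphi}$), after which the $(\widehat{C^{(j)}})^{\Gamma}$-torsor structure on the fibers pins down $s_{z}^{(j)}$ itself. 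Your proposal does not address this at all.

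A smaller point: in your verification that $H \to H_{z}^{(k)}$ is a weak $z$-embedding, saying the triviality of $C^{(k)}(F) \to H^{1}(F, H')$ is ``guaranteed by the analogous property of the system $\{G_{z}^{(k)}\}$'' is too vague. The system $\{G_{z}^{(k)}\}$ only tells you about $H^{1}(F, G')$, not $H^{1}(F, H')$. The paper instead uses that $H^{1}(F, H')$ is finite, so the kernel of $C^{(i)}(F) \to H^{1}(F, H')$ is open, and then local class field theory forces the norm image of $C^{(k)}(F)$ into that kernel for $k \gg 0$.
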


We remark that this is the stage where we need the more general definition of a weak $z$-embedding, since $Z(H_{z}^{(j)})$ need not be connected. Note also that there is a subtle difference from the mixed characteristic case, wherein the maps $H \to H_{z}^{(j)}$ could actually fail to be weak $z$-embeddings if $j \geq i$ is too small, and the tuple $\dot{\mathfrak{e}}_{z}^{(j)}$ could fail to give a refined endoscopic datum (because, as we will see, the element $\dot{s}_{z}^{(j)}$ could fail to lie in $Z(\widehat{H_{z}^{(j)}})^{+}$ if $j$ is too small).

\begin{proof} We have the short exact sequence 
\begin{equation*}
1 \to H \to H_{z}^{(j)} \to C^{(j)} \to 1,
\end{equation*}
and so the only thing to show for the weak $z$-embedding claim is the injectivity on $H^{1}$ for $j \gg 0$ (for the group $H$ and also its inner forms). We have the exact sequence
$$H_{z}^{(i)}(F) \to C^{(i)}(F) \to H^{1}(F, H) \to H^{1}(F, H_{z}^{(i)}) \to 0,$$ where the last map is the one in question, which may not be injective. 
Recall also the exact sequence $$Z(G_{z}^{(j)})(F) \to C^{(j)}(F) \to H^{1}(F, Z(G)) \to H^{1}(F, Z(G_{z}^{(j)})),$$ which we studied at length in the previous section. 
Note that the connecting homomorphism $C^{(j)}(F) \to H^{1}(F, H)$ factors through the above connecting homomorphism to $H^{1}(F, Z(G))$, since the lift of any $x \in C^{(j)}(F)$ may be taken in $Z(G_{z}^{(j)})(\overline{F})$, and hence also through the composition $$C^{(j)}(F) \xrightarrow{N_{K_{j}/K_{i}}} C^{(i)}(F) \to H^{1}(F, Z(G)) \to H^{1}(F, H)$$ (where the middle map is the connecting homomorphism for the same sequence with $j=i$ and we are picking a splitting of $T_{0}$ as in \S \ref{ConstructionEmbeddings} to make sense of the norm maps).

From here we can use a familiar argument: The kernel of $C^{(i)}(F) \to H^{1}(F, H)$ is an open subgroup since the right-hand set is finite, and hence for $j \gg 0$ the connecting homomorphism $C^{(j)}(F) \to H^{1}(F, H)$ vanishes, giving the desired injectivity (after twisting). Applying the identical argument for each inner twist $H'$ (and taking the maximum of the $j$'s) shows that $H \to H_{z}^{(k)}$ is a weak $z$-embedding for $k \gg 0$. To see that this gives a good $z$-embedding system, we note that are obvious surjective transition maps between these groups compatible with the embedding of $H$, and the remaining condition of Definition \ref{goodsysdef} (condition (2)) is a straightforward verification. 

It remains to show that each $\dot{\mathfrak{e}}_{z}^{(j)}$ is a refined endoscopic datum for $j \gg 0$. As in the proof of \cite[Lemma 5.14]{kaletha18}, we may assume that $\mathcal{H} \subset \prescript{L}{}G$ and $\xi$ is the inclusion, which means the analogue is also true for each $\mathcal{H}_{z}^{(j)}$, $\prescript{L}{}G_{z}^{(j)}$, and $\xi_{z}^{(j)}$. It is clear that each $H_{z}^{(j)}$ is quasi-split because $H$ is, and the identical argument in the mixed characteristic case shows that $Z_{\widehat{G_{z}^{(j)}}}(s_{z}^{(j)})^{\circ}$ is a dual group for $H_{z}^{(j)}$, where $s_{z}^{(j)}$ is the image of $\dot{s}_{z}$ in $\widehat{G_{z}^{(j)}}$, and that $\mathcal{H}_{z}^{(j)}$ is an extension of $\widehat{H^{(j)}_{z}}$ by $W_{F}$. Moreover, the identifications $\widehat{H_{z}^{(j)}}\xrightarrow{\sim} Z_{\widehat{G_{z}^{(j)}}}(s_{z}^{(j)})^{\circ}$ may be chosen so that each map $\widehat{H_{z}^{(j)}} \to \widehat{H_{z}^{(k)}}$ (for $k \geq j$) induced by $\widehat{G_{z}^{(j)}} \to \widehat{G_{z}^{(k)}}$ is dual to the projection $H_{z}^{(k)} \to H_{z}^{(j)}$. The extensions can also be chosen to be compatible with the dual maps $\widehat{H_{z}^{(j)}} \to \widehat{H_{z}^{(k)}}$ and the maps $\mathcal{H}^{(j)}_{z} \to \mathcal{H}^{(k)}_{z}$ induced by pulling back $\widehat{G_{z}^{(j)}} \to \widehat{G_{z}^{(k)}}$.

We now show that the extension $0 \to \widehat{H^{(j)}_{z}} \to \mathcal{H}_{z}^{(j)} \to W_{F} \to 0$ obtained in the above paragraph is (compatibly as $j$ varies) split. Composing a splitting of the extension $\mathcal{H}$ with $\xi$ gives an $L$-homomorphism $W_{F} \xrightarrow{a} \prescript{L}{}G$, which by Proposition \ref{liftparam} (whose proof works for general $L$-homomorphisms) lifts to an $L$-homomorphism $W_{F} \to \prescript{L}{}G_{z}^{(j)}$ (we can choose such a lift for $j=i$ and then take the lift for $j \geq i$ to be the one given by composing). Since the image of $a$ lies in $\xi(\mathcal{H})$, the image of this lift lies in the preimage of $\mathcal{H}$ in $\prescript{L}{}G_{z}^{(j)}$, which is exactly $\xi_{z}^{(j)}(\mathcal{H}_{z}^{(j)})$. Since $\xi_{z}^{(j)}$ is an isomorphism of topological groups onto its image, postcomposing this extension of $a$ with $(\xi_{z}^{(j)})^{-1}$ yields is a continuous $L$-homomorphism from $W_{F}$ to $\mathcal{H}_{z}^{(j'}$ splitting our original extension. Using the same extension of $a$ for each $k \geq j$ gives the desired compatible sequence of splittings. 

The only step in showing that $\dot{\mathfrak{e}}_{z}^{(j)}$ is an endoscopic datum where one needs to move higher up in the system is showing that $s_{z}^{(k)} \in Z(\widehat{H_{z}^{(k)}})^{\Gamma}$ for $k \gg i$. Recall that $s_{z}^{(k)}$ is a lift of $s \in Z(\widehat{H})^{\Gamma}$, and we have the exact sequence $0 \to \widehat{C}^{(k)} \to Z(\widehat{H^{(k)}_{z}}) \to Z(\widehat{H}) \to 0$, which further yields another exact sequence
\begin{equation*}
0 \to (C^{(k)})^{\Gamma} \to Z(\widehat{H_{z}^{(k)}})^{\Gamma} \to  Z(\widehat{H})^{\Gamma} \to H^{1}(W_{F}, \widehat{C}^{(k)}).
\end{equation*}

The identical argument as in the proof of Proposition \ref{limsurj} (which does not use the connectedness of $Z(G_{z}^{(i)})$) shows that $s$ lies in the image of $Z(\widehat{H_{z}^{(k)}})^{\Gamma}$ for all $k \gg 0$, and since the fibers of $Z(\widehat{H_{z}^{(k)}})^{\Gamma} \to  Z(\widehat{H})^{\Gamma}$ are $(C^{(k)})^{\Gamma}$-torsors, we get that $s_{z}^{(k)} \in Z(\widehat{H_{z}^{(k)}})^{\Gamma}$ for $k \gg 0$, completing this step.

The final verification required is that the action of $W_{F}$ on $\widehat{H^{(k)_{z}}}$ induced by the extension $\mathcal{H}_{z}^{(k)}$ agrees with the standard $L$-group structure induced by $H^{(k)}_{z}$, which can be taken verbatim from the mixed characteristic case. Thus, $\dot{\mathfrak{e}}_{z}^{(k)}$ is a refined endoscopic datum for $k \gg 0$. 
\end{proof}

We re-emphasize that, although one can still define the tuple $\dot{\mathfrak{e}}_{z}^{(j)}$ for any $j \geq i$, it can fail to be a refined endoscopic datum if $j$ is too small. An important property of the above construction is:

\begin{lemma} For $\dot{\mathfrak{e}}$ as above, the family of refined endoscopic data $\{\dot{\mathfrak{e}}_{k}\}_{k \gg 0}$ is unique up to (compatible) isomorphism of refined endoscopic data.
\end{lemma}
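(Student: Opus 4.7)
The plan is to trace through the construction of Proposition \ref{liftend} and isolate where the choices are made. The endoscopic group $H_{z}^{(j)} = H \times^{Z(G)} Z(G_{z}^{(j)})$, the extension $\mathcal{H}_{z}^{(j)}$ (as the pullback of $\mathcal{H} \to \prescript{L}{}G$ and $\prescript{L}{}G_{z}^{(j)} \to \prescript{L}{}G$), and the embedding $\xi_{z}^{(j)}$ (as the second projection out of this pullback) are all canonically determined by $\dot{\mathfrak{e}} = (H, \mathcal{H}, \dot{s}, \xi)$ together with the fixed system $\{G \to G_{z}^{(j)}\}$. The only genuine freedom is therefore in the lift $\dot{s}_{z}^{(i)}$ of $\dot{s}$ to $\varprojlim_{n} Z(\widehat{G_{z,n}^{(i)}})^{+}$, since $\dot{s}_{z}^{(j)}$ for $j \geq i$ is forced from $\dot{s}_{z}^{(i)}$ by transport along the transition maps $\widehat{\bar{G}_{z}^{(i)}} \to \widehat{\bar{G}_{z}^{(j)}}$.

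Given two such choices $\dot{s}_{z}^{(i)}$ and $\dot{s}_{z}'^{(i)}$, they differ by an element $c$ of the kernel $\varprojlim_{n} (\widehat{C^{(i)}/Z_{n}})^{\Gamma}$ of the surjection $\varprojlim_{n} Z(\widehat{G_{z,n}^{(i)}})^{+} \to \varprojlim_{n} Z(\widehat{G_{n}})^{+}$ used in the construction of $\dot{s}_{z}^{(i)}$. Because $C^{(i)}$ is an induced torus, $H^{1}(F, C^{(i)}) = 0$, so by Kottwitz duality the group $(\widehat{C^{(i)}})^{\Gamma}$ is connected; the same persists after passing to the quotients by $Z_{n}$ and to the projective limit. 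Consequently, the image $c^{(k)}$ of $c$ under the transition map into $\widehat{\bar{G}_{z}^{(k)}}$ lies in the identity component of $Z(\widehat{\bar{G}_{z}^{(k)}})$. The key point is then that multiplying the central element $\dot{s}_{z}^{(k)}$ by an element of $Z(\widehat{\bar{G}_{z}^{(k)}})^{\circ}$ produces an isomorphism of refined endoscopic data in the sense of \cite[\S 5.3]{kaletha16}, \cite[\S 7.4]{Dillery1}, yielding the desired isomorphism $\dot{\mathfrak{e}}_{z,1}^{(k)} \cong \dot{\mathfrak{e}}_{z,2}^{(k)}$ at every level $k \gg 0$ where both sides are defined as refined endoscopic data.

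Compatibility of this family of isomorphisms as $k$ varies is then automatic from the naturality of the construction: the level-$(k{+}1)$ isomorphism is simply the image of the level-$k$ one under $\widehat{G_{z}^{(k)}} \to \widehat{G_{z}^{(k+1)}}$, since $c^{(k+1)}$ is the image of $c^{(k)}$. The residual freedom in the initial level $i$ is handled by the same argument applied to two lifts $\dot{s}_{z}^{(i)}$ and $\dot{s}_{z}^{(i')}$ with $i \leq i'$: transporting the former to level $i'$ reduces the comparison to another modification by a connected central element. The main obstacle is checking carefully that the equivalence relation on refined endoscopic data in the rigid setting genuinely absorbs modifications of $\dot{s}$ by elements of $Z(\widehat{\bar{G}_{z}^{(k)}})^{\circ}$ rather than a smaller subgroup; this is ultimately a structural consequence of the induced-torus property of each $C^{(k)}$ that we exploited above, but tracking it through the definition of isomorphism (especially the interaction with the extension $\mathcal{H}_{z}^{(k)}$ and the compatibility with $\dot{s}_{z}$ viewed inside $\varinjlim_{j \geq i}\widehat{\bar{G}_{z}^{(j)}}$) will require the most care.
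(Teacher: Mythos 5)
Your approach matches the paper's: identify the initial lift $\dot{s}_z^{(i)}$ as the only choice, observe that two lifts differ by an element of the connected group $\widehat{C^{(i)}}^\Gamma$, and conclude that the equivalence relation on refined endoscopic data absorbs this modification. The paper discharges that last verification in one line by citing the proof of \cite[Fact 5.15]{kaletha18} — which is precisely the step you flag as ``requiring the most care'' and leave unfinished — so your argument is an unpacked version of the same proof rather than an alternative route. Two small inaccuracies worth noting: the kernel of the surjection $\varprojlim_n Z(\widehat{G_{z,n}^{(i)}})^+ \to \varprojlim_n Z(\widehat{G_n})^+$ is the constant group $\widehat{C^{(i)}}^\Gamma$, not $\varprojlim_n (\widehat{C^{(i)}/Z_n})^\Gamma$ (indeed $G_{z,n}^{(i)}/G_n = C^{(i)}$ is independent of $n$, which is exactly why the paper can pass the surjectivity through the inverse limit); and the subgroup absorbed by the isomorphism relation on refined endoscopic data is $[Z(\widehat{\bar{G}_z^{(k)}})^+]^\circ$ rather than the a priori larger $Z(\widehat{\bar{G}_z^{(k)}})^\circ$ — but since $\widehat{C^{(k)}}^\Gamma$ is a connected subgroup landing inside the former, your conclusion still goes through.
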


\begin{proof} The only choice made in the construction is the lift $\dot{s}_{z}^{(i)}$ of $\dot{s}$, and any two choices yield the same isomorphism classes by the proof of \cite[Fact 5.15]{kaletha18}.
\end{proof}

In contrast to the above slightly delicate procedure of lifting endoscopic data from $G$ to $\{G_{z}^{(i)}\}$, it is easy to pass from a refined endosocopic datum $(H_{z}^{(i)}, \mathcal{H}_{z}^{(i)}, \dot{s}_{z}^{(i)}, \xi_{z}^{(i)})$ for some $G_{z}^{(i)}$ to a refined endoscopic datum for $G$, as well as for any $G_{z}^{(j)}$ with $j \geq i$, as follows. We explain the latter first: Since $\xi_{z}^{(i)}(\widehat{H_{z}^{(i)}})$ contains $\widehat{C^{(i)}}$ we have a dual surjection $H^{(i)}_{z} \to C^{(i)}$ and define $H^{(j)}_{z} := H_{z}^{(i)} \times_{C^{(i)}} C^{(j)}$, so that $\widehat{H^{(j)}_{z}} = \widehat{H_{z}^{(i)}} \times^{\widehat{C^{(i)}}} \widehat{C^{(j)}}$ and we take $\mathcal{H}_{z}^{(j)} = \mathcal{H}_{z}^{(i)}  \times^{\widehat{C^{(i)}}} \widehat{C^{(j)}}$, where in the last two groups $\widehat{C^{(i)}}$ is embedded in the left-hand factor via $(\xi_{z}^{(i)})^{-1}$. We take $\xi_{z}^{(j)}$ to be the inclusion induced by $\xi_{z}^{(i)}$ and $\widehat{C^{(j)}} \hookrightarrow \prescript{L}{}G_{z}^{(j)}$ and $\dot{s}_{z}^{(j)}$ the image of $\dot{s}_{z}^{(i)}$; it is easy to check that this defines a refined endoscopic datum for $G_{z}^{(j)}$.

To construct a refined endoscopic datum for $G$, first let $H$ be the kernel of the aforementioned map $H_{z}^{(i)} \to C^{(i)}$ and set $\widehat{H} = \text{im}(\widehat{H_{z}^{(i)}} \to \widehat{G})$. Taking $\dot{s}$ to be the image of $\dot{s}_{z}^{(i)}$ and $\xi$ the composition of $\xi_{z}^{(i)}$ and the natural map $\widehat{G_{z}^{(i)}} \to \widehat{G}$ gives a tuple $(H, \mathcal{H}, \dot{s}, \xi)$ which one verifies is a refined endoscopic datum. Applying the construction discussed in Proposition \ref{liftend} (using $\dot{s}_{z}^{(i)}$ as the lift of $\dot{s}$) yields a family of $4$-tuples $\{(H_{z}^{(j)}, \mathcal{H}_{z}^{(j)}, \dot{s}^{(j)}_{z}, \xi_{z}^{(j)})\}_{j \geq i}$, and it is easy to check that for all $j \geq i$ this agrees with the datum constructed from $(H_{z}^{(i)}, \mathcal{H}_{z}^{(i)}, \dot{s}_{z}^{(i)}, \xi_{z}^{(i)})$ in the previous paragraph.


In view of the above discussion, it makes sense to define the set 
$$\varinjlim_{i} (\{\text{Refined endoscopic data for $G^{(i)}_{z}$}\}/\text{Isom}),$$
and we have:

\begin{corollary}\label{endcor} The map from Proposition \ref{liftend} (and the discussion preceding it) defines a bijection between isomorphism classes of refined endoscopic data for $G$ and the above inductive limit. Moreover, the image of a datum under this map is an elliptic datum (by which we mean each term in the inductive system is elliptic) if and only if the original datum is. 
\end{corollary}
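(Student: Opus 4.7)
The plan is to verify this in three steps: well-definedness of both the lifting and the descent maps on isomorphism classes, then mutual invertibility, then the ellipticity equivalence. The descent map is the one described in the paragraph immediately preceding the corollary (take $H = \ker(H_z^{(i)} \to C^{(i)})$, push $\xi_z^{(i)}$ through $\widehat{G_z^{(i)}} \to \widehat{G}$, etc.), and the lifting map is the one from Proposition \ref{liftend}. Since the proof of the previous lemma already shows that the system $\{\dot{\mathfrak{e}}_z^{(k)}\}$ is independent up to compatible isomorphism of the choice of $\dot{s}_z^{(i)}$, well-definedness of the lifting map is immediate. For the descent map, an isomorphism of $\dot{\mathfrak{e}}_z^{(i)}$ with $\dot{\mathfrak{e}}_z^{\prime (i)}$ is in particular an isomorphism $H_z^{(i)} \to H_z^{\prime (i)}$ over $C^{(i)}$, hence restricts to an isomorphism of kernels, and compatibility with the $\mathcal{H}$, $\xi$, $\dot{s}$ components is immediate. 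One also checks that the descent construction is compatible with the ``raising'' procedure $\dot{\mathfrak{e}}_z^{(i)} \rightsquigarrow \dot{\mathfrak{e}}_z^{(j)}$ within the inductive system, so the map out of the direct limit is well-defined.

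For mutual invertibility, starting from $\dot{\mathfrak{e}}$, lifting to $H_z^{(j)} = H \times^{Z(G)} Z(G_z^{(j)})$ and descending recovers $H$, since the kernel of the projection $H_z^{(j)} \to C^{(j)} = Z(G_z^{(j)})/Z(G)$ is exactly $H$; the analogous verifications for the $L$-group components use only the construction of $\mathcal{H}_z^{(j)}$ as a pullback of $\mathcal{H}$ along $^{L}G_z^{(j)} \to {}^{L}G$ and the definition of $\dot{s}_z$ as a lift of $\dot{s}$. Conversely, given $\dot{\mathfrak{e}}_z^{(i)}$, descending and re-lifting recovers the original system for $j \geq i$ --- this is essentially asserted in the paragraph preceding the corollary, and amounts to the identity $H_z^{(j)} = (\ker H_z^{(i)}\to C^{(i)}) \times^{Z(G)} Z(G_z^{(j)})$, together with the corresponding identifications for $\mathcal{H}_z^{(j)}$, $\xi_z^{(j)}$, and $\dot{s}_z^{(j)}$.

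For the ellipticity claim, the key short exact sequences are
\begin{equation*}
1 \to \widehat{C^{(j)}} \to Z(\widehat{G_z^{(j)}}) \to Z(\widehat{G}) \to 1, \qquad 1 \to \widehat{C^{(j)}} \to Z(\widehat{H_z^{(j)}}) \to Z(\widehat{H}) \to 1,
\end{equation*}
which are exact because $\widehat{G_z^{(j)}} \to \widehat{G}$ and $\widehat{H_z^{(j)}} \to \widehat{H}$ restrict to isomorphisms on derived subgroups (both $G \hookrightarrow G_z^{(j)}$ and $H \hookrightarrow H_z^{(j)}$ being central extensions). Since $C^{(j)}$ is an induced torus we have $H^1(F, C^{(j)}) = 0$, so by Kottwitz duality $\widehat{C^{(j)}}^{\Gamma}$ is connected. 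Using that a continuous morphism of topological groups with connected kernel takes identity component to identity component, one deduces that the image of $(Z(\widehat{H_z^{(j)}})^{\Gamma})^{\circ}$ in $Z(\widehat{H})^{\Gamma}$ is exactly $(Z(\widehat{H})^{\Gamma})^{\circ}$ (and likewise for $G$). A short diagram chase in the commutative square formed by $\xi$ and $\xi_z^{(j)}$ then shows that $(Z(\widehat{H_z^{(j)}})^{\Gamma})^{\circ} \subset Z(\widehat{G_z^{(j)}})$ if and only if $(Z(\widehat{H})^{\Gamma})^{\circ} \subset Z(\widehat{G})$, using in the ``only if'' direction that $Z(\widehat{G_z^{(j)}}) \twoheadrightarrow Z(\widehat{G})$ and in the ``if'' direction that $\widehat{C^{(j)}} \subset Z(\widehat{G_z^{(j)}})$. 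The most delicate step is probably this ellipticity equivalence (in particular verifying that the conditions really do agree on the nose rather than up to finite index), but once one exploits the connectedness of $\widehat{C^{(j)}}^{\Gamma}$ it is a standard argument.
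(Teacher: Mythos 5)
Your proposal is correct and follows the same structure the paper uses: the bijection is verified by unwinding the lift and descent constructions from the preceding discussion (which the paper declares ``immediate''), and the ellipticity equivalence is obtained from the two central-extension short exact sequences on dual groups together with the connectedness of $\widehat{C^{(j)}}^{\Gamma}$. The one difference is that you argue the ellipticity equivalence directly, whereas the paper outsources it to \cite[Lemma 5.16]{kaletha18}; the underlying mechanism is the same.
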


\begin{proof} The first part is immediate from the above discussion. The second follows from \cite[Lemma 5.16]{kaletha18}.
\end{proof}

\subsection{Transfer factors}\label{EndsocopyTransfer}

In order to compare transfer factors for the data $\dot{\mathfrak{e}}$ and $\{\dot{\mathfrak{e}}_{j}\}$ we still need to pass between $z$-pairs for $G$ and systems of $z$-pairs for the $G_{z}^{(j)}$, which we turn to now. This is almost identical to the mixed characteristic case, but we summarize it here for completeness. Fix $\dot{\mathfrak{e}} = (H, \mathcal{H}, \dot{s}, \xi)$ and $\dot{\mathfrak{e}}_{z}^{(i)} = (H_{z}^{(i)}, \dot{s}_{z}^{(i)}, \xi_{z}^{(i)})$ which correspond to each other under the bijection of Corollary \ref{endcor} and $\mathfrak{z}_{z}^{(i)} = (H_{z,1}^{(i)}, \xi_{z,1}^{(i)})$ a $z$-pair for $\dot{\mathfrak{e}}_{z}^{(i)}$. After possibly enlarging $i$, we can assume that $H \to H_{z}^{(i)}$ is a weak $z$-embedding (and so this is also the case for any $j \geq i$). We define a $z$-pair $\mathfrak{z}_{1}$ for $\dot{\mathfrak{e}}$ by defining $H_{1} = H_{z,1}^{(i)} \times_{H_{z}^{(i)}} H$ and since the composition $$\mathcal{H}_{z}^{(i)} \to \prescript{L}{}H_{z,1}^{(i)} \to \prescript{L}{}H_{1}$$ factors uniquely through $\mathcal{H}$ we can define $\xi_{1} \colon \mathcal{H} \to \prescript{L}{}H_{1}$ as the induced map. Moreover, we can define a $z$-pair $\mathfrak{z}_{z}^{(j)}$ for any $j \geq i$ by taking $$H_{z,1}^{(j)} = H_{z,1}^{(i)} \times_{H_{z}^{(i)}} H_{z}^{(j)} = H_{1} \times^{Z(G)} Z(G_{z}^{(j)})$$ and $\xi_{z,1}^{(j)}$ the map
$$\mathcal{H}^{(j)}_{z} = \mathcal{H} \times_{\prescript{L}{}G} \prescript{L}{}G_{z}^{(j)}\xrightarrow{\xi_{1} \times \text{id}} \prescript{L}{}H^{(1)}_{z,1} \times_{\prescript{L}{}G} \prescript{L}{}G_{z}^{(j)} = \prescript{L}{}H_{z,1}^{(j)}.$$

We now give an analogue of \cite[Lemma 5.17]{kaletha18}:

\begin{proposition}\label{liftzpair} For any $j \geq i$, the following statements hold: \begin{enumerate} \item The map $H_{1} \to H_{z,1}^{(j)}$ has cokernel $C^{(j)}$, is a weak $z$-embedding, and $\{H_{1} \to H_{z,1}^{(k)}\}_{k \geq j}$ is a good $z$-embedding system.

\item The maps $H_{1} \to H$ and $H_{z,1}^{(j)} \to H_{z}^{(j)}$ are $z$-extensions.

\item $(H_{1}, \xi_{1})$ is a $z$-pair for $\dot{\mathfrak{e}}$, $(H_{z,1}^{(j)}, \mathfrak{z}_{z,1}^{(j)})$ is a $z$-pair for each $\dot{\mathfrak{e}}_{z}^{(j)}$ (the latter is an endoscopic datum since $j \geq i$, cf. Proposition \ref{liftend}), and the natural map 
$$\varinjlim_{k} (\{\tn{$z$-pairs for $\dot{\mathfrak{e}_{k}}$}\}/\tn{Isom}) \to  \{\tn{$z$-pairs for $\dot{\mathfrak{e}}$}\}/\tn{Isom}$$
has fibers which are torsors under the group $\varinjlim_{k} Z^{1}(W_{F}, \widehat{C^{(k)}})$ which acts on each $z$-pair $\mathfrak{z}_{z}^{(k)}$ by multiplication on the second component.

\end{enumerate}
\end{proposition}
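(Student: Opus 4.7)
The plan is to verify each of the three parts by leveraging the fiber-product descriptions $H_{1} = H_{z,1}^{(i)} \times_{H_{z}^{(i)}} H$ and $H_{z,1}^{(j)} = H_{z,1}^{(i)} \times_{H_{z}^{(i)}} H_{z}^{(j)} = H_{1} \times^{Z(G)} Z(G_{z}^{(j)})$, reducing each claim to either the weak $z$-embedding property of $H \hookrightarrow H_{z}^{(j)}$ established in Proposition \ref{liftend} or the $z$-extension property of $H_{z,1}^{(i)} \to H_{z}^{(i)}$ from the hypothesis that $\mathfrak{z}_{z}^{(i)}$ is a $z$-pair.

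For part (1), the map $H_{1} \to H_{z,1}^{(j)}$ is induced on the second factor by $H \hookrightarrow H_{z}^{(j)}$, so its cokernel agrees with $H_{z}^{(j)}/H = C^{(j)}$, yielding conditions (1) and (2) of Definition \ref{weakdef} immediately (since $C^{(j)}$ is induced). For condition (3), I would invoke Lemma \ref{weaklem} and reduce to the surjectivity of $H_{z,1}^{(j)'}(F) \to C^{(j)}(F)$ for each inner twist, via the factorization
\begin{equation*}
H_{z,1}^{(j)'}(F) \twoheadrightarrow H_{z}^{(j)'}(F) \twoheadrightarrow C^{(j)}(F),
\end{equation*}
where the first arrow is surjective because its kernel is the induced torus $Z^{(i)} \defeq \ker(H_{z,1}^{(i)} \to H_{z}^{(i)})$ and the second is the weak $z$-embedding property of $H \hookrightarrow H_{z}^{(j)}$ (again via Lemma \ref{weaklem}). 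The good $z$-embedding system conditions for $\{H_{1} \to H_{z,1}^{(k)}\}_{k \geq j}$ then follow: surjective transition maps pass from $H_{z}^{(k+1)} \to H_{z}^{(k)}$ through the fiber product, and the vanishing of the limit connecting homomorphism $\varprojlim_{k} C^{(k)}(F) \to H^{1}(F, Z(H_{1}))$ reduces by diagram chase to the analogous vanishing for $\{H \to H_{z}^{(k)}\}$.

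For part (2), both $H_{1} \to H$ and $H_{z,1}^{(j)} \to H_{z}^{(j)}$ have kernel $Z^{(i)}$, an induced torus by the $z$-extension hypothesis on $\mathfrak{z}_{z}^{(i)}$. The remaining simply-connected-derived-subgroup condition I would obtain as follows: viewing $H_{1}$ and $H_{z,1}^{(j)}$ as preimages of $H$ and $H_{z}^{(j)}$ inside $H_{z,1}^{(i)}$, and noting that $H_{\tn{der}} = (H_{z}^{(j)})_{\tn{der}} = (H_{z}^{(i)})_{\tn{der}}$ since the embeddings of $H$ have torus cokernels, a short computation identifies $(H_{1})_{\tn{der}} = (H_{z,1}^{(j)})_{\tn{der}} = (H_{z,1}^{(i)})_{\tn{der}}$, which is simply connected. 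For part (3), (1) and (2) together with the construction of $\xi_{1}$ and $\xi_{z,1}^{(j)}$ immediately give the $z$-pair claims. For the fiber structure, holding $(H_{1}, \xi_{1})$ fixed, a compatible $\xi_{z,1}^{(k)}$ is an $L$-homomorphism $\mathcal{H}_{z}^{(k)} \to \prescript{L}{}H_{z,1}^{(k)}$ extending the dual embedding and inducing $\xi_{1}$; two such maps differ by a cocycle with values in $\ker(\widehat{H_{z,1}^{(k)}} \twoheadrightarrow \widehat{H_{1}}) = \widehat{C^{(k)}}$, obtained by dualizing the short exact sequence $1 \to H_{1} \to H_{z,1}^{(k)} \to C^{(k)} \to 1$ from part (1). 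Passing to the direct limit gives the stated torsor structure under $\varinjlim_{k} Z^{1}(W_{F}, \widehat{C^{(k)}})$.

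I expect the main obstacle to be the bookkeeping in part (1), specifically the verification of weak $z$-embedding uniformly across inner twists: each inner twist $H_{1}'$ of $H_{1}$ induces an inner twist $H_{z,1}^{(j)'}$ via the analogue of Lemma \ref{transfertwist}, and one must check that the surjectivity argument above is compatible with this twisting (in particular that the kernel $Z^{(i)}$ remains unchanged under twisting, which holds because it lies in the center). Once this is in place, part (2) is essentially formal and part (3) is standard cocycle calculus.
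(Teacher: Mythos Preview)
Your proposal is correct and follows essentially the same approach as the paper: both identify the common kernel $K = Z^{(i)}$ of the projections to $H$ and $H_{z}^{(j)}$, deduce the cokernel $C^{(j)}$ from the fiber-product description, and prove the weak $z$-embedding property by factoring $H_{z,1}^{(j)}(F) \to C^{(j)}(F)$ through $H_{z}^{(j)}(F)$ as two surjections (induced-torus kernel for the first, the weak $z$-embedding of $H \hookrightarrow H_{z}^{(j)}$ for the second). Your treatment of parts (2) and (3) also matches the paper's, and the inner-twist bookkeeping you flag as the main obstacle is exactly the detail the paper leaves implicit.
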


\begin{proof} It is easy to check that if $K$ is the kernel of $H_{z,1}^{(i)} \to H_{z}^{(i)}$, then we have the short exact sequences
\begin{equation}\label{zextSES}
0 \to K \to H_{1} \to H \to 0, \hspace{1mm} 0 \to K \to H_{z,1}^{(j)} \to H_{z}^{(j)} \to 0
\end{equation} for any $j$, and also that the quotient of $H_{z,1}^{(j)}$ by $H_{1}$ is $C^{(j)}$. It follows that the natural map $H^{1}(F, H_{1}) \to H^{1}(F, H_{z,1}^{(j)})$ is surjective, and it is injective because the map $H^{(j)}_{z,1}(F) \to C^{(j)}(F)$ factors as a composition of two surjections $$H^{(j)}_{z,1}(F) \to H^{(j)}_{z}(F) \to C^{(j)}(F),$$ where the first map is surjective because $K$ is induced (by construction of a $z$-extension) and the second is surjective because we have chosen $j \geq i$. The rest of statement (1) above is a straightforward verification. 

For (2), the sequences \eqref{zextSES} imply that all relevant kernels are induced, and the fact that $H_{1} \to H_{1,z}^{(i)}$ and each $H_{1} \to H_{z,1}^{(j)}$ are embeddings with abelian cokernels means that they all have isomorphic derived subgroups. As in the mixed characteristic case, statement (3) follows from the fact that the kernels of each $\mathcal{H}_{z}^{(j)} \to \mathcal{H}$ and $^{L}H_{z,1}^{(j)} \to \prescript{L}{}H_{1}$ are $\widehat{C^{(j)}}$.
\end{proof}

Now we can compare normalized transfer factors. Recall that the maps $T \mapsto T \cdot Z(G_{z}^{(j)})^{\circ}$ and $B \to B \cdot Z(G_{z}^{(j)})^{\circ}$ induce bijections between sets of $F$-splittings and Whittaker data between the two groups for any $j$, and we can use this to pass from a Whittaker datum $\mathfrak{w}$ for $G$ to a (compatible) family of Whittaker data $\{\mathfrak{w}_{z}^{(j)}\}$, as in Section \ref{rigidcomp}. We can now deduce the main result of this section:

\begin{proposition}\label{lifttransf} Let $\dot{\mathfrak{e}}$ be a fixed refined endoscopic datum for $G$ with corresponding system of refined endoscopic data $\{\dot{\mathfrak{e}}_{z}^{(j)}\}_{j \geq i}$ for some $i \geq 0$, as in Proposition \ref{liftend}. Fix also a rigid inner twist $(\psi, \mathcal{T}, \bar{h})$ of $G$ with underlying group $G'$, which, via Lemma \ref{transfertwist}, gives a compatible family of rigid inner twists $(\psi_{z}^{(j)}, \mathcal{T}_{z}^{(j)}, \bar{h}_{z}^{(j)})$ of each $G_{z}^{(j)}$, and a $z$-pair $\mathfrak{z} = (H_{1}, \xi_{1})$ for $\dot{\mathfrak{e}}$ which corresponds to the system of $z$-pairs $\{\mathfrak{z}_{z}^{(j)}\}_{j \geq i}$ as in Proposition \ref{liftzpair}. 

For any $\gamma_{1} \in H_{1}(F)$ strongly $G'$-regular related to $\delta' \in G'(F)$, we have
$$\Delta'[\dot{\mathfrak{e}}, \mathfrak{z}, \mathfrak{w}, (\mathcal{T}, \bar{h})](\gamma_{1}, \delta') = \Delta'[\dot{\mathfrak{e}}_{z}^{(j)}, \mathfrak{z}_{z}^{(j)}, \mathfrak{w}_{z}^{(j)}, (\mathcal{T}_{z}^{(j)}, \bar{h}_{z}^{(j)})](\gamma_{1}, \delta')$$ for any $j \geq i$. In the above, $\Delta'$ denotes the normalized absolute transfer factor originally defined in \cite[\S 5.3]{kaletha16} in mixed characteristic and extended to equal characteristic in \cite[\S 7.2, 7.3]{Dillery1}.
\end{proposition}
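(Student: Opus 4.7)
The plan is to prove the equality factor-by-factor using the standard decomposition
\begin{equation*}
\Delta' = \epsilon \cdot \Delta_I \cdot \Delta_{II} \cdot \Delta_{III} \cdot \Delta_{IV}
\end{equation*}
of the normalized absolute transfer factor (cf.\ \cite[\S 7.3]{Dillery1}), after first setting up the relevant matching data on the $(G_z^{(j)}, H_{z,1}^{(j)})$-side. Because $\delta'$ is strongly $G'$-regular, it lies in a unique maximal torus $S' \subset G'$; the central embedding $G' \hookrightarrow (G')_z^{(j)}$ carries $S'$ into $S'_z := S' \cdot Z((G')_z^{(j)})$, a maximal torus of $(G')_z^{(j)}$, and induces a canonical bijection of absolute root data. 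Hence $\delta'$ remains strongly $(G')_z^{(j)}$-regular and the relation to $\gamma_1$ via $H_1 \hookrightarrow H_{z,1}^{(j)}$ (which induces an equality of derived groups by Proposition \ref{liftzpair}(2)) is preserved.

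First I would dispatch the ``easy'' factors: $\Delta_I$, $\Delta_{II}$, $\Delta_{IV}$, and the Whittaker normalization $\epsilon$ all depend only on the $a$-data, $\chi$-data, absolute roots, and an $F$-splitting recovered from the Whittaker datum. Under the central embedding the roots match, the $a$-data and $\chi$-data can be chosen identically (we pull back from the quotient by the central torus), and the bijection between Whittaker data for $G$ and for $G_z^{(j)}$ described just before the statement identifies the relevant $F$-splittings. Thus these factors are literally unchanged when passing from $(G, \mathfrak{w})$ to $(G_z^{(j)}, \mathfrak{w}_z^{(j)})$.

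The substantive step is the comparison of $\Delta_{III}$, which is a pairing between a gerbe-cohomology class built from $(\delta', \gamma_1, \mathcal{T}, \bar{h})$ living in a group of the shape $H^1(\mathcal{E}, Z \to [S'_\mathrm{sc} \to S'_1])$ and a character assembled from $\dot{s}$, $\xi_1$ and the $\chi$-data, as defined in \cite[\S 7.2, \S 7.3]{Dillery1}. On the $G_z^{(j)}$-side, the analogous class is obtained by pushing forward along the weak $z$-embedding, and by the compatibility of the rigid inner twist $(\mathcal{T}, \bar{h})$ with $(\mathcal{T}_z^{(j)}, \bar{h}_z^{(j)})$ from Lemma \ref{transfertwist}, the two classes correspond under functoriality of the Tate--Nakayama isomorphism of \cite[Theorem 5.10]{Dillery1} for complexes of tori. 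Dually, $\dot{s}_z^{(j)}$ projects to $\dot{s}$ by construction and the map $\xi_{z,1}^{(j)}$ restricts to $\xi_1$ on $\mathcal{H}$, so the character side also transports compatibly. The desired equality of pairings then follows from the naturality of the cup-product pairing in $\check{\mathrm{C}}$ech cohomology used to define $\Delta_{III}$.

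The main obstacle is the careful bookkeeping in the $\Delta_{III}$ comparison: one must track a \v{C}ech cocycle for the gerbe-cohomology class along the weak $z$-embedding, verify that its pairing with the lifted element $\dot{s}_z^{(j)}$ reproduces the original pairing with $\dot{s}$, and confirm that the auxiliary choices (of $n$-data, of $\chi$-data on the new root system, of splittings of the cover) can be made compatibly. This mirrors the argument in \cite[Proposition 5.18]{kaletha18} in the mixed characteristic setting but must be carried out in the \v{C}ech-cohomological framework of \S \ref{PrelimGerbes}. A pleasant feature is that this is a statement about a single $j$ (no direct/inverse limit is needed), so once the functoriality of the pairing under central extensions is in place the identity is immediate.
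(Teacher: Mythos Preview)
Your proposal is correct and follows essentially the same approach as the paper: both defer to the factor-by-factor comparison in \cite[Lemma 5.18]{kaletha18}, with the paper simply asserting that ``the mixed characteristic proof works verbatim here for any fixed $j$'' while you spell out the shape of that argument (the easy factors $\epsilon, \Delta_I, \Delta_{II}, \Delta_{IV}$ depending only on root and splitting data, and the functoriality of the Tate--Nakayama pairing for $\Delta_{III}$). Your observation that this is a single-$j$ statement requiring no limiting argument is exactly the point.
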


\begin{proof} The mixed characteristic proof (as in \cite[Lemma 5.18]{kaletha18}) works verbatim here for any fixed $j$.
\end{proof}

As an application, we can now relate the endoscopic character identities for $\{G_{z}^{(i)}\}$ to those for $G$. We first recall the statement of these aforementioned identities, and for notational ease we state them for $x := (G', \mathcal{T}, \bar{h})$ a rigid inner twist of $G$. Fix an $L$-parameter $\varphi$ with refined endoscopic datum $\dot{\mathfrak{e}} = (H, \mathcal{H}, \dot{s}, \xi)$ and $z$-pair $\mathfrak{z} = (H_{1}, \xi_{1})$. Define the virtual character $$\Theta_{\varphi,x}^{\dot{s}} := e (G') \sum_{\pi \in \Pi_{\varphi}(G')} \langle \pi, \dot{s} \rangle \Theta_{\pi},$$ where $e(G')$ is Kottwitz's sign (as in \cite{kottwitz83}), $\Theta_{\pi}$ is the Harish-Chandra character of $\pi$, and the pairing is the one induced by $\iota_{\mathfrak{w}, x}$. For $\dot{s} = 1$ we expect this to be a stable  distribution independent of $\mathfrak{w}$ and $x$. 

Our formulation of the endoscopic character identity has the following form: for any strongly-regular semisimple $\delta' \in G'(F)$, we have the equality 
\begin{equation}\label{endchar}
\Theta_{\varphi,x}^{\dot{s}}(\delta') = \sum_{[\gamma_{1}]} \Delta'[\dot{\mathfrak{e}}, \mathfrak{z}, \mathfrak{w}, (\mathcal{T}, \bar{h})](\gamma_{1}, \delta')\Delta_{IV}(\gamma_{1}, \delta')^{-2}\Theta^{1}_{\xi_{1} \circ \varphi, \tn{triv}}(\gamma_{1}),
\end{equation}
where the sum runs over the stable conjugacy classes $[\gamma_{1}]$ in $H_{1}$ of all strongly-regular semisimple elements $\gamma_{1} \in H_{1}(F)$ and $\tn{triv}$ denotes the trivial rigid inner twist. To relate stable conjugacy classes in $H_{1}$ and $H_{z,1}^{(j)}$ we need:

\begin{lemma}\label{stableconj} If $\gamma_{1} \in H_{z,1}^{(j)}(F)$ is related to $\delta' \in G'(F)$ then $\gamma_{1}$ actually lies in $H_{1}(F)$.
\end{lemma}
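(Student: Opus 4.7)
The plan is to reduce the claim to a vanishing statement in the torus $C^{(j)}$. From Proposition~\ref{liftzpair}(1) there is a commutative diagram of short exact sequences
\begin{equation*}
\begin{tikzcd}
1 \arrow{r} & H_1 \arrow{r} \arrow{d} & H_{z,1}^{(j)} \arrow{r} \arrow{d} & C^{(j)} \arrow["\tn{id}"]{d} \arrow{r} & 1 \\
1 \arrow{r} & H \arrow{r} & H_{z}^{(j)} \arrow{r} & C^{(j)} \arrow{r} & 1,
\end{tikzcd}
\end{equation*}
and in particular $H_{1}(F)$ is precisely the kernel of $H_{z,1}^{(j)}(F) \to C^{(j)}(F)$. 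Hence it suffices to show that the image $\gamma$ of $\gamma_{1}$ in $H_{z}^{(j)}(F)$ lies in $H(F)$, or equivalently has trivial image in $C^{(j)}(F)$.

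Next I would unpack the relatedness hypothesis: projecting to $H_{z}^{(j)}$ and $(G')_{z}^{(j)}$, there is an admissible embedding of maximal tori $T_{H_{z}^{(j)}} \hookrightarrow T_{(G')_{z}^{(j)}}$ (defined over $\overline{F}$) sending $\gamma$ to an element that is $(G')_{z}^{(j)}(\overline{F})$-conjugate to $\delta'$. Using the pushout descriptions $H_{z}^{(j)} = H \times^{Z(G)} Z(G_{z}^{(j)})$ and $(G')_{z}^{(j)} = G' \times^{Z(G)} Z(G_{z}^{(j)})$ (cf.\ Lemma~\ref{transfertwist}), the relevant maximal tori inherit analogous pushout presentations, and the admissible embedding is the identity on the $Z(G_{z}^{(j)})$-factor. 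Consequently the two natural maps $T_{H_{z}^{(j)}} \to C^{(j)}$ (the direct one, and the one factoring through $T_{(G')_{z}^{(j)}}$) coincide.

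Finally, $\delta' \in G'(F)$ has trivial image in $C^{(j)}(F)$ by construction of the embedding $G' \hookrightarrow (G')_{z}^{(j)}$, and since $C^{(j)}$ is abelian, $(G')_{z}^{(j)}(\overline{F})$-conjugation acts trivially on $C^{(j)}(\overline{F})$. Thus the stable conjugate of $\delta'$ identified with $\gamma$ also projects to $0$ in $C^{(j)}$. By the compatibility from the previous paragraph, $\gamma$ itself projects to $0$, as required. The only step requiring genuine verification is the compatibility of the admissible embedding with the quotient to $C^{(j)}$; however, this is essentially forced by the pushout structure of the groups and is why the mixed-characteristic argument in \cite{kaletha18} transports verbatim to our setting, so I do not expect a substantive obstacle.
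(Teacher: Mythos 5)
Your overall strategy is essentially the same as the paper's: reduce to showing that the image $\gamma$ of $\gamma_1$ in $H_z^{(j)}(F)$ lies in $H(F)$, use the admissible embedding $T^\gamma \xrightarrow{h} (G')_z^{(j)}$ with $h(\gamma) = \delta'$, and exploit that $h$ is the identity on $Z(G_z^{(j)})$. However, there is a genuine gap at your ``Consequently the two natural maps $T_{H_z^{(j)}}\to C^{(j)}$ coincide.'' Knowing that $h$ restricts to the identity on $Z(G_z^{(j)})$ does \emph{not} by itself force the two projections to $C^{(j)}$ to agree: both maps are surjective and agree on the subgroup $Z(G_z^{(j)})$, which already surjects onto $C^{(j)}$, but two surjections of tori that agree on a surjecting subgroup need not be equal (they can differ by a nontrivial map landing in $C^{(j)}$ on a complementary part of the torus). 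Equality of the two maps is equivalent to $h(T^\gamma \cap H) \subset G'$, which is exactly what the paper proves and what you flag as ``essentially forced by the pushout structure'' --- but the pushout structure alone doesn't give it.

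The paper supplies the missing argument by observing that $h$ carries $T^\gamma \cap (H_z^{(j)})_{\mathrm{der}}$ into $T^{\delta'} \cap [(G')_z^{(j)}]_{\mathrm{der}} \subset G'$, and that $T^\gamma \cap H = [T^\gamma \cap (H_z^{(j)})_{\mathrm{der}}] \cdot Z(G)$; combined with $h$ being the identity on $Z(G_z^{(j)}) \supset Z(G)$, this yields $h(T^\gamma \cap H) \subset G'$, and the reverse containment is checked over $\bar F$ using $H_z^{(j)}(\bar F) = Z(G_z^{(j)})(\bar F)\cdot H(\bar F)$ and the analogue for $G'$. That derived-subgroup step is the real content of the lemma and needs to appear in your argument; without it, the claim that the diagram to $C^{(j)}$ commutes is unjustified.
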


\begin{proof} It's enough to show that the image $\gamma$ of $\gamma_{1}$ in $H_{z}^{(j)}(F)$ lies in $H(F)$. For a strongly-regular semisimple element $j \in J(F)$ for a reductive group $J$, denote by $T^{j}$ its (scheme-theoretic) centralizer. By definition there is an admissible embedding $T^{\gamma} \xrightarrow{h} (G')_{z}^{(j)}$ sending $\gamma$ to $\delta'$. We're done if we can show that $h^{-1}(G') = T^{\gamma} \cap H$; the right-to-left containment follows from the fact that $$h(T^{\gamma} \cap (H_{z}^{(j)})_{\text{der}}) \subset T^{\delta'} \cap [(G')_{z}^{(j)}]_{\text{der}} \subset G'$$ and the equality $T^{\gamma} \cap H = [T^{\gamma} \cap (H_{z}^{(j)})_{\text{der}}] \cdot Z(G)$ (using also that $h$ is the identity on $Z(G_{z}^{(j)})$).

The other containment can be checked after base-change to $\overline{F}$, where it follows from $H_{z}^{(j)}(\overline{F}) = Z(G_{z}^{(j)})(\overline{F}) \cdot H(\overline{F})$, $(G')_{z}^{(j)}(\overline{F}) = Z(G_{z}^{(j)})(\overline{F}) \cdot G'(\overline{F})$, and the fact that $h$ is the identity on $Z(G_{z}^{(j)})$.
\end{proof}

For our fixed rigid inner twist $x$, refined endoscopic datum $\dot{\mathfrak{e}}$ and $z$-pair $\mathfrak{z}$ we have a compatible system of: Rigid inner twists $\{x_{z}^{(j)}\}_{j \geq 0}$, refined endoscopic data $\{\dot{\mathfrak{e}}_{z}^{(j)}\}_{j \gg 0}$, and $z$-pairs $\{\mathfrak{z}_{z}^{(j)}\}$ as in Propositions \ref{liftend}, \ref{liftzpair}.

\begin{proposition} Let $\varphi$ be a tempered $L$-parameter for $G$ with a family of tempered parameters $\{\varphi_{z}^{(i)}\}$ lifting $\varphi$ (this always exists, cf. Proposition \ref{liftparam}). Then if the endoscopic character identity \eqref{endchar} holds for $(G')_{z}^{(j)}$ for the parameter  $\varphi_{z}^{(j)}$, rigidification $x_{z}^{(j)}$, endoscopic datum $\dot{\mathfrak{e}}_{z}^{(j)}$ and $z$-pair $\mathfrak{z}_{z}^{(j)}$ for all $j \gg 0$, then it holds for $G$, $\varphi$, $\dot{\mathfrak{e}}$, and $\mathfrak{z}$ using the bijection $\iota_{\mathfrak{w}}$ constructed from $\iota_{\mathfrak{w}_{z}^{(j)}}$ for each $G_{z}^{(j)}$ in Section \ref{rigidcomp}.
\end{proposition}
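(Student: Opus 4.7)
The plan is to choose $j \gg 0$ large enough that our construction of the rigid refined LLC in \S \ref{rigidcomp} gives a clean bijection between $L$-packets at level $j$ and level $G$ (and similarly for $H_1$), and then match both sides of \eqref{endchar} for $G$ to the corresponding sides of the assumed identity for $(G')^{(j)}_z$. Specifically, we require $j$ such that: (a) the endoscopic character identity holds for $(G')^{(j)}_z, \varphi^{(j)}_z, \dot{\mathfrak{e}}^{(j)}_z, \mathfrak{z}^{(j)}_z$; (b) the maps $\pi_0(S^+_{\varphi^{(j)}_z}) \to \pi_0(S^+_\varphi)$ and $\pi_0(S^+_{\xi_{1,z}^{(j)} \circ \varphi^{(j)}_z}) \to \pi_0(S^+_{\xi_1 \circ \varphi})$ are isomorphisms (Corollary \ref{keycor}); (c) every $\pi \in \Pi_\varphi(G')$ (resp.\ $\sigma \in \Pi_{\xi_1 \circ \varphi}(H_1)$) is the irreducible restriction to $G'(F)$ (resp.\ $H_1(F)$) of a unique $\dot{\pi}^{(j)}_z \in \Pi_{\varphi^{(j)}_z}((G')^{(j)}_z)$ (resp.\ $\dot{\sigma}^{(j)}_z \in \Pi_{\xi_{1,z}^{(j)} \circ \varphi^{(j)}_z}(H^{(j)}_{z,1})$). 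Condition (c) follows from the construction of \S \ref{rigidcomp} (applied to $H_1$ via the good $z$-embedding system $\{H_1 \to H^{(k)}_{z,1}\}$ provided by Proposition \ref{liftzpair}(1)), together with Lemmas \ref{keycor1} and \ref{uniqueness}.

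For the spectral side, the bijection $\pi \leftrightarrow \dot{\pi}^{(j)}_z$ of (c) intertwines the pairings used in the definition of $\Theta^{\dot{s}}_{\varphi,x}$, since by construction $\dot{s}^{(j)}_z$ is a lift of $\dot{s}$ under the centralizer isomorphism in (b), giving $\langle \pi, \dot{s} \rangle = \langle \dot{\pi}^{(j)}_z, \dot{s}^{(j)}_z \rangle$. Because $\delta' \in G'(F)$, restriction of $\dot{\pi}^{(j)}_z$ preserves the Harish-Chandra character at $\delta'$, and the Kottwitz signs agree, $e((G')^{(j)}_z) = e(G')$, as these groups share the same adjoint group. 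Therefore
\begin{equation*}
\Theta^{\dot{s}}_{\varphi, x}(\delta') = \Theta^{\dot{s}^{(j)}_z}_{\varphi^{(j)}_z, x^{(j)}_z}(\delta'),
\end{equation*}
which by hypothesis equals the RHS of \eqref{endchar} for $(G')^{(j)}_z$.

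For the geometric side, Lemma \ref{stableconj} forces any $\gamma^{(j)}_{1,z} \in H^{(j)}_{z,1}(F)$ related to $\delta'$ to lie in $H_1(F)$, and stable $H^{(j)}_{z,1}$-conjugacy of elements of $H_1(F)$ coincides with stable $H_1$-conjugacy because $H^{(j)}_{z,1} = H_1 \cdot Z(G^{(j)}_z)$ with $Z(G^{(j)}_z) \subset Z(H^{(j)}_{z,1})$, so any $H^{(j)}_{z,1}(\bar{F})$-conjugator can be replaced by one in $H_1(\bar{F})$ modulo a central element. Proposition \ref{lifttransf} equates the normalized transfer factors $\Delta'$ at $(\gamma_1, \delta')$, while $\Delta_{IV}$ is unchanged since it depends only on the common root systems of the centralizers in question. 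The stable-character equality $\Theta^1_{\xi_{1,z}^{(j)} \circ \varphi^{(j)}_z, \tn{triv}}(\gamma_1) = \Theta^1_{\xi_1 \circ \varphi, \tn{triv}}(\gamma_1)$ is obtained by the same restriction argument as on the spectral side, applied to the bijection of $L$-packets for $H_1$ and $H^{(j)}_{z,1}$ (valid by Proposition \ref{liftzpair}(1)). Assembling these identifications converts the RHS of \eqref{endchar} for $(G')^{(j)}_z$ into the RHS for $G$, completing the proof.

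The main delicate point is to verify that the restriction bijection (c) intertwines the characters assigned by $\iota_{\mathfrak{w}}$ on $\Pi_\varphi(G')$ with those assigned by $\iota_{\mathfrak{w}^{(j)}_z}$ on $\Pi_{\varphi^{(j)}_z}((G')^{(j)}_z)$ under the centralizer isomorphism in (b); this is precisely what the direct-limit formula \eqref{limLLC} and Proposition \ref{canonLLC} are designed to guarantee, ultimately via the functoriality Conjecture \ref{Maartenbis} (or its weakening \ref{Maartenbisbis}) applied to the central surjections $G^{(k)}_z \to G^{(j)}_z$. The analogous compatibility for $H_1 \hookrightarrow H^{(j)}_{z,1}$ on the geometric side is handled in the same way.
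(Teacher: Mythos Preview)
Your proposal is correct and follows essentially the same approach as the paper's proof: match the spectral side via the construction of $\iota_{\mathfrak{w}}$ from \S\ref{rigidcomp}, and match the geometric side via Lemma \ref{stableconj}, Proposition \ref{lifttransf}, equality of $\Delta_{IV}$, and the construction of the LLC for $H_{1}$ using the good $z$-embedding system $\{H_{z,1}^{(j)}\}$ from Proposition \ref{liftzpair}. Your write-up is in fact more explicit than the paper's in several places (the Kottwitz sign, the bijection of stable conjugacy classes in $H_{1}$ versus $H_{z,1}^{(j)}$, and the intertwining of the pairings), but none of this constitutes a different argument.
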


\begin{proof} As far as the left-hand side of \eqref{endchar} goes, the value $\Theta_{\varphi,x}^{\dot{s}}(\delta')$ equals any $\Theta_{\varphi_{z}^{(j)},x_{z}^{(j)}}^{\dot{s}_{z}^{(j)}}(\delta')$ by construction of the correspondence for $G'$. For the right-hand side, we first note that by Lemma \ref{stableconj}, all strongly regular semisimple stable conjugacy classes in any $H_{z,1}^{(j)}$ that are related to $\delta'$ (the only ones which have non-zero contribution to the sum in \eqref{endchar}) can represented be elements $\gamma_{1}$ of $H_{1}(F)$, and then by Proposition \ref{lifttransf}, we have $$\Delta'[\dot{\mathfrak{e}}, \mathfrak{z}, \mathfrak{w}, (\mathcal{T}, \bar{h})](\gamma_{1}, \delta') = \Delta'[\dot{\mathfrak{e}}_{z}^{(j)}, \mathfrak{z}_{z}^{(j)}, \mathfrak{w}_{z}^{(j)}, (\mathcal{T}_{z}^{(j)}, \bar{h}_{z}^{(j)})](\gamma_{1}, \delta').$$ The $\Delta_{IV}$ factors agree by part of the proof of \cite[Lemma 5.18]{kaletha18}, and $\Theta^{1}_{\xi_{1} \circ \varphi}(\gamma_{1})$ equals any $\Theta^{1}_{\xi_{z,1}^{(j)} \circ \varphi_{z}^{(j)}}(\gamma_{1})$, again by our construction of the refined local Langlands correspondence in Section \ref{rigidcomp} (for $H_{1}$, using the good $z$-embedding system $\{H_{z,1}^{(j)}\}$, cf. Proposition \ref{liftzpair}).
\end{proof}

\begin{remark}\label{weakremark3} If one uses the twisted modification of the LLC from Remark \ref{weakremark2} adapted to the assumptions in Remark \ref{weakremark}, then the above proof still works, replacing each $\varphi_{z}^{(j)}$ with a twist by something in $H^{1}(W_{F}, Z(G_{z}^{(j)}))$ whose induced character of $(G')_{z}^{(j)}(F)$ is trivial on $G'(F)$. 
\end{remark}

\section{Cohomology of finite group schemes on $\mathcal{E}$}\label{Tateduality}
\label{sec:cohom}
\subsection{The setup}\label{TatedualitySetup}
Our goal is to prove the following result, for $Z$ a finite multiplicative $F$-group scheme. For this section we can replace $\mathbb{C}$ by $C$ an arbitrary algebraically closed field of characteristic zero. 

\begin{proposition}\label{Zduality} There is a canonical, functorial isomorphism 
\begin{equation}\label{Zdualeq}
H^{1}(\mathcal{E}, Z) \xrightarrow{\sim} Z^{1}(\Gamma, \widehat{Z})^{*},
\end{equation}
where $\widehat{Z} := \mathrm{Hom}(X^{*}(Z), \mathbb{Q}/\mathbb{Z}) \cong \mathrm{Hom}(X^{*}(Z), C^\times).$
\end{proposition}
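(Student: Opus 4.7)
The plan is to construct an explicit functorial pairing
\[
H^1(\mathcal{E}, Z) \otimes Z^1(\Gamma, \widehat{Z}) \longrightarrow \mathbb{Q}/\mathbb{Z}
\]
and show it is a perfect duality, closely following the strategy used to prove local Tate duality $H^1_{\tn{fppf}}(F,Z) \cong H^1(\Gamma,\widehat{Z})^*$ for finite multiplicative $Z$, but working throughout in \v{C}ech cohomology and using the unbalanced cup products developed in \cite[\S 4.2]{Dillery1}. The reason $Z^1$ rather than $H^1$ appears on the Galois side is that the pairing on the gerbe sees more than just the underlying $F$-torsor component of a class in $H^1(\mathcal{E},Z)$: it also detects the morphism $f\colon u\to Z$ in the data of a representing $\xi$-twisted cocycle.

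First, I would define the pairing on cocycle representatives. A class in $H^1(\mathcal{E},Z)$ is represented by a pair $(c,f)$ with $f\colon u\to Z$ an $F$-morphism and $c\in Z(\overline{F}\otimes_F\overline{F})$ satisfying $dc=f_*(\xi)$. Dualizing, $\widehat{f}\colon \widehat{Z}\to\widehat{u}$ lets us push a cocycle $z\in Z^1(\Gamma,\widehat{Z})$ to $\widehat{f}\circ z\in Z^1(\Gamma,\widehat{u})$. The unbalanced \v{C}ech cup product pairs $c$ against $\widehat{f}\circ z$ to produce a \v{C}ech $2$-cocycle with values in $\mathbb{G}_m$; the resulting class in $H^2(F,\mathbb{G}_m)$ is then sent to $\mathbb{Q}/\mathbb{Z}$ via the local invariant of class field theory. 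Well-definedness on cohomology classes on the left, and on cocycles on the right, is then a formal \v{C}ech-cohomological computation: one uses the Leibniz-type compatibility of the unbalanced cup product with $d$ together with the constraint $dc=f_*(\xi)$ to check invariance under the gerbe coboundary relation, and uses the cocycle condition on $z$ to absorb shifts by coboundaries on the Galois side.

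Second, I would establish bijectivity by fitting both sides into matching four-term exact sequences and applying the five lemma. On the $\mathcal{E}$-side, the Leray-type spectral sequence of the banded gerbe gives
\[
0\to H^1(F,Z)\to H^1(\mathcal{E},Z)\to\tn{Hom}_F(u,Z)\xrightarrow{\,\cdot\,\cup\xi\,}H^2(F,Z),
\]
while on the dual side the tautological sequence $0\to \widehat{Z}/\widehat{Z}^\Gamma\to Z^1(\Gamma,\widehat{Z})\to H^1(\Gamma,\widehat{Z})\to 0$ dualizes to
\[
0\to H^1(\Gamma,\widehat{Z})^*\to Z^1(\Gamma,\widehat{Z})^*\to (\widehat{Z}/\widehat{Z}^\Gamma)^*\to 0.
\]
The left ends agree via the classical local Tate duality $H^1(F,Z)\cong H^1(\Gamma,\widehat{Z})^*$. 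For the right ends, one identifies $\tn{Hom}_F(u,Z)$ with a quotient of $\widehat{Z}$ using the computation $H^2(F,u)\cong\widehat{\mathbb{Z}}$ (interpreting $u$ as a universal pro-object) and checks that the kernel of cup-product with $\xi$ matches $(\widehat{Z}/\widehat{Z}^\Gamma)^*$. Functoriality in $Z$ is immediate from the construction.

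The main obstacle will be verifying the matching of boundary maps: namely, showing that the pairing intertwines the edge homomorphism $H^1(\mathcal{E},Z)\to\tn{Hom}_F(u,Z)$ with the surjection $Z^1(\Gamma,\widehat{Z})^*\to(\widehat{Z}/\widehat{Z}^\Gamma)^*$. This is exactly where the ``unbalanced'' feature of the cup product is essential, since it is what allows a class built from $f\colon u\to Z$ (a morphism, not a cocycle) to be paired with a $1$-cocycle for $\Gamma$ and land in the degree needed to hit the local invariant. A secondary, but more mechanical, difficulty is handling the pro-system defining $u$: one must show that the pairing is compatible with the transition maps in $u=\varprojlim u_k$ and hence descends to the pro-limit, mirroring the compatibility already established for $H^1(\mathcal{E},Z\to T)$ in \cite{Dillery1}.
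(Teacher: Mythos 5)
Your proposal takes a genuinely different route from the paper, but it has a gap at the very first step. You want to pair $c$ (a \v{C}ech $1$-cochain valued in $Z$) against $\widehat{f}\circ z$, which lies in $Z^{1}(\Gamma, \widehat{u})$; but there is no natural pairing $Z\times\widehat{u}\to\mathbb{G}_{m}$ (the natural pairings are $Z\times\widehat{Z}\to\mathbb{G}_{m}$ and $u\times\widehat{u}\to\mathbb{G}_{m}$), so this coefficient mismatch means the proposed cup product does not parse. Even if you instead pair $c$ directly with $z$ (valued in $\widehat{Z}$), the resulting $2$-cochain $c\cup z$ is \emph{not} a cocycle: since $dc=f_{*}(\xi)\neq 0$, the Leibniz rule gives $d(c\cup z)=f_{*}(\xi)\cup z$, which is generically nonzero. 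So there is no class in $H^{2}(F,\mathbb{G}_{m})$ whose local invariant you can take, and nothing in your proposal supplies a canonical correction term. This is not a ``formal'' well-definedness issue but the central technical obstruction. The other structural point worth noting is that the unbalanced cup product of \cite[\S 4.2]{Dillery1} (and \S\ref{TatedualityCup} here) is specifically a pairing of an fppf \v{C}ech cochain against a \emph{negative}-degree Tate cochain, decreasing the \v{C}ech degree; it is not designed to pair two degree-$1$ cochains into a degree-$2$ class, so invoking it here is a category error.

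The paper's actual proof circumvents exactly this problem by working in the opposite direction and in a different degree. Rather than pairing $H^{1}(\mathcal{E},Z)$ against $Z^{1}(\Gamma,\widehat{Z})$ directly, it constructs a map \emph{into} $H^{1}(\mathcal{E},Z)$ from the group $\widehat{C}^{-2}(\Gamma,X^{*}(Z)^{*})/\widehat{B}^{-2}(\Gamma,X^{*}(Z)^{*})$ of Tate $-2$-cochains modulo coboundaries: given $y$ there, one forms $z_{y_{k}}:=d(\widetilde{l_{k}c_{k}})\sqcup_{E_{k}/F}y_{k}$ (unbalanced cup product of a canonical $2$-cochain built from roots of a representative of the fundamental class against the negative Tate cochain $y_{k}$) together with the morphism $\lambda_{y_{k}}\colon u_{k}\to Z$ determined by $dy_{k}$, and checks that $(-z_{y_{k}},\lambda_{y_{k}})$ is a $\xi_{k}$-twisted cocycle. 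The controlled cochain $\widetilde{l_{k}c_{k}}$ is precisely what supplies the canonical ``correction'' that your pairing lacks; it is engineered so that $d(\widetilde{l_{k}c_{k}})\sqcup_{E_{k}/F}\delta_{e}$ represents the canonical class. The map is then shown to be an isomorphism by fitting it into the two matching diagrams \eqref{bigdiag1} and \eqref{dualdiagram1} and applying the five lemma, and the identification with $Z^{1}(\Gamma,\widehat{Z})^{*}$ is obtained only at the end, in \S\ref{TatedualityDualgp}, by Pontryagin-dualizing \eqref{dualdiagram1} (where one uses $\varprojlim Z^{1}(\Gamma_{E_{k}/F},\widehat{Z})^{*}=Z^{1}(\Gamma,\widehat{Z})^{*}$). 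If you want to pursue a direct pairing, you would need to first build in a canonical bounding cochain for $f_{*}(\xi)\cup z$ along the lines of the paper's $\widetilde{l_{k}c_{k}}$; without it the pairing is not even defined.
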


\begin{remark} We briefly explain the utility of Proposition \ref{Zduality} in the context of this paper. Recall that $H^{1}(\mathcal{E}, Z)$ acts on the fibers of
\begin{equation*}
H^{1}(\mathcal{E}, Z \to G) \to H^{1}(F, G_{\tn{ad}})
\end{equation*}
by multiplication and, in fact, this makes them into $H^{1}(\mathcal{E}, Z)$-torsors. The above result is used for understanding how the irreducible representation of $\pi_{0}(S_{\varphi}^{+})$ associated via the rigid refined LLC to a representation $(x, \pi)$ of a rigid inner twist $x$ changes if we replace $x$ by $x'$ another rigid inner twist inducing the same inner form. More precisely, if $x' = zx$ for $z \in H^{1}(\mathcal{E}, Z)$, then (assuming the existence of rigid refined LLC) the representation of $\pi_{0}(S_{\varphi}^{+})$ corresponding to $(x', \pi)$ must be the one for $(x, \pi)$ twisted by a character canonically obtained from the image of $z$ in $Z^{1}(\Gamma, \widehat{Z})^{*}$ via \eqref{Zdualeq}. For the full details of this construction, see \cite[\S 6.3]{kaletha18} and \cite[\S 4.4]{DS23}.

Another advantage of Proposition \ref{Zduality} (and, more specifically, understanding the effect that changing the rigidifcation of an inner form has on rigid refined LLC) becomes apparent when comparing rigid refined LLC to isocrystal refined LLC, which will be our task in \S \ref{Iso}. The latter approach replaces the cohomology set $H_{\tn{bas}}^{1}(\mathcal{E}, G)$ with $B(G)_{\tn{bas}}$, which is, as recalled in detail in \S \ref{IsoReview}, the set of isomorphism classes of basic $G$-isocrystals. We will construct in \S \ref{IsoComp} (cf. \eqref{cohomcomp}) a map
\begin{equation}\label{rigtoiso1}
B(G)_{\tn{bas}} \to H^{1}_{\tn{bas}}(\mathcal{E}, G)
\end{equation}
which allows one to interpolate between the two versions of the refined LLC. Although this map is in general far from surjective, when $Z(G)$ is connected it will be surjective up to multiplication by $H^{1}(\mathcal{E}, Z)$. Therefore, combining the comparison for elements in the image of \eqref{rigtoiso1} with Proposition \ref{Zduality} allows for a ``complete" comparison in the connected center case. 
\end{remark}

We follow the general framework of \cite[\S 6.1, 6.2]{kaletha18}. As in loc. cit., the first step is giving an interpretation of $H^{1}(\mathcal{E}, Z)$ in terms of Galois modules. 

Recall that for the absolute Galois group $\Gamma$ of $F$ and a finite discrete $\Gamma$-module $M$ (on which $\Gamma$ acts continuously), we can define \mathdef{Tate $-2$-cochains (resp. cocycles)}, denoted by $\widehat{C}^{-2}(\Gamma, M)$ (resp. $\widehat{Z}^{-2}(\Gamma, M)$), as follows: First, recall that for finite Galois $E/F$ such that $\Gamma_E$ acts trivially on $M$, a $-2$-cochain is a function $\Gamma_{E/F} \xrightarrow{f} M$, and it is a $1$-cycle (or a $-2$-cocycle) if $\sum_{g \in \Gamma_{E/F}} (g^{-1} -1)f(g) = 0$; denote the $-2$-cochains, cocycles, and coboundaries by $\widehat{C}^{-2}(\Gamma_{E/F}, M)$, $\widehat{Z}^{-2}(\Gamma_{E/F}, M)$, and $\widehat{B}^{-2}(\Gamma_{E/F}, M)$, respectively. For $K/E/F$ there are obvious transition maps on all of the above groups, and we define 
\begin{equation}\label{Hminus2}
\widehat{H}^{-2}(\Gamma, M) = \varprojlim \frac{\widehat{Z}^{-2}(\Gamma_{E/F}, M)}{\widehat{B}^{-2}(\Gamma_{E/F}, M)}, \hspace{1mm} \frac{\widehat{C}^{-2}(\Gamma, M)}{\widehat{B}^{-2}(\Gamma, M)} : =\varprojlim \frac{\widehat{C}^{-2}(\Gamma_{E/F}, M)}{\widehat{B}^{-2}(\Gamma_{E/F}, M)},
\end{equation}
as we vary over all finite Galois extensions through which the action of $\Gamma$ factors. 

For \textit{any} discrete $\Gamma$-module $M$, we define $\widehat{Z}^{-1}(\Gamma, M)$ as the colimit $\varinjlim \widehat{Z}^{-1}(\Gamma_{K/F}, M)$ with respect to the obvious inclusions, similarly with $\widehat{H}^{-1}(\Gamma, M)$. Note that if our group of coefficients $M$ is finite (and hence the norm will eventually act by zero), we simply have $\widehat{Z}^{-1}(\Gamma, M) = M.$

\begin{remark} In the $p$-adic case, Langlands shows in \cite{Lan83} that the left-hand limit in \eqref{Hminus2} stabilizes at a finite level and \cite[\S 6.1]{kaletha18} shows that the right-hand limit also stabilizes. However, they need not stabilize in the local function field case (for example, using duality, one sees that the lack of stabilization for the left-hand limit is equivalent to the fact that inflation $H^{1}(\Gamma_{E/F}, M) \xrightarrow{\tn{inf}} H^{1}(\Gamma, M)$ is in general not surjective for any finite $E/F$). 
\end{remark}

\begin{remark} For any torus $Z \hookrightarrow S$ defined over $F$, the group $\frac{X_{*}(S/Z)}{X_{*}(S)}$ is canonically isomorphic to $X^{*}(Z)^{*} := \Hom(X^{*}(Z), \mathbb{Q}/\mathbb{Z}) \cong \Hom(X^{*}(Z), C^\times)$; it will frequently be convenient to use this identification, and we do so without comment in the following.
\end{remark}

We have from \cite[Theorem 4.8]{kaletha16}, \cite[Theorem 4.10, Proposition 6.4]{Dillery1}) (and local class field theory) that for any torus $Z \hookrightarrow S$, the diagram
\begin{equation}\label{bigdiag1}
\begin{tikzcd}
0 \arrow{r} & H^{1}(F, Z) \arrow{r} \arrow{d} & H^{1}(\mathcal{E}, Z) \arrow{d} \arrow{r} & \Hom_{F}(u,Z) \arrow{d} \\
0 \arrow{r} & H^{1}(F, S) \arrow{r} \arrow{d} & H^{1}(\mathcal{E}, Z \to S) \arrow{r} \arrow{d} & \Hom_{F}(u,Z) \arrow{d} \\
& H^{1}(F,S) \arrow{r} & H^{1}(F, S/Z) \arrow{d}  \arrow{r} & H^{2}(F, Z) \arrow{d} \\
& & 0 & 0
\end{tikzcd}
\end{equation}
is isomorphic to
\begin{equation}\label{dualdiagram1}
\begin{tikzcd}
0 \arrow{r} & \widehat{H}^{-2}(\Gamma, \frac{X_{*}(S/Z)}{X_{*}(S)}) \arrow{r} \arrow{d} & ? \arrow{d} \arrow{r} & \widehat{Z}^{-1}(\Gamma, \frac{X_{*}(S/Z)}{X_{*}(S)}) \arrow{d} \\
0 \arrow{r} & \widehat{H}^{-1}(\Gamma, X_{*}(S)) \arrow{r} \arrow{d} & \frac{\widehat{Z}^{-1}(\Gamma, X_{*}(S/Z))}{\widehat{B}^{-1}(\Gamma, X_{*}(S))} \arrow{r} \arrow{d} & \widehat{Z}^{-1}(\Gamma, \frac{X_{*}(S/Z)}{X_{*}(S)}) \arrow{d} \\
& \widehat{H}^{-1}(\Gamma, X_{*}(S)) \arrow{r} &  \widehat{H}^{-1}(\Gamma, X_{*}(S/Z))  \arrow{r} & \widehat{H}^{-1}(\Gamma, \frac{X_{*}(S/Z)}{X_{*}(S)})  \arrow{d} \\
& & & 0,
\end{tikzcd}
\end{equation}
and we need to both determine what goes in missing ?-spot and show that it is (compatibly) isomorphic to $H^{1}(\mathcal{E}, Z)$. 

Returning to the two diagrams, it is easy to check that the group 
\begin{equation}\label{dualmodule} \frac{\widehat{C}^{-2}(\Gamma, \frac{X_{*}(S/Z)}{X_{*}(S)})}{\widehat{B}^{-2}(\Gamma, \frac{X_{*}(S/Z)}{X_{*}(S)})},
\end{equation} 
fits into the missing spot in the second diagram (the only non-immediate map is the vertical one; it is defined by lifting $1$-chains to $1$-chains valued in $X_{*}(S/Z)$ and then taking the differential). There are no issues with exactness after passing to the limits implicit in \eqref{dualdiagram1} because all of the groups in the relevant projective systems are finite. It remains to construct the isomorphism of \eqref{dualmodule} with $H^{1}(\mathcal{E}, Z)$.

In the $p$-adic case \cite{kaletha18} constructs the isomorphism by choosing a torus $Z \hookrightarrow S$ such that the canonical map $H^{1}(\mathcal{E}, Z) \to H^{1}(\mathcal{E}, Z \to S)$ is an isomorphism, as is its counterpart 
 $$\frac{\widehat{C}^{-2}(\Gamma, \frac{X_{*}(S/Z)}{X_{*}(S)})}{\widehat{B}^{-2}(\Gamma, \frac{X_{*}(S/Z)}{X_{*}(S)})} \to \frac{\widehat{Z}^{-1}(\Gamma, X_{*}(S/Z))}{\widehat{B}^{-1}(\Gamma, X_{*}(S))},$$
 which one can do in light of:

\begin{proposition}\label{clevertorus}(\cite[Proposition 5.2]{kaletha18}) For $F$ a $p$-adic local field, one can choose a torus $Z \to S$ such that $H^{1}(F, S/Z) = 0$ and $H^{1}(F, Z) \to H^{1}(F, S)$ is bijective. 
\end{proposition}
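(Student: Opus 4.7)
The strategy is to construct $S$ so that (i) $S/Z$ is an induced torus, ensuring $H^{1}(F, S/Z) = 0$, and (ii) the connecting homomorphism $(S/Z)(F) \to H^{1}(F, Z)$ vanishes; together with the long exact sequence
\begin{equation*}
S(F) \to (S/Z)(F) \to H^{1}(F, Z) \to H^{1}(F, S) \to H^{1}(F, S/Z),
\end{equation*}
conditions (i) and (ii) yield both the surjectivity and the injectivity of $H^{1}(F, Z) \to H^{1}(F, S)$.

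My plan begins with a standard character-lattice construction. Write $M = X^{*}(Z)$, a finite continuous $\Gamma$-module, factoring through some $\Gamma_{E/F}$. Choose a surjection $L \twoheadrightarrow M$ from a permutation $\Gamma$-module $L$ (for instance $L = \bbZ[\Gamma_{E/F}]^{n}$ for suitable $n$) and take $S_{0}$ to be the torus with $X^{*}(S_{0}) = L$. Then $S_{0}$ is induced and $Z \hookrightarrow S_{0}$ as the kernel of $S_{0} \to S_{0}/Z$. Hilbert $90$ plus Shapiro gives $H^{1}(F, S_{0}) = 0$, so the long exact sequence identifies $H^{1}(F, Z)$ with $(S_{0}/Z)(F)/S_{0}(F)$. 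This is a \emph{finite} group in the $p$-adic setting, which is the first place the hypothesis is used essentially.

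Next, I would exploit this finiteness to modify $S_{0}$ into an $S$ with $S/Z$ induced and with the connecting map killed. One route is to first arrange that $S_{0}/Z$ is itself induced; this requires finding a presentation $0 \to N \to L \to M \to 0$ in which both $L$ \emph{and} $N$ are permutation $\Gamma$-modules. In the $p$-adic case such a presentation is available (after possibly enlarging $L$) because $\Gamma$ is topologically finitely generated and the relevant finite-module cohomology is well-controlled. This part of the argument is the one that visibly fails in the local function field setting, which is why the paper develops its alternative projective-system approach.

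Finally, for condition (ii), I would enlarge $S_{0}$ by an auxiliary induced torus $T$ depending on representatives $\bar{t}_{1}, \dots, \bar{t}_{k} \in (S_{0}/Z)(F)$ generating $H^{1}(F, Z)$: choose a finite Galois extension $K/F$ containing the splitting field of $Z$ and large enough (using local class field theory, specifically the cokernel $F^{\times}/N_{K/F}(K^{\times})$) that each $\bar{t}_{i}$ lifts to an element of $S(F) := (S_{0} \times_{F} T)(F)$ for a suitable diagonal embedding $Z \hookrightarrow S_{0} \times_{F} T$. One then verifies that $S/Z$ remains induced (being a pushout of induced tori) and that $S(F) \to (S/Z)(F)$ is surjective by construction. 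The main obstacle is the cohomological bookkeeping in this last step: one must simultaneously control the quotient $S/Z$ and the lifting problem, which is where local Tate duality and the finiteness of $H^{1}(F, Z)$ enter decisively, and where the argument cannot be adapted to the equal characteristic case.
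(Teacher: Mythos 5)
Your reduction via the long exact sequence to (i) $H^1(F,S/Z)=0$ and (ii) triviality of the connecting map $(S/Z)(F)\to H^1(F,Z)$ is correct, but the construction you propose does not carry it out. Stage 1 (taking $S_0$ induced so that $H^1(F,S_0)=0$) is unmotivated and in fact pulls in the wrong direction: you want $H^1(F,Z)\to H^1(F,S)$ to be a bijection, so killing $H^1$ of the ambient torus is the opposite of what helps. Stage 2 is the more serious gap: there is no general reason a finite $\Gamma$-module $M=X^*(Z)$ admits a presentation $0\to N\to L\to M\to 0$ with \emph{both} $N$ and $L$ permutation modules. Whether such a presentation exists is a purely representation-theoretic question about the $\Gamma$-module structure of $M$ and has nothing to do with $F$ being $p$-adic; topological finite generation of $\Gamma$ does not supply it. Moreover you only need $S/Z$ to be induced, not $S$, and this can be arranged in a single step without controlling $S$ itself.

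Kaletha's argument (which this paper reproduces in \S\ref{ConstructionEmbeddings} to set up the weak $z$-embeddings) takes any embedding $Z\hookrightarrow T_0$ into a torus, puts $C_0=T_0/Z$, and, for a finite Galois $K/F$ containing the splitting field $K_0$ of $T_0$, defines $C=\mathrm{Res}_{K/F}(C_{0,K})$ and $S:=C\times_{C_0}T_0$. Then $S/Z=C$ is induced by construction, so $H^1(F,S/Z)=0$ for free. Functoriality of the long exact sequence applied to the morphism of short exact sequences from $0\to Z\to S\to C\to 0$ to $0\to Z\to T_0\to C_0\to 0$ shows that the connecting map $C(F)\to H^1(F,Z)$ factors through the norm $N_{K/K_0}\colon C(F)\to C_0(F)$. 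Since $H^1(F,Z)$ is finite over a $p$-adic $F$, the kernel of $C_0(F)\to H^1(F,Z)$ is open, and local class field theory lets you enlarge $K$ so that the norm image lands inside that kernel, killing the connecting map. This norm-map factorization is exactly what your stage 3 is missing: you correctly intuit that finiteness of $H^1(F,Z)$ together with local class field theory should finish the job, but without routing the connecting map through a norm there is no handle on the lifting problem you flag at the end.
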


From there the desired isomorphism comes from the analogous one for $H^{1}(\mathcal{E}, Z \to S)$; one checks that it is independent of the choice of $S$ and functorial in $Z$, see \cite[\S 6.1]{kaletha18}.

\subsection{Unbalanced cup products}\label{TatedualityCup}
We briefly recall cup products and unbalanced cup products in the setting of \v{C}ech cohomology which will be used to prove the above duality isomorphism over function fields. The only novel definition in this subsection is of the unbalanced cup product for $-2$-cochains.

For $S/R$ a finite flat extension (not necessarily étale) of rings, $S'/R$ a Galois extension contained in $S$, and $G$ a commutative $R$-group scheme, define the group $C^{n}(S/R, G)$ to be $G(S^{\bigotimes_{R}(n+1)})$, and  $C^{n,i}(S/R, S', G)$ to be the subgroup $G(S^{\bigotimes_{R} (n+1-i)} \otimes_{R} (S'^{\bigotimes_{R} i}))$. Setting $\Gamma' := \text{Aut}_{R\text{-alg}}(S')$, our goal is to define an unbalanced cup product $$C^{n,i}(S/R, S', G) \times C_{\text{Tate}}^{-i}(\Gamma', H(S')) \xrightarrow{\underset{S'/R}{\sqcup}} C^{n-i}(S/R, J)$$ for two commutative $R$-group schemes $G, H$ and $R$-pairing $P \colon G \times H \to J$ for $J$ another commutative $R$-group scheme, $i=1,2$, where as above $C_{\text{Tate}}^{-1}(\Gamma', H(S')) = H(S')$, $C_{\text{Tate}}^{-2}(\Gamma', H(S'))$ is functions $\Gamma' \to H(S')$. We now drop the ``Tate" subscript for these degrees for notational convenience. Note that in previous work \cite[\S 4.2]{Dillery1}, only the definition for $i=1$ was given. We start by defining the standard cup product in \v{C}ech cohomoloogy:

\begin{definition}\label{fppfcup} We recall the general cup product on \v{C}ech cochains as defined in \cite[\S 3]{Shatz}, continuing with the notation from above. For $n,m \geq 1$, we have the morphism of $R$-algebras $S^{\bigotimes_{R}n} \otimes_{R} S^{\bigotimes_{R} m}  \xrightarrow{\theta} S^{\bigotimes_{R} (n+m-1)}$ defined on simple tensors by $$(a_{1} \otimes \dots \otimes a_{n}) \otimes (b_{1} \otimes \dots \otimes b_{m}) \mapsto a_{1} \otimes \dots \otimes a_{n-1} \otimes a_{n}b_{1} \otimes b_{2} \otimes \dots \otimes b_{m}.$$
The cup product 
\begin{equation*}
    C^{n-1}(S/R, G) \times C^{m-1}(S/R, H) \to C^{n+m-2}(S/R, J)
\end{equation*}
is defined to be the composition of $$G(S^{\bigotimes_{R} n}) \times H(S^{\bigotimes_{R} m}) \xrightarrow{(\text{id} \otimes 1)^{\sharp} \times (1 \otimes \text{id})^{\sharp}} G(S^{\bigotimes_{R}n} \otimes_{R} S^{\bigotimes_{R} m}) \times H(S^{\bigotimes_{R}n} \otimes_{R} S^{\bigotimes_{R} m}) = (G \times H)(S^{\bigotimes_{R}n} \otimes_{R} S^{\bigotimes_{R} m})$$ with the map $$(G \times H)(S^{\bigotimes_{R}n} \otimes_{R} S^{\bigotimes_{R} m}) \xrightarrow{\theta^{\sharp}} (G \times H)(S^{\bigotimes_{R} (n+m-1)}) \xrightarrow{P} J(S^{\bigotimes_{R} (n+m-1)}).$$
\end{definition}

With the above definition in hand, we resume our construction of the unbalanced cup product. We have a homomorphism of $R$-algebras $\lambda \colon S^{\bigotimes_{R} n} \otimes_{R} S' \to \prod_{\Gamma'} S^{\bigotimes_{R} n}$ defined on simple tensors by $$a_{i,1} \otimes \dots \otimes a_{i,n+1} \mapsto (a_{i,1} \otimes \dots \otimes a_{i,n} \cdot ^{\sigma}a_{i,n+1} )_{\sigma \in \Gamma'}.$$ Moreover, for any $R$-group scheme $J$, we have a canonical identification $J(\prod_{\Gamma'} S^{\bigotimes_{R} n}) \to \prod_{\Gamma'} J(S^{\bigotimes_{R} n})$; 

We first recall the definition for $-1$-cochains Define, for $a \in G(S^{\bigotimes_{R} n} \otimes_{R} S')$ and $b \in H(S')$, $$a \tilde{\underset{S'/R}{\sqcup}} b = \lambda^{\sharp}(a \cup b^{(0)}) \in \prod_{\Gamma'} J(S^{\bigotimes_{R} n}).$$ In the above formula we are using the cup product from Definition \ref{fppfcup} (which in this case is the pairing $P$ applied to $(a, p_{n+1}^{\sharp}(b^{(0)})) \in G(S^{\bigotimes_{R} (n+1)}) \times H(S^{\bigotimes_{R} (n+1)})$) and $b^{(0)}$ denotes the element $b \in H(S')$ viewed as a $0$-cochain.

We now apply the group homomorphism $N \colon \prod_{\Gamma'} J(S^{\bigotimes_{R} n}) \to J(S^{\bigotimes_{R} n})$ obtained by taking the sum of all elements on the left-hand side, and the resulting pairing 
$$C^{n,1}(S/R, S', G) \times C^{-1}(\Gamma', H(S')) \to J(S^{\bigotimes_{R} n})$$ is $\mathbb{Z}$-bilinear.  Indeed, $$(a +a') \tilde{\underset{S'/R}{\sqcup}} b = \lambda^{\sharp}[(a+a') \cup b^{(0)}] = \lambda^{\sharp}(a\cup b^{(0)} + a' \cup b^{(0)}) = \lambda^{\sharp}(a \cup b^{(0)}) + \lambda^{\sharp}(a' \cup b^{(0)}).$$ This will be our desired pairing, denoted by $a \underset{S'/R}{\sqcup} b$. 

Note that we could also have defined $a \underset{S'/R}{\sqcup} b$ by first applying $\lambda^{\sharp}$ to $a$, obtaining $\lambda^{\sharp}(a) = (a_{\sigma})_{\sigma \in \Gamma'} \in \prod_{\Gamma'} G(S^{\bigotimes_{R} n})$ and then taking the sum
$$a \underset{S'/R}{\sqcup} b = \sum_{\sigma \in \Gamma'} a_{\sigma} \cup (^{\sigma}b^{(0)});$$ 
it is straightforward to verify that this agrees with the first definition.

This second description of the unbalanced cup product gives a more obvious way of extending the definition to $-2$-cochains $b \colon \Gamma' \to H(S')$. For $a \in G(S^{\bigotimes_{R} n} \otimes_{R} S' \otimes_{R} S')$ and $b \colon \Gamma' \to H(S')$, we first take 
\begin{equation*}
\lambda^{\sharp}(a) = (a_{\underline{\sigma}})_{\underline{\sigma} \in (\Gamma')^{2}} \in \prod_{(\Gamma')^{2}} G(S^{\bigotimes_{R} n}),
\end{equation*}
where now $\lambda$ is the ring isomorphism
\begin{equation*} S^{\bigotimes_{R} n} \otimes_{R} S' \otimes_{R} S' \to \prod_{(\Gamma')^{2}} S^{\bigotimes_{R} n}, \hspace{1mm} a_{1} \otimes \dots \otimes a_{n} \otimes b_{1} \otimes b_{2} \mapsto (a_{1} \otimes \dots a_{n-1} \otimes a_{n} \cdot ^{\sigma_{1}}b_{1} \cdot ^{(\sigma_{1}\sigma_{2})}b_{2})_{(\sigma_{1}, \sigma_{2}) \in (\Gamma')^{2}}.
\end{equation*}
We now define
\begin{equation*}
a \underset{S'/R}{\sqcup} b := \sum_{\underline{\sigma} = (\sigma_{1}, \sigma_{2}) \in (\Gamma')^{2}} a_{\underline{\sigma}} \cup (^{\sigma_{1}\sigma_{2}}b(\sigma_{1}\sigma_{2})^{(0)}),
\end{equation*}
where, as above, $^{\sigma_{1}\sigma_{2}}b(\sigma_{1}\sigma_{2})^{(0)}$ indicates that we are viewing the element $^{\sigma_{1}\sigma_{2}}b(\sigma_{1}\sigma_{2}) \in H(S')$ as a $0$-cochain.

A key property of the unbalanced cup product is the following (cf. \cite[Proposition 4.4]{Dillery1}):

\begin{proposition}\label{bigcup} For $a \in C^{n,i}(S/R, S', G)$ and $b \in C^{-i}(\Gamma',H(S'))$, $i=1,2$, we have $$d(a \underset{S'/R}{\sqcup} b) = (da) \underset{S'/R}{\sqcup} b + (-1)^{n} (a \underset{S'/R}{\sqcup} db).$$
\end{proposition}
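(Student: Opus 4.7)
Plan: The $i=1$ case is already \cite[Proposition 4.4]{Dillery1}, so the new work is $i=2$. My strategy is to reduce the $i=2$ Leibniz identity to the ordinary Leibniz rule for the \v{C}ech cup product, applied termwise to the explicit expansion
\begin{equation*}
a \underset{S'/R}{\sqcup} b = \sum_{(\sigma_1, \sigma_2) \in (\Gamma')^2} a_{(\sigma_1, \sigma_2)} \cup \bigl({}^{\sigma_1\sigma_2} b(\sigma_1\sigma_2)\bigr)^{(0)}
\end{equation*}
given just before the statement, where each $a_{(\sigma_1, \sigma_2)} \in C^{n-2}(S/R, G)$ is read off from $a$ via $\lambda^\sharp$.

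First I would handle the easy family of terms, namely $\sum_{\underline{\sigma}} d(a_{\underline{\sigma}}) \cup c_{\underline{\sigma}}^{(0)}$ with $c_{\underline{\sigma}} := {}^{\sigma_1\sigma_2} b(\sigma_1\sigma_2)$. Because $\lambda$ is a ring homomorphism, $\lambda^\sharp$ commutes with the face maps that build the \v{C}ech differential, so $d(a_{\underline{\sigma}}) = (da)_{\underline{\sigma}}$, and the sum reassembles to $(da) \underset{S'/R}{\sqcup} b$. The Leibniz sign $(-1)^{\deg a_{\underline{\sigma}}} = (-1)^{n-2} = (-1)^n$ matches the one in the statement.

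Next I would tackle the substantive second family, $(-1)^n \sum_{\underline{\sigma}} a_{\underline{\sigma}} \cup d(c_{\underline{\sigma}}^{(0)})$. Here $d(c_{\underline{\sigma}}^{(0)}) = p_2^\sharp c_{\underline{\sigma}} - p_1^\sharp c_{\underline{\sigma}}$, and the goal is to show that after reindexing the double sum over $(\Gamma')^2$ by a single $\tau = \sigma_1 \sigma_2$, the telescoping $p_2^\sharp - p_1^\sharp$ combines with the Galois action on $S' \subset S$ to produce exactly the expression $\sum_\tau (\tau^{-1} - 1) b(\tau) = db \in H(S')$ appearing on the right-hand side. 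This is essentially a direct generalization of the analogous computation for the $i=1$ case in \cite[Proposition 4.4]{Dillery1}, with one extra Galois index to push through.

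The main obstacle I anticipate is the combinatorial bookkeeping in that last step: keeping straight how the pair $(\sigma_1, \sigma_2)$ interacts with the two \v{C}ech face maps and the embedding $S' \hookrightarrow S$, and verifying that no stray twist survives the reindexing by $\tau$. If the direct calculation becomes unwieldy, an alternative is to prove the $i=2$ identity by iterating the $i=1$ identity: first apply $\lambda^\sharp$ to only one $S'$-slot of $a$, reducing $a$ to a $\Gamma'$-indexed family in $C^{n-1, 1}(S/R, S', G)$, and then invoke \cite[Proposition 4.4]{Dillery1} for each member of the family and sum over $\Gamma'$.
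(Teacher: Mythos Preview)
Your approach is the same as the paper's: both say the $i=1$ case is \cite[Proposition 4.4]{Dillery1} and the $i=2$ case is a direct adaptation. The paper gives no further details, so your plan is in fact more explicit.

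That said, one of your intermediate claims is not correct as stated. You assert that because $\lambda$ is a ring homomorphism, $\lambda^{\sharp}$ commutes with the \v{C}ech face maps, so that $(da)_{\underline{\sigma}} = d(a_{\underline{\sigma}})$. This fails: the differential $da$ uses $n+2$ face maps $\partial_0,\dots,\partial_{n+1}$ on $S^{\otimes(n+1)}$, while $d(a_{\underline{\sigma}})$ uses only $n$ face maps on $S^{\otimes(n-1)}$, and $\lambda$ does not intertwine the face maps touching the last two ($S'$-) slots with anything in the shorter complex. Concretely, for the last face map $\partial_{n+1}$ one finds $(\partial_{n+1}^{\sharp}a)_{\underline{\sigma}} = a_1\otimes\cdots\otimes a_{n-1}\otimes {}^{\sigma_1\sigma_2}b_2$ (with the $b_1$-contribution absorbed earlier), which is not a face of $a_{\underline{\sigma}}$. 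So your ``first family'' $\sum_{\underline{\sigma}} d(a_{\underline{\sigma}})\cup c_{\underline{\sigma}}^{(0)}$ is \emph{not} equal to $(da)\sqcup_{S'/R} b$; the discrepancy is precisely a set of cross-terms coming from the $S'$-slot face maps, and it is these cross-terms, combined with your ``second family,'' that reorganize into $(-1)^n(a\sqcup_{S'/R} db)$. The bookkeeping you anticipate for the second family is therefore already present in the first. Once you track those extra face maps carefully the computation closes, and your alternative (iterating the $i=1$ identity by peeling off one $S'$-slot at a time) is a clean way to avoid doing this by hand.
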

In the above result, the unbalanced cup product is defined to be the usual cup product when the right-hand argument is not a negative Tate cochain (this occurs in the $i=1$ case).

\begin{proof} This is proved loc. cit. for $i=-1$, and a straightforward adaptation of the argument gives the result for $i=-2$ as well.
\end{proof}

\subsection{Duality via cup products}\label{TatedualityCupDual}

We will give a functorial isomorphism between $H^{1}(\mathcal{E}, Z)$ and the abelian group \eqref{dualmodule} in the function field case, which is the essential ingredient in the proof of Proposition \ref{Zduality}, by a different method from that of \cite{kaletha18}; there is no hope of an analogue of Proposition \ref{clevertorus} in this situation (even if one tries to replace the torus by a pro-torus in order to preserve injectivity, surjectivity is lost due to the failure of the universal residue symbol in local class field theory to be surjective). Our argument will work for any non-archimedean local field.

The proof will require some further recollections from \cite{Dillery1}; recall from Section \ref{PrelimKal} that we have fixed a system of finite Galois extensions $\{E_{k}/F\}$ and natural numbers $\{n_{k}\}$ which are totally ordered and co-final with respect to the limit defining $u$. 

We assume that, for our fixed group $Z$, that $Z$ is split over $E_{k}$, $\text{exp}(Z) \mid n_{k}$, and $\widehat{Z}^{-1}(\Gamma_{E_{k}/F}, X^{*}(Z)^{*})$ are constant (i.e., are all just equal to $X^{*}(Z)^{*}$) for all $k$ (we can always arrange for this to be the case by shifting the system, since our constructions only depend on its limit anyway, as explained in \cite[\S 4]{Dillery1}). We can define a \v{Cech} $2$-cochain (from \cite[\S 4.3]{Dillery1}), denoted $\widetilde{l_{k}c_{k}} \in \mathbb{G}_{m}(\overline{F}^{\bigotimes_{F} 3})$, by taking a certain choice $c_{k}$ of representative for the canonical class $H^{2}(\Gamma_{E_{k}/F}, E_{k}^{\times})$ and then strategically defining an $n_{k}$th root $\widetilde{l_{k}c_{k}}$; both of these choices will be compatible for varying $k$. This cochain has the property that it takes values in the subgroup $\mathbb{G}_{m}(\overline{F} \otimes_{F} E_{k} \otimes_{F} E_{k}) \subset \mathbb{G}_{m}(\overline{F}^{\bigotimes_{F} 3})$ and hence its differential $d(\widetilde{l_{k}c_{k}})$ is a $3$-cocycle lying in the subgroup $\mu_{n_{k}}(\overline{F} \otimes_{F} \overline{F} \otimes_{F} E_{k} \otimes_{F} E_{k}) \subset \mathbb{G}_{m}(\overline{F}^{\bigotimes_{F} 4})$; both of these inclusions will be used for taking unbalanced cup products.

We define the ``evaluation" homomorphism $$\mu_{n_{k},E_{k}} \xrightarrow{\delta_{e}} u_{k,E_{k}}$$ given on character modules by $$\mathbb{Z}/n_{k}\mathbb{Z}[\Gamma_{E_{k}}/F]_{0} \to \mathbb{Z}/n_{k}\mathbb{Z}, \sum_{\sigma} c_{\sigma}[\sigma] \mapsto c_{e}.$$ Note that it is killed by the $\Gamma_{E_{k}/F}$-norm map. We also sometimes denote this  map by $\delta_{e,k}$ if we are working with multiple terms in the projective system.

There is an isomorphism 
\begin{equation*}
\Hom_{F}(u_{k}, Z) \xrightarrow{\sim} \widehat{Z}^{-1}(\Gamma_{E_{k}/F}, X^{*}(Z)^{*}) = \widehat{Z}^{-1}(\Gamma, X^{*}(Z)^{*}) = X^{*}(Z)^{*}
\end{equation*}
(the last two equality are because of our assumptions on $k$) via precomposing with $\delta_{e}$, and then one obtains the isomorphism $\widehat{Z}^{-1}(\Gamma, X^{*}(Z)^{*}) \xrightarrow{\sim} \Hom_{F}(u, Z)$ by taking the direct limit of the finite-level isomorphisms, which are easily seen to agree as $k$ varies (because of how we have chosen our system relative to $Z$, it follows that every map $u \to Z$ factors through any $u_{k}$, in particular for $k=1$).

A key property of the cochains $\widetilde{l_{k}c_{k}}$ is the following:

\begin{proposition}(\cite[Lemma 4.8]{Dillery1}) Set $$\xi_{k} := d(\widetilde{l_{k}c_{k}}) \sqcup_{E_{k}/F} \delta_{e} \in u_{k}(\overline{F}^{\bigotimes_{F} 3}).$$ Then $[\xi_{k}]$ equals the image of the canonical class $[\xi] \in H^{2}(F, u)$ in $H^{2}(F, u_{k})$.
\end{proposition}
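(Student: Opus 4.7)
The plan is to pin down a concrete model for the canonical class in $H^{2}(F, u_{k})$ and then match it against $\xi_{k}$ by a direct cocycle computation. First I would exploit the short exact sequence
\begin{equation*}
1 \to \mu_{n_{k}} \to \mathrm{Res}_{E_{k}/F}(\mu_{n_{k}}) \to u_{k} \to 1
\end{equation*}
together with Shapiro's lemma $H^{2}(F, \mathrm{Res}_{E_{k}/F}(\mu_{n_{k}})) = H^{2}(E_{k}, \mu_{n_{k}})$ to identify $H^{2}(F, u_{k})$ with a subquotient built from $H^{2}(E_{k}, \mu_{n_{k}})$ and $H^{3}(F, \mu_{n_{k}})$. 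Under this identification, the image of the canonical class $[\xi] \in H^{2}(F, u)$ at level $k$ corresponds, after twisting by the norm-and-power transition maps, to the class represented by $c_{k} \in H^{2}(\Gamma_{E_{k}/F}, E_{k}^{\times})$ after applying the Kummer connecting map $E_{k}^{\times} \to \mu_{n_{k}}$ and then the diagonal-to-$u_{k}$ map induced by $\delta_{e}$.

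Next, I would translate the Galois cocycle $c_{k}$ to the \v{C}ech side via the canonical identification of $\mathbb{G}_{m}(E_{k}^{\otimes_{F} n+1})$ with $\prod_{\Gamma_{E_{k}/F}^{n}} E_{k}^{\times}$ for $E_{k}/F$ Galois; this embeds $c_{k}$ into $\mathbb{G}_{m}(\overline{F} \otimes_{F} E_{k} \otimes_{F} E_{k})$. The cochain $\widetilde{l_{k}c_{k}}$ is a strategically chosen $n_{k}$-th root, so its \v{C}ech differential $d(\widetilde{l_{k}c_{k}})$ is forced to be $\mu_{n_{k}}$-valued and, when restricted to the indices that matter, it is cohomologous to the image of $c_{k}$ under Kummer. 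The key computational step is then applying Proposition \ref{bigcup} to expand
\begin{equation*}
d(\widetilde{l_{k}c_{k}} \underset{E_{k}/F}{\sqcup} \delta_{e}) = d(\widetilde{l_{k}c_{k}}) \underset{E_{k}/F}{\sqcup} \delta_{e} + (-1)^{2}\, \widetilde{l_{k}c_{k}} \underset{E_{k}/F}{\sqcup} d\delta_{e},
\end{equation*}
and observing that the second term vanishes because $\delta_{e}$ is killed by the $\Gamma_{E_{k}/F}$-norm. Hence $\xi_{k}$ differs from the coboundary of $\widetilde{l_{k}c_{k}} \underset{E_{k}/F}{\sqcup} \delta_{e}$ only by terms that, after reduction, lie in the image of already-understood coboundaries, so $\xi_{k}$ is cohomologous to the unbalanced cup product computed on the level of the canonical class $c_{k}$ paired with the evaluation $\delta_{e}$.

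The final step is to identify what $c_{k} \cup \delta_{e}$ computes under the Shapiro-lemma identification: unbalanced cup product with $\delta_{e}$ implements the ``evaluate at the identity of $\Gamma_{E_{k}/F}$'' map, which is precisely the inverse of the Shapiro isomorphism applied to the canonical class. Combined with the normalization chosen so that $[\xi]$ corresponds to $-1 \in \widehat{\mathbb{Z}} \cong H^{2}(F, u)$, this shows $[\xi_{k}]$ is the image of $-1$ in $\mathbb{Z}/n_{k} \cong H^{2}(F, u_{k})$, matching the image of $[\xi]$.

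The main obstacle I anticipate is bookkeeping: verifying that the choices of representative $c_{k}$ and $n_{k}$-th root $\widetilde{l_{k}c_{k}}$ are compatible as $k$ varies (so that the equality $[\xi_{k}] = [\xi]|_{u_{k}}$ holds simultaneously at every level), and carefully checking the sign/normalization conventions in the unbalanced cup product so that the class lands on $-1$ rather than $+1$. The Leibniz-type identity of Proposition \ref{bigcup} does the heavy lifting, but its application requires tracking the two $S'$-tensor factors separately from the $\overline{F}$-tensor factors, which is where the combinatorics is most delicate.
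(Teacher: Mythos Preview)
The paper does not prove this statement; it is quoted verbatim as \cite[Lemma 4.8]{Dillery1} and used as a black box. So there is no ``paper's own proof'' to compare against---the actual argument lives in the cited reference.

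That said, your sketch contains a genuine gap at the Leibniz step. You write
\[
d\bigl(\widetilde{l_{k}c_{k}} \underset{E_{k}/F}{\sqcup} \delta_{e}\bigr) = d(\widetilde{l_{k}c_{k}}) \underset{E_{k}/F}{\sqcup} \delta_{e} + (-1)^{2}\, \widetilde{l_{k}c_{k}} \underset{E_{k}/F}{\sqcup} d\delta_{e}
\]
and then kill the second term. Taken at face value this would make $\xi_{k}$ a coboundary in $u_{k}$-cohomology, contradicting the very statement you are trying to prove (the canonical class is nontrivial). The point you are missing is a coefficient mismatch: $d(\widetilde{l_{k}c_{k}})$ is $\mu_{n_{k}}$-valued, so $\xi_{k}$ lands in $u_{k}$, but $\widetilde{l_{k}c_{k}}$ itself is only $\mathbb{G}_{m}$-valued. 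To form $\widetilde{l_{k}c_{k}} \sqcup_{E_{k}/F} \delta_{e}$ you must extend $\delta_{e}$ to a map out of $\mathbb{G}_{m}$, and (as the paper notes just after the second proposition citing \cite[Lemma 4.8]{Dillery1}) that extension lands in $\mathrm{Res}_{E_{k}/F}(\mathbb{G}_{m})/\mathbb{G}_{m}$, not in $u_{k}$. So your ``antiderivative'' lives in the wrong group, and the Leibniz identity only shows $\xi_{k}$ is a coboundary after pushing forward to $\mathrm{Res}_{E_{k}/F}(\mathbb{G}_{m})/\mathbb{G}_{m}$---which is consistent with, but does not establish, the claim.

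Your broad strategy (Shapiro plus the connecting map for $1 \to \mu_{n_{k}} \to \mathrm{Res}_{E_{k}/F}(\mu_{n_{k}}) \to u_{k} \to 1$, then matching against the explicit cocycle) is the right shape, and is essentially how \cite{Dillery1} proceeds. But the argument has to track the coefficient groups more carefully: one shows that $\xi_{k}$, viewed in $u_{k}$, maps under the connecting homomorphism to the class of $c_{k}$ (or rather its Kummer image), and separately identifies that connecting map with the inverse of the process defining the canonical class. Your sentence ``$\xi_{k}$ differs from the coboundary \ldots\ only by terms that, after reduction, lie in the image of already-understood coboundaries'' is where this should happen, but as written it is hand-waving over exactly the step that carries the content.
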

 
In the following we will work with the gerbe $$\mathcal{E} := \varprojlim_{k}\mathcal{E}_{\xi_{k}},$$ which, in view of the preceding Lemma, represents the canonical class in $H^{2}(F,u)$ (recall from \S \ref{PrelimKal} that we may use any such gerbe). The transition maps will be defined shortly. Fix a cochain $y \in \widehat{C}^{-2}(\Gamma, X^{*}(Z)^{*})$, with image $y_{k} \in \widehat{C}^{-2}(\Gamma_{E_{k}/F}, X^{*}(Z)^{*})$. We then define a $1$-cochain by taking the unbalanced cup product
\begin{equation}\label{zykeq}
z_{y_{k}}:=  d(\widetilde{l_{k}c_{k}}) \sqcup_{E_{k}/F} y_{k} \in Z(\overline{F} \otimes_{F} \overline{F});
\end{equation}
note that this is well-defined because of our above remark about $d(\widetilde{l_{k}c_{k}})$, and we are using the obvious pairing 
\begin{equation*} \mu_{n_{k}} \times \underline{\Hom(X^{*}(Z), \frac{1}{n_{k}}\mathbb{Z}/\mathbb{Z})} \to Z
\end{equation*}
in the above cup product.

The $1$-cochain \eqref{zykeq} will be part of our data in the duality isomorphism between $H^{1}(\mathcal{E}, Z)$ and \eqref{dualmodule}; we want to extract from the cochain $y$ a $\xi$-twisted $Z$-torsor, or, equivalently, a $\xi$-twisted cocycle valued in $Z$ as defined in \S \ref{PrelimGerbes} (cf. \cite[\S 2.4, \S 2.5]{Dillery1} for more details), so we still need to give a homomorphism $u \to Z$ over $F$. For this we first take the image $$dy = dy_{k} \in \widehat{Z}^{-1}(\Gamma_{E_{k}/F}, X^{*}(Z)^{*});$$
note that, by construction, the image in the colimit $\widehat{Z}^{-1}(\Gamma, X^{*}(Z)^{*})$ is independent of the choice of $k$.

We then denote by $\lambda_{y_{k}}$ the unique morphism $u_{k} \to Z$ such that $\lambda_{y_{k}} \circ \delta_{e}= dy_{k}$. Again, we emphasize that these $\lambda_{y_{l}}$ are compatible as $l$ varies, by construction.

\begin{lemma}\label{twistedtors} The pair $(-z_{y_{k}}, \lambda_{y_{k}})$ is a $\xi_{k}$-twisted cocycle valued in $Z$. Moreover, its image in $H^{1}(\mathcal{E}_{\xi_{k}}, Z)$ is independent of the coset of $y_{k}$ modulo $\widehat{B}^{-2}(\Gamma_{E_{k}/F}, X^{*}(Z)^{*})$ up to equivalence of $\xi_{k}$-twisted cocycles valued in $Z$.
\end{lemma}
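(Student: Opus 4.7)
The plan is to verify the two assertions in sequence, both resting on the Leibniz rule of Proposition~\ref{bigcup}.

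For the cocycle condition, the requirement is $d(-z_{y_{k}}) = \lambda_{y_{k}}(\xi_{k})$. Apply Proposition~\ref{bigcup} with $a = d(\widetilde{l_{k}c_{k}}) \in C^{3,2}(\overline{F}/F, E_{k}, \mu_{n_{k}})$ and $b = y_{k} \in \widehat{C}^{-2}(\Gamma_{E_{k}/F}, X^{*}(Z)^{*})$, using $d^{2}(\widetilde{l_{k}c_{k}}) = 0$ and $(-1)^{3} = -1$, to obtain
\[
d(z_{y_{k}}) = -\,\bigl(d(\widetilde{l_{k}c_{k}}) \underset{E_{k}/F}{\sqcup} dy_{k}\bigr).
\]
Naturality of the unbalanced cup product with respect to postcomposition by a group homomorphism on the right-hand factor, together with the defining relation $\lambda_{y_{k}} \circ \delta_{e} = dy_{k}$, then gives
\[
d(\widetilde{l_{k}c_{k}}) \underset{E_{k}/F}{\sqcup} dy_{k} = \lambda_{y_{k}}\bigl(d(\widetilde{l_{k}c_{k}}) \underset{E_{k}/F}{\sqcup} \delta_{e}\bigr) = \lambda_{y_{k}}(\xi_{k}),
\]
so that $d(-z_{y_{k}}) = \lambda_{y_{k}}(\xi_{k})$, as required for a $\xi_{k}$-twisted \v{C}ech cocycle with coefficients in $Z$.

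For independence of the $\widehat{B}^{-2}$-coset, let $y_{k}' = y_{k} + dw$ with $w \in \widehat{C}^{-3}(\Gamma_{E_{k}/F}, X^{*}(Z)^{*})$. Since $d^{2} = 0$, $dy_{k}' = dy_{k}$, whence $\lambda_{y_{k}'} = \lambda_{y_{k}}$, and the two cocycles $(-z_{y_{k}}, \lambda_{y_{k}})$ and $(-z_{y_{k}'}, \lambda_{y_{k}'})$ share a common homomorphism component. It thus suffices to realize $z_{dw} = z_{y_{k}'} - z_{y_{k}}$ as a \v{C}ech coboundary of an element of $Z(\overline{F})$. The plan is to extend the unbalanced cup product construction of \S\ref{TatedualityCup} to the case $i = 3$ via a triple-$(\Gamma_{E_{k}/F})$-norm formula parallel to the $i = 2$ case, together with the corresponding extension of Proposition~\ref{bigcup}; once this is in place, the identity
\[
z_{dw} = d(\widetilde{l_{k}c_{k}}) \underset{E_{k}/F}{\sqcup} dw = -\,d\bigl(d(\widetilde{l_{k}c_{k}}) \underset{E_{k}/F}{\sqcup} w\bigr)
\]
exhibits $z_{dw}$ as the coboundary of the $0$-cochain $-\,d(\widetilde{l_{k}c_{k}}) \underset{E_{k}/F}{\sqcup} w \in Z(\overline{F})$.

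The expected main obstacle is making the $i = 3$ cup product formalism accept $d(\widetilde{l_{k}c_{k}}) \in C^{3,2}$, which a priori involves only two $E_{k}$-factors rather than three. This can be resolved either by refining the representative $\widetilde{l_{k}c_{k}}$ via the flexibility in its construction in \cite[\S 4.3]{Dillery1} so that $d(\widetilde{l_{k}c_{k}})$ lies in a smaller filtered subgroup with an extra $E_{k}$-factor, or by broadening the cup product to pair $C^{n,i}$ with $\widehat{C}^{-j}$ for $j > i$ by additional $\Gamma_{E_{k}/F}$-averaging over the remaining tensor factor, adapting the strategy of the analogous fppf argument in \cite[\S 4.2]{Dillery1}. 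Functoriality of the entire construction in $Z$ will then guarantee that the resulting class in $H^{1}(\mathcal{E}_{\xi_{k}}, Z)$ is independent of the auxiliary choices.
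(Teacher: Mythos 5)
Your treatment of the cocycle condition is correct and matches the paper: Proposition~\ref{bigcup} together with $d^{2}(\widetilde{l_{k}c_{k}})=0$ gives $d(-z_{y_{k}})=d(\widetilde{l_{k}c_{k}})\sqcup_{E_{k}/F} dy_{k}$, and since $\lambda_{y_{k}}$ is defined over $F$ it can be pulled out of the cup product to yield $\lambda_{y_{k}}(\xi_{k})$. The coboundary-invariance half, however, has a genuine gap. You reduce correctly to showing that $z_{dw}$ is a \v{C}ech coboundary of an element of $Z(\overline{F})$, and you correctly identify the obstacle: the putative primitive $d(\widetilde{l_{k}c_{k}})\sqcup_{E_{k}/F} w$ is undefined, since $d(\widetilde{l_{k}c_{k}})$ lies in $C^{3,2}$ rather than $C^{3,3}$. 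But neither of your proposed fixes is viable. Forcing $\widetilde{l_{k}c_{k}}$ into $C^{2,3}$ is not possible: its construction takes an $n_{k}$th root of an $E_{k}^{\times}$-valued cocycle, and this root lives in $\overline{F}$, not $E_{k}$, so the first tensor factor cannot be shrunk. And broadening the cup product to pair $C^{n,i}$ with $\widehat{C}^{-j}$ for $j>i$ has no natural definition here, since the extra Galois direction has no tensor factor of $E_{k}$ on which to act, and you supply no formula or Leibniz compatibility.

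The paper circumvents the problem with a detection-via-tori argument you are missing. Embed $Z$ into a torus $S$, lift $dx_{k}$ to a $-2$-cochain valued in a cocharacter lattice, and cup against $\widetilde{l_{k}c_{k}}$ \emph{itself} (which is $\mathbb{G}_{m}$-valued, so it only pairs into a torus, never directly into $Z$): this produces a $0$-cochain in $S(\overline{F})$ whose differential equals the image of $z_{dw}$ under $Z(\overline{F}\otimes_{F}\overline{F})\to S(\overline{F}\otimes_{F}\overline{F})$. Consequently the composition $\widehat{B}^{-2}(\Gamma_{E_{k}/F},X^{*}(Z)^{*})\to H^{1}(\mathcal{E}_{\xi_{k}},Z)\to H^{1}(\mathcal{E}_{\xi_{k}},Z\to S)$ vanishes for every such $S$. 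Since, by the argument behind Proposition~\ref{clevertorus}, every nonzero class in $H^{1}(\mathcal{E}_{\xi_{k}},Z)$ survives in $H^{1}(\mathcal{E}_{\xi_{k}},Z\to S_{v})$ for a suitably chosen torus $S_{v}$, this vanishing at every $S$-level forces vanishing in $H^{1}(\mathcal{E}_{\xi_{k}},Z)$ itself. This two-step structure---prove triviality after embedding $Z$ in a torus, then detect back in $Z$---is the idea your proposal needs and does not contain.
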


\begin{remark} The sign change is due to the sign coming from Proposition \ref{bigcup}.
\end{remark}

\begin{proof} The only thing to show for the first part is that $$d(-z_{y_{k}}) = \lambda_{y_{k}}(\xi_{k}).$$ We compute using Proposition \ref{bigcup} that 
$$d(-z_{y_{k}}) = d(\widetilde{l_{k}c_{k}}) \sqcup_{E_{k}/F} dy_{k} =  d(\widetilde{l_{k}c_{k}}) \sqcup_{E_{k}/F} (\lambda_{y_{k}} \circ \delta_{e}).$$ Note that $\lambda_{y_{k}}$, by construction, is defined over $F$, and hence we have (using \cite[Lemma 4.6]{Dillery1})
$$d(\widetilde{l_{k}c_{k}}) \sqcup_{E_{k}/F} (\lambda_{y_{k}} \circ \delta_{e}). = [d(\widetilde{l_{k}c_{k}}) \sqcup_{E_{k}/F} \delta_{e}] \cup \lambda_{y_{k}}^{(0)},$$ where we have added the superscript ``$(0)$" to denote the fact that in the last term we are viewing $\lambda_{y_{k}}$ as a $0$-chain rather than a $-1$-chain.

The first claim then follows from the equality
$$[d(\widetilde{l_{k}c_{k}}) \sqcup_{E_{k}/F} \delta_{e}] \cup \lambda_{y_{k}}^{(0)} = \lambda_{y_{k}}([d(\widetilde{l_{k}c_{k}}) \sqcup_{E_{k}/F} \delta_{e}] ) = \lambda_{y_{k}}(\xi_{k}).$$

For the second claim, suppose that we replace $y$ by $y + dx$ for $x \in \widehat{C}^{-3}(\Gamma, X^{*}(Z)^{*})$ (defined in the obvious way). Then the morphism $\lambda_{y_{k}}$ is unchanged, and we compute that 
$$d(\widetilde{l_{k}c_{k}}) \sqcup_{E_{k}/F} (y_{k} + dx_{k}) = z_{y_{k}} + d(\widetilde{l_{k}c_{k}}) \sqcup_{E_{k}/F} dx_{k},$$ and we want to show that this is equivalent to $(z_{y_{k}}, \lambda_{y_{k}})$ as $\xi_{k}$-twisted cocycles valued in $Z$.

To do this, we reintroduce an $F$-torus $Z \hookrightarrow S$; we may lift the cochain $dx_{k} \in \widehat{C}^{-2}(\Gamma_{E_{k}/F}, X^{*}(Z)^{*})$ to a cochain $dx_{k}^{(S)} \in  \widehat{C}^{-2}(\Gamma_{E_{k}/F}, X_{*}(S))$, and from here we compute that
$$ d(\widetilde{l_{k}c_{k}}) \sqcup_{E_{k}/F} dx_{k} = d(\widetilde{l_{k}c_{k}} \sqcup_{E_{k}/F} dx_{k}^{(S)}),$$ where in the right-hand expression $\widetilde{l_{k}c_{k}} \sqcup_{E_{k}/F} dx_{k}^{(S)} \in S(\overline{F})$. The aforementioned unbalanced cup product is well-defined because $\widetilde{l_{k}c_{k}} \in C^{2,2}(\overline{F}/F, E_{k}, \mathbb{G}_{m})$ and $dx_{k}^{(S)}$ is a $-2$-cochain. It was necessary to introduce the torus $S$ because the cochain $\widetilde{l_{k}c_{k}}$ is valued in $\mathbb{G}_{m}$.

It follows that the composition
$$\widehat{C}^{-2}(\Gamma_{E_{k}/F}, X^{*}(Z)^{*}) \to H^{1}(\mathcal{E}, Z) \to H^{1}(\mathcal{E}, Z \to S)$$ 
kills the subgroup $\widehat{B}^{-2}(\Gamma_{E_{k}/F}, X^{*}(Z)^{*})$ for any torus $Z \to S$. For any nonzero $v \in H^{1}(\mathcal{E}_{\xi_{k}}, Z)$, the proof of Proposition \ref{clevertorus} in \cite{kaletha18} shows that we may find a torus $Z \hookrightarrow S_{v}$ such that $v$ is not in the image of the map $(S_{v}/Z)(F) \to H^{1}(\mathcal{E}_{\xi_{k}}, Z)$ (which factors through $H^{1}(F, Z)$), and thus the image of $v$ in $H^{1}(\mathcal{E}_{\xi_{k}}, Z \to S)$ is nonzero. Combining this fact with the vanishing of the above composition for any $S$ gives the second part of the desired result.
\end{proof}

At this point, we need to recall the precise definition of the transition maps $$\mathcal{E}_{\xi_{k+1}} \to \mathcal{E}_{\xi_{k}}.$$
Since the image of $\xi_{k+1}$ in $u_{k}$ is not identically equal to $\xi_{k}$, it is necessary to introduce the $1$-cochains 
$$\alpha_{k} := (\widetilde{l_{k}c_{k}} \sqcup_{E_{k}/F} \delta_{e,k})^{-1} \cdot (\widetilde{l_{k+1}c_{k+1}} \sqcup_{E_{k+1}/F} p_{k+1,k} \circ \delta_{e,k+1}).$$ When we write $\delta_{e,k}$ in the above cup products, we mean the canonical extension of the previous map $\delta_{e,k}$ to a homomorphism (still killed by the norm)
\begin{equation*}
(\mathbb{G}_{m})_{E_{k}} \to (\frac{\mathrm{Res}_{E_{k}/F}(\mathbb{G}_{m})}{\mathbb{G}_{m}})_{E_{k}}.
\end{equation*}
The key property of these cochains is:

\begin{proposition}(\cite[Lemma 4.8]{Dillery1}) The cochains $\alpha_{k}$ are valued in $u_{k}$ and satisfy
$$d\alpha_{k} = p_{k+1,k}(\xi_{k+1})\xi_{k}^{-1}.$$
\end{proposition}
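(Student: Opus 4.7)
The plan is to derive both assertions directly from the graded Leibniz identity for unbalanced \v{C}ech cup products (Proposition \ref{bigcup}), combined with the vanishing of the Tate differential of $\delta_{e,k}$, which in turn follows from the stated fact that $\delta_{e,k}$ is killed by the $\Gamma_{E_k/F}$-norm.

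For the containment $\alpha_k \in u_k(\overline{F} \otimes_F \overline{F})$, observe that each of the two factors in the definition of $\alpha_k$ is, by construction, a $1$-cochain valued in $u_k$: in both the $k$th and $(k+1)$th factors, the relevant pairing used in the unbalanced cup product has target $u_k$ (in the first via $\delta_{e,k}$ itself, in the second via $p_{k+1,k} \circ \delta_{e,k+1}$, whose codomain is $u_{k,E_{k+1}}$). Since the product of the first factor with the inverse of the second is then a $1$-cochain in $u_k$, the claim follows immediately.

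For the coboundary identity, apply Proposition \ref{bigcup} to each factor. The key vanishing step is that, reading $\delta_{e,k}$ as a Tate $-1$-cochain, its differential is the $\Gamma_{E_k/F}$-norm, which is zero by construction; the same then holds for $p_{k+1,k} \circ \delta_{e,k+1}$ viewed over $E_{k+1}/F$. Since $\widetilde{l_k c_k}$ is a $2$-cochain, the Leibniz rule yields
\begin{equation*}
d\bigl(\widetilde{l_k c_k} \underset{E_k/F}{\sqcup} \delta_{e,k}\bigr) = d(\widetilde{l_k c_k}) \underset{E_k/F}{\sqcup} \delta_{e,k} + \widetilde{l_k c_k} \underset{E_k/F}{\sqcup} d\delta_{e,k} = \xi_k,
\end{equation*}
and analogously
\begin{equation*}
d\bigl(\widetilde{l_{k+1} c_{k+1}} \underset{E_{k+1}/F}{\sqcup} (p_{k+1,k} \circ \delta_{e,k+1})\bigr) = p_{k+1,k}(\xi_{k+1}),
\end{equation*}
using the obvious naturality of the unbalanced cup product along the projection $p_{k+1,k} \colon u_{k+1} \to u_k$. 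Taking the ratio of these two identities gives $d\alpha_k = p_{k+1,k}(\xi_{k+1}) \xi_k^{-1}$.

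The only genuine obstacle I anticipate is bookkeeping at the definitional level: one needs to be precise about how the morphism $\delta_{e,k}$ is to be interpreted as a Tate $-1$-cochain compatible with the setup of \S \ref{TatedualityCup}, so that Proposition \ref{bigcup} applies and the ``$d\delta_{e,k}$'' term really is the norm (and hence vanishes). Once this identification is pinned down, together with the functoriality of $\sqcup_{E/F}$ along $p_{k+1,k}$ and the inclusion $E_k \hookrightarrow E_{k+1}$, the argument is purely formal.
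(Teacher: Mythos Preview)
The paper does not actually prove this proposition; it is simply quoted from \cite[Lemma 4.8]{Dillery1}, so there is no in-paper argument to compare against. That said, let me comment on your attempt on its own merits.

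Your argument for the differential identity is essentially correct: once the bookkeeping you flag (interpreting $\delta_{e,k}$ as a Tate cochain of the appropriate degree, and the naturality of $\sqcup$ in the second argument along $p_{k+1,k}$) is sorted out, the Leibniz rule of Proposition~\ref{bigcup} together with the vanishing of $d\delta_{e,k}$ (the norm condition) does yield $d\alpha_k = p_{k+1,k}(\xi_{k+1})\,\xi_k^{-1}$ exactly as you write.

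However, your argument for the containment $\alpha_k \in u_k(\overline{F}\otimes_F\overline{F})$ has a genuine gap. You assert that each factor of $\alpha_k$ is already valued in $u_k$ because ``the relevant pairing used in the unbalanced cup product has target $u_k$.'' But this is not what the paper says: immediately before the proposition it is explained that in the definition of $\alpha_k$ the symbol $\delta_{e,k}$ denotes the \emph{extended} homomorphism
\[
(\mathbb{G}_m)_{E_k} \longrightarrow \Bigl(\tfrac{\mathrm{Res}_{E_k/F}(\mathbb{G}_m)}{\mathbb{G}_m}\Bigr)_{E_k},
\]
not the original map $\mu_{n_k}\to u_k$. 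This extension is necessary precisely because $\widetilde{l_kc_k}$ takes values in $\mathbb{G}_m$, not in $\mu_{n_k}$. Thus each factor $\widetilde{l_kc_k}\sqcup_{E_k/F}\delta_{e,k}$ is a priori a cochain valued in the larger group $\mathrm{Res}_{E_k/F}(\mathbb{G}_m)/\mathbb{G}_m$, and the content of the first assertion is that the \emph{ratio} of the two factors lands in the subgroup $u_k$. This requires the specific compatibilities between the choices of $n_k$th roots $\widetilde{l_kc_k}$ and $\widetilde{l_{k+1}c_{k+1}}$ built into \cite[\S 4.3]{Dillery1}; it does not come for free from the shape of the pairing. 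Your argument as written does not address this.
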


We can then define the transition maps via translation by $\alpha_{k}$ (cf. \cite[Construction 2.38]{Dillery1}). At the level of twisted cocycles, pullback by this transition map sends $(z, \lambda)$ to $(z \cdot \lambda(\alpha_{k}), \lambda \circ p_{k+1,k})$.

\begin{corollary} The assignment $y = (y_{k})_{k} \mapsto (-z_{y_{k}}, \lambda_{y_{k}})$ gives a well-defined map
$$\frac{\widehat{C}^{-2}(\Gamma, X^{*}(Z)^{*})}{\widehat{B}^{-2}(\Gamma, X^{*}(Z)^{*})} \to H^{1}(\mathcal{E}, Z).$$
\end{corollary}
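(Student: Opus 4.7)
The plan is to use Lemma \ref{twistedtors} for well-definedness at each finite level $k$, and then verify that the resulting classes in $H^{1}(\mathcal{E}_{\xi_{k}}, Z)$ are compatible under the transition maps $H^{1}(\mathcal{E}_{\xi_{k}}, Z) \to H^{1}(\mathcal{E}_{\xi_{k+1}}, Z)$, so that they assemble into a single class in $H^{1}(\mathcal{E}, Z) \xrightarrow{\sim} H^{1}(\mathcal{E}_{\xi_{k}}, Z)$. Concretely I need to show that the pullback, which at the cocycle level is $(z, \lambda) \mapsto (z \cdot \lambda(\alpha_{k}), \lambda \circ p_{k+1,k})$, sends $(-z_{y_{k}}, \lambda_{y_{k}})$ to something cohomologous to $(-z_{y_{k+1}}, \lambda_{y_{k+1}})$.

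The second coordinate will agree on the nose: by construction the homomorphisms $\lambda_{y_{l}}$ are compatible with the projections $p_{l+1,l}$ (each is characterized by $\lambda_{y_{l}} \circ \delta_{e,l} = dy_{l}$ and the $\delta_{e,l}$, $dy_{l}$ fit into a compatible system), so $\lambda_{y_{k}} \circ p_{k+1,k} = \lambda_{y_{k+1}}$. For the first coordinate, I will compute $\lambda_{y_{k}}(\alpha_{k})$ by passing the $F$-morphism $\lambda_{y_{k}}$ through the two unbalanced cup products appearing in the definition of $\alpha_{k}$ (using the analogue of \cite[Lemma 4.6]{Dillery1} that allows an $F$-rational morphism of group schemes to commute with $\sqcup_{E/F}$). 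Applying $\lambda_{y_{k}} \circ \delta_{e,k} = dy_{k}$ in the first factor and $\lambda_{y_{k}} \circ p_{k+1,k} \circ \delta_{e,k+1} = dy_{k+1}$ in the second, I would arrive at
\[
\lambda_{y_{k}}(\alpha_{k}) = \bigl(\widetilde{l_{k}c_{k}} \sqcup_{E_{k}/F} dy_{k}\bigr)^{-1} \cdot \bigl(\widetilde{l_{k+1}c_{k+1}} \sqcup_{E_{k+1}/F} dy_{k+1}\bigr).
\]

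Next I would apply Proposition \ref{bigcup} (with $n=2$, $i=2$, where the sign is $+1$) to the auxiliary $0$-cochain $w_{k} := \widetilde{l_{k}c_{k}} \sqcup_{E_{k}/F} y_{k} \in Z(\overline{F})$, obtaining $dw_{k} = z_{y_{k}} + \widetilde{l_{k}c_{k}} \sqcup_{E_{k}/F} dy_{k}$ and the analogous identity at level $k+1$. Substituting these into the formula above and rewriting additively should give
\[
(-z_{y_{k}}) \cdot \lambda_{y_{k}}(\alpha_{k}) = -z_{y_{k+1}} + d(w_{k+1} - w_{k}),
\]
exhibiting the two cocycles as cohomologous in $H^{1}(\mathcal{E}_{\xi_{k+1}}, Z)$, which is the compatibility needed.

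The main obstacle is purely bookkeeping: correctly tracking the degrees $(n,i)$ and sign in Proposition \ref{bigcup}, and carefully invoking the $F$-rationality of $\lambda_{y_{k}}$ when sliding it through the unbalanced cup products defining $\alpha_{k}$. There is no conceptually new input beyond Lemma \ref{twistedtors}, Proposition \ref{bigcup}, and the compatibility of the $\delta_{e,k}$ and $\lambda_{y_{k}}$ already recorded in the setup; the proposition is essentially the Leibniz-type identity for $\sqcup_{E/F}$ packaged to produce a map of cohomology sets.
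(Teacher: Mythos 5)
Your high-level strategy matches the paper's: show $\lambda_{y_{k+1}} = \lambda_{y_{k}} \circ p_{k+1,k}$ on the nose, then use the Leibniz identity (Proposition \ref{bigcup}) and the explicit form of $\alpha_{k}$ to exhibit the difference of the first coordinates as a coboundary. However, there is a genuine gap at the point where you introduce the auxiliary $0$-cochain $w_{k} := \widetilde{l_{k}c_{k}} \sqcup_{E_{k}/F} y_{k}$ and assert that it lies in $Z(\overline{F})$. This expression is not well-defined: the cochain $\widetilde{l_{k}c_{k}}$ is $\mathbb{G}_{m}$-valued (only its differential $d(\widetilde{l_{k}c_{k}})$ is $\mu_{n_{k}}$-valued), and there is no natural pairing $\mathbb{G}_{m} \times X^{*}(Z)^{*} \to Z$ to make sense of the unbalanced cup product; the pairing $\mu_{n_{k}} \times \underline{\Hom(X^{*}(Z),\tfrac{1}{n_{k}}\mathbb{Z}/\mathbb{Z})} \to Z$ used in defining $z_{y_{k}}$ only applies to the $\mu_{n_{k}}$-valued $d(\widetilde{l_{k}c_{k}})$, not to $\widetilde{l_{k}c_{k}}$ itself. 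The same objection applies to your formula for $\lambda_{y_{k}}(\alpha_{k})$, whose right-hand side features $\widetilde{l_{k}c_{k}} \sqcup_{E_{k}/F} dy_{k}$.

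The paper circumvents this precisely by re-introducing a torus $Z \hookrightarrow S$, lifting $y_{k}$ to a cochain $y_{k}^{(S)} \in \widehat{C}^{-2}(\Gamma_{E_{k}/F}, X_{*}(S))$ so that $\beta_{k} := \widetilde{l_{k}c_{k}} \sqcup_{E_{k}/F} y_{k}^{(S)}$ makes sense in $S(\overline{F})$ via the pairing $\mathbb{G}_{m} \times X_{*}(S) \to S$, and then showing the relevant difference is a coboundary in $Z^{1}(\mathcal{E}_{\xi_{k+1}}, Z \to S)$. This suffices because, as established in the proof of Lemma \ref{twistedtors}, for any nonzero $v \in H^{1}(\mathcal{E}_{\xi_{k+1}}, Z)$ there is a torus $S_{v}$ detecting $v$ in $H^{1}(\mathcal{E}_{\xi_{k+1}}, Z \to S_{v})$. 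Your argument needs both of these ingredients: the lift $y_{k}^{(S)}$ to make the auxiliary cochain legible, and the ``test against all $S$'' reduction to conclude the coboundary statement in $Z$-coefficients. Once these are inserted, your computation goes through as you outlined.
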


\begin{proof} The map is defined on each term of the limit by pulling back the torsor defined in Lemma \ref{twistedtors} to a torsor on $\mathcal{E}$. We claim that for any $y_{l}$, $l > k$, the corresponding isomorphism class of torsors coincides with the image of $y_{k}$. One checks easily that $\lambda_{y_{k+1}} = \lambda_{y_{k}} \circ p_{k+1,k}$, so the content of this claim is that one twisted cocycle pulls back to the other (up to equivalence) via the above transition maps. 

We need to study the expression 
$$[d(\widetilde{l_{k}c_{k}}) \sqcup_{E_{k}/F} y_{k}] \cdot \lambda_{y_{k}}(\alpha_{k}) \cdot [d(\widetilde{l_{k+1}c_{k+1}}) \sqcup_{E_{k+1}/F} y_{k+1}]^{-1} \in Z(\overline{F} \otimes_{F} \overline{F}),$$
and we are done if we can show that it is a coboundary (in particular, by the proof of Lemma \ref{twistedtors}, it suffices to show that it is a coboundary in $Z^{1}(\mathcal{E}_{\xi_{k+1}}, Z \to S)$ for any torus $Z \hookrightarrow S$.)

Fixing a torus $Z \hookrightarrow S$, we compute (cf. the proof of the first part of Lemma \ref{twistedtors}) that 
$$\lambda_{y_{k}}(\widetilde{l_{k}c_{k}} \sqcup_{E_{k}/F} \delta_{e,k}) = \widetilde{l_{k}c_{k}} \sqcup_{E_{k}/F} dy_{k}^{(S)},$$
and identically with $k+1$. It immediately follows that our main expression
$$[d(\widetilde{l_{k}c_{k}}) \sqcup_{E_{k}/F} y_{k}] \cdot \lambda_{y_{k}}(\alpha_{k}) \cdot [d(\widetilde{l_{k+1}c_{k+1}}) \sqcup_{E_{k+1}/F} y_{k+1}]^{-1} = d(\beta_{k} \cdot \beta_{k+1}^{-1}),$$
where $$\beta_{i} := \widetilde{l_{i}c_{i}} \sqcup_{E_{i}/F} y_{i}^{(S)}$$ for $i=k, k+1$, giving the result.
\end{proof}

In summary, we have constructed the candidate morphism
$$\frac{\widehat{C}^{-2}(\Gamma, X^{*}(Z)^{*})}{\widehat{B}^{-2}(\Gamma, X^{*}(Z)^{*})} \to H^{1}(\mathcal{E}, Z),$$
and we will now turn to showing that it is an isomorphism. A main component of our proof is:

\begin{lemma} For any torus $Z \hookrightarrow S$, the following diagram commutes
\[
\begin{tikzcd}
\frac{\widehat{C}^{-2}(\Gamma, X^{*}(Z)^{*})}{\widehat{B}^{-2}(\Gamma, X^{*}(Z)^{*})} \arrow{r} \arrow{d} & H^{1}(\mathcal{E}, Z) \arrow{d} \\
\frac{\widehat{Z}^{-1}(\Gamma, X_{*}(S/Z))}{\widehat{B}^{-1}(\Gamma, X_{*}(S))} \arrow{r} & H^{1}(\mathcal{E}, Z \to S).
\end{tikzcd}
\]
The map is also compatible with the duality isomorphism $\widehat{H}^{-2}(\Gamma, X^{*}(Z)^{*}) \xrightarrow{\sim} H^{1}(F, Z)$.
\end{lemma}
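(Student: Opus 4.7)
The strategy is to compute both compositions explicitly as twisted cocycles in $Z^{1}(\mathcal{E}_{\xi_{k}}, Z \to S)$ using the unbalanced cup product formalism, and then identify them via the Leibniz rule of Proposition \ref{bigcup}. The top-then-right composition sends $y$ to the class of $(-z_{y_{k}}, \iota \circ \lambda_{y_{k}})$, where $\iota \colon Z \hookrightarrow S$ and $z_{y_{k}} = d(\widetilde{l_{k}c_{k}}) \underset{E_{k}/F}{\sqcup} y_{k}$ is viewed in $S(\overline{F} \otimes_{F} \overline{F})$ via $\iota$. The left-then-bottom composition sends $y$ first to the class of $d\tilde{y}_{k}$ in $\widehat{Z}^{-1}(\Gamma, X_{*}(S/Z))/\widehat{B}^{-1}(\Gamma, X_{*}(S))$, where $\tilde{y}_{k} \in \widehat{C}^{-2}(\Gamma_{E_{k}/F}, X_{*}(S/Z))$ is any lift of $y_{k}$, and then to its image in $H^{1}(\mathcal{E}, Z \to S)$ under the Tate--Nakayama isomorphism constructed in \cite[\S 4]{Dillery1}. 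Crucially, the latter isomorphism is also built from unbalanced cup products with the cochains $\widetilde{l_{k}c_{k}}$, which is the reason one expects compatibility.

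The key computation is then to apply Proposition \ref{bigcup} to the product $\widetilde{l_{k}c_{k}} \underset{E_{k}/F}{\sqcup} \tilde{y}_{k} \in S(\overline{F})$, obtaining
\[
d\bigl(\widetilde{l_{k}c_{k}} \underset{E_{k}/F}{\sqcup} \tilde{y}_{k}\bigr) = d(\widetilde{l_{k}c_{k}}) \underset{E_{k}/F}{\sqcup} \tilde{y}_{k} + (-1)^{2}\, \widetilde{l_{k}c_{k}} \underset{E_{k}/F}{\sqcup} d\tilde{y}_{k}.
\]
The first term on the right is $z_{y_{k}}$ viewed in $S$ (since the pairing factors through $Z \hookrightarrow S$ and $\tilde{y}_{k}$ is a lift of $y_{k}$), while the second term is precisely (up to sign) the cocycle representative the bottom map attaches to $[d\tilde{y}_{k}]$ via the construction of \cite[\S 4.3]{Dillery1}. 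Rearranging exhibits the difference of the two twisted cocycles as $d(\widetilde{l_{k}c_{k}} \underset{E_{k}/F}{\sqcup} \tilde{y}_{k})$, an honest $S$-coboundary; moreover the second components agree because the identification $\mathrm{Hom}_{F}(u, Z) \cong X^{*}(Z)^{*}$ via precomposition with $\delta_{e}$ is compatible with $y \mapsto dy$ after the inclusion $\iota$. This gives the commutativity in $H^{1}(\mathcal{E}, Z \to S)$. The compatibility with the classical Tate--Nakayama isomorphism $\widehat{H}^{-2}(\Gamma, X^{*}(Z)^{*}) \xrightarrow{\sim} H^{1}(F, Z)$ then drops out: when $y$ is a $-2$-cocycle we have $dy = 0$, hence $\lambda_{y_{k}} = 0$, so the twisted cocycle $(-z_{y_{k}}, 0)$ descends to a genuine \v{C}ech $1$-cocycle in $Z(\overline{F} \otimes_{F} \overline{F})$, and $y \mapsto -z_{y_{k}}$ is cup product with (the normalized square root of) the local fundamental class, recovering the standard duality.

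The main obstacle is the precise identification in the second paragraph: one must verify that the bottom arrow from \cite{Dillery1}, which is constructed for $-1$-Tate cochains, produces a representative matching $\widetilde{l_{k}c_{k}} \underset{E_{k}/F}{\sqcup} d\tilde{y}_{k}$ on the nose rather than up to a further correction. This requires carefully matching the $-2$-case of the unbalanced cup product (newly introduced here in \S \ref{TatedualityCup}) with the $-1$-case used in \cite{Dillery1}, tracking signs and the choice of lift $\tilde{y}_{k}$, and checking independence of that choice (which follows once more from Proposition \ref{bigcup}, exactly as in the proof of Lemma \ref{twistedtors}). Once this bookkeeping is in place, the commutativity reduces to the single Leibniz identity displayed above.
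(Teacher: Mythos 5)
The paper leaves this lemma as an exercise, pointing only at ``basic properties of the unbalanced cup product,'' so you are supplying an argument the paper declines to spell out. Your strategy is exactly the intended one: apply the Leibniz formula of Proposition \ref{bigcup} to $\widetilde{l_{k}c_{k}} \sqcup_{E_{k}/F} \tilde{y}_{k}$ so that the boundary term splits $d(\widetilde{l_{k}c_{k}}) \sqcup_{E_{k}/F} \tilde{y}_{k}$ (encoding the top-then-right composite via $z_{y_{k}}$) against $\widetilde{l_{k}c_{k}} \sqcup_{E_{k}/F} d\tilde{y}_{k}$ (encoding the left-then-bottom composite via the $-1$-cochain Tate--Nakayama construction of \cite{Dillery1}). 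You also correctly identify where the real work lies, namely matching the $-1$-case used to build the bottom arrow in \cite{Dillery1} against the new $-2$-case of the unbalanced cup product introduced in \S \ref{TatedualityCup}. The deduction of the second statement (when $dy = 0$ the morphism $\lambda_{y_{k}}$ vanishes and the twisted cocycle descends to $H^{1}(F,Z)$) is also the right reduction, though it is an $n_{k}$th root, not a square root, of the fundamental class that appears.

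One point is glossed over and needs to be made precise before the argument closes. You assert $\widetilde{l_{k}c_{k}} \sqcup_{E_{k}/F} \tilde{y}_{k} \in S(\overline{F})$ and later that the Leibniz boundary term is ``an honest $S$-coboundary.'' But if $\tilde{y}_{k}$ takes values in $X_{*}(S/Z)$ and one uses the natural evaluation pairing $\mathbb{G}_{m} \times \underline{X_{*}(S/Z)} \to S/Z$, every cup product in your displayed identity lands in $S/Z$ rather than $S$, and in $S/Z$ the composite $Z \hookrightarrow S \to S/Z$ is trivial, so the claim that $d(\widetilde{l_{k}c_{k}}) \sqcup_{E_{k}/F} \tilde{y}_{k}$ ``is $z_{y_{k}}$ viewed in $S$'' does not follow from ``the pairing factors through $Z \hookrightarrow S$'' as stated: there is no natural map $X_{*}(S) \to X^{*}(Z)^{*}$, and the $\mu_{n_{k}} \times X^{*}(Z)^{*} \to Z$ pairing used for $z_{y_{k}}$ is not directly a restriction of $\mathbb{G}_{m} \times \underline{X_{*}(S/Z)} \to S/Z$. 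The resolution is precisely the bookkeeping you flag as the main obstacle: the $n_{k}$th-root structure of $\widetilde{l_{k}c_{k}}$ lets one replace $\tilde{y}_{k}$ by an $X_{*}(S)$-valued representative (essentially $n_{k}$ times an $X_{*}(S/Z)$-lift) which pairs to $S$ and reproduces the $Z$-valued cup product $z_{y_{k}}$ on the nose; this is the same manipulation the paper performs with $dx_{k}^{(S)}$ in its proof of Lemma \ref{twistedtors}. You should carry out that identification explicitly, verify it is compatible with the lift fed into the left vertical arrow, and only then invoke the Leibniz identity.
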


\begin{proof} This is a routine calculation, which we leave as an exercise (using basic properties of the unbalanced cup product developed in \cite[\S 4.2]{Dillery1}).
\end{proof}

\begin{theorem}\label{finiteduality}
The morphism 
$$\frac{\widehat{C}^{-2}(\Gamma, X^{*}(Z)^{*})}{\widehat{B}^{-2}(\Gamma, X^{*}(Z)^{*})} \to H^{1}(\mathcal{E}, Z)$$
constructed above is an isomorphism.
\end{theorem}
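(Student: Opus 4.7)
My plan is to fit the constructed map into the middle of a commutative ladder of exact sequences and apply the five lemma. The top row will be the four-map exact sequence
\begin{equation*}
0 \to \widehat{H}^{-2}(\Gamma, X^{*}(Z)^{*}) \to \frac{\widehat{C}^{-2}(\Gamma, X^{*}(Z)^{*})}{\widehat{B}^{-2}(\Gamma, X^{*}(Z)^{*})} \xrightarrow{d} \widehat{Z}^{-1}(\Gamma, X^{*}(Z)^{*}) \to \widehat{H}^{-1}(\Gamma, X^{*}(Z)^{*}) \to 0,
\end{equation*}
obtained by splicing the short exact sequences $0 \to \widehat{H}^{-2} \to \widehat{C}^{-2}/\widehat{B}^{-2} \to \widehat{C}^{-2}/\widehat{Z}^{-2} \to 0$ and $0 \to \widehat{B}^{-1} \to \widehat{Z}^{-1} \to \widehat{H}^{-1} \to 0$ via the canonical identification $d \colon \widehat{C}^{-2}/\widehat{Z}^{-2} \xrightarrow{\sim} \widehat{B}^{-1}$. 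The bottom row will be
\begin{equation*}
0 \to H^{1}(F, Z) \to H^{1}(\mathcal{E}, Z) \to \mathrm{Hom}_{F}(u, Z) \to H^{2}(F, Z) \to 0,
\end{equation*}
extending the top row of diagram \eqref{bigdiag1} on the right by the pushforward $\lambda \mapsto \lambda_{*}[\xi]$. The outer four vertical arrows will be the classical Tate-Nakayama duality isomorphisms for $Z$ of multiplicative type (two for $H^{1}(F, Z)$ and $H^{2}(F, Z)$, and the stabilized identification $\widehat{Z}^{-1}(\Gamma, X^{*}(Z)^{*}) \cong \mathrm{Hom}_{F}(u, Z)$ from the text preceding the theorem); the middle vertical arrow is the map just constructed.

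Commutativity of the leftmost and middle-left squares is exactly the content of the lemma immediately preceding the theorem, and the middle-right square commutes by construction, since $\lambda_{y} \circ \delta_{e} = dy$ holds by definition of $\lambda_{y}$. Given commutativity, the five lemma yields the result: for injectivity, if $y$ maps to zero in $H^{1}(\mathcal{E}, Z)$, then the middle-right square forces $dy = 0$, so $y$ represents a class in $\widehat{H}^{-2}$, and then the leftmost square together with Tate duality for $H^{1}(F, Z)$ forces $y \in \widehat{B}^{-2}$; for surjectivity, given $t \in H^{1}(\mathcal{E}, Z)$ with image $\lambda \in \mathrm{Hom}_{F}(u, Z)$, the compatibility with $H^{2}(F, Z)$ together with exactness of the top row lets me pull $\lambda$ back to $\mathrm{im}(d) \subset \widehat{Z}^{-1}$, then lift to some $y \in \widehat{C}^{-2}/\widehat{B}^{-2}$; the difference of $t$ and the image of $y$ then lies in $H^{1}(F, Z)$ by the leftmost square, and can be corrected using Tate duality for $H^{1}(F, Z)$.

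The main obstacle will be verifying commutativity of the rightmost square, i.e., that the pushforward $\lambda \mapsto \lambda_{*}[\xi]$ corresponds under the classical Tate duality isomorphisms to the Tate boundary $\widehat{Z}^{-1}(\Gamma, X^{*}(Z)^{*}) \to \widehat{H}^{-1}(\Gamma, X^{*}(Z)^{*})$. In the $p$-adic setting this is classical, but in the function field setting one must check it directly using the explicit description $[\xi_{k}] = [d(\widetilde{l_{k}c_{k}}) \sqcup_{E_{k}/F} \delta_{e}]$ of the canonical class recalled above, together with the formal properties of the unbalanced cup product from Proposition \ref{bigcup}, tracing through how the Tate coboundary on $X^{*}(Z)^{*}$ is realized via the \v{C}ech differential of $d(\widetilde{l_{k}c_{k}})$ applied against the pushforward class.
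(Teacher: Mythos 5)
Your approach is essentially the same as the paper's: the paper's one-line proof says precisely that the constructed map is compatible with the surrounding duality isomorphisms of diagrams \eqref{bigdiag1} and \eqref{dualdiagram1}, so the five lemma applies. You have simply unpacked this by isolating the specific four-term horizontal exact sequence (obtained by splicing the top row of \eqref{bigdiag1} onto its rightmost column) and its dual, which is exactly the ladder the five lemma sees. The "main obstacle" you flag in your last paragraph — compatibility of the pushforward $\lambda \mapsto \lambda_{*}[\xi]$ with the Tate boundary $\widehat{Z}^{-1} \to \widehat{H}^{-1}$ — is not actually a gap: it is already built into the asserted isomorphism of diagrams \eqref{bigdiag1} and \eqref{dualdiagram1}, specifically the commutativity of their rightmost vertical columns, which rests on the Tate--Nakayama results cited from \cite{kaletha16} and \cite{Dillery1}. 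So no new cup-product computation is needed there; the genuinely new commutativity checks are exactly those handled by the lemma immediately preceding the theorem.
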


\begin{proof} This map is compatible with all the duality isomorphisms between the two diagrams \eqref{bigdiag1} \eqref{dualdiagram1}, so the result follows by the five-lemma.
\end{proof}

\subsection{Relation to the dual group}\label{TatedualityDualgp}
We are now able to finish proving Proposition \ref{Zduality}. Unlike the previous subsection, this part of the result is identical to its analogue in \cite[\S  6.2]{kaletha18}, and we summarize the situation here. Fixing $Z \hookrightarrow S$, we set $\overline{S} := S/Z$, as usual.

We claim that taking the Pontryagin dual of the diagram \eqref{dualdiagram1} yields:
\begin{equation}\label{dualdiagram2}
\begin{tikzcd}
1 & H^{1}(F, \widehat{Z}) \arrow{l} & Z^{1}(\Gamma, \widehat{Z}) \arrow{l} & \frac{\widehat{C}^{0}(\Gamma, \widehat{Z})}{\widehat{B}^{0}(\Gamma, \widehat{Z})} \arrow["-d"]{l} \\
1 & \widehat{H}^{0}(\Gamma, \widehat{S}) \arrow["-\delta"]{u} \arrow{l} & \pi_{0}(\widehat{\overline{S}}^{\Gamma,+}) \arrow["-d"]{u} \arrow{l} & \frac{\widehat{C}^{0}(\Gamma, \widehat{Z})}{\widehat{B}^{0}(\Gamma, \widehat{Z})} \arrow{l} \arrow{u} \\
& \widehat{H}^{0}(\Gamma, \widehat{S}) \arrow{u} & \widehat{H}^{0}(\Gamma, \widehat{\overline{S}}) \arrow{l} \arrow{u} & \widehat{H}^{0}(\Gamma, \widehat{Z}). \arrow{l} \arrow{u}
\end{tikzcd}
\end{equation}
All of the above descriptions of the Pontryagin duals come from the identifications induced by $X_{*}(S) = X^{*}(\widehat{S})$, $X_{*}(\overline{S}) = X^{*}(\widehat{\overline{S}})$. The fact that the description holds for the terms in the top row involving $\widehat{Z}$ (in the function field case, the corresponding terms in \eqref{dualdiagram1} are inverse limits that do not stabilize) follows from applying inverse limits to \cite[Lemma 6.1]{kaletha18} and then using the fact that 
$$\varprojlim Z^{1}(\Gamma_{E_{k}/F}, \widehat{Z})^{*} = [\varinjlim Z^{1}(\Gamma_{E_{k}/F}, \widehat{Z})]^{*} = Z^{1}(\Gamma, \widehat{Z}).$$

\noindent Combining the above diagram with Theorem \ref{finiteduality} finishes the proof of Proposition \ref{Zduality}.

\section{Isocrystal comparison}\label{Iso}
This section discusses the passage from the rigid refined LLC to the isocrystal refined LLC and proceeds identically (modulo minor \v{C}ech/gerbe-theoretic modifications) here as in the mixed-characteristic case, and, as such, the goal of this section is to summarize the analogous discussion in \cite[\S 4]{kaletha18} (with the exception of \S \ref{IsoFunc} which justifies the sufficiency of Conjecture \ref{Maartenbisbis}).

\subsection{Review of isocrystal LLC}\label{IsoReview}
Denote by $\mathbb{D}$ the pro-multiplicative group with character group $\mathbb{Q}$ and $\mu$ the one with character group $\mathbb{Q}/\mathbb{Z}$; recall that the \mathdef{Kottwitz gerbe} (sometimes also called the \mathdef{Dieudonn\'{e} gerbe}) is the fpqc $\mathbb{D}$-gerbe, denoted by $\tn{Kott}$, corresponding to the canonical class in $H^{2}(F, \mathbb{D})$ defined as the image of $1 \in H^{2}_{\tn{fppf}}(F, \mu) = \varprojlim_{n} H^{2}(F, \mu_{n}) = \widehat{\mathbb{Z}}$. For $G$ a connected reductive group, we define $B(G)_{\tn{bas}}$ as the set of isomorphism classes of $G$-torsors on $\tn{Kott}$ whose $\mathbb{D}$-action is induced by a homomorphism $\mathbb{D} \to Z(G)$; these can also be viewed concretely as \mathdef{basic $G$-isocrystals} (see e.g. \cite[\S 3]{Kottwitz97}). There is a canonical map $B(G)_{\tn{bas}} \to H^{1}(F, G_{\tn{ad}})$ that is surjective whenever $Z(G)$ is connected, as well as a canonical ``Tate-Nakayama duality" isomorphism 
\begin{equation}
B(G)_{\tn{bas}} \xrightarrow{\kappa} [Z(\widehat{G})^{\Gamma}]^{*}
\end{equation}
which is functorial in $G$. An \mathdef{extended pure inner twist} of $G$ is a pair $(\psi, x_{\tn{iso}})$ where $G' \xrightarrow{\psi} G$ is an inner twist of $G$ and $x_{\tn{iso}}$ is a $G$-isocrystal on $\tn{Kott}$ whose image in $Z^{1}(F, G_{\tn{ad}})$ equals the cocycle corresponding to $\psi$.

We can now state the simplest form of the isocrystal refined LLC: For $G$ a quasi-split connected reductive group with fixed Whittaker datum $\mathfrak{w}$ and tempered $L$-parameter $\varphi$, there should be set of isomorphism classes of tempered representations of extended pure inner twists $\Pi^{\tn{iso}}_{\varphi}$ and bijection 
\begin{equation}\label{isoLLC} \Pi^{\text{iso}}_{\varphi} \xrightarrow{\iota_{\mathfrak{w}}^{\tn{iso}}} \tn{Irr}(S_{\varphi}^{\natural}),
\end{equation}
where
\begin{equation*}
S_{\varphi}^{\natural} := \frac{S_{\varphi}}{(S_{\varphi} \cap \widehat{G}_{\tn{der}})^{\circ}}.
\end{equation*}
There are natural maps $\Pi^{\text{iso}}_{\varphi} \to B(G)_{\tn{bas}}$ and $\tn{Irr}(S_{\varphi}^{\natural}) \to [Z(\widehat{G})^{\Gamma}]^{*}$ and, via these maps, the bijection \eqref{isoLLC} should be compatible with $\kappa$. 

There is also a notion of the endoscopic character identities for \eqref{isoLLC} whose formulation requires some further recollections about transfer factors. For an extended pure inner twist $(\psi, x_{\tn{iso}})$ with underlying inner twist $G'$ and an endoscopic datum $\mathfrak{e}$ with $z$-pair $\mathfrak{z}$ for $G'$, one can define a normalized transfer factor $\Delta'[\mathfrak{w}, \mathfrak{e}, \mathfrak{z}, (\psi, x_{\tn{iso}})]$ identically as in the mixed-characteristic case using the relative transfer factor for local function fields constructed in \cite[\S 7]{Dillery1}. As in the rigid situation, we can then define, for any $s \in S_{\varphi}$ semisimple, the virtual character
\begin{equation} \Theta_{\varphi, [x_{\tn{iso}}]}^{s} := e(G') \cdot \sum_{\pi \in \Pi^{\text{iso}}_{\varphi}([x_{\tn{iso}}])} \langle \pi, s \rangle \Theta_{\pi},
\end{equation}
where $\Pi^{\text{iso}}_{\varphi}([x_{\tn{iso}}])$ denotes all representations in $\Pi^{\text{iso}}_{\varphi}$ which are isomorphic to representations whose underlying element of $B(G)_{\tn{bas}}$ is $[x_{\tn{iso}}]$, the terms $e(G')$, $\Theta_{\pi}$ are as defined in \S \ref{EndsocopyTransfer}, and the pairing is from \eqref{isoLLC}. If $H \in \mathfrak{e}$ is the endoscopic group, $s \in \mathfrak{e}$ the semisimple element of $S_{\varphi}$, $H_{1} \in \mathfrak{z}$ the group from the $z$-pair, $\varphi_{H}$ the induced parameter for $H$, and $f \in \mathcal{C}^{\infty}_{c}(G'(F))$, $f^{H} \in \mathcal{C}^{\infty}_{c}(H_{1}(F))$ are $\Delta'[\mathfrak{w}, \mathfrak{e}, \mathfrak{z}, (\psi, x_{\tn{iso}})]$-matching functions, then the \mathdef{endoscopic character identities} are the equalities 
\begin{equation}
\Theta^{1}_{\varphi_{H}, \tn{triv}}(f^{H}) = \Theta^{s}_{\varphi, [x_{\tn{iso}}]}(f).
\end{equation}

\subsection{Comparison with rigid LLC}\label{IsoComp}

By \cite[Proposition 3.6]{Dillery1} there is a canonical map $u \to \mu$ such that the induced map $H^{2}(F, u) \to H^{2}(F, \mu)$ sends the canonical class of the left-hand side defining $\mathcal{E}$ to $1 \in H^{2}(F, \mu)$, and so we have a morphism $\mathcal{E} \to \tn{Kott}$ of fibered categories over $F$ which on bands is the composition $u \to \mu \to \mathbb{D}$; denote this composition by $\phi$. Pullback by this map gives a homomorphism 
\begin{equation}\label{cohomcomp} B(G)_{\tn{bas}} \to H^{1}_{\tn{bas}}(\mathcal{E}, G),
\end{equation}
which we warn the reader need not be injective or surjective in general.

There is an analogue of \eqref{cohomcomp} on the Galois side. Recall $Z_{n} := (Z(G)/Z_{\tn{der}})[n]$ and $G_{n} = G/Z_{n}$ as introduced in \S \ref{Endoscopy}; we observe that $G_{1} \xrightarrow{\sim} G_{\tn{ad}} \times \frac{Z(G)}{Z_{\tn{der}}}$ so that $G_{n} \xrightarrow{\sim} G_{\tn{ad}} \times \frac{Z(G)}{Z_{\tn{der}}}[n]$ and there is an identification
\begin{equation*} \widehat{G_{n}} \xrightarrow{\sim} \widehat{G}_{\tn{sc}} \times Z(\widehat{G})^{\circ},
\end{equation*}
where, for $n \mid m$ the dual transition map $\widehat{G_{m}} \to \widehat{G_{n}}$ is multiplication by $m/n$. This allows us to identify $\widehat{\bar{G}}$ with $\widehat{G}_{\tn{sc}} \times \widehat{C}_{\infty}$, where $\widehat{C}_{\infty} = \varprojlim Z(\widehat{G})^{\circ}$ with transition maps given as in the preceding sentence.

There is a canonical group homomorphism 
\begin{equation}\label{hatcomp}
\widehat{\bar{G}} = \widehat{G}_{\tn{sc}} \times \widehat{C}_{\infty} \to \widehat{G}, \hspace{1mm} (a, (b_{n})) \mapsto a_{\tn{der}} \cdot \frac{b_{1}}{N_{E/F}(b_{[E \colon F]})},
\end{equation}
where $a_{\tn{der}}$ denotes the image of $a$ in $G_{\tn{der}}$ and the right-hand factor is constant for all sufficiently large finite Galois $E/F$, and we take such an $E/F$ in \eqref{hatcomp}. This map restricts to a homomorphism
\begin{equation}\label{Zhatcomp}
\pi_{0}((Z(\widehat{G}_{\tn{sc}}) \times \widehat{C}_{\infty})^{+}) \to Z(\widehat{G})^{\Gamma},   
\end{equation}
and it is proved in \cite[Proposition 3.3]{kaletha18} that the comparison map \eqref{cohomcomp} becomes the dual of \eqref{Zhatcomp} after applying Tate-Nakayama duality (the argument loc. cit. is valid in any characteristic).

Similarly, for a fixed $L$-parameter $\varphi$ for $G$, if $S_{\varphi}^{+}$ denotes the preimage of $S_{\varphi}$ in $\widehat{\bar{G}}$, then \eqref{hatcomp} restricts to a map $S_{\varphi}^{+} \to S_{\varphi}$ whose post-composition with the projection to $S_{\varphi}^{\natural}$ is trivial on $(S_{\varphi}^{+})^{\circ}$ and thus gives a homomorphism
\begin{equation}\label{Sphicomp}
    \pi_{0}(S_{\varphi}^{+}) \to S_{\varphi}^{\natural},
\end{equation}
which is what we'll use to compare the two versions of the refined LLC.

The key result (whose proof is the same as \cite[Lemma 4.1]{kaletha18}, its mixed-characteristic analogue) is:

\begin{proposition} Let $[x_{\tn{iso}}] \in B(G)_{\tn{bas}}$ with image in $H^{1}_{\tn{bas}}(\mathcal{E},G)$ denoted by $[x_{\tn{rig}}]$, which we can view as an element of $H^{1}(\mathcal{E}, Z(G) \to G) := \varinjlim_{n} H^{1}(\mathcal{E}, Z_{n} \to G)$. Denote by $\tn{Irr}(\pi_{0}(S_{\varphi}^{+}), [x_{\tn{rig}}])$ (resp. $\tn{Irr}(S_{\varphi}^{\natural}, [x_{\tn{iso}}])$) all irreducible representations of $\pi_{0}(S_{\varphi}^{+})$ (resp. $S_{\varphi}^{\natural}$)  that map to $[x_{\tn{rig}}]$ (resp. $[x_{\tn{iso}}]$) via restriction followed by Tate-Nakayama duality.

Pullback by the map \eqref{Sphicomp} induces a bijection
\begin{equation}\label{Irrcomp}
\tn{Irr}(S_{\varphi}^{\natural}, [x_{\tn{iso}}]) \to \tn{Irr}(\pi_{0}(S_{\varphi}^{+}), [x_{\tn{rig}}]).
\end{equation}
\end{proposition}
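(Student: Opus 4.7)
The plan is to follow \cite[Lemma 4.1]{kaletha18} essentially verbatim, substituting our Proposition \ref{Zduality} (and its well-known Kottwitz-style analogue for $B(G)_{\tn{bas}}$) for their mixed-characteristic predecessors. First I would verify well-definedness. Given $\rho \in \tn{Irr}(S_{\varphi}^{\natural}, [x_{\tn{iso}}])$, its central character on $Z(\widehat{G})^{\Gamma}$ corresponds to $[x_{\tn{iso}}]$ under Kottwitz duality. Pulling $\rho$ back by \eqref{Sphicomp} produces a representation of $\pi_{0}(S_{\varphi}^{+})$ whose central character on $\pi_{0}(Z(\widehat{\bar{G}})^{+})$ is the composition of the character of $\rho$ with the restriction of \eqref{Zhatcomp}. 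By \cite[Proposition 3.3]{kaletha18} this restriction is dual to the comparison map \eqref{cohomcomp}, so under Proposition \ref{Zduality} this pulled-back character corresponds precisely to $[x_{\tn{rig}}]$. Thus pullback sends $\tn{Irr}(S_{\varphi}^{\natural}, [x_{\tn{iso}}])$ into $\tn{Irr}(\pi_{0}(S_{\varphi}^{+}), [x_{\tn{rig}}])$.

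Next I would establish bijectivity by a direct analysis of the kernel $K$ and image of \eqref{Sphicomp}. Unwinding the definition $\widehat{\bar{G}} = \widehat{G}_{\tn{sc}} \times \widehat{C}_{\infty}$ and the formula in \eqref{hatcomp}, one identifies $K$ with a subquotient of $\pi_{0}(\widehat{C}_{\infty}^{\Gamma})$, sitting inside $\pi_{0}(Z(\widehat{\bar{G}})^{+})$. The crucial claim, transcribed from \cite[Lemma 4.1]{kaletha18}, is that the characters of $K$ arising from representations in $\tn{Irr}(\pi_{0}(S_{\varphi}^{+}), [x_{\tn{rig}}])$ are all trivial exactly when $[x_{\tn{rig}}]$ lies in the image of \eqref{cohomcomp}; equivalently, the constraint that the central character correspond to $[x_{\tn{rig}}] \in \mathrm{im}(B(G)_{\tn{bas}} \to H^{1}_{\tn{bas}}(\mathcal{E}, G))$ forces triviality on $K$. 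Granted this, any $\rho' \in \tn{Irr}(\pi_{0}(S_{\varphi}^{+}), [x_{\tn{rig}}])$ factors through $\pi_{0}(S_{\varphi}^{+})/K$, which sits inside $S_{\varphi}^{\natural}$ as a normal subgroup with abelian quotient contained in the centralizer of the image; standard Clifford theory (or, more elementarily, the fact that the quotient acts on $\rho'$ via a character constrained by the prescribed central character) produces a unique extension to an irreducible representation of $S_{\varphi}^{\natural}$ in $\tn{Irr}(S_{\varphi}^{\natural}, [x_{\tn{iso}}])$.

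The main obstacle is the kernel computation in the second step, where one must match the subgroup $K \subseteq \pi_{0}(S_{\varphi}^{+})$ with the kernel of the dual of \eqref{cohomcomp}. This is a purely formal consequence of the compatibility proved in \cite[Proposition 3.3]{kaletha18} together with Tate-Nakayama duality on both sides: Proposition \ref{Zduality} on the rigid side, and Kottwitz's duality $B(G)_{\tn{bas}} \xrightarrow{\sim} (Z(\widehat{G})^{\Gamma})^{*}$ on the isocrystal side. Since all the manipulations only use exactness of Pontryagin duals, the functoriality of $\pi_{0}$, and the explicit description \eqref{hatcomp}, the mixed-characteristic proof transcribes with only cosmetic \v{C}ech-cohomological adjustments already made throughout this paper.
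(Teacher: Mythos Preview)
Your approach is essentially the same as the paper's: both defer to \cite[Lemma 4.1]{kaletha18}, and you correctly sketch the structure of that argument (well-definedness via compatibility of \eqref{Zhatcomp} with \eqref{cohomcomp}, then bijectivity via a kernel analysis and Clifford-theoretic extension). One minor correction: the Tate-Nakayama duality you need on the rigid side is the one for $H^{1}(\mathcal{E}, Z_{n} \to G)$ from \cite[Theorem 5.10]{Dillery1} (cf.\ the diagram in Conjecture \ref{rigidLLC1}), not Proposition \ref{Zduality}, which concerns $H^{1}(\mathcal{E}, Z)$ for finite $Z$ and plays no role in this particular proposition.
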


It immediately follows that the composition
\begin{equation}\label{finalLLC}
\Pi_{\varphi}^{\tn{iso}}([x_{\tn{iso}}]) \xrightarrow{\eqref{cohomcomp}} \Pi_{\varphi}^{\tn{rig}}([x_{\tn{rig}}]) \xrightarrow{\eqref{rigidLLC}} \tn{Irr}(\pi_{0}(S_{\varphi}^{+}), [x_{\tn{rig}}]) \xrightarrow{\eqref{Sphicomp}^{-1}} \tn{Irr}(S_{\varphi}^{\natural}, [x_{\tn{iso}}])  
\end{equation}
splices across all $[x_{\tn{iso}}] \in B(G)_{\tn{bas}}$ to define a bijection as in the isocrystal refined LLC \eqref{isoLLC}.

For a fixed tempered $L$-parameter and $\dot{s}_{\tn{rig}} \in S_{\varphi}^{+}$ let $\dot{\mathfrak{e}}$ denote the corresponding refined endoscopic datum $(H, \mathcal{H}, \xi, \dot{s}_{\tn{rig}})$ with $z$-pair $(H_{1}, \xi_{1})$. It is a basic fact of refined endoscopic data that if $s$ is the image of $\dot{s}_{\tn{rig}}$ in $S_{\varphi}$ via the natural projection, then $(H, \mathcal{H}, \xi, \dot{s}_{\tn{iso}})$ is an endoscopic datum serving $\varphi$ and $s$, with $z$-pair $\mathfrak{z}$. Moreover, since the image of $\dot{s}_{\tn{rig}}$ in $S_{\varphi}$ via \eqref{hatcomp}, denoted by $s_{\tn{iso}}$, differs from $s$ by $N_{E/F}(b_{[E \colon F]}) \in Z(\widehat{G})^{\Gamma}$, the data $H, \mathcal{H}, \xi$ also serve $\varphi$ and $s_{\tn{iso}}$, and so $\mathfrak{e}_{\tn{iso}} := (H, \mathcal{H}, \xi, s_{\tn{iso}})$ is also an endoscopic datum served by $\mathfrak{z}$.

The final comparison result is:

\begin{proposition}
For $\dot{\mathfrak{e}} = (H, \mathcal{H}, \xi, \dot{s}_{\tn{rig}})$, $\mathfrak{e}_{\tn{iso}}$ as above and rigid inner twist $[x_{\tn{rig}}]$ of $G$, the rigid endoscopic character identities for $[x_{\tn{rig}}]$ and $\dot{\mathfrak{e}}$ hold if and only if the analogous endoscopic character identities \eqref{endchar} hold for the isocrystal refined LLC given by \eqref{finalLLC} for any $[x_{\tn{iso}}]$ mapping to $[x_{\tn{rig}}]$ and endoscopic datum $\mathfrak{e}_{\tn{iso}}$. 

In particular, if $Z(G)$ is connected, then the rigid endoscopic character identities for all rigid inner twists of $G$ are equivalent to the isocrystal endoscopic character identities for all extended pure inner twists of $G$.
\end{proposition}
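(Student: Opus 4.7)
The plan is to compare the two sides of the endoscopic character identity term by term, matching each ingredient on the rigid side with its isocrystal counterpart under the comparison maps \eqref{cohomcomp} and \eqref{Sphicomp}. On the spectral side, $\Pi_{\varphi}^{\tn{iso}}([x_{\tn{iso}}])$ and $\Pi_{\varphi}^{\tn{rig}}([x_{\tn{rig}}])$ are in canonical bijection with matching underlying smooth representations of $G'(F)$ by the very construction of \eqref{finalLLC}, so the two sums defining $\Theta^{\dot{s}_{\tn{rig}}}_{\varphi,[x_{\tn{rig}}]}$ and $\Theta^{s_{\tn{iso}}}_{\varphi,[x_{\tn{iso}}]}$ index the same set of characters $\Theta_{\pi}$. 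Moreover, the pairings $\langle \pi, \dot{s}_{\tn{rig}}\rangle$ and $\langle \pi, s_{\tn{iso}}\rangle$ agree tautologically through \eqref{Irrcomp}, since $s_{\tn{iso}}$ is by construction the image of $\dot{s}_{\tn{rig}}$ under \eqref{hatcomp} and \eqref{Irrcomp} is pullback along \eqref{Sphicomp}. Combined with the trivial equality of Kottwitz signs $e(G')$, this yields $\Theta^{\dot{s}_{\tn{rig}}}_{\varphi,[x_{\tn{rig}}]}(\delta') = \Theta^{s_{\tn{iso}}}_{\varphi,[x_{\tn{iso}}]}(\delta')$ for all strongly regular $\delta'$.

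On the geometric side, the fourth-kind term $\Delta_{IV}(\gamma_1,\delta')$ and the stable character $\Theta^{1}_{\xi_1 \circ \varphi, \tn{triv}}(\gamma_1)$ are intrinsic to the endoscopic datum $(H, \mathcal{H}, \xi)$ and the quasi-split group $H_1$ and therefore do not distinguish the rigid and isocrystal frameworks. The remaining subtlety is the identification of normalized absolute transfer factors
\begin{equation*}
\Delta'[\mathfrak{w}, \dot{\mathfrak{e}}, \mathfrak{z}, (\mathcal{T}, \bar{h})](\gamma_1, \delta') = \Delta'[\mathfrak{w}, \mathfrak{e}_{\tn{iso}}, \mathfrak{z}, (\psi, x_{\tn{iso}})](\gamma_1, \delta').
\end{equation*}
Both are normalizations of the same relative transfer factor (the underlying inner twist $\psi$ is common), so their ratio equals the ratio of the corresponding normalizations against the trivial inner twist. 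In each framework this ratio is a pairing of an invariant $\tn{inv}(\delta',\gamma_1)$, living in the gerbe cohomology of the relevant torus sitting inside both $H$ and $G'_{\delta'}$, with the semisimple element $\dot{s}_{\tn{rig}}$ or $s_{\tn{iso}}$. The crucial compatibility is that \eqref{cohomcomp} transports the isocrystal invariant to the rigid invariant; invoking that \eqref{cohomcomp} is Pontryagin dual to \eqref{Zhatcomp} (and using Proposition \ref{Zduality} in the function field setting to furnish the required duality for the relevant finite multiplicative group schemes), the pairings match, giving the desired equality of transfer factors.

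For the final ``in particular" clause, when $Z(G)$ is connected the map \eqref{cohomcomp} is surjective after quotienting by the natural action of $\varinjlim_{Z} H^1(\mathcal{E}, Z)$, as recalled in the introduction. Thus any rigid inner twist $[x_{\tn{rig}}']$ differs from some $[x_{\tn{rig}}]$ in the image of \eqref{cohomcomp} by multiplication by a class $z \in H^1(\mathcal{E}, Z)$ for sufficiently large finite $Z$. The discussion following Proposition \ref{Zduality} (and the references \cite[\S 6.3]{kaletha18}, \cite[\S 4.4]{DS23}) expresses the effect of such a twist on the rigid refined LLC as multiplication of the spectral pairing by a canonical character of $\pi_0(S_{\varphi}^{+})$ obtained from $z$ via Proposition \ref{Zduality}, and a routine check shows that the analogous twist appears on the geometric side via the change of inner twist in the transfer factor. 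Consequently the ECI for $[x_{\tn{rig}}']$ is equivalent to the ECI for $[x_{\tn{rig}}]$, which by the first part of the argument is equivalent to the isocrystal ECI for any preimage $[x_{\tn{iso}}]$ of $[x_{\tn{rig}}]$.

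The main obstacle is the transfer-factor identification in the \v{C}ech/gerbe formalism: one needs to verify that the ratio $\Delta'[\,\cdot\,, (\mathcal{T},\bar{h})]/\Delta'[\,\cdot\,, \tn{triv}]$, expressed as a pairing against $\dot{s}_{\tn{rig}}$ through the Kaletha-gerbe cohomology of a torus, matches its isocrystal analogue after pullback along $\mathcal{E} \to \tn{Kott}$. This is the function field analogue of \cite[Lemma 4.2]{kaletha18}, and the anticipated strategy is to rewrite the proof there using the \v{C}ech cocycle descriptions of $\mathcal{E}$ and $\tn{Kott}$ from \S \ref{PrelimGerbes} and \S \ref{PrelimKal}, substituting the duality isomorphisms of \S \ref{Tateduality} for the $p$-adic torus-embedding argument used in \cite{kaletha18}.
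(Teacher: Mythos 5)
Your proposal follows the same overall architecture as the paper's proof: first match the spectral sides $\Theta^{\dot s_{\tn{rig}}}_{\varphi,[x_{\tn{rig}}]} = \Theta^{s_{\tn{iso}}}_{\varphi,[x_{\tn{iso}}]}$ by construction of \eqref{finalLLC}, observe that $\Delta_{IV}$ and the stable character $\Theta^1_{\xi_1\circ\varphi,\tn{triv}}$ are shared, reduce everything to the equality of normalized transfer factors, and handle the ``in particular'' clause by a translation argument via $H^1(\mathcal{E},Z)$ using \cite[Lemmas~6.2, 6.3]{kaletha18}. That much is correct and matches the paper closely.

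The issue is that you explicitly flag the transfer-factor equality as ``the main obstacle'' and only sketch a \emph{strategy} for it, whereas that equality is the substantive content of the proposition; an outline is not a proof. The paper actually carries out the verification: one fixes $\delta\in G(F)$, $g\in G(F^{\tn{sep}})$ with $\psi(g\delta g^{-1})=\delta'$, sets $S=Z_G(\delta)$, chooses a \v{C}ech representative $\xi$ of the canonical class for $\mathcal{E}$ so that $\phi(\xi)$ represents $\tn{Kott}$, and writes $x_{\tn{rig}}=(c_{\tn{rig}},f)$ and $x_{\tn{iso}}=(c_{\tn{rig}},f')$ as twisted cocycles with $f=f'\circ\phi$. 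Then both invariants have the \emph{same} underlying cochain $p_1^\sharp(g)c_{\tn{rig}}p_2^\sharp(g)^{-1}$, differing only in the band homomorphism ($f$ vs.~$f'$), so the comparison map \eqref{cohomcomp} visibly carries $\tn{inv}[x_{\tn{iso}}](\gamma,\delta')$ to $\tn{inv}[x_{\tn{rig}}](\gamma,\delta')$. The conclusion then follows from compatibility of the rigid and isocrystal Tate--Nakayama dualities (the argument of \cite[Proposition~3.3]{kaletha18}, which is characteristic-independent) and the defining relation $s_{\tn{iso}} = \eqref{hatcomp}(\dot s_{\tn{rig}})$. You would need to include a computation of roughly this shape to have a complete proof. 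One further imprecision: you invoke Proposition~\ref{Zduality} inside the transfer-factor comparison, but that result concerns finite multiplicative coefficients and is used only for the ``in particular'' clause (via translation by $H^1(\mathcal{E},Z)$); the duality required for the transfer-factor step is the Tate--Nakayama isomorphism for the torus $S$ on the Kaletha gerbe and on $\tn{Kott}$, which is a different (though related) statement already available from \cite{Dillery1}.
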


\begin{proof}
We have by construction that $\Theta_{\varphi, [x_{\tn{rig}}]}^{\dot{s}_{\tn{rig}}} =  \Theta_{\varphi, [x_{\tn{iso}}]}^{s_{\tn{iso}}}$ so it suffices to show that 
\begin{equation}\label{endcomp}
\Delta'[\mathfrak{w}, \mathfrak{e}_{\tn{iso}}, \mathfrak{z}, (\psi, x_{\tn{iso}})] = \Delta'[\mathfrak{w}, \dot{\mathfrak{e}}, \mathfrak{z}, x_{\tn{rig}}]
\end{equation}
as functions on $H_{1,G-\tn{sr}}(F) \times G_{\tn{sr}}(F)$, where as usual we are taking some fixed Whittaker datum $\mathfrak{w}$ for $G$. For matching elements $(\gamma, \delta') \in H_{1,G-\tn{sr}}(F) \times G_{\tn{sr}}(F)$ the only possible discrepancy between the two sides of \eqref{endcomp} are the values coming from pairing (via Tate-Nakayama duality) the torsors $\tn{inv}[x_{\tn{rig}}](\gamma, \delta')$ and $\tn{inv}[x_{\tn{iso}}](\gamma, \delta')$ (cf. \cite[\S 7.1]{Dillery1} for the construction of this torsor in the rigid case---we will review both rigid and isocrystal constructions later in this argument) with $\dot{s}_{\tn{rig}}$ and $s_{\tn{iso}}$, respectively. 

Fix $\delta \in G(F)$ and $g \in G(F^{\tn{sep}})$ such that $\psi(g\delta g^{-1}) = \delta'$ and set $S:= Z_{G}(\delta)$. To facilitate computations we choose a representative $\xi \in u(\overline{F}^{\bigotimes_{F} 3})$ of the class representing the gerbe $\mathcal{E}$, which implies (using the first paragraph of this section) that $\phi(\xi)$ has image in $H^{2}(F, \mathbb{D})$ that represents $\tn{Kott}$; we can and do view $H$-torsors (for a group scheme $H$) on $\mathcal{E}$ (resp. on $\tn{Kott}$) as $\xi$-twisted (resp. $\phi(\xi)$-twisted) cocycles with coefficients in $H$ (as defined in \S \ref{PrelimGerbes}). In particular, we can write $x_{\tn{rig}} = (c_{\tn{rig}}, f)$, where $u \xrightarrow{f} Z(G)$ and $c_{\tn{rig}} \in G(\overline{F} \otimes_{F} \overline{F})$ is such that $dc_{\tn{rig}} = f(\xi)$, which means that $x_{\tn{iso}} = (c_{\tn{rig}}, f')$ where $\mathbb{D} \xrightarrow{f'} Z(G)$ is such that $f = f' \circ \phi$. 

By construction, $\tn{inv}[x_{\tn{rig}}](\gamma, \delta') \in H^{1}(\mathcal{E}, Z(G) \to S) := \varinjlim_{Z \subset_{\tn{finite}} Z(G)} H^{1}(\mathcal{E}, Z \to S)$ is represented by the $\xi$-twisted cocycle $(p_{1}^{\sharp}(g)c_{\tn{rig}}p_{2}^{\sharp}(g)^{-1}, f)$, and $\tn{inv}[x_{\tn{iso}}](\gamma, \delta') \in B(S)$ by the $\phi(\xi)$-twisted $(p_{1}^{\sharp}(g)c_{\tn{rig}}p_{2}^{\sharp}(g)^{-1}, f')$ (in the literature this second invariant has only been defined in the setting of Galois cocycles, see e.g. \cite[\S 4.1]{kaletha18}, but it is a routine verification that the \v{C}ech definition given here agrees under the usual passage from gerbe to Galois gerbe as in \cite[\S 6]{Taibi22}). The explicit description of these two twisted cocycles makes it obvious that the image of $\tn{inv}[x_{\tn{iso}}](\gamma, \delta')$ in $H^{1}(\mathcal{E}, Z(G) \to S)$ under \eqref{cohomcomp} equals $\tn{inv}[x_{\tn{rig}}](\gamma, \delta')$. From here the result follows from the compatibility of rigid and isocrystal Tate-Nakayama duality and the fact that, by definition, $s_{\tn{rig}}$ maps to $s_{\tn{iso}}$ under \eqref{hatcomp}.

It follows immediately from the above argument that the rigid endoscopic character identities imply their isocrystal analogues. For the converse when $Z(G)$ is connected, it suffices to show that, for any $x \in H^{1}(\mathcal{E}, Z(G) \to G)$, the rigid endoscopic character identity \eqref{endchar} for $x$ is equivalent to the same identity with $x$ replaced by any $H^{1}(\mathcal{E}, Z_{n})$-translate $x'$, for any choice of $n$. This equivalence follows from \cite[Lemma 6.2]{kaletha18} (and, in turn, Lemma 6.3 loc. cit.), the proofs of which hold in our situation due to the dual diagrams \eqref{bigdiag1} and \eqref{dualdiagram2} constructed in \S \ref{Tateduality}.
\end{proof}

\subsection{Functoriality conjecture revisited}\label{IsoFunc}
The goal of this subsection is to show that the first part of Conjecture \ref{Maartenbis} is not required by using isocrystals and the construction of semisimple $L$-parameters from \cite{FS21}. Fix $G \to G_{z}$ a weak $z$-embedding and $Z \subset Z(G)$ a finite central subgroup. For an $L$-parameter $\varphi$ for $^{L}G$, denote by $\varphi^{\tn{ss}}$ its \textsf{semisimplification}---the homomorphism $W_{F} \to \prescript{L}{}G$ given by pre-composing $\phi$ with the map $W_{F} \to W_{F} \times \SL_{2}$ defined by the identity on the first factor and $w \mapsto \begin{pmatrix} ||w||^{1/2} & 0 \\ 0 & ||w||^{-1/2} \end{pmatrix}$ on the second factor. 

\begin{remark}\label{field}In order to use the results of \cite{FS21} it is necessary to fix a field isomorphism $\mathbb{C} \cong \overline{\mathbb{Q}_{\ell}}$ (for $\ell \neq \tn{Char}(F)$ a prime) which we use to transfer results between their setting and the setting of this paper.
\end{remark}

Since the results that we will use from \cite{FS21} deal with semisimple parameters, we need the following result:

\begin{lemma}\label{sslemma} Let $\tilde{\varphi}$ be a parameter for $^{L}G$ with lift $\varphi$ to $^{L}G_{z}$ (which exists by Proposition \ref{liftparam}). Then if the induced map 
\begin{equation}\label{JMLemMap}
    S_{\varphi^{\tn{ss}}} \to S_{\tilde{\varphi}^{\tn{ss}}}
\end{equation}
is surjective, the map $\pi_{0}(S_{\varphi}^{+}) \to  \pi_{0}(S_{\tilde{\varphi}}^{+})$ is an isomorphism. 
\end{lemma}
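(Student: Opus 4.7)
The plan is to reduce to showing that $S_\varphi \to S_{\tilde{\varphi}}$ is surjective. The long exact sequence associated to \eqref{dualSES} with $L_F$-action twisted by $\varphi$ yields
\[
0 \to \widehat{C}^\Gamma \to S_\varphi \to S_{\tilde{\varphi}} \to H^1(L_F, \widehat{C}),
\]
and $\widehat{C}^\Gamma$ is connected by Kottwitz's duality since $H^1(F,C)=1$ (cf. the proof of Proposition \ref{liftparam}). Granting the surjectivity of $S_\varphi \to S_{\tilde{\varphi}}$, the identical argument as in the second paragraph of the proof of Corollary \ref{keycor} (lifting along the compatible embeddings of $\widehat{Z}$ into $\widehat{G_z/Z}$ and $\widehat{G/Z}$) promotes it to surjectivity of $S_\varphi^+ \to S_{\tilde{\varphi}}^+$. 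Applying the right-exact functor $\pi_0$ to the resulting short exact sequence $0 \to \widehat{C}^\Gamma \to S_\varphi^+ \to S_{\tilde{\varphi}}^+ \to 0$ will then yield the desired isomorphism, since the kernel is connected.

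To prove $S_\varphi \to S_{\tilde{\varphi}}$ is surjective, I will exploit the explicit formula $\varphi^{\tn{ss}}(w) = \varphi(w) \cdot \varphi(n_w)$, where $n_w := \smat{||w||^{1/2}}{0}{0}{||w||^{-1/2}} \in \SL_2(\mathbb{C})$. Given $\tilde{x} \in S_{\tilde{\varphi}}$, this formula implies $\tilde{\varphi}^{\tn{ss}}(W_F) \subseteq \tilde{\varphi}(L_F)$, so $\tilde{x} \in S_{\tilde{\varphi}^{\tn{ss}}}$, and by hypothesis it lifts to some $x \in S_{\varphi^{\tn{ss}}}$. To show $x \in S_\varphi$, it then suffices (by the same formula, together with the fact that $x$ already centralizes $\varphi^{\tn{ss}}(W_F)$) to prove that $x$ centralizes $\varphi(\SL_2)$. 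For $s \in \SL_2(\mathbb{C})$, setting $\alpha(s) := x \varphi(s) x^{-1} \varphi(s)^{-1} \in \widehat{G_z}$, its image in $\widehat{G}$ equals $[\tilde{x}, \tilde{\varphi}(s)] = 1$, so $\alpha(s) \in \widehat{C}$. Since $\widehat{C} \subseteq Z(\widehat{G_z})$ (the dual of the abelian quotient $C = G_z/G$ is central in $\widehat{G_z}$), a direct computation using this centrality shows $\alpha$ is a homomorphism of algebraic groups $\SL_2 \to \widehat{C}$; as $\SL_2$ is semisimple and $\widehat{C}$ is a torus, $\alpha$ is trivial, giving the claim.

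The only subtle point will be the centrality of $\widehat{C}$ in $\widehat{G_z}$, which follows from general duality theory for reductive groups (the dual of an abelian quotient is central in the dual), but deserves explicit mention because it underlies both the triviality of $\alpha$ and the verification that $\alpha$ is multiplicative. Everything else is a direct computation using the defining formula for $\varphi^{\tn{ss}}$.
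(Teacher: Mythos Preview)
Your proof is correct and takes a genuinely different route from the paper's. The paper invokes the Jacobson--Morozov correspondence (citing \cite{AMIY}) to identify $\pi_{0}(S_{\varphi})$ with $\pi_{0}(Z_{\widehat{G_{z}}}(\varphi^{\tn{ss}}, N))$ where $N = d\varphi(e_{0})$, and then shows surjectivity of $Z_{\widehat{G_{z}}}(\varphi^{\tn{ss}},N) \to Z_{\widehat{G}}(\tilde{\varphi}^{\tn{ss}},N)$ using that the central kernel $\widehat{C}$ acts trivially on $N$ via the adjoint action. Your argument bypasses Jacobson--Morozov entirely: you show the stronger statement that $S_{\varphi} \to S_{\tilde{\varphi}}$ is already surjective, by lifting $\tilde{x} \in S_{\tilde{\varphi}} \subset S_{\tilde{\varphi}^{\tn{ss}}}$ to $x \in S_{\varphi^{\tn{ss}}}$ and then observing that the commutator map $s \mapsto [x,\varphi(s)]$ on $\SL_{2}$ lands in the central torus $\widehat{C}$ and is therefore a trivial algebraic homomorphism. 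In effect, the two proofs use the same underlying fact---that the obstruction to a lift of $\tilde{x}$ centralizing the ``$\SL_{2}$-part'' of $\varphi$ lies in the central torus $\widehat{C}$ and hence vanishes---but the paper packages this through the nilpotent $N$ and an external reference, while you work directly with $\varphi|_{\SL_{2}}$. Your approach is more elementary and self-contained; the paper's has the minor advantage of making the role of the Jacobson--Morozov parametrization explicit, which fits the surrounding discussion of Fargues--Scholze compatibility.
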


\begin{proof}
We prove this for $Z =1$; the argument for general $Z$ is identical (cf. the proof of Corollary \ref{keycor}). Note that the map $S_{\varphi^{\tn{ss}}} \to S_{\tilde{\varphi}^{\tn{ss}}}$ has central kernel; it thus maps centers to centers and induces a bijection on unipotent elements. It follows from \cite{AMIY} that we have a canonical isomorphism
\begin{equation*}
    \pi_{0}(S_{\varphi}) \xrightarrow{\sim} \pi_{0}(Z_{\widehat{G_{z}}}(\mathdef{JM}(\varphi)))
\end{equation*}
where $\mathdef{JM}(\varphi)$ denotes the image of $\varphi$ under the \mathdef{Jacobson-Morozov} map (by \cite[Theorem 3.6]{AMIY} the JM map induces a bijection on Frobenius-semisimple parameters, which include all parameters considered in this paper), which is the pair $(\varphi^{\tn{ss}}, N:= d\varphi(e_{0}))$ (with $e_{0} := \begin{pmatrix} 0 & 1 \\ 0 & 0 \end{pmatrix} \in \mathfrak{sl}_{2}$), and $Z_{\widehat{G_{z}}}(\mathdef{JM}(\varphi))$ denotes the elements of $\widehat{G_{z}}$ that fix both components of the pair. If $\mathdef{JM}(\tilde{\varphi}) = (\tilde{\varphi}^{\tn{ss}}, N)$ then clearly $\mathdef{JM}(\varphi) = (\varphi^{\tn{ss}}, N)$.

We deduce that to show that $\pi_{0}(S_{\varphi}) \to  \pi_{0}(S_{\tilde{\varphi}})$ is surjective (it is already injective, cf. the proof of Proposition \ref{limsurj}) it suffices to prove surjectivity of the map
\begin{equation*}
    Z_{\widehat{G_{z}}}(\mathdef{JM}(\varphi)) \to Z_{\widehat{G}}(\mathdef{JM}(\tilde{\varphi})),
\end{equation*}
which follows from combining the fact that the map \eqref{JMLemMap} is surjective with the observation that the preimage of $Z_{\widehat{G}}(\mathdef{JM}(\tilde{\varphi})) = Z_{S_{\tilde{\varphi}^{\tn{ss}}}}(N)$ in $S_{\varphi^{\tn{ss}}}$ is exactly $Z_{S_{\varphi^{\tn{ss}}}}(N) = Z_{\widehat{G_{z}}}(\mathdef{JM}(\varphi))$. 
\end{proof}

The key input here is \cite[\S I.9]{FS21}, which, for $G'$ an extended pure inner form of $G$, associates to any smooth $\pi \in \tn{Irr}(G'(F))$ a semisimple parameter $\psi$, which we call the \mathdef{FS parameter} of $\pi$. If $\varphi$ is the $L$-parameter for $\pi$ obtained via the rigid refined LLC, it is conjecturally expected that these two parameters are related via $\varphi^{\tn{ss}} = \psi$ (after applying the coefficient field identification of Remark \ref{field}). A key fact about the Fargues-Scholze correspondence for our applications is:

\begin{theorem}\label{FSthm} (\cite[Theorem I.9.6.(v)]{FS21}) 
    For any $G_{1} \xrightarrow{\eta} G_{2}$ which induces an isomorphism on adjoint groups and $\pi \in \tn{Irr}(G_{2}'(F))$ with corresponding FS parameter $\psi$, any irreducible constituent of $\pi \circ \eta'$ has FS parameter $^{L}\eta \circ \psi$.
\end{theorem}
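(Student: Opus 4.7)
The plan is to unpack the definition of the FS parameter via excursion operators and track how these operators behave under pullback by $\eta$. Recall that for any $\pi \in \tn{Irr}(G'(F))$, Fargues-Scholze associate the FS parameter $\psi$ by letting $\pi$ correspond to a simple object (or a perverse quotient) in the category of sheaves on $\tn{Bun}_{G}$, on which the excursion algebra $\tn{Exc}(W_F, \widehat{G})$ acts centrally; the character of this action is precisely the character of $\tn{Exc}(W_F, \widehat{G})$ attached to the semisimple parameter $\psi$. So the theorem reduces to comparing the excursion-algebra characters for $\pi$ and for any irreducible constituent of $\pi \circ \eta'$.

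First I would reduce to the case that $\eta$ is either a central isogeny or a central extension by a torus, since any $\eta$ inducing an isomorphism on adjoint groups factors into such pieces. In either case one has a natural morphism $\eta_{*} \colon \tn{Bun}_{G_1} \to \tn{Bun}_{G_2}$ (or its companion) compatible with the spectral action: namely, $\tn{Bun}_{G_1}$ is a torsor or gerbe over $\tn{Bun}_{G_2}$ banded by the kernel/cokernel of $\eta$. The next step is to verify that pullback of sheaves along $\eta_*$ carries the spectral action of $\tn{Exc}(W_F, \widehat{G_2})$ to the restriction along $^{L}\eta^{\vee} \colon \tn{Exc}(W_F, \widehat{G_2}) \to \tn{Exc}(W_F, \widehat{G_1})$ of the spectral action of $\tn{Exc}(W_F, \widehat{G_1})$. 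This compatibility is a formal consequence of the construction of the excursion operators via geometric Satake on $\tn{Div}^I_{X}$, since the morphism on Beilinson-Drinfeld affine Grassmannians induced by $\eta$ intertwines the fusion products on both sides, and the maps of Satake categories arising from $\widehat{G_2} \to \widehat{G_1}$ transport the relevant $V$-indexed functors compatibly.

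Granted this compatibility, any irreducible constituent $\pi_1$ of $\pi \circ \eta'$ appears as a subquotient of $\eta_*^{*}(\pi)$ (or appears in the same $\eta_*$-fiber as $\pi$, depending on the direction of $\eta$), so the character of $\tn{Exc}(W_F, \widehat{G_1})$ acting on $\pi_1$ is the pullback along $^{L}\eta^{\vee}$ of the character acting on $\pi$. Translating back via the bijection between excursion characters and semisimple parameters, this pulled-back character corresponds exactly to $^{L}\eta \circ \psi$, as required.

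The main obstacle is the spectral-action compatibility in the second paragraph: it is easy to state but requires keeping careful track of the geometric Satake equivalence in the Fargues-Scholze setup and verifying that the transition morphisms commute with the $V$-sheaves that define excursion operators. Once this is established, the deduction of the theorem is essentially formal. In practice, for the applications in this paper, one can avoid re-doing this entirely by quoting \cite[Theorem I.9.6.(v)]{FS21} directly.
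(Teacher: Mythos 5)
The paper does not prove this statement; it is imported verbatim as \cite[Theorem I.9.6.(v)]{FS21}, and the author treats it as a black box for the arguments in \S\ref{IsoFunc}. Your sketch is therefore answering a different question than the paper does — you are attempting to reconstruct the internal Fargues–Scholze argument, whereas the paper simply cites it.

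As an outline of the FS proof your sketch is broadly on the right track: the FS parameter is characterized by the character of the excursion algebra $\tn{Exc}(W_F,\widehat{G})$ on $\pi$ (via its action through the Bernstein center), and the theorem does reduce to showing that pullback along $\eta$ is intertwined with restriction of excursion operators along $\widehat{G_2}\to\widehat{G_1}$, which in turn comes down to compatibility of the Hecke functors and geometric Satake under central maps. However, the sketch is loose precisely at the one step that carries all the content. The description of $\tn{Bun}_{G_1}\to\tn{Bun}_{G_2}$ as ``a torsor or gerbe banded by the kernel/cokernel of $\eta$'' is not correct as stated — when $\eta$ is a central surjection the fibers involve $H^1$ and $H^2$ of the kernel on the curve and the map is neither a torsor nor a gerbe in general, and when $\eta$ is an inclusion with abelian cokernel $Q$ one must instead restrict to the fiber of $\tn{Bun}_{G_2}\to\tn{Bun}_Q$ over the trivial $Q$-bundle. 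Likewise, ``any irreducible constituent $\pi_1$ of $\pi\circ\eta'$ appears as a subquotient of $\eta_*^{*}(\pi)$'' blurs the two directions of $\eta$ and elides the actual mechanism (compatibility of Hecke eigensheaf structures, not mere subquotient relations at the level of sheaves). These are the points where the FS argument genuinely requires work, and your sketch signals the issue but does not resolve it. Your final sentence — that for the purposes of this paper one should simply cite FS21 — is exactly what the paper does, and is the right call here.
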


We deduce the following consequence:

\begin{corollary}\label{pi0isom}
Fix a good $z$-embedding system $\{G \to G_{z}^{(i)}\}$ and a smooth irreducible representation $\pi^{(i)}$ of $(G')_{z}^{(i)}(F)$ (with corresponding parameter $\varphi^{(i)}$ for $^{L}G_{z}^{(i)}$) whose pullbacks $\pi^{(j)}$ to $(G')_{z}^{(j)}(F)$ are irreducible for all $j \geq i$. Then for some $j \gg i$ the parameter $\varphi^{(j)}$ for $\pi^{(j)}$ is such that the natural map $\pi_{0}(S_{\prescript{L}{}p_{k,j} \circ \varphi^{(j)}}^{+}) \to \pi_{0}(S_{\varphi^{(j)}}^{+})$ is an isomorphism for any $k \geq j$.
\end{corollary}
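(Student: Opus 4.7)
The plan is to reduce the claim, via Lemma~\ref{sslemma}, to a surjectivity statement for centralizers of semisimple parameters, which can then be handled by combining Theorem~\ref{FSthm} (to upgrade the Fargues--Scholze parameters to a compatible system) with Proposition~\ref{limsurj} and Corollary~\ref{keycor}.

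First, Lemma~\ref{keycor1} forces $\pi := \pi^{(i)}|_{G'(F)}$ to be irreducible; let $\psi$ denote its Fargues--Scholze semisimple parameter. Applying Theorem~\ref{FSthm} to each central embedding $G \hookrightarrow G_{z}^{(k)}$ shows that the FS parameter $\psi^{(k)}$ of $\pi^{(k)}$ lifts $\psi$, and applying it to the transition map $p_{k,j}$ (whose kernel is a central torus, so that it induces an isomorphism on adjoint groups) yields $\psi^{(k)} = {}^{L}p_{k,j}\circ\psi^{(j)}$ for all $k \geq j \geq i$. Assuming the expected identification $(\varphi^{(j)})^{\tn{ss}} = \psi^{(j)}$ between the rigid refined LLC and the Fargues--Scholze correspondence at the level of semisimplifications, the family $\{(\varphi^{(j)})^{\tn{ss}}\}_{j \geq i}$ then forms a compatible system of semisimple parameters lifting the fixed parameter $\varphi^{\tn{ss}} := \psi$ of ${}^{L}G$ (namely, the semisimplification of $\varphi := {}^{L}\eta_{j}\circ\varphi^{(j)}$ for any $j$, where $\eta_{j}$ denotes $G \hookrightarrow G_{z}^{(j)}$).

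I would then apply Proposition~\ref{limsurj} and Corollary~\ref{keycor} to this compatible system to extract some $j \gg i$ for which $S_{(\varphi^{(j)})^{\tn{ss}}} \to S_{\varphi^{\tn{ss}}}$ is surjective. For any $k \geq j$, the inclusion $\widehat{G_{z}^{(j)}} \hookrightarrow \widehat{G_{z}^{(k)}}$ dual to $p_{k,j}$ induces an inclusion $S_{(\varphi^{(j)})^{\tn{ss}}} \hookrightarrow S_{({}^{L}p_{k,j}\circ\varphi^{(j)})^{\tn{ss}}}$ compatible with the projections to $S_{\varphi^{\tn{ss}}}$, which forces $S_{({}^{L}p_{k,j}\circ\varphi^{(j)})^{\tn{ss}}} \to S_{\varphi^{\tn{ss}}}$ to be surjective as well. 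Since $G \hookrightarrow G_{z}^{(j)}$ and $G \hookrightarrow G_{z}^{(k)}$ are weak $z$-embeddings, Lemma~\ref{sslemma} applies in each case and yields isomorphisms $\pi_{0}(S^{+}_{\varphi^{(j)}}) \xrightarrow{\sim} \pi_{0}(S^{+}_{\varphi})$ and $\pi_{0}(S^{+}_{{}^{L}p_{k,j}\circ\varphi^{(j)}}) \xrightarrow{\sim} \pi_{0}(S^{+}_{\varphi})$; composing the second with the inverse of the first produces the desired isomorphism for every $k \geq j$.

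The main obstacle is the appeal to compatibility between the rigid refined LLC and the Fargues--Scholze correspondence at the semisimple level, i.e.\ the identification $(\varphi^{(j)})^{\tn{ss}} = \psi^{(j)}$: this is precisely what allows the functorial compatibility of FS parameters from Theorem~\ref{FSthm} to be imported into a statement about the LLC parameters actually being used. Without this input one has no reason to expect $\{(\varphi^{(j)})^{\tn{ss}}\}$ to be compatible under ${}^{L}p_{k,j}$, and the appeal to Proposition~\ref{limsurj}/Corollary~\ref{keycor} collapses. Modulo this assumption, the rest of the argument is a routine combination of Lemma~\ref{sslemma} with the limit-type results of \S\ref{ConstructionDual}.
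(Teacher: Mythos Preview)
Your proposal is correct and follows essentially the same route as the paper's proof: both use Theorem~\ref{FSthm} (together with the assumed compatibility $(\varphi^{(j)})^{\tn{ss}}=\psi^{(j)}$ between the rigid refined LLC and the Fargues--Scholze correspondence) to force the semisimplifications $\{(\varphi^{(j)})^{\tn{ss}}\}$ into a compatible system, then invoke Corollary~\ref{keycor} to obtain surjectivity of $S_{(\varphi^{(j)})^{\tn{ss}}}\to S_{\varphi^{\tn{ss}}}$ for $j\gg i$, and finally apply Lemma~\ref{sslemma} at levels $j$ and $k$ and compose through $\pi_{0}(S_{\varphi}^{+})$. The only cosmetic difference is that the paper applies Corollary~\ref{keycor} directly to the manifestly compatible family $\{{}^{L}p_{j,i}\circ\varphi^{(i),\tn{ss}}\}_{j}$ and then uses Theorem~\ref{FSthm} to identify this with $\{(\varphi^{(j)})^{\tn{ss}}\}$, whereas you first establish compatibility via Theorem~\ref{FSthm} and then apply Corollary~\ref{keycor}; your explicit flagging of the FS-compatibility assumption is exactly the implicit hypothesis the paper is using.
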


\begin{proof}
Denote by $\varphi^{\tn{ss}}$ the parameter for $^{L}G$ induced by $\varphi^{(i),\tn{ss}}$; Corollary \ref{keycor} implies that the natural map $S_{\prescript{L}{}p_{j,i} \circ \varphi^{(i),\tn{ss}}} \to S_{\varphi^{\tn{ss}}}$ is surjective for all $j \gg 0$. Fixing such a $j$, denote by $\varphi^{(j)}$ the parameter for $^{L}G^{(j)}_{z}$ associated to $\pi^{(j)}$ (which we do \textit{not} assume is induced by $\varphi^{(i)}$) and $\varphi$ the induced parameter of $^{L}G$. It follows from Theorem \ref{FSthm} that $\varphi^{(j),\tn{ss}} = \prescript{L}{}p_{j,i} \circ \varphi^{(i),\tn{ss}}$, and so by Lemma \ref{sslemma} the map $\pi_{0}(S_{\varphi^{(j)}}^{+}) \to \pi_{0}(S_{\varphi}^{+})$ is an isomorphism, as are the natural maps $\pi_{0}(S_{\prescript{L}{}p_{k,j} \circ \varphi^{(j)}}^{+}) \to \pi_{0}(S_{\varphi}^{+})$ for any $k \geq j$. We conclude that the natural map $\pi_{0}(S_{\prescript{L}{}p_{k,j} \circ \varphi^{(j)}}^{+}) \to \pi_{0}(S_{\varphi^{(j)}}^{+})$ is an isomorphism for all $k \geq j$, since they're both compatibly isomorphic to $\pi_{0}(S_{\varphi}^{+})$.
\end{proof}

We may thus (using Proposition \ref{funcprop1}) replace all key uses of Conjecture \ref{Maartenbis} in \S \ref{rigidcomp} by the weaker Conjecture \ref{Maartenbisbis}:

\begin{corollary}\label{conjsuffices}
One can replace Conjecture \ref{Maartenbis} by Conjecture \ref{Maartenbisbis} in the justification that equation \eqref{limLLC} is well-defined and in the proof of Proposition \ref{uniqueness} (and thus also for Corollary \ref{fullunique}).
\end{corollary}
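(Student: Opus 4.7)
The plan is to verify that, in each of the two invocations of Conjecture \ref{Maartenbis} flagged by the statement, the two additional hypotheses of Conjecture \ref{Maartenbisbis} (that $\tilde{\varphi}$ lifts to an $L$-parameter for $\prescript{L}{}G_{2}$, and that the induced map on $\pi_{0}$ of $S^{+}$-groups is bijective) hold automatically, so that Conjecture \ref{Maartenbisbis} may replace Conjecture \ref{Maartenbis} without loss.

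For the well-definedness of \eqref{limLLC}, I would apply Conjecture \ref{Maartenbisbis} to the central surjection $\eta = p_{k+1,k} \colon G_{z}^{(k+1)} \twoheadrightarrow G_{z}^{(k)}$, with $\varphi = \varphi^{(k)}$ the rigid refined LLC parameter of $\rho_{j} \circ p_{k,j}$. The pulled-back representation $\rho_{j} \circ p_{k,j} \circ p_{k+1,k} = \rho_{j} \circ p_{k+1,j}$ is irreducible for $k \gg j$ by Lemma \ref{keycor1}; let $\tilde{\varphi}$ denote its rigid refined LLC parameter. To verify the lifting hypothesis, I would combine Theorem \ref{FSthm} with the assumed compatibility of the rigid refined LLC and the Fargues--Scholze correspondence to deduce the identity $\tilde{\varphi}^{\tn{ss}} = \prescript{L}{}p_{k+1,k} \circ \varphi^{(k),\tn{ss}}$, whose right-hand side tautologically factors through $\prescript{L}{}G_{z}^{(k)}$. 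Since the kernel of $p_{k+1,k}$ is a central torus, the derived subgroups $\widehat{G_{z}^{(k+1)}}_{\tn{der}}$ and $\widehat{G_{z}^{(k)}}_{\tn{der}}$ coincide, and $\tilde{\varphi}|_{\SL_{2}}$ lands in this common derived subgroup; combining this with the factorization of $\tilde{\varphi}^{\tn{ss}}$ (which differs from $\tilde{\varphi}|_{W_{F}}$ only by an element of the derived subgroup), I conclude that the full parameter $\tilde{\varphi}$ factors through $\prescript{L}{}G_{z}^{(k)}$, giving the required lift. The bijectivity hypothesis follows from Corollary \ref{pi0isom}, which (after replacing the initial $j$ by a sufficiently large integer) makes $\pi_{0}(S_{\varphi^{(k)}}^{+}) \to \pi_{0}(S_{\prescript{L}{}p_{k+1,k} \circ \varphi^{(k)}}^{+})$ an isomorphism for all $k \geq j$. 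Conjecture \ref{Maartenbisbis} then yields equality of consecutive terms in \eqref{limLLC}, so the direct limit is eventually constant.

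For the proof of Proposition \ref{canonLLC} (presumably the intended reference of ``Proposition \ref{uniqueness}''), the original argument invokes Conjecture \ref{Maartenbis} for the central embeddings $G_{z}^{(i)} \hookrightarrow J_{i,j}$ and $H_{z}^{(j)} \hookrightarrow J_{i,j}$. The bijectivity of the relevant $\pi_{0}$-maps is already verified there, via the pseudo $z$-embedding property for the first and via the chain of compatible isomorphisms with $\pi_{0}(S_{\varphi}^{+})$ for the second. The lifting hypothesis is handled exactly as above: by Fargues--Scholze, the semisimplification of the LLC parameter of the restricted representation equals the manifest pullback (which lifts tautologically), and the derived-subgroup argument (applicable because $J_{i,j}/G_{z}^{(i)}$ and $J_{i,j}/H_{z}^{(j)}$ are tori) upgrades this to a lift of the full parameter. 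Conjecture \ref{Maartenbisbis} then yields the same equalities that Conjecture \ref{Maartenbis} provided in the original proof.

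The main obstacle, and the essential content of \S \ref{IsoFunc}, is the Fargues--Scholze semisimplification identity. Once this is in hand, the verification of the lifting hypothesis reduces to the formal observation that central extensions of reductive groups by tori preserve derived subgroups of dual groups.
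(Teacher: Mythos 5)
Your proposal is correct and follows essentially the same route as the paper: both reduce the lifting hypothesis of Conjecture \ref{Maartenbisbis} to the Fargues--Scholze semisimplification identity (Theorem \ref{FSthm}), invoke Corollary \ref{pi0isom} for the bijectivity hypothesis, and finish with a derived-subgroup observation. The only cosmetic difference is that the paper passes to the Weil--Deligne presentation $W_F' = \mathbb{G}_a \rtimes W_F$ and lifts the nilpotent operator $N$ via the bijection of nilpotent cones induced by a central surjection of dual groups, whereas you factor $\tilde{\varphi}|_{\SL_2}$ directly through the common derived subgroup of the dual groups---these are the same observation via Jacobson--Morozov.
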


\begin{proof}
The only remaining thing to show is that the lifting hypothesis of Conjecture \ref{Maartenbisbis} holds in our above situation. One can deduce that $\varphi^{(k)}$ (corresponding to $\pi^{(k)}$) as in the proof of Corollary \ref{pi0isom} above always lifts to a parameter for $^{L}G_{z}^{(j)}$. For this argument we will replace $L_{F}$ with $W_{F}' = \mathbb{G}_{a} \rtimes W_{F}$ (where $W_{F}$ acts via scaling by $||w||$) and view parameters $\varphi$ as pairs of $W_{F} \xrightarrow{\varphi^{\tn{ss}}} \prescript{L}{}G$ semisimple and $N = d\varphi(e_{0}) \in \mathrm{Lie}(\widehat{G})$ satisfying certain properties (see e.g. \cite[\S 5.2]{Taibi22} for details, including its equivalence to our previous notion of parameters). We know by Theorem \ref{FSthm} that $\varphi^{(k),\tn{ss}} = \prescript{L}{}p_{k,j} \circ \varphi^{(j),\tn{ss}}$, and so it suffices to lift the nilpotent element $N \in \mathrm{Lie}(\widehat{G_{z}^{(k)}})$ to a nilpotent $N^{(j)} \in \mathrm{Lie}(\widehat{G_{z}^{(j)}})$ such that $\varphi^{(j),\tn{ss}}(w)N^{(j)}\varphi^{(j),\tn{ss}}(w)^{-1} = ||w||N^{(j)}$ for all $w \in W_{F}$.

Since the maps from both $\widehat{G_{z}^{(j)}}$ and $\widehat{G_{z}^{(k)}}$ to $\widehat{G}$ are surjective with central kernel, they induce bijections on nilpotent elements, and hence so does the map $\widehat{G_{z}^{(j)}} \to \widehat{G_{z}^{(k)}}$. It is easy to verify that the lift of $N$ to $\mathrm{Lie}(\widehat{G_{z}^{(j)}})$ is the desired $N^{(j)}$.
\end{proof}

\appendix
\section{Braided crossed modules and duality}\label{Braided}
The purpose of this appendix is to justify the duality results involving braided crossed modules for local function fields used in \S \S \ref{ConstructionDual}, \ref{EndoscopySystems}. The arguments are essentially identical to the $p$-adic case, but we outline them here for completeness. The bulk of the work has already been done in the case of tori in \cite[Appendix A]{Dillery2}, which we will refer to often.

\subsection{Crossed modules}\label{BraidedCrossed}
We recall some basic notions of crossed modules, following \cite{Noohi11} (see also \cite{Borovoi16}, \cite{Aviles12}, \cite{labesse99}).

\begin{definition}\label{crosseddef}
A \mathdef{crossed module} $H \xrightarrow{\partial} G$ is two groups such that $G$ acts on the right of $H$ via group automorphisms and $\partial$ is a group homomorphism which is $G$-equivariant with respect to the conjugation action of $G$ on itself. We also require the following identity, for all $h_{1}, h_{2} \in H$:
\begin{equation*}
h_{1}^{\partial{h_{2}}} = h_{2}h_{1}h_{2}^{-1}.
\end{equation*}
For a (topological) group $\Gamma$, if we equip $H \xrightarrow{\partial} G$ with a (continuous) left $\Gamma$-action such that for any $\sigma \in \Gamma$ we have
\begin{equation*}
\prescript{\sigma}{}(h^{g}) = (\prescript{\sigma}{}{h})^{\prescript{\sigma}{}g},
\end{equation*}
then we call it a \mathdef{$\Gamma$-crossed module}.
\end{definition}

\begin{definition}
In the context of Definition \ref{crosseddef}, a \mathdef{$\Gamma$-braiding} is a map $\{-, -\} \colon G \times G \to H$ such that 
\begin{equation*}
\partial \{g_{1}, g_{2}\} = g_{1}^{-1}g_{2}^{-1}g_{1}g_{2}
\end{equation*}
and $\{\prescript{\sigma}{}g_{1}, \prescript{\sigma}{}g_{2}\} = \prescript{\sigma}{}\{g_{1},g_{2}\}$ for $\sigma \in \Gamma$. We say that the braiding is \mathdef{symmetric} if every $\{g_{1},g_{2}\}\{g_{2},g_{1}\} = 1$.
\end{definition}

Given the above data, one can define sets $Z^{i}(\Gamma, H \xrightarrow{\partial} G)$ and $H^{i}(\Gamma, H \xrightarrow{\partial} G)$ for $i = -1,0,1$, see \cite[\S 3, \S 4]{Noohi11}. Since we will be giving complete definitions in the \v{C}ech setting we do not write out the full Galois versions here. 

\begin{remark}\label{keyapprem}
For our applications in this paper, the above notion of crossed $\Gamma$-modules is all that is required, for reasons that will be explained later in this appendix. Nevertheless, we work with the \v{C}ech versions here for completeness and full generality.
\end{remark}

One can adapt the above framework to the more general setting of a faithfully flat morphism of rings $R \to S$ (for applications in this paper, this will just be $F \to \overline{F}$). The author is unsure if this appears as given below in the literature, but it is certainly already known. We also alert the reader that there are more conceptual approaches to this topic using torsors on topoi, see for example \cite{Aviles12}. Recall that all algebraic groups are assumed to be affine over $R$ (but not necessarily commutative).

Recall from \S \ref{PrelimCohom} that for an $S$-algebra $S'$, we denote the two projections
\begin{equation*}
\mathrm{Spec}(S') \times_{R} \mathrm{Spec}(S') \to \mathrm{Spec}(S')  
\end{equation*}
by $p_{1}$ and $p_{2}$. Similarly, for the $3$-fold cover denote the projection onto the $(i,j)$-embedded copy of $\mathrm{Spec}(S') \times_{R} \mathrm{Spec}(S')$ by $p_{i,j}$, and analogously for all $n$-fold covers over $m$-fold bases.

\begin{definition}\label{crosseddefbis}
A \mathdef{crossed module} $H \xrightarrow{\partial} G$ is two algebraic groups over $R$ such that $G$ acts on the right of $H$ via $R$-group automorphisms and $\partial$ is an $R$-group homomorphism which is $G$-equivariant with respect to the conjugation action of $G$ on itself. We also require the following identity, for all $h_{1}, h_{2} \in H(R')$, where $R'$ is an $R$-algebra:
\begin{equation*}
h_{1}^{\partial{h_{2}}} = h_{2}h_{1}h_{2}^{-1};
\end{equation*}
the above identity could also be expressed diagramatically but we omit that here for brevity.

We call $H \xrightarrow{\partial} G$ an \mathdef{$S/R$-crossed module} if the following identity holds for all $R$-algebras $R'$, $\bar{h} \in H(S \otimes_{R} R')$, and $\bar{g} \in G(S \otimes_{R} R')$:
\begin{equation*}
p_{2}^{\sharp}(\bar{h}^{\bar{g}}) = p_{2}^{\sharp}(\bar{h})^{p_{2}^{\sharp}(\bar{g})}.
\end{equation*}
One checks easily that when $S/R$ is finite Galois then this recovers Definition \ref{crosseddef} (for $\Gamma = \mathrm{Aut}_{R}(S)$).
\end{definition}

\begin{definition}
In the context of Definition \ref{crosseddefbis}, an \mathdef{$S/R$-braiding} is a morphism of $S$-schemes $\{-\} \colon G_{S} \times G_{S} \to H_{S}$ such that for any $R$-algebra $R'$ and all $g_{i} \in G(S \otimes_{R} R')$ we have $\partial \{g_{1},g_{2}\} = g_{1}^{-1}g_{2}^{-1}g_{1}g_{2}$ and $\{p_{2}^{\sharp}(g_{1}), p_{2}^{\sharp}(g_{2})\} = p_{2}^{\sharp}\{g_{1},g_{2}\}$. We say that the braiding is \mathdef{symmetric} if every $\{g_{1},g_{2}\}\{g_{2},g_{1}\} = 1$.
\end{definition}

We can now define the desired cohomology sets (which we will give the structure of groups):

\begin{definition}\label{maincrossedef} Given the above setup, we can make the following definitions:
\begin{enumerate}
\item{Set $H^{-1}(S/R, H \xrightarrow{\partial} G) := \mathrm{Ker}(\partial)(R)$;}
\item{\begin{itemize}
    \item {Take $Z^{0}(S/R, H \xrightarrow{\partial} G)$, the set of \mathdef{$0$-cocycles}, to be all pairs $(g, \theta)$ where $g \in G(S)$ and $\theta \in H(S \otimes_{R} S)$ is a \v{C}ech $1$-cocycle such that we have $\partial \theta = p_{1}^{\sharp}(g)^{-1} \cdot p_{2}^{\sharp}(g)$.}
\item{Given $(g_{i}, \theta_{i}) \in Z^{0}(S/R, H \xrightarrow{\partial} G)$, we can define 
\begin{equation*}
(g_{1}, \theta_{1}) \cdot (g_{2}, \theta_{2}) = (g_{1}g_{2}, \theta_{1}^{p_{1}^{\sharp}(g_{2})} \cdot \theta_{2});
\end{equation*}
this gives $Z^{0}(S/R, H \xrightarrow{\partial} G)$ the structure of a group.}
\item{We say that $(g, \theta) \in Z^{0}(S/R, H \xrightarrow{\partial} G)$ is a \mathdef{coboundary} if it is of the form $(\partial \mu, \theta_{\mu})$ where $\mu \in H(S)$ and $\theta_{\mu}:= p_{1}^{\sharp}(\mu)^{-1} \cdot p_{2}^{\sharp}(\mu)$. The set of coboundaries is a normal subgroup and we define $H^{0}(S/R, H \xrightarrow{\partial} G)$ to be the corresponding quotient.}
\end{itemize}}

\item{
\begin{itemize}
\item{Take $Z^{1}(S/R, H \xrightarrow{\partial} G)$, the set of \mathdef{$1$-cocycles}, to be pairs $(q, \varepsilon)$ with $q \in G(S \otimes_{R} S)$ and $\varepsilon \in H(S \otimes_{R} S \otimes_{R} S)$ satisfying the identities
\begin{equation*}
p_{1,3}^{\sharp}(q) \cdot \partial \varepsilon = p_{1,2}^{\sharp}(q) \cdot p_{2,3}^{\sharp}(q)
\end{equation*}
and 
\begin{equation*}
p_{1,2,4}^{\sharp}(\varepsilon) \cdot p_{2,3,4}^{\sharp}(\varepsilon) = p_{1,3,4}^{\sharp}(\varepsilon) \cdot p_{1,2,3}^{\sharp}(\varepsilon)^{p_{3,4}^{\sharp}(q)}. 
\end{equation*}
We encourage the reader to work out the fact that the above identities reduce to analogous group-cohomological identities in \cite[\S 3.1]{Noohi11} when $S/R$ is finite Galois.}

\item{One can give $Z^{1}(S/R, H \xrightarrow{\partial} G)$ the natural structure of a groupoid by defining an arrow $(q_{1}, \varepsilon_{1}) \to (q_{2}, \varepsilon_{2})$ as a pair $(g, \theta)$, where $g \in H(S)$ and $\theta \in G(S \otimes_{R} S)$ are such that 
\begin{equation*}
q_{2} = p_{1}^{\sharp}(g)^{-1} \cdot q_{1} \cdot p_{2}^{\sharp}(g) \cdot \partial \theta^{-1}
\end{equation*}
and
\begin{equation*}
\varepsilon_{2} = p_{1,3}^{\sharp}(\theta) \cdot \varepsilon_{1}^{p_{3}^{\sharp}(g)} \cdot p_{2,3}^{\sharp}(\theta)^{-1} \cdot p_{1,2}^{\sharp}(\theta^{-1})^{p_{2,3}^{\sharp}(q_{2})}.
\end{equation*}
Again, one checks that this reduces to the analogous group-theoretic identity (as in, e.g., \cite[\S 4.1]{Noohi11}) when $S/R$ is finite Galois. We define $H^{1}(S/R, H \xrightarrow{\partial} G)$ to be the corresponding set of isomorphism classes.}
\item{To define a group structure in degree $1$ we need to use our (symmetric) $S/R$ braiding. Given $(q_{i}, \varepsilon_{i}) \in Z^{1}(S/R, H \xrightarrow{\partial} G)$, we define $(q_{1}, \varepsilon_{1}) \cdot (q_{2}, \varepsilon_{2}) = (q, \varepsilon)$, where $q:= q_{1}q_{2}$ and 
\begin{equation*}
\varepsilon = \varepsilon_{1}^{p_{1,3}^{\sharp}(q_{2})} \cdot \varepsilon_{2} \cdot \{p_{1,2}^{\sharp}(q_{2}), p_{2,3}^{\sharp}(q_{1})\}^{p_{2,3}^{\sharp}(q_{2})}.
\end{equation*}}
This gives a well-defined group structure which preserves the above isomorphism classes, and thus gives a group structure on $H^{1}(S/R, H \xrightarrow{\partial} G)$. Since the braiding is symmetric, it is furthermore an abelian group (cf. \cite[\S 4]{Noohi11} for a proof of these claims in the Galois setting---it is straightforward to adapt them to the \v{C}ech setting).
\end{itemize}
}
\end{enumerate}
\end{definition}

When $H$ and $G$ are commutative, one can consider $H \xrightarrow{\partial} G$ as a complex with two terms, typically taken to be concentrated in degrees $-1$ and $0$ or $0$ and $1$. This perspective (and the associated double complexes) can be used to define cohomology groups, as studied in \cite[A.1, A.2]{Dillery2}. More specifically, these groups will either be of the form $H^{i}(\overline{F}/F, H \to G)$ (on the ``automorphic side") or $H^{i}(W_{F}, \widehat{G} \to \widehat{H})$ (on the ``Galois side"). The degree convention in \cite{Dillery2} is that the complex is concentrated in degrees $0$ and $1$. However, to match the cohomological degrees given above, one should use degrees $-1$ and $0$ instead, and thus all cohomology groups loc. cit. should be shifted down by $1$ (for example, $H^{1}$ loc. cit. is our $H^{0}$). When working with such groups, we always take the trivial braiding. 

One key property of the above cohomology groups---justifying the ambiguous notation---is:

\begin{lemma}\label{crossedcomp1} The cohomology groups in Definition \eqref{maincrossedef} coincide with the analogous constructions in \cite[A.1]{Dillery2} (with degrees shifted down by $1$) when $H$ and $G$ are commutative.
\end{lemma}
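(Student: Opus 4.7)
The plan is to verify the identification degree-by-degree by matching cocycles, coboundaries, and the group structures, viewing the cohomology groups of \cite[\S A.1]{Dillery2} as the \v{C}ech hypercohomology of the two-term complex $H \xrightarrow{\partial} G$ placed in degrees $-1$ and $0$, computed from the total complex of the \v{C}ech bicomplex $C^p(S/R, X^q)$ with $X^{-1} = H$, $X^0 = G$. Under the standing convention of trivial braiding, the crossed module axiom $h_1^{\partial h_2} = h_2 h_1 h_2^{-1}$ forces the $G$-action on $\partial(H)$ to be trivial, and the complex-theoretic setup loc.\ cit.\ uses no further $G$-action data, so one may take the $G$-action on $H$ to be trivial throughout. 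This reduces $H \xrightarrow{\partial} G$ to a morphism of commutative $R$-group schemes.

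With these reductions in place, degree $-1$ is immediate: both sides are $\ker(\partial)(R)$. For degree $0$, I would observe that a pair $(g,\theta)$ satisfying $\partial\theta = p_1^\sharp(g)^{-1} p_2^\sharp(g)$ together with the \v{C}ech cocycle condition on $\theta$ is precisely a total-degree-$0$ cocycle in the bicomplex, and that the coboundary relation $(g,\theta) = (\partial\mu,\, p_1^\sharp(\mu)^{-1} p_2^\sharp(\mu))$ is the image of $\mu \in H(S)$ under the total differential from total degree $-1$. The group laws on both sides are componentwise multiplication, so they agree.

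In degree $1$, once the $G$-action and braiding are trivial the two defining identities of Definition \ref{maincrossedef} collapse into the genuine \v{C}ech cocycle condition on $\varepsilon$ together with $\partial\varepsilon = p_{1,2}^\sharp(q)\, p_{2,3}^\sharp(q)\, p_{1,3}^\sharp(q)^{-1}$, which together comprise a total-degree-$1$ cocycle of the bicomplex. The equivalence defined by arrows $(g,\theta)$ becomes exactly the total differential applied to an element in total degree $0$, identifying isomorphism classes with the corresponding hypercohomology group. Finally, the braided product formula for Noohi's $H^1$ degenerates to componentwise multiplication, recovering the usual abelian group structure on hypercohomology.

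The only substantive care required is bookkeeping: one must align the multiplicative sign conventions implicit in the cocycle identities of Definition \ref{maincrossedef} (inverses playing the role of subtraction) with the total differential of the bicomplex loc.\ cit., and account for the shift-by-one between the two degree conventions announced in the statement. There is no conceptual obstacle — the content of the lemma is that Noohi's explicit cocycle definitions reduce, in the abelian case with trivial action and trivial braiding, to the explicit total-complex formulas for \v{C}ech hypercohomology in degrees $-1, 0, 1$.
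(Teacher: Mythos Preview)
Your proposal is correct and takes essentially the same approach as the paper: the paper's proof is the single sentence ``This follows from a basic unpacking of the explicit description of the cohomology groups associated to the total complex of a double complex,'' and your write-up is precisely that unpacking carried out degree-by-degree. The only content beyond the paper is your explicit remark on triviality of the $G$-action and braiding, which is implicit in the paper's convention stated just before the lemma.
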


\begin{proof} This follows from a basic unpacking of the explicit description of the cohomology groups associated to the total complex of a double complex. 
\end{proof}

\begin{definition} A \mathdef{morphism} of two $S/R$-braided crossed modules $[H' \xrightarrow{\partial'} G'] \to [H \xrightarrow{\partial} G]$ is a morphism $(f, \phi)$ of complexes $[H' \to G'] \to [H \to G]$ such that $f$ is $G'$-equivariant, where $G'$ acts on $H$ via $\phi$ and is compatible with the braidings (in the obvious sense). We say that such a morphism is a \mathdef{quasi-isomorphism} if the induced maps $\mathrm{Ker}(\partial') \to \mathrm{Ker}(\partial)$ and $\mathrm{Coker}(\partial') \to \mathrm{Coker}(\partial)$ are isomorphisms. 
\end{definition}

Our main application of the above definition uses:

\begin{proposition}\label{qisom}
If $[H' \to G'] \xrightarrow{(f,\phi)} [H \to G]$ is a quasi-isomorphism of $S/R$-braided crossed modules then it induces an isomorphism on all cohomology groups as in Definition \ref{maincrossedef}.   
\end{proposition}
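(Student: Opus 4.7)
The plan is to adapt the standard proofs of invariance of crossed module cohomology under quasi-isomorphism (cf. \cite{Noohi11, Borovoi16, labesse99}) from the Galois setting to our \v{C}ech setup. The $H^{-1}$ case is immediate: since $H^{-1}(S/R, H \xrightarrow{\partial} G) = \ker(\partial)(R)$ functorially in the crossed module, the hypothesis that $\ker(\partial') \to \ker(\partial)$ is an isomorphism of $R$-group schemes gives a bijection on $R$-points directly.

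For $H^0$ and $H^1$, I would factor the quasi-isomorphism $(f, \phi)$ through a suitable intermediate crossed module and reduce to two special cases: (i) $\phi$ is an isomorphism and $f$ induces the identity on cokernels (so that the quasi-iso hypothesis forces the induced map on kernels to be an isomorphism), and (ii) $f$ is an isomorphism and $\phi$ induces the identity on kernels (so the roles are reversed). In each special case, the induced maps on $H^0$ and $H^1$ can be analyzed directly from the explicit cocycle formulas in Definition \ref{maincrossedef}, and bijectivity follows by lifting cocycles along the faithfully flat cover $\mathrm{Spec}(S) \to \mathrm{Spec}(R)$ combined with the hypotheses on $\ker(\partial')$ and $\mathrm{coker}(\partial')$. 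Equivalently, one can form a mapping cone braided crossed module associated to $(f,\phi)$ and establish a long exact sequence
\begin{equation*}
H^{-1}(\mathrm{cone}) \to H^{-1}([H'\to G']) \to H^{-1}([H\to G]) \to H^0(\mathrm{cone}) \to \cdots \to H^1([H\to G]),
\end{equation*}
then check that the quasi-isomorphism hypothesis forces the cone to have vanishing cohomology in degrees $-1,0,1$, from which the result follows via a five-lemma style argument.

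The main obstacle will be the bookkeeping in the $H^1$ case, where the $3$-cochain $\varepsilon$ enters the cocycle conditions along with non-commutative conjugation by $G$ and where the braiding appears in the product structure; verifying that the factorization (or the long exact sequence) interacts correctly with all of these data requires some care. However, this bookkeeping is parallel to the Galois version developed in \cite[\S 3--4]{Noohi11}, and Definition \ref{maincrossedef} was set up precisely so that the group-cohomological formulas loc. cit. translate mechanically by replacing the Galois action with pullback along the projections $p_{i,j,\ldots}$. In all applications of this Proposition in the body of the paper the target complexes are in fact commutative, so one can alternatively invoke Lemma \ref{crossedcomp1} together with the standard invariance of hypercohomology of two-term complexes under quasi-isomorphism, avoiding the non-commutative bookkeeping entirely.
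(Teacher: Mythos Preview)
Your approach is correct, but it differs substantially from what the paper actually does. The paper's proof is a two-sentence citation: it invokes \cite[Proposition 1.2.2]{labesse99} for the finite Galois case and \cite[\S 2]{Aviles12} for the general \v{C}ech setting, and stops there. You, by contrast, outline the substance of what those references contain: the immediate $H^{-1}$ case, a factorization-through-an-intermediate-crossed-module argument (or equivalently a mapping-cone long exact sequence) for $H^0$ and $H^1$, and the observation that in the commutative case one can fall back on Lemma \ref{crossedcomp1} and ordinary hypercohomology. All of this is valid and is essentially a reconstruction of the cited arguments; your remark that Definition \ref{maincrossedef} was designed so that the formulas of \cite{Noohi11} translate mechanically under $p_{i,j,\ldots}^{\sharp}$ is exactly the point. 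The trade-off is clear: the paper's version is terse and delegates the work, while yours is more self-contained but leaves the $H^1$ bookkeeping (the interaction of the braiding with the $\varepsilon$-component) as a sketch rather than a verified computation. If you want your version to stand on its own, that $H^1$ step is the one that needs to be written out in full; otherwise, citing \cite{Aviles12} as the paper does is the cleaner option.
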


\begin{proof} When $S/R$ is finite Galois and we use the classical definition for $\Gamma = \mathrm{Aut}_{R}(S)$, this is the content of \cite[Proposition 1.2.2]{labesse99}. In the general \v{C}ech setting this is explained in \cite[\S 2]{Aviles12}.  
\end{proof}

\begin{remark} In many applications of crossed cohomology groups, one can often replace the $S/R$-crossed module $[H \to G]$ by a quasi-isomorphic crossed module $[H' \to G'] \to [H \to G]$ where $H'$ and $G'$ are abelian in order to (after applying Proposition \ref{qisom}) use Lemma \ref{crossedcomp1} to view the cohomology groups in the much simpler context of \cite[Appendix A]{Dillery2} rather than Definition \ref{maincrossedef}. From this point of view, the main contribution of Definition \ref{maincrossedef} is to give a definition of $H^{1}(\overline{F}/F, H \to G)$ which is intrinsic to $G$ and makes it clear that it does not depend on choice of $[H' \to G']$.
\end{remark}

\subsection{Duality results}\label{BraidedDual}
Let $G$ be a connected reductive group over $F$; we take the $\overline{F}/F$-crossed module $G_{\tn{sc}} \to G$, where $G$ acts on $G_{\tn{sc}}$ by conjugation, with symmetric braiding $\{g,h\} = g_{\tn{sc}}h_{\tn{sc}}g_{\tn{sc}}^{-1}h_{\tn{sc}}^{-1}$ (choosing $g_{\tn{sc}}$, $h_{\tn{sc}} \in G_{\tn{sc}}(\overline{F})$ whose image in $G_{\tn{ad}}(\overline{F})$ coincides with that of $g,h \in G(\overline{F})$ respectively).

We can also consider the $W_{F}$-crossed module $\widehat{G}_{\tn{sc}} \to \widehat{G}$ with analogous symmetric braiding and define the associated cohomology groups using the group-theoretic construction given in \cite[\S3, \S4]{Noohi11} which we mentioned but did not explicitly define in the previous subsection, as well as the cohomology of the crossed $W_{F}$-module $\widehat{Z}_{1} \xrightarrow{\partial} \widehat{Z}_{2}$ for any two multiplicative groups over $\mathbb{C}$ with a $W_{F}$-action (compatible with $\partial$). In this vein, we have:

\begin{corollary}\label{crossedcomp2} (cf. Lemma \ref{crossedcomp1}) In the context of the previous paragraph, the cohomology groups given by considering $W_{F}$-crossed modules coincide with their analogues defined in \cite[A.2]{Dillery2} (with appropriate degree shifts).
\end{corollary}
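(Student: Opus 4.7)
The plan is to mirror the proof of Lemma \ref{crossedcomp1}, replacing the \v{C}ech cover $\mathrm{Spec}(\overline{F}) \to \mathrm{Spec}(F)$ by the continuous cochain complex for $W_{F}$. The content is a tautological cochain-level comparison; the only non-trivial aspect is keeping track of the degree shift, since in \cite[Appendix A.2]{Dillery2} the complex $\widehat{Z}_{1} \to \widehat{Z}_{2}$ is placed in degrees $0$, $1$, while in the crossed module convention of \cite[\S\S 3--4]{Noohi11} used here it sits in degrees $-1$, $0$.

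First I would treat the abelian case of a $W_{F}$-crossed module $\widehat{Z}_{1} \xrightarrow{\partial} \widehat{Z}_{2}$ of multiplicative $W_{F}$-modules, equipped with the trivial braiding. The Galois analogue of Definition \ref{maincrossedef} (as in \cite[\S\S 3--4]{Noohi11}) declares a $1$-cocycle to be a pair $(q, \varepsilon)$ with $q \in C^{1}(W_{F}, \widehat{Z}_{2})$ and $\varepsilon \in C^{2}(W_{F}, \widehat{Z}_{1})$ satisfying $dq = \partial \varepsilon$ and $d\varepsilon = 0$, which is exactly the condition for $(q, \varepsilon)$ to be a $2$-cocycle in the total complex $\mathrm{Tot}(C^{\bullet}(W_{F}, \widehat{Z}_{1}) \xrightarrow{\partial} C^{\bullet}(W_{F}, \widehat{Z}_{2}))$ computing $H^{2}$ in \cite[Appendix A.2]{Dillery2}. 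The analogous matchings in degrees $-1$ and $0$ and for coboundary relations are immediate unpacking. Hence the cohomology groups coincide after the indicated shift.

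For the non-abelian crossed module $\widehat{G}_{\tn{sc}} \to \widehat{G}$, the cohomology groups of \cite[A.2]{Dillery2} are not literally defined, so the claim must be interpreted via passage to a quasi-isomorphic abelian crossed module, justified by the $W_{F}$-analogue of Proposition \ref{qisom}. Concretely, fixing a maximal torus $\widehat{T} \subset \widehat{G}$ with preimage $\widehat{T}_{\tn{sc}} \subset \widehat{G}_{\tn{sc}}$, the inclusion $[\widehat{T}_{\tn{sc}} \to \widehat{T}] \hookrightarrow [\widehat{G}_{\tn{sc}} \to \widehat{G}]$ is a morphism of $W_{F}$-crossed modules which induces isomorphisms on kernels ($Z(\widehat{G}_{\tn{sc}})$ in both cases) and on cokernels ($\widehat{G}/\widehat{G}_{\tn{der}}$ in both cases, using $\widehat{G} = \widehat{T} \cdot \widehat{G}_{\tn{der}}$ and $\widehat{T} \cap \widehat{G}_{\tn{der}} = \widehat{T}_{\tn{der}}$). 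The abelian case already established then yields the claimed identification.

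The main obstacle is purely bookkeeping. The crossed-module differentials and cocycle conditions in Definition \ref{maincrossedef} are written in terms of the right $G$-action on $H$ and the symmetric braiding; one must verify that, in the abelian/trivial-braiding setting, these collapse to the signed sum of the horizontal and vertical differentials on the total complex with the expected signs. This is routine and has essentially already been carried out for complexes of tori in \cite[Appendix A]{Dillery2}; nothing substantively new is required beyond verifying that the Weil-group continuous-cochain variant of Proposition \ref{qisom} holds, which is standard.
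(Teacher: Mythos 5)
Your proof is correct and matches what the paper intends by the ``cf.\ Lemma~\ref{crossedcomp1}'' reference: the abelian case is a formal cochain-level matching with the total complex of \cite[A.2]{Dillery2}, exactly as Lemma~\ref{crossedcomp1} does for the \v{C}ech side, and the degree-shift bookkeeping is handled correctly. Your observation that the statement for $\widehat{G}_{\tn{sc}} \to \widehat{G}$ only makes sense through a quasi-isomorphic abelian replacement is the right reading; the paper's sole application of the Corollary (in the proof of Proposition~\ref{appdual1}) is indeed to $[\widehat{T}_{\tn{sc}} \to \widehat{T}]$, after the quasi-isomorphism $[\widehat{T}_{\tn{sc}} \to \widehat{T}] \hookrightarrow [\widehat{G}_{\tn{sc}} \to \widehat{G}]$.

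One small imprecision: the kernel of $\widehat{G}_{\tn{sc}} \to \widehat{G}$ is not $Z(\widehat{G}_{\tn{sc}})$ in general but rather $\ker(\widehat{G}_{\tn{sc}} \to \widehat{G}_{\tn{der}}) = \pi_{1}(\widehat{G}_{\tn{der}})$, a subgroup of $Z(\widehat{G}_{\tn{sc}})$ (take $G = \GL_n$, where $\widehat{G}_{\tn{sc}} = \SL_n \hookrightarrow \GL_n = \widehat{G}$ is injective). The conclusion is unaffected --- the kernel of $\widehat{T}_{\tn{sc}} \to \widehat{T}$ is $\widehat{T}_{\tn{sc}} \cap \pi_{1}(\widehat{G}_{\tn{der}}) = \pi_{1}(\widehat{G}_{\tn{der}})$ since the latter is central and so lies in every maximal torus, giving the same isomorphism on kernels --- but the identification should be stated correctly.
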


The main statement we need is the analogue of \cite[Proposition 5.19, Proposition 5.20]{kaletha15}:

\begin{proposition}\label{appdual1}
There are canonical, functorial isomorphisms
\begin{enumerate}
\item{\begin{equation*} \Hom_{\tn{cts}}(Z(G)(F), \mathbb{C}^{\times}) \xrightarrow{\sim} H^{1}(W_{F}, \widehat{G}_{\tn{sc}} \to \widehat{G});
\end{equation*}}
\item{\begin{equation*}
\Hom_{\tn{cts}}(H^{0}(\overline{F}/F, G_{\tn{sc}} \to G), \mathbb{C}^{\times}) \xrightarrow{\sim} H^{1}(W_{F}, \widehat{Z}).
\end{equation*}}
\end{enumerate}
Moreover, the compositions
\begin{equation*} H^{1}(W_{F}, \widehat{G}) \to H^{1}(W_{F}, \widehat{G}_{\tn{sc}} \to \widehat{G}) \xrightarrow{\sim} \Hom_{\tn{cts}}(Z(G)(F), \mathbb{C}^{\times})
\end{equation*}
and
\begin{equation*}
H^{1}(W_{F}, \widehat{Z})\xrightarrow{\sim} \Hom_{\tn{cts}}(H^{0}(\ov{F}/F, G_{\tn{sc}} \to G), \mathbb{C}^{\times}) \xrightarrow{\sim} \Hom_{\tn{cts}}(G(F)/G_{\tn{sc}}(F), \mathbb{C}^{\times})
\end{equation*}
agree with Langlands' constructions (given, for example, in \cite{Borel77}).
\end{proposition}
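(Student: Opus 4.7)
My plan is to reduce both duality statements to the case of complexes of tori via a quasi-isomorphism of crossed modules, and then invoke the Tate--Nakayama-type duality for complexes of tori over local function fields established in \cite[Appendix A]{Dillery2}. This parallels Kaletha's mixed-characteristic argument for \cite[Propositions 5.19, 5.20]{kaletha15}, now carried out in the \v{C}ech framework of \S \ref{BraidedCrossed}. To begin, fix a maximal $F$-torus $T \subset G$ and let $T_{\tn{sc}} \subset G_{\tn{sc}}$ denote the preimage of $T \cap G_{\tn{der}}$, which is a maximal torus of $G_{\tn{sc}}$. I would first verify that the inclusion yields a morphism of $\overline{F}/F$-braided crossed modules $[T_{\tn{sc}} \to T] \hookrightarrow [G_{\tn{sc}} \to G]$ which is a quasi-isomorphism: both maps have kernel equal to the kernel $\pi_{1}(G_{\tn{der}})$ of the isogeny $G_{\tn{sc}} \to G_{\tn{der}}$, and both have cokernel $G/G_{\tn{der}} = T/(T \cap G_{\tn{der}})$. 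By Proposition \ref{qisom}, this induces isomorphisms on all associated cohomology groups. Dually, the inclusion $[\widehat{T}_{\tn{sc}} \to \widehat{T}] \hookrightarrow [\widehat{G}_{\tn{sc}} \to \widehat{G}]$ is analogously a quasi-isomorphism of $W_{F}$-braided crossed modules and yields $H^{1}(W_{F}, \widehat{G}_{\tn{sc}} \to \widehat{G}) \xrightarrow{\sim} H^{1}(W_{F}, \widehat{T}_{\tn{sc}} \to \widehat{T})$.

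Since the reduced crossed modules have abelian terms, Lemma \ref{crossedcomp1} and Corollary \ref{crossedcomp2} let me identify the above cohomology groups with those of the associated complexes of tori as treated in \cite[Appendix A]{Dillery2}. For part (1), the key observation is that $[\widehat{T}_{\tn{sc}} \to \widehat{T}]$ is Langlands dual to the complex $[T \to T_{\tn{ad}}]$ (via $\widehat{T}_{\tn{sc}} = \widehat{T_{\tn{ad}}}$), and that $Z(G) = \ker(T \to T_{\tn{ad}})$; the Tate--Nakayama duality of loc. cit. then delivers the isomorphism $H^{1}(W_{F}, \widehat{T}_{\tn{sc}} \to \widehat{T}) \xrightarrow{\sim} \Hom_{\tn{cts}}(Z(G)(F), \mathbb{C}^{\times})$, giving (1). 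For part (2), I would use the same reduction to identify $H^{0}(\overline{F}/F, G_{\tn{sc}} \to G)$ with $H^{0}(\overline{F}/F, T_{\tn{sc}} \to T)$; since the cokernel of $T_{\tn{sc}} \to T$ is the abelianized torus $G/G_{\tn{der}}$, a second application of Tate--Nakayama for complexes of tori (with the target now $\widehat{Z}$, the dual of the relevant torus) produces the required duality.

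Independence of $T$ and functoriality in $G$ follow formally from the functoriality of the quasi-isomorphism and of Tate--Nakayama duality. To verify the agreement with Langlands' construction from \cite{Borel77}, I would reduce the comparison to the case when $G = T$ is itself a torus via the same quasi-isomorphism machinery, at which point both duality isomorphisms collapse to the classical Langlands duality $H^{1}(W_{F}, \widehat{T}) \cong T(F)^{D}$, whose \v{C}ech cup-product description in \cite[Appendix A]{Dillery2} matches Borel's construction by direct cocycle computation. The main obstacle is in controlling the continuity and naturality of the duality pairing at the level of \v{C}ech cochains: one must check that the compatibility of the crossed-module cohomology of Definition \ref{maincrossedef} with the complex-of-tori cohomology of \cite[Appendix A]{Dillery2} under Lemma \ref{crossedcomp1} is strong enough to preserve both the topology on $Z(G)(F)$ and the comparison with Langlands' explicit construction. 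This compatibility is ultimately what is provided by the unbalanced cup products developed in \S \ref{TatedualityCup}.
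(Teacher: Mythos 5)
Your proposal follows essentially the same route as the paper: reduce both sides to complexes of tori via quasi-isomorphisms of crossed modules, pass to the complex-of-tori framework of \cite[Appendix A]{Dillery2} via Lemma \ref{crossedcomp1} and Corollary \ref{crossedcomp2}, invoke the Tate--Nakayama duality established there, and compare with Langlands' construction by reducing to the torus case. One small imprecision worth flagging: you write ``fix a maximal $F$-torus $T \subset G$,'' but for $[\widehat{T}_{\tn{sc}} \to \widehat{T}] \hookrightarrow [\widehat{G}_{\tn{sc}} \to \widehat{G}]$ to be a morphism of $W_F$-crossed modules, $\widehat{T}$ must be $W_F$-stable inside $\widehat{G}$, which forces $T$ to be the minimal Levi of the quasi-split $G$ (equivalently, $\widehat{T}$ must be part of the fixed $F$-splitting of $\widehat{G}$), exactly as the paper specifies; an arbitrary maximal $F$-torus will not do. With that correction your argument matches the paper's proof.
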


\begin{remark}
The constructions of Langlands referred to above works for any non-archimedean local field, so the claim makes sense as stated. Observe also that the only expression in Proposition \ref{appdual1} which uses the \v{C}ech version of the cohomology of crossed modules is the group $H^{0}(\overline{F}/F, G_{\tn{sc}} \to G)$; for the applications in the above paper we only need the first duality isomorphism. 
\end{remark}

\begin{proof}
We re-emphasize that this uses the identical arguments of \cite{kaletha15}. The purpose of writing out the argument is to highlight which results of \cite{Dillery2} one needs to use rather than their analogues in \cite{KS99}, and also how one deals with the \v{C}ech version of crossed modules (when applicable, cf. the previous remark). 

Set $\widehat{Z} = Z(\widehat{G})$; for any subgroup $M \subset \widehat{G}$, denote by $M_{\tn{sc}}$ its preimage in $\widehat{G}_{\tn{sc}}$. If we fix $\widehat{T} \subset \widehat{G}$ a maximal torus which is part of an $F$-splitting then we have a morphism of crossed $W_{F}$-modules 
\begin{equation*}
    [\widehat{T}_{\tn{sc}} \to \widehat{T}] \to [\widehat{G}_{\tn{sc}} \to \widehat{G}]
\end{equation*}
which induces an isomorphism on $W_{F}$-crossed cohomology. We can thus prove the result for $H^{1}(W_{F}, \widehat{G}_{\tn{sc}} \to \widehat{G})$ replaced by $H^{1}(W_{F}, \widehat{T}_{\tn{sc}} \to \widehat{T})$. The latter is dual to $H^{-1}(\overline{F}/F, T \to T_{\tn{ad}})$, where $T$ is the minimal Levi subgroup of $G$ (recall that $G$ is quasi-split) by \cite[Lemma A.3]{Dillery2} (using the comparison from Corollary \ref{crossedcomp2}). Now the first duality claim follows from the fact that the complex $[T \to T_{\tn{ad}}]$ is quasi-isomorphic to $[Z(G) \to 1]$ (also using Proposition \ref{qisom}). We alert the reader that, for the validity of this argument, one can define $H^{-1}(\overline{F}/F, T_{\tn{ad}} \to T)$ via the cohomology associated to a complex of tori as in \cite[A.1]{Dillery2} in order to avoid the technical construction of Definition \ref{maincrossedef}.

For the map (2) one uses the quasi-isomorphism of $\overline{F}/F$-braided crossed modules
\begin{equation*}
[T_{\tn{sc}} \to T] \to [G_{\tn{sc}} \to G]   
\end{equation*}
which induces an isomorphism of $H^{1}(\overline{F}/F, T_{\tn{sc}} \to T)$ with $H^{1}(\overline{F}/F, G_{\tn{sc}} \to G)$. The former can be interpreted in the framework of \cite[Appendix A]{Dillery2} (using Lemma \ref{crossedcomp1}) where one can use the same arguments as in \cite{kaletha15} (replacing the use of \cite[Lemma A.3.B]{KS99} loc. cit. with \cite[Lemma A.4]{Dillery2}) to get the desired result.

The agreement of the above compositions with Langlands' constructions proceeds verbatim (using $z$-extensions, which exist over any field) as in the mixed characteristic case. In particular, they only use the cohomology of crossed modules in the \v{C}ech setting to construct the map 
\begin{equation*}
 G(F)/G_{\tn{sc}}(F) \to  H^{0}(\overline{F}/F, G_{\tn{sc}} \to G),
\end{equation*}
which follows from the long exact sequence in crossed cohomology (cf. \cite[Proposition 2.4]{Aviles12}) associated to the $\overline{F}/F$-crossed module $[G_{\tn{sc}} \to G]$.
\end{proof}

\bibliography{rigvsisolocalFFbib}
\bibliographystyle{amsalpha}

\end{document}